\documentclass[11pt,a4paper]{amsart}
\usepackage[foot]{amsaddr}

\usepackage{caption}
 \usepackage{geometry}
\geometry{hmargin=2.7cm,vmargin=2.7cm}
\usepackage{amsmath,amsfonts,amssymb,amsthm,mathrsfs}
\usepackage{graphicx}
\usepackage{stmaryrd}
\usepackage{hyperref}
\usepackage{tikz}
\usetikzlibrary{decorations.pathreplacing,calligraphy}
\usepackage{dsfont}

\newtheorem{theorem}{Theorem}[section]
\newtheorem{lemma}[theorem]{Lemma}
\newtheorem{claim}[theorem]{Claim}
\newtheorem{proposition}[theorem]{Proposition}
\newtheorem{corollary}[theorem]{Corollary}
\newtheorem{assumption}[theorem]{Assumption}

\newtheorem{question}[theorem]{Question}

\newtheorem{definition}[theorem]{Definition}
\theoremstyle{remark}
\newtheorem{remark}[theorem]{Remark}

\numberwithin{equation}{section}

\newcommand{\R}{\mathbb{R}}
\newcommand{\Z}{\mathbb{Z}}
\def\P{\mathbb{P}} 
\newcommand{\E}{\mathbb{E}}

\newcommand{\id}{\mathds{1}}

\newcommand{\eps}{\varepsilon}
\newcommand{\cross}{\text{\textup{Cross}}}
\newcommand{\ann}{\text{\textup{Ann}}}
\newcommand{\arm}{\text{\textup{Arm}}}
\newcommand{\capa}{\text{\textup{Cap}}}
\newcommand{\tube}{\text{\textup{Tube}}}

\newcommand{\new}[1]{\textcolor{red}{#1}}

\begin{document}

\title[Percolation of strongly correlated Gaussian fields I. Subcritical decay]{Percolation of strongly correlated Gaussian fields I. Decay of subcritical connection probabilities}
\author{Stephen Muirhead$^1$}
\email{smui@unimelb.edu.au}
\address{$^1$School of Mathematics and Statistics, University of Melbourne}
\author{Franco Severo$^2$}
\address{$^2$ETH Z\"{u}rich}
\email{franco.severo@math.ethz.ch}
\begin{abstract}
We study the decay of connectivity of the subcritical excursion sets of a class of strongly correlated Gaussian fields. Our main result shows that, for smooth isotropic Gaussian fields whose covariance kernel $K(x)$ is regularly varying at infinity with index $\alpha \in [0, 1)$, the probability that $\{f \le \ell\}$, $\ell < \ell_c$, connects the origin to distance $R$ decays sub-exponentially in~$R$ at log-asymptotic rate $c_\alpha (\ell_c-\ell)^2 / K(R)$ for an explicit $c_\alpha > 0$. If $\alpha = 1$ and $\int_0^\infty K(x) dx = \infty$ then the log-asymptotic rate is $c_1 (\ell_c-\ell)^2 R (\int_0^R K(x) dx)^{-1}$, and if $\alpha > 1$ the decay is exponential.

Our findings extend recent results on the Gaussian free field (GFF) on $\Z^d$, $d \ge 3$, and can be interpreted as showing that the subcritical behaviour of the GFF is universal among fields with covariance $K(x) \sim c|x|^{d-2}$. Our result is also evidence in support of physicists' predictions that the correlation length exponent is $\nu = 2/\alpha$ if $\alpha \le 1$, and in $d=2$ we establish rigorously that $\nu \ge 2/\alpha$. More generally, our approach opens the door to the large deviation analysis of a wide variety of percolation events for smooth Gaussian fields.

This is the first in a series of two papers studying level-set percolation of strongly correlated Gaussian fields, which can be read independently.
\end{abstract}

\thanks{S.M.\ was supported by the Australian Research Council (ARC) Discovery Early Career Researcher Award DE200101467, and also acknowledges the hospitality of the Statistical Laboratory, University of Cambridge, where part of this work was carried out. F.S.\ was supported by the Swiss National Science Foundation, the NCCR SwissMAP and the European Research Council (ERC) under the European Union’s Horizon 2020 research and innovation program (grant agreement No 851565). The authors thank Alejandro Rivera and Hugo Vanneuville for helpful discussions on bootstrapping arguments and the physics literature on the correlation length exponent, Andriy Olenko for help with references on strongly correlated Gaussian fields, and two anonymous referees for detailed comments and corrections on a previous version.}
\keywords{Gaussian fields, percolation, strongly correlated systems}
\date{\today}
\maketitle

\section{Introduction}
Let $f$ be a smooth centred stationary Gaussian field on $\mathbb{R}^d$, $d \ge 2$. We consider the probability that the excursion sets
\[ \{f \le \ell\} := \{ x \in \R^d : f(x) \le \ell \} \]
contain connected components of large diameter. For $R > 0$, define the centred Euclidean ball $B(R) = \{ x : |x| \le R\} \subset \R^d$, and denote by $\arm_\ell(R)$ the event that $\{f \le \ell\}$ contains a path between the origin and $\partial B(R)$. By monotonicity of $\{f \le \ell\}$ with respect to~$\ell$, there exists a critical level $\ell_c \in [-\infty, \infty]$ such that, as $R \to \infty$,
\[   \P[ \arm_\ell(R) ]  \to \begin{cases} \theta(\ell) =  0  & \text{if } \ell < \ell_c , \\  \theta(\ell) > 0 & \text{if } \ell > \ell_c , \end{cases} \]
and under certain conditions it is known that $\ell_c$ is finite \cite{ms83b}. It is further known that $\ell_c = 0$ for a very wide class of planar fields as a consequence of self-duality \cite{rv20, mv20, mrv20}, and also that $\ell_c < 0$ for certain fields if $d \ge 3$ \cite{drrv21}.

\smallskip
Our primary interest is the rate of decay of  $\P[ \arm_\ell(R) ] \to 0$ as $R \to \infty$ in the subcritical phase $\ell < \ell_c$. For fields with `weak' (i.e.~rapidly decaying) correlations it is known that $\P[ \arm_\ell(R) ]$ decays exponentially -- in particular, for positive and integrable covariances (with mild additional assumptions) \cite{mv20, s21}. This is analogous to the `sharp' phase transition that occurs in many percolation models with short-range dependence, including the archetype Bernoulli percolation \cite{men86,ab87}.

\smallskip
By contrast, much less is known about the decay of  $\P[ \arm_\ell(R) ]$ for fields with `strong' (i.e.~slowly decaying) correlations. In the physics literature it is predicted that such fields can lie outside the Bernoulli percolation universality class \cite{w84, ik91, jgrs20} (see Section~\ref{s:univ} for further discussion), and that $\P[ \arm_\ell(R)]$ may decay slower than exponentially.

\smallskip
 A motivating example is the Gaussian free field (GFF), which is the centred stationary Gaussian field on $\Z^d$, $d \ge 3$, with covariance given by the Green's function $G(x) \sim c_{G,d} |x|^{2-d}$, where $c_{G,d} > 0$ is a constant. For $d = 3$, it was recently shown \cite{grs21} that, for $\ell < \ell_c$,
\begin{equation}
\label{e:gff}
- \log  \P[ \arm_\ell(R) ]   \sim  \frac{(\ell_c-\ell)^2 R  }{4 c_{G,3} ( \log R)}   , 
  \end{equation}
where $\ell_c  = \ell_c(d) < 0$ is the critical level of the GFF, whereas in higher dimensions $d \ge 4$ the decay is exponential \cite{pr15}. Although the GFF is defined on $\Z^d$ rather than $\R^d$, heuristically this should be irrelevant when considering large-scale connectivity properties.

\smallskip
In this paper we address the decay of $\P[ \arm_\ell(R) ]$ for smooth isotropic Gaussian fields whose covariance kernel $K(x) = \E[f(0) f(x)]$ is regularly varying at infinity with index $\alpha \in [0, d)$, i.e.\ $K(x) = |x|^{-\alpha} L(|x|)$ where $L(x)$ is a slowly varying function (see~\eqref{e:sv} for a precise definition); the GFF corresponds to $\alpha = d-2$ and $L \sim c_{G,d}$. Our main result extends \eqref{e:gff} to a wide class of fields, and shows in particular that the behaviour of the GFF is universal among fields whose correlations are asymptotic to the Green's function (see Section \ref{r:gff}).

\smallskip
While the analysis in \cite{grs21} relied on specific properties of the GFF (e.g.\ harmonicity, the Markov property, the connection to random walks etc.), here we develop a robust approach that, in principle, relies only on isotropy and the regular variation of the covariance (although we use extra assumptions for technical reasons). In this we are guided by two main ideas:
\begin{itemize}
\item The notion of \textit{(harmonic) capacity} of a set $D \subset \Z^d$, central to the analysis of the GFF, can be replaced by the \textit{(generalised) capacity} of a set $D \subset \R^d$ (with respect to the kernel $K$), defined equivalently as either
   \begin{equation}
\label{e:cap2}
 \capa_K(D)  = \Big( \min_{ \mu \in \mathcal{P}(D) } \int_D \int_D K(x-y) d\mu(x) d\mu(y) \Big)^{-1} ,
 \end{equation}
 where $\mathcal{P}(D)$ is the set of (Borel) probability measures on $D \subset \R^d$, or as 
\begin{equation}
\label{e:cap1}
   \capa_K(D) = \min \big\{ \|h\|^2_H : h \in H, h \ge 1 \text{ on } D \big\} , 
   \end{equation}
where $H$ is the \textit{reproducing kernel Hilbert space (RKHS)} associated to the kernel $K$; see Section \ref{s:cap1} for a detailed introduction. In the case of regularly varying covariance, the asymptotic capacity of scaled sets can be computed explicitly (see Section \ref{s:cap2}).
\item The decomposition of the GFF into a harmonic average and a `local' field, which allows for renormalisation arguments, can be replaced by a generalised notion of \textit{local-global decomposition} (see Section \ref{s:locglo}).
\end{itemize}
Although we restrict our attention to smooth isotropic fields, we believe this approach could be adapted to Gaussian fields on $\Z^d$ whose covariance is \textit{asymptotically} isotropic (e.g.\ the~GFF).

\smallskip
To explain the relevance of the capacity, standard heuristics suggest that the asymptotics of $\P[ \arm_\ell(R) ] $ are carried, in a large deviation sense, by the event that the field has excess mean $\ell_c - \ell$ in a neighbourhood of a path connecting $0$ to $\partial B(R)$, so that the excursion set $\{f \le \ell\}$ `behaves supercritically' in this neighbourhood. Since the large deviation rate function for a Gaussian field to have excess mean $h \in H$ is $\frac{1}{2}\|h\|_H^2$, and assuming that the excess mean is easiest to achieve on a line segment, this leads one to suppose that
\begin{equation}
\label{e:result}   - \log  \P[ \arm_\ell(R) ]  \approx  \frac{ \min \big\{ \|h\|^2_H : h \in H, h \ge \ell_c - \ell \text{ on } [0, R] \big\}}{2}   = \frac{(\ell_c - \ell)^2 \capa_K([0,R])}{2}  . 
\end{equation}
It is not hard to obtain the lower bound in \eqref{e:result} using an entropic bound; the main difficulty lies in obtaining a matching upper bound, for which we use renormalisation arguments. In the case $\alpha \le 1$, we obtain \eqref{e:result} as an asymptotic equivalence. By contrast, if $\alpha > 1$ we obtain \eqref{e:result} only up to multiplicative error; this is quite natural because in this regime one expects many paths from $0$ to $\partial B(R)$ to contribute to $ - \log  \P[ \arm_\ell(R) ]$.

\smallskip
Beyond the study of $\arm_\ell(R)$, our approach opens the door to the analysis of large deviation and entropic repulsion phenomena for a wide variety of percolation-type events for strongly correlated Gaussian fields in both subcritical and supercritical phases (e.g.\ disconnection, excess density, large finite clusters etc.), which have received a lot of attention in the context of the GFF \cite{Szn15,Nit18,CN20a,Szn19b,PS21a} (or the related model of random interlacements \cite{Szn17,NS20,Szn19a,Szn20,Szn21}). We stress that, unlike the aforementioned works, our approach does not require any kind of Markov property or connection to random walks.

\smallskip
We conclude the introduction with some notation. Henceforth all Gaussian fields and vectors are centred without further mention. A function $g : \R^d \to \R$ is \textit{isotropic} if $g(x)$ depends only on $|x|$, where $|\cdot|$ is the Euclidean norm; we often abuse notation by identifying isotropic functions with their restriction to $\R^+$. A Gaussian field is \textit{isotropic} if its covariance kernel is isotropic. For functions $g,h$ we write $g(x) \sim h(x)$, $g(x) = o(h(x))$, $g(x) = O(h(x))$, and $g(x) \asymp h(x)$ respectively to indicate that, as $x \to \infty$, $g(x)/h(x) \to 1$, $|g(x)| / |h(x)| \to 0$, $|g(x)|/|h(x)|$ is bounded away from $\infty$, and $g(x)/h(x)$ is bounded away from $0$ and $\infty$. When the context is clear, we sometimes equate a set $D \subseteq \R^k$, $k < d$, with the set $D \times \{0\}^{d-k} \subseteq \R^d$.

\subsection{A first description of our results}
We begin by stating our results for an explicit one-parameter family of Gaussian fields in $d=2$; this illustrates our results while avoiding the need for extra assumptions. We state general results in the next section. 

\smallskip
For $\alpha > 0$, let $F_\alpha$ be the smooth isotropic Gaussian field on $\R^2$ with covariance
\begin{equation}
\label{e:cauchy}
 K(x) = (1+x^2)^{-\alpha/2} \sim x^{-\alpha},
 \end{equation}
 known as the \textit{Cauchy kernel}; the critical level of $F_\alpha$ is $\ell_c = 0$ \cite{mv20, mrv20}.

\smallskip
Our main result characterises how the decay of $\P[\arm_\ell(R)]$, $\ell <  0$, depends on $\alpha$:

\begin{theorem}[Decay of subcritical connection probabilities]
\label{t:spec}
Let $f = F_\alpha$, $\alpha \in (0,2)$, on $\R^2$. Then for every $\ell < \ell_c=0$, as $R \to \infty$:
\begin{enumerate}
\item If $\alpha \in (1,2)$,
\begin{equation}
\label{e:spec1}
-  \log  \P[ \arm_\ell(R) ]   \asymp  R   . 
  \end{equation}
\item If $\alpha = 1$,
\begin{equation}
\label{e:spec2}
 - \log  \P[\arm_\ell(R)]   \sim \frac{ \ell^2 R}{4 (\log R) }  .  
  \end{equation}
\item If $\alpha \in (0, 1)$,
\begin{equation}
\label{e:spec3}
  - \log  \P[\arm_\ell(R)]   \sim   \frac{c_\alpha \ell^2 R^\alpha}{2}  
   \end{equation}
where 
\begin{equation}
\label{e:calpha}
 c_\alpha = \frac{1}{\pi}  \textrm{B} \Big( \frac{1+\alpha}{2}, \frac{1+\alpha}{2} \Big) \cos \Big( \frac{\pi \alpha}{2} \Big)\in (0, 1)  ,
 \end{equation}
 and $\textrm{B}(x,y)$ is the Beta function.
 \end{enumerate}
\end{theorem}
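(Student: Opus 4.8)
The plan is to deduce Theorem~\ref{t:spec} from the general result \eqref{e:result}, namely the two-sided estimate
\[
- \log \P[\arm_\ell(R)] \asymp (\ell_c - \ell)^2 \capa_K([0,R])
\]
(with the sharp constant $\tfrac12$ in the cases $\alpha \le 1$), applied to the Cauchy field $F_\alpha$ with $\ell_c = 0$. The theorem then reduces to an \emph{analytic} computation: the asymptotics of the generalised capacity $\capa_K([0,R])$ of a line segment as $R \to \infty$, where $K(x) = (1+x^2)^{-\alpha/2} \sim x^{-\alpha}$. So the whole argument splits into (i) checking that $F_\alpha$ satisfies the hypotheses of the general theorems of later sections (smoothness, isotropy, regular variation with index $\alpha$, and whatever mild extra technical assumptions are imposed — non-negativity of the spectral measure, spectral decay, positivity of the covariance, etc.), which for the explicit Cauchy kernel should be routine since its spectral density is known explicitly (a Bessel-type/Mat\'ern-type density with power-law tails), and (ii) computing the capacity asymptotics.

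For step (ii), I would use the variational characterisation \eqref{e:cap2}: $\capa_K([0,R])^{-1} = \min_{\mu \in \mathcal{P}([0,R])} \iint K(x-y)\,d\mu(x)\,d\mu(y)$. By scaling $x \mapsto Rx$ and using $K(Rx) \sim R^{-\alpha} K_0(x)$ with $K_0(x) = |x|^{-\alpha}$ (the exact regularly varying leading term), one expects
\[
\capa_K([0,R]) \sim R^{\alpha} \Big( \min_{\mu \in \mathcal{P}([0,1])} \int_0^1\!\!\int_0^1 |x-y|^{-\alpha}\,d\mu(x)\,d\mu(y) \Big)^{-1}
\]
when $\alpha \in (0,1)$ (so the kernel is integrable against the equilibrium measure and the minimiser is a genuine probability measure on $[0,1]$). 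The one-dimensional Riesz-kernel energy problem on an interval is classical: the equilibrium measure of $[0,1]$ for the kernel $|x-y|^{-\alpha}$, $\alpha \in (0,1)$, is the arcsine-type density $\propto (x(1-x))^{(\alpha-1)/2}$, and the minimal energy is $\Gamma(\tfrac{1+\alpha}{2})^2 / (\Gamma(1+\alpha) \cdot (\text{trig factor}))$; inverting this and matching against the Beta-function identity $\mathrm{B}(\tfrac{1+\alpha}{2},\tfrac{1+\alpha}{2}) = \Gamma(\tfrac{1+\alpha}{2})^2/\Gamma(1+\alpha)$ and the reflection formula should reproduce exactly the constant $c_\alpha$ in \eqref{e:calpha}, with the $\cos(\pi\alpha/2)$ coming from $\Gamma$-reflection. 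For $\alpha = 1$ the energy integral $\iint |x-y|^{-1}$ diverges logarithmically, so no probability minimiser exists on the unit scale; instead one expects $\capa_K([0,R]) \sim 2R/\log R$ (the logarithmic-capacity regime), matching \eqref{e:spec2} with the stated $\tfrac14 = \tfrac12 \cdot \tfrac12$. For $\alpha \in (1,2)$ the kernel is too singular for any measure on a segment to have finite energy, forcing $\capa_K([0,R]) \asymp R$ (a single point, or rather a unit-length tube, already has capacity of order $R$ worth of points), giving \eqref{e:spec1}; here only the two-sided $\asymp$ survives, consistent with the remark after \eqref{e:result} that many paths contribute.

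The bulk of the work in each regime is therefore: (a) justify passing from the exact kernel $K(x)=(1+x^2)^{-\alpha/2}$ to its leading term $|x|^{-\alpha}$ inside the (rescaled) energy functional — this needs a uniform control showing the lower-order corrections are negligible in the limit, which follows from regular variation and a compactness/uniform-continuity argument for the minimising measures; (b) solve, or quote the classical solution of, the Riesz energy problem on $[0,1]$; and (c) translate the capacity asymptotics into the claimed connection-probability asymptotics via the general theorem, tracking that the sharp constant $\tfrac12$ is preserved when $\alpha \le 1$. The main obstacle I anticipate is \textbf{not} the capacity computation itself (which is classical Riesz-potential theory) but rather verifying that the Cauchy field $F_\alpha$ genuinely falls within the scope of the general theorems' technical hypotheses — in particular any spectral positivity/decay conditions needed for the renormalisation argument behind the upper bound — and, in the borderline case $\alpha=1$, carefully matching the logarithmic capacity asymptotics with the right constant so that \eqref{e:spec2} comes out with the factor $\tfrac14$ exactly as in the GFF formula \eqref{e:gff}.
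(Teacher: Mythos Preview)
Your proposal is correct and follows essentially the same route as the paper: verify that $F_\alpha$ satisfies the hypotheses of the general Theorem~\ref{t:gen} (the paper does this via the Laplace-transform representation $K(r)=\mathcal{L}[v](r^2)$ in Assumption~\ref{a:gen1}, which yields the local-global decomposition, together with Proposition~\ref{p:nontriv} and Remark~\ref{r:ell^**} to pin down $\ell_c^\ast=\ell_c^{\ast\ast}=0$), then read off the capacity asymptotics from Proposition~\ref{p:capline}. Two small corrections: for $\alpha=1$ the capacity is $\sim R/(2\log R)$, not $2R/\log R$ (this gives the factor $\tfrac14$ directly), and for $\alpha\in(1,2)$ the linear growth of capacity comes not from singularity of the energy but from integrability $\int_0^\infty K<\infty$, which makes the uniform measure asymptotically optimal.
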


\begin{remark}
We have omitted the case $\alpha \ge 2$ from Theorem \ref{t:spec} for technical reasons, although previous work has shown that \eqref{e:spec1} also holds in this case \cite{mv20}.
\end{remark}

As a corollary, we deduce the order of the largest cluster inside~$B(R)$. For $R > 0$ and $\ell \in \R$, let $D_{R, \ell} $ denote the largest diameter among the connected components of $\{ f \le \ell \} \cap B(R)$.

\begin{theorem}[Diameter of the largest cluster]
\label{t:diamspec}
Let $f = F_\alpha$, $\alpha \in (0,2)$, on $\R^2$. Then for every $\ell < \ell_c = 0$, as $R \to \infty$:
\begin{enumerate}
\item If $\alpha \in (1, 2)$,
\[ \frac{  D_{R, \ell} }{   \log R  } \quad  \text{ is bounded above and away from zero in probability.} \]
\item If $\alpha = 1$,
 \[  \frac{  D_{R, \ell} }{   \log R \log \log R }   \to    \frac{8}{\ell^2}   \quad \text{ in probability.}  \]
\item If $\alpha \in (0, 1)$, 
\[    \frac{  D_{R, \ell} }{  (  \log R )^{1/\alpha} }   \to  \Big( \frac{4}{c_\alpha \ell^2} \Big)^{1/\alpha}  \quad \text{ in probability,}  \]
where $c_\alpha$ is defined as in \eqref{e:calpha}. 
\end{enumerate}
\end{theorem}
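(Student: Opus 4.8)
The plan is to deduce both theorems from Theorem~\ref{t:spec} by relating the largest‐cluster diameter $D_{R,\ell}$ to the arm probabilities $\P[\arm_\ell(r)]$. The guiding principle is that $D_{R,\ell}$ concentrates on the scale $r_c=r_c(R)$ at which a fixed box of radius $\asymp r$ inside $B(R)$ has an order‐one chance of being crossed by $\{f\le\ell\}$, i.e.\ $\P[\arm_\ell(r_c)]\asymp R^{-2}$, equivalently $-\log\P[\arm_\ell(r_c)]\sim 2\log R$. Inverting the three asymptotics of Theorem~\ref{t:spec} gives exactly the three scales $r_c\asymp\log R$ when $\alpha\in(1,2)$, $r_c\sim(8/\ell^2)\,\log R\log\log R$ when $\alpha=1$, and $r_c\sim(4/(c_\alpha\ell^2))^{1/\alpha}(\log R)^{1/\alpha}$ when $\alpha\in(0,1)$, matching the normalisations in Theorem~\ref{t:diamspec}. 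It thus suffices to show, for each fixed $\eps>0$, that $\P[D_{R,\ell}\ge(1+\eps)r_c]\to0$ and $\P[D_{R,\ell}\le(1-\eps)r_c]\to0$ (with the weaker $\asymp$‐statement in the case $\alpha\in(1,2)$, where only matching orders are claimed).

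For the upper bound I would use a union bound. If $\{f\le\ell\}\cap B(R)$ has a component of diameter $\ge r$, then it contains two points at distance $\ge r-1$, so for some $x$ in a fixed unit net $\Lambda_R$ of $B(R)$ (with $|\Lambda_R|\asymp R^2$) the set $\{f\le\ell\}$ connects $B(x,1)$ to $\partial B(x,r-2)$. By stationarity, $\P[D_{R,\ell}\ge r]\le|\Lambda_R|\,\P[\{f\le\ell\}\text{ connects }B(1)\text{ to }\partial B(r-2)]$, and covering $B(1)$ by a bounded number of starting points reduces the last probability to $\P[\arm_\ell(r-O(1))]$, which has the log‐asymptotics supplied by Theorem~\ref{t:spec}. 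Taking $r=(1+\eps)r_c$ gives $-\log\P[\arm_\ell(r-O(1))]\sim(1+\eps)^\alpha\cdot 2\log R$, so $\P[D_{R,\ell}\ge r]\le R^{2}\exp\big(-(1+o(1))(1+\eps)^\alpha\,2\log R\big)\to0$ because $(1+\eps)^\alpha>1$ (and identically for $\alpha=1$, and with the $\asymp$‐version of Theorem~\ref{t:spec} for $\alpha\in(1,2)$).

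For the lower bound, fix $r=(1-\eps)r_c$ and partition $B(R/2)$ into $N\asymp(R/\rho)^2$ disjoint boxes $Q_1,\dots,Q_N$ of side length $\asymp r$ placed on a grid of spacing $\rho:=r\log R$, so that the boxes are pairwise at distance $\gtrsim r\log R$. Let $A_i$ be the event that $\{f\le\ell\}$ crosses $Q_i$ (so $A_i$ forces $D_{R,\ell}\ge r$). Localising to $Q_i$ the entropic lower‐bound construction behind Theorem~\ref{t:spec} — shifting $f$ by an $h\in H$ supported near a diameter of $Q_i$ with $\|h\|_H^2=(1+o(1))\ell^2\capa_K([0,r])\sim(1+o(1))c_\alpha\ell^2 r^\alpha$ — yields $\P[A_i]\ge\exp\big(-(1+o(1))\tfrac{c_\alpha\ell^2 r^\alpha}{2}\big)=R^{-2(1-\eps)^\alpha(1+o(1))}$, hence $N\,\P[A_i]\ge R^{\,2-2(1-\eps)^\alpha-o(1)}\to\infty$ since $(1-\eps)^\alpha<1$. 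It remains to convert this first‐moment divergence into $\P[\bigcap_i A_i^c]\to0$. The point where the strong correlations bite is precisely here: the positive association of $f$ gives $\P[\bigcap_i A_i^c]\ge\prod_i\P[A_i^c]$, the wrong direction, so one must genuinely exploit that the covariance between distinct boxes is small, namely $\sup\{K(x-y):x\in Q_i,\ y\in Q_j\}\le K(r\log R)=o(1/\log R)$. Feeding this into a quasi‐independence/decoupling estimate for the Gaussian field — this is exactly what the local–global decomposition of Section~\ref{s:locglo} is for — in one of the usual forms (a sprinkled decoupling comparing $\P[\bigcap_i A_i^c]$ to $\prod_i\big(1-\P[A_i']\big)$ for the crossing event $A_i'$ at the easier level $\ell+\delta$, up to an error; or a covariance bound $\mathrm{Cov}(\id_{A_i},\id_{A_j})\lesssim\P[A_i]\P[A_j]\,K(\mathrm{dist}(Q_i,Q_j))$ feeding a second‐moment argument, for which $\sum_{i\neq j}K(\mathrm{dist}(Q_i,Q_j))=O(R^{-\alpha}(\E\sum_i\id_{A_i})^2)$), one checks that the error is negligible against $N$ (resp.\ against $(N\P[A_i])^2$) thanks to the fast decay $K(r\log R)=o(1/\log R)$. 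This gives $\P[\bigcap_i A_i^c]\to0$, i.e.\ $D_{R,\ell}\ge r$ with high probability, completing the lower bound.

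The main obstacle is this decoupling step. Unlike the GFF analyses there is no Markov property to make well‐separated boxes exactly independent, and because the correlations are positive the relevant inequality is not for free; one has to quantify quasi‐independence directly from the regular variation of $K$, which is what makes the local–global decomposition indispensable. Everything else is routine once Theorem~\ref{t:spec} and such a decoupling are in hand: matching the sharp constants ($c_\alpha$, $8/\ell^2$), the logarithmic corrections in the case $\alpha=1$, and the case $\alpha\in(1,2)$ — where only orders are needed and the argument is correspondingly softer, with no sharp‐constant bookkeeping — follow by repeating the above with the appropriate $r_c$.
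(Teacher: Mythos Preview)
Your outline matches the paper's proof of Theorem~\ref{t:diamgen} almost exactly: the upper bound is the same union bound \eqref{e:diam1}, and the lower bound is the same ``place well-separated boxes, use the local--global decomposition so that the local field $f_L$ makes the box events genuinely independent, and absorb the sprinkle into the constant'' argument (see \eqref{e:diam2}--\eqref{e:diam5}, with the paper choosing spacing $L=(\log R)^{\max\{2,2/\alpha\}}$).

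Two small corrections. First, your sprinkle goes the wrong way: to pass from $\{f\text{ has no large cluster at }\ell\}$ to a product over boxes you need the implication $\{f_L\text{ crosses at }\ell-\eps\}\cap\{\sup g_L\le\eps\}\Rightarrow\{f\text{ crosses at }\ell\}$, hence the comparison event $A_i'$ must be at the \emph{harder} level $\ell-\eps$ (and then, via \eqref{e:diam3}, you feed in $\P[\arm_{\ell-2\eps}(1,r)]$); the direction you wrote would require $g_L\le-\delta$ uniformly, which has vanishing probability. Second, the alternative route via $\mathrm{Cov}(\id_{A_i},\id_{A_j})\lesssim \P[A_i]\P[A_j]\,K(\mathrm{dist}(Q_i,Q_j))$ is not a known inequality for Gaussian level-set events and would not follow from the tools in the paper; stick to the sprinkled-independence route, which is exactly what the local--global decomposition delivers.
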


Our results also cover fields with slower-than-polynomial correlation decay (i.e.\ the $\alpha = 0$ case). Although we lack explicit examples, they are straightforward to construct indirectly via convolution. Fix $\gamma > 0$ and let $q$ be a smooth isotropic unimodal function on~$\R^2$ such that 
\[ q(x) \propto  x^{-1} (\log x)^{-(\gamma+1)/2}   , \]
for all $x \ge 2$. Then let $F_0^\gamma$ be the smooth isotropic Gaussian field on $\R^2$ with covariance $K = q \star q$, where $\star$ denotes convolution. As we show in Appendix \ref{a:tau}, we can and will normalise $q$ so that $K(x) \sim  (\log x)^{-\gamma}$ as $x \to \infty$. The critical level of $F_0^\gamma$ is also $\ell_c = 0$ \cite{mrv20}.

\begin{theorem}[Decay of subcritical connection probabilities; $\alpha = 0$ case]
\label{t:spec0}
Let $f = F_0^\gamma$, $\gamma > 0$, on $\R^2$. Then for every $\ell < \ell_c = 0$, as $R \to \infty$,
\begin{equation}
\label{e:spec4}
 - \log  \P[ \arm_\ell(R) ]   \sim   \frac{ \ell^2 ( \log R)^{\gamma}}{2}  .
  \end{equation}
\end{theorem}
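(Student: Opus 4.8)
The plan is to derive Theorem~\ref{t:spec0} from our general result (stated in the next section) in the regularly-varying regime of index $\alpha = 0$, once the field $F_0^\gamma$ has been placed within its scope and the relevant capacity has been evaluated. First I would verify that $F_0^\gamma$ satisfies the standing assumptions. The covariance $K = q \star q$ is legitimate because its spectral density is $|\widehat{q}|^2 \ge 0$; it is non-negative (so that $f$ is positively associated) because $q \ge 0$; and it is radially non-increasing because $q$ is, using the layer-cake decomposition of $q$ into indicators of centred balls together with the fact that the overlap volume of two balls in $\R^2$ is non-increasing in the distance between their centres. With a suitable choice of $q$ near the origin and of the normalising constant, $F_0^\gamma$ is moreover smooth and non-degenerate and $K(x) \sim (\log x)^{-\gamma}$ --- this is exactly what is arranged in Appendix~\ref{a:tau}. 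In particular $K$ is regularly varying at infinity with index $\alpha = 0$ and slowly varying part $L(x) = (\log x)^{-\gamma}$, and $\ell_c = 0$ by~\cite{mrv20}.

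With the hypotheses in place, our general result makes \eqref{e:result} an asymptotic equivalence in the regime $\alpha \le 1$, that is,
\[
 - \log \P[\arm_\ell(R)] \sim \frac{(\ell_c - \ell)^2}{2}\, \capa_K([0,R]) ,
\]
so it remains to show that $\capa_K([0,R]) \sim 1/K(R) = (\log R)^\gamma$. Write $\mathcal{I}(R) = \min_{\mu \in \mathcal{P}([0,R])} \iint K(x-y)\, d\mu(x)\, d\mu(y) = \capa_K([0,R])^{-1}$. The lower bound $\mathcal{I}(R) \ge K(R)$ is immediate for all large $R$: since $K > 0$ is eventually non-increasing, $\min_{|z| \le R} K(z) = K(R)$, hence $\iint K(x-y)\, d\mu(x)\, d\mu(y) \ge K(R)$ for every $\mu \in \mathcal{P}([0,R])$. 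For the matching upper bound I would test with the normalised Lebesgue measure $\mu_0$ on $[0,R]$: a change of variables gives $\iint K(x-y)\, d\mu_0(x)\, d\mu_0(y) = \tfrac{2}{R^2} \int_0^R (R-t) K(t)\, dt$, and since $K$ is locally bounded and slowly varying, Karamata's theorem yields $\int_0^R K(t)\, dt = (1+o(1))\, R K(R)$ and $\int_0^R t K(t)\, dt = (1+o(1))\, \tfrac12 R^2 K(R)$, so that $\int_0^R (R-t) K(t)\, dt = (1+o(1))\, \tfrac12 R^2 K(R)$ and $\mathcal{I}(R) \le (1+o(1))\, K(R)$. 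Hence $\mathcal{I}(R) \sim K(R)$, i.e.\ $\capa_K([0,R]) \sim (\log R)^\gamma$, and inserting $\ell_c = 0$ into the displayed equivalence yields $-\log \P[\arm_\ell(R)] \sim \ell^2 (\log R)^\gamma / 2$, as claimed.

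The substantive work is encapsulated in the general theorem, whose proof --- matching the easy entropic lower bound for $-\log \P[\arm_\ell(R)]$ with a renormalisation upper bound --- is the technical heart of the paper. The points genuinely specific to Theorem~\ref{t:spec0} are milder, but I expect the main obstacle to be the first step: confirming that the \emph{indirectly} defined $F_0^\gamma$ truly meets all the standing assumptions, which forces a careful construction of $q$ (smoothness, including at the origin; fast enough spectral decay; unimodality; the prescribed logarithmic tail; non-degeneracy) --- the role of Appendix~\ref{a:tau}. The accompanying analytic point is that, unlike for $\alpha \in (0,1)$, where $\capa_K([0,R])$ is computed by an exact rescaling followed by an explicit one-dimensional Riesz-energy calculation producing the constant $c_\alpha$ of~\eqref{e:calpha}, no such rescaling is available when $\alpha = 0$; one must instead argue via the Karamata-type estimate above, which is precisely why the $\alpha = 0$ case is stated and proved separately.
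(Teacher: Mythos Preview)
Your proposal is correct and follows essentially the same route as the paper: verify that $F_0^\gamma$ satisfies Assumption~\ref{a:rv} and has a local-global decomposition (the paper does this via Remark~\ref{r:examples2} and Proposition~\ref{p:locglo}), invoke Proposition~\ref{p:nontriv} to get $\ell_c^\ast = \ell_c = 0$ in $d=2$, and then apply the $\alpha=0$ case of Theorem~\ref{t:gen} together with $K \ge 0$ to pass from $\arm_\ell(1,R)$ to $\arm_\ell(R)$. Your computation of $\capa_K([0,R]) \sim 1/K(R)$ via the monotonicity of $K$ (for the lower bound on the energy) and Karamata applied to the Lebesgue test measure (for the upper bound) is actually more direct than the paper's treatment in Proposition~\ref{p:capdom}, which proceeds by rescaling, subsequence extraction, and Fatou/dominated convergence to handle all $\alpha \in [0,d)$ uniformly; your argument exploits the genuine radial monotonicity of $K = q \star q$, which is specific to this construction but buys a cleaner proof in the $\alpha = 0$ case.
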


\begin{remark}
For technical reasons we do not prove the analogue of Theorem \ref{t:diamspec} for $F_0^\gamma$.
\end{remark}

\subsection{Discussion, extensions, and conjectures}

\subsubsection{The role of the capacity and the constant $c_\alpha$}
\label{sss:cap}
We could unify \eqref{e:spec2}, \eqref{e:spec3} and \eqref{e:spec4} by stating them collectively as
\begin{equation}
\label{e:capresult}
- \log \P[\arm_\ell(R)]  \sim   \frac{ \ell^2 \capa_K([0, R]) }{2} , 
\end{equation}
where $\capa_K$ is defined in \eqref{e:cap2}--\eqref{e:cap1}. Indeed it can be shown (see Proposition~\ref{p:capline}) that
\begin{equation}
\label{e:capasym}
\capa_K([0, R])  \sim  \begin{cases}  R /( 2 \int_0^R K(x) dx ) \sim  R/ (2 \log R)    & \text{if } \alpha = 1 , \\   c_\alpha / K(R)  &   \text{if } \alpha \in (0, 1) , \\ 1/K(R) & \text{if } \alpha = 0.  \end{cases} 
\end{equation}

\smallskip
In the case $\alpha \in (0, 1)$, one arrives at \eqref{e:capasym} by applying the spatial rescaling $x \mapsto x/R$ in \eqref{e:cap2}, which by regular variation leads to
\begin{equation}
\label{e:calpha2}
 \capa_K([0, R])  K(R) \sim  \Big(  \min_{ \mu \in \mathcal{P}([0,1]) } \int_0^1 \int_0^1 |x-y|^{-\alpha} d\mu(x) d\mu(y) \Big)^{-1}  =:    c_\alpha .
  \end{equation}
By the classical theory of the $\alpha$-Riesz kernel $K_\alpha(x, y) = |x-y|^{-\alpha}$ (see \cite[Section II.3.13 p.163--164 and Appendix p.399--400]{lan72}) the minimum in \eqref{e:calpha2} is achieved for a beta-distributed $\mu$ with density proportional to $(x(1-x))^{(\alpha-1)/2}$, which explains the form of \eqref{e:calpha}. 

\smallskip
In the case $\alpha = 0$ the kernel homogenises under this rescaling, leading to 
\[ \capa_K([0, R])  K(R) \sim   \Big( \min_{ \mu \in \mathcal{P}([0,1]) } \int_0^1 \int_0^1 d\mu(x) d\mu(y) \Big)^{-1}  =  1  . \]
In the case $\alpha = 1$ the optimal probability measure $\mu \in \mathcal{P}([0, R])$ in~\eqref{e:cap2} homogenises as $R \to \infty$, i.e.\ one can approximate the capacity by setting $\mu$ to be the normalised Lebesgue measure, which leads to the estimate
\begin{equation}
\label{e:capa1}
 \capa_K([0, R])  \sim  R^2 \Big( \int_0^R \int_0^R K(x-y) dx dy \Big)^{-1}   \sim R \Big( 2 \int_0^R K(x) dx \Big)^{-1} . 
 \end{equation}
This homogenisation also occurs if $\alpha > 1$, or more generally if $\int_0^\infty K(x) dx < \infty$, where one similarly has $\capa_K([0, R]) \sim R ( 2 \int_0^\infty K(x) dx )^{-1}$, so in this case \eqref{e:spec1} can be written as
\[ - \log \P[\arm_\ell(R)]  \asymp  \capa_K([0, R]) .  \]

\subsubsection{Comparison with the GFF}
\label{r:gff}
The decay \eqref{e:spec2} in the $\alpha = 1$ case matches precisely the known decay for the GFF on $\Z^3$ in \eqref{e:gff} (up to an $o(1)$ term in the exponent); this can be interpreted as showing that sub-exponential decay for the GFF on $\Z^3$ is universal for fields whose covariance is asymptotic to the Green's function $G(x) \sim c_{G,3} |x|^{-1}$. Moreover, \eqref{e:spec1} shows that the exponential decay for the GFF in dimensions $d \ge 4$ is universal for fields whose covariance has regularly varying decay with index $\alpha > 1$.\

\smallskip
As mentioned above, we believe this universality extends to discrete stationary Gaussian fields on $\Z^d$ whose covariance is asymptotically isotropic and regularly varying.

\subsubsection{The logarithmically-correlated case} 
Theorem \ref{t:spec0} demonstrates that, under suitable assumptions, if $K(x) \sim (\log x)^{-1}$ then
 \[    \P[ \arm_\ell(R) ]   = R^{ -\ell^2 / 2  + o(1)} \ , \quad \ell <  0 , \]
i.e.\ there is power law decay of connectivity throughout the subcritical phase with varying exponent. This contrasts markedly with the behaviour of fields with weak correlations \cite{bg17, rv20, mv20, s21}. We are not aware of any other percolation model that exhibits this behaviour (although long-range percolation models typically have power-law decay in the subcritical phase \cite{dcrt20}, usually the exponent does not vary).

\subsubsection{Other connection events}
\label{s:other}
Aside from $\arm_\ell(R)$ one could consider other connection events, for instance the event $\cross_\ell(R)$ that the square $[0, R]^2$ is crossed by $\{f \le \ell\}$ from left-to-right, i.e.\ $\{ f \le \ell \} \cap [0, R]^2$ contains a path that intersects $\{0\} \times [0, R]$ and $\{R\} \times [0, R]$. For this event the conclusion of Theorem \ref{t:spec} is true with essentially no change to the proof.

\smallskip
More generally, one can consider quad-crossing events $\cross^Q_\ell(R)$ for a quad $Q = (D; \gamma_1, \gamma_2)$ where $D \subset \R^2$ is a compact domain and $\gamma_1, \gamma_2$ are disjoint boundary arcs on $\partial D$; this is the event that $\{f \le \ell\} \cap RD$ contains a path that intersects $R \gamma_1$ and $R \gamma_2$. While it does not follow immediately from the proof of Theorem \ref{t:spec}, in this case we expect that
\[ - \log \P[\cross^Q_\ell(R)] \sim   \begin{cases}   \frac{ \tilde{c}_Q \ell^2 R}{4 ( \log R)}   & \text{if } \alpha = 1, \\  \frac{ \tilde{c}_{Q,\alpha} \ell^2 R^\alpha}{2}    &   \text{if } \alpha \in (0, 1) , \end{cases} \]
where $\tilde{c}_Q$ is the length of the shortest path in $D$ between $\gamma_1$ and $\gamma_2$, and $\tilde{c}_{Q,\alpha}$ is the minimal capacity, with respect to the $\alpha$-Riesz kernel, among all paths in $D$ between $\gamma_1$ and $\gamma_2$.

\smallskip
As discussed above, in addition to subcritical connection events one can also ask about large deviations for other percolation events, for instance the event that the connected component of $\{f \le \ell\}$ that contains the origin is large and bounded in the supercritical phsae (as was analysed in \cite{grs21} for the GFF).

\subsubsection{Universality classes and critical exponents}
\label{s:univ}
In the physics literature it is predicted that isotropic planar Gaussian fields with covariance $K(x) \sim x^{-\alpha}$, $\alpha \in (0, \infty)$, lie inside the Bernoulli percolation universality class if $\alpha > 3/2$, whereas if $\alpha < 3/2$ the model lies inside a distinct, $\alpha$-dependent, universality class (see \cite{w84, ik91, jgrs20}).

\smallskip
One manifestation of this is the conjectured $\alpha$-dependency of the \textit{critical exponents} that govern the behaviour of the connectivity at criticality $\ell = \ell_c$ and near criticality $\ell \approx \ell_c$. For example, recalling the square-crossing event $\cross_\ell(R)$ from Section \ref{s:other}, for $\ell < \ell_c = 0$ one can define the \textit{correlation length} to be
\begin{equation}
\label{e:xi}
  \xi(\ell) = \inf \{ R \ge 0 :  \P[ \cross_\ell(R)] < \eps  \} , 
  \end{equation}
where $\eps \in (0,1/2)$ is chosen arbitrarily. The \textit{correlation length exponent} $\nu$ is defined as the constant satisfying $\xi(\ell) = \ell^{-\nu + o(1)}$ as $\ell \uparrow \ell_c = 0$, if such a constant exists.
 
 \smallskip
For planar models in the Bernoulli percolation universality class it is believed, and in a few cases known rigorously \cite{sw01}, that $\nu = 4/3$. For smooth Gaussian fields with covariance $K(x) \sim x^{-\alpha}$, $\alpha \in (0, \infty)$, it has been conjectured \cite{w84, ik91, jgrs20} that
\[ \nu = \nu(\alpha) = \begin{cases} 4/3 & \text{if } \alpha \ge 3/2 , \\  2/\alpha  & \text{if } \alpha \le 3/2. \\ \end{cases} \]
This has been recently proved \cite{DPR21} for the special case of the GFF on the cable system of certain transient graphs. We believe that our findings are evidence in support of this conjecture, at least in the case $\alpha \le 1$. Indeed, if one considers the analogue of \eqref{e:spec3} for the square-crossing event $\cross_\ell(R)$, in the case $\alpha \le 1$ it states that, for $\ell < \ell_c$, as $R \to \infty$,
\begin{equation}
\label{e:cross}
  - \log \P[ \cross_\ell(R) ]   =   \frac{c' \ell^2 R^\alpha (1 + o(1)) }{(\log R)^{\id_{\alpha = 1}}}    .
  \end{equation}
for some constant $c' > 0$ depending only on $\alpha$. Although formally \eqref{e:cross} is valid only as $R \to \infty$ and for fixed $\ell < \ell_c$ (whereas the correlation length exponent $\nu$ captures the behaviour of $ \P[ \cross_\ell(R) ] $ simultaneously as $R \to \infty$ and $\ell \uparrow 0$), it strongly suggests that $\nu = 2/\alpha$ if $\alpha \le 1$; indeed if the $o(1)$ term is neglected it implies precisely this. In fact, adapting our proof of the lower bound in \eqref{e:cross} we can rigorously establish the lower bound $\nu \ge 2/\alpha$:

\begin{proposition}[Lower bound on the correlation length exponent]
\label{p:cl}
Let $f = F_\alpha$, $\alpha \in (0,1]$, on $\R^2$, as in Theorem~\ref{t:spec}. Fix $\eps \in (0, 1/(2e))$, and define $\xi(\ell)$ as in \eqref{e:xi}. Then
\[  \liminf_{\ell \uparrow \ell_c} \frac{ - \log \xi(\ell) }{\log \ell}  \ge \frac{2}{\alpha} . \]
In particular, $\nu \ge 2/\alpha$ if it exists.
\end{proposition}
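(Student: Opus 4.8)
The plan is to run the entropic (Cameron--Martin) change-of-measure argument that yields the upper bound on $-\log\P[\cross_\ell(R)]$ in \eqref{e:cross}, but tracking constants as $\ell\uparrow\ell_c=0$ rather than fixing $\ell$ and sending $R\to\infty$. Write $\ell_c-\ell=|\ell|$, fix $R>0$, and let $h=h_{R,\ell}$ be $|\ell|$ times the minimiser in \eqref{e:cap1} for the set $[0,R]^2$; thus $h$ lies in the Cameron--Martin space $H$, satisfies $h\ge|\ell|$ on $[0,R]^2$, and $\|h\|_H^2=|\ell|^2\,\capa_K([0,R]^2)$. Let $\tilde\P$ be the law of $f-h$. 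On $[0,R]^2$ we have $\{f-h\le\ell\}=\{f\le\ell+h\}\supseteq\{f\le\ell_c\}$, and any left--right crossing of $[0,R]^2$ lies inside $[0,R]^2$, whence $\tilde\P[\cross_\ell(R)]\ge\P[\{f\le\ell_c\}\text{ crosses }[0,R]^2\text{ left--right}]$. Since $f$ is isotropic with $\ell_c=0$, the self-duality argument used to identify $\ell_c=0$ in \cite{rv20,mrv20} (i.e.\ $f\equlaw-f$ combined with the reflection symmetries of the square) shows that this last probability equals $\tfrac{1}{2}$, uniformly in $R$. Hence $\tilde\P[\cross_\ell(R)]\ge\tfrac{1}{2}$.

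Feeding this into the standard entropy inequality $-\log\P[A]\le\bigl(D(\tilde\P\,\|\,\P)+2/e\bigr)/\tilde\P[A]$ (valid for any event $A$ and any $\tilde\P\ll\P$, with $D$ the relative entropy), and using the Cameron--Martin identity $D(\tilde\P\,\|\,\P)=\tfrac12\|h\|_H^2$, we obtain
\[ -\log\P[\cross_\ell(R)]\le|\ell|^2\,\capa_K([0,R]^2)+\tfrac{4}{e}. \]
The hypothesis $\eps<1/(2e)$ enters precisely here: $\log(1/\eps)>\log(2e)>4/e$, so whenever $|\ell|^2\capa_K([0,R]^2)$ is small enough (namely $\le\log(1/\eps)-4/e$) we get $\P[\cross_\ell(R)]>\eps$, i.e.\ $R$ is not among the radii over which the infimum in \eqref{e:xi} is taken.

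It remains to take $R$ as large as possible subject to $|\ell|^2\capa_K([0,R]^2)\to0$. By monotonicity of $\capa_K$ under inclusion (immediate from \eqref{e:cap1}) together with the capacity asymptotics of Section~\ref{s:cap2} (Proposition~\ref{p:capline} and its ball analogue, using $K(R)\asymp R^{-\alpha}$ for the Cauchy kernel), $\capa_K([0,R']^2)\le\capa_K([0,R]^2)\le\capa_K(B(\sqrt{2}\,R))=R^{\alpha+o(1)}$ for all $R'\le R$, uniformly for $\alpha\in(0,1]$. Fix $\delta\in(0,2/\alpha)$ and put $R(\ell)=\lfloor|\ell|^{-2/\alpha+\delta}\rfloor$; then $|\ell|^2\capa_K([0,R(\ell)]^2)=|\ell|^{\alpha\delta+o(1)}\to0$, and a fortiori the same holds with $R'$ in place of $R(\ell)$ for every $R'\le R(\ell)$. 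Hence, for all $|\ell|$ small enough, $\P[\cross_\ell(R')]>\eps$ for every $R'\in(0,R(\ell)]$, so $\xi(\ell)\ge R(\ell)\ge\tfrac{1}{2}|\ell|^{-2/\alpha+\delta}$. Therefore $\liminf_{\ell\uparrow\ell_c}(-\log\xi(\ell))/\log|\ell|\ge2/\alpha-\delta$, and letting $\delta\downarrow0$ completes the proof; in particular $\nu\ge2/\alpha$ whenever the exponent $\nu$ exists.

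I do not expect a serious obstacle: the two genuine inputs --- the sharp order of the capacity of an interval or ball under regular variation, and the self-dual crossing value $\tfrac{1}{2}$ --- are already available from earlier in the paper and from \cite{rv20,mrv20}, and the authors themselves note (see the discussion of \eqref{e:result}) that the lower bound on $\P[\cross_\ell(R)]$ is the easy direction. What does need care, and what fixes the precise threshold $1/(2e)$, is the constant bookkeeping of the middle step: one must check that the $O(1)$ term that survives the $\ell\to0$ limit of the entropic bound stays strictly below $\log(1/\eps)$, which is why the optimal self-dual crossing probability $\tfrac{1}{2}$ (rather than a generic RSW lower bound) and a sharp form of the entropy inequality are used. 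The one remaining routine verification, which I would carry out as in \cite{rv20}, is that the Cameron--Martin shift by $-h$ interacts correctly with the measurable (closed) crossing event and with the monotone inclusion of excursion sets restricted to $[0,R]^2$.
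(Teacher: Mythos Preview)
Your proof is correct and follows essentially the same approach as the paper: both use self-duality to get $\P[\cross_0(R)]=\tfrac12$, combine this with the entropic/Cameron--Martin bound (the paper via Corollary~\ref{c:eb}, you via an equivalent relative-entropy inequality) and the capacity estimate $\capa_K([0,R]^2)\asymp R^\alpha$ from Proposition~\ref{p:capdom}, and then invert. The only cosmetic difference is that the paper uses the sharp bound $\capa_K([0,R]^2)\le c_1 R^\alpha$ to obtain $\xi(\ell)\ge c_2|\ell|^{-2/\alpha}$ in one step, whereas your use of $R^{\alpha+o(1)}$ forces the extra $\delta\to 0$ limit; conversely, your monotonicity-of-capacity argument handles small $R$ more cleanly than the paper's ad hoc continuity step for $R\le 1$.
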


\vspace{0cm}
\subsubsection{Beyond the regularly-varying case}
While our proof depends strongly on properties of regularly varying kernels, we believe that the decay of $\P[\arm_\ell(R)]$  is  governed by the asymptotics of the capacity $\capa_K([0, R])$ in general.

\begin{question}
\label{c}
Is the following true for every smooth ergodic isotropic Gaussian field $f$ on~$\mathbb{R}^2$? For every $\ell < 0$, as $R \to \infty$,
\[  - \log \P[\arm_\ell(R)]   \asymp \capa_K([0, R])  . \] 
More precisely: 
\begin{enumerate}
\item If $ \lim_{R \to \infty} \int_0^R K(x) dx = \infty$, then \eqref{e:capresult} holds.
\item If $\int_0^\infty K(x) dx < \infty$, then $\P[\arm_\ell(R)]$ has exponential decay.
\end{enumerate}
\end{question}

Let us briefly consider the important case of the random plane wave (RPW) for which $K(x) = J_0(|x|)$, where $J_0$ is the zeroth Bessel function. Although $J_0(|x|)$ is not absolutely integrable on $\R^d$, due to oscillations one has $\int_0^\infty J_0(x) dx < \infty$. Hence we expect $\P[\arm_\ell(R)]$ to decay exponentially, which is consistent with physicists' predictions that the RPW lies in the Bernoulli percolation universality class \cite{bs07}. Note that the oscillations of $J_0$ mean that RPW does not fall within our regularly-varying setting.

\subsection{The general result}
We now present a generalisation of Theorems \ref{t:spec} and \ref{t:diamspec} to a wider class of fields in all dimensions $d \ge 2$. 

\smallskip
Recall that a continuous strictly positive function $L : \R^+ \to \R^+$ is said to be \textit{slowly varying (at infinity)} if, for all $a > 0$,
\begin{equation}
\label{e:sv}
\lim_{x \to \infty} \frac{L(ax)}{L(x)} = 1 . 
\end{equation}
We say that a continuous isotropic function $h: \mathbb{R}^d \to \mathbb{R}$ is \textit{regularly varying (at infinity) with index} $\alpha \in \R$ if $h(x) \sim x^{-\alpha} L(x)$, for $L$ slowly varying. While $h$ may not be positive, necessarily it is \textit{eventually} positive. We collect basic properties of regularly varying functions in Appendix \ref{a:reg}, and refer to \cite{bgt87} for a detailed treatment.

\smallskip
The \textit{spectral measure} of a continuous stationary Gaussian field $f$ on $\R^d$ is the finite measure $\nu$ defined via 
\[K(x) =  \mathcal{F}[\nu](x) = \int_{\R^d}  e^{2\pi i (x, \lambda) } d\nu(\lambda) , \] 
where $\mathcal{F}[\cdot]$ denotes the Fourier transform. The field $f$ will be called \textit{regular} if it is a.s.\ $C^2$-smooth and its spectral measure contains an open set in its support. In particular this implies that $\{f = \ell\}$ a.s.\ consists of a collection of hypersurfaces (see \cite[Corollary A.3]{m22}).

\smallskip
Our main assumption is the following:
\begin{assumption}
\label{a:rv}
$f$ is a continuous regular isotropic Gaussian field on $\mathbb{R}^d$, $d \ge 2$, whose covariance kernel $K$ is regularly varying with index $\alpha \in [0, d)$, i.e.\ there exists a slowly varying function $L$ such that, as $x \to \infty$,
\begin{equation}
\label{a:k}
K(x) \sim x^{-\alpha} L(x)  .
\end{equation}
If $\alpha = 0$ or $\alpha = 1$ we further assume there is a $\gamma \in \R$ such that, as $x \to \infty$,
 \[      L(x) \sim  (\log x)^{-\gamma} ,  \]
 where $\gamma < 1$ if $\alpha =1$ and $\gamma > 0$ if $\alpha = 0$.
\end{assumption}

In addition, our main results assume the existence of an appropriate local-global decomposition. To keep the discussion simple we defer the precise definition to Section \ref{s:locglo} (see Definition \ref{d:locglo}), but roughly speaking this requires that, for any $L \ge 1$, $f$ may be decomposed as $f = f_L + g_L$, where $f_L$ is $L$-range dependent, and $g_L$ `carries' the covariance structure on scales $\gg L$. We refer to $f_L$ and $g_L$ as the \textit{local field} and the \textit{global field} respectively, and we emphasise that $f_L$ and $g_L$ need not be independent (and indeed will not be in our setting). 

\smallskip
In Section \ref{s:locglo} we present quite general sufficient conditions for this decomposition to exist. These cover, among other examples, the Gaussian fields with Cauchy kernel \eqref{e:cauchy} in all dimensions. It is plausible that a local-global decomposition \textit{always} exists under Assumption~\ref{a:rv}.

\smallskip
To state our results we also need to introduce certain variants of the critical level, $\ell^\ast_c$ and $\ell^{\ast \ast}_c$, which provide a priori control in the subcritical and supercritical phases respectively. More precisely, $\ell < \ell^\ast_c$ guarantees that the probability of `annuli crossings' decays on large scales, whereas $\ell > \ell^{\ast \ast}_c$ ensures the existence of a crossing in a thin `tube'. 

\smallskip
For $R > 0$, let $A(R) \subset \R^d$ denote the annulus $B(2R) \setminus B(R)$, and let $\ann_\ell(R)$ denote the event that $\{f \le \ell\} \cap A(R)$ contains a path that intersects $\partial B(R)$ and $\partial B(2R)$. For $R > 0$ and $s > 0$, let $T(R; \rho)$ denote the `tube' $[0, R] \times [0, R^\rho]^{d-1}$, and let $\tube_\ell(R; \rho)$ denote the event that $T(R; \rho)$ is crossed by $\{f \le \ell\}$ from left-to-right, i.e.\ $\{ f \le \ell \} \cap T(R;\rho)$ contains a path that intersects $\{0\} \times [0, R^\rho]^{d-1}$ and $\{R\} \times [0, R^\rho]^{d-1}$. 
Then define
\[ \ell^\ast_c = \sup \{ \ell :  \lim_{R \to \infty} \P[ \ann_\ell(R)] = 0 \} \quad \text{and} \quad \ell^{\ast \ast}_c = \inf \{ \ell :  \forall \rho > 0 ,  \lim_{R \to \infty} \P[ \tube_\ell(R; \rho)] = 1   \} . \]

As a preliminary we establish basic properties of these levels including their finiteness; these follow from combining sprinkled bootstrapping arguments, similar to those in \cite{rs13,pt15,pr15}, with results in \cite{mrv20}:

\begin{proposition}
\label{p:nontriv}
Let $f$ satisfy Assumption \ref{a:rv} and suppose that $f$ has a local-global decomposition in the sense of Definition \ref{d:locglo}. Then
\[ \ell^\ast_c \le \ell_c \le 0  \quad \text{and} \quad \ell^\ast_c \le \ell^{\ast \ast}_c. \]
 Moreover if either $\alpha > 0$ or $\alpha = 0$ and $\gamma > 1$, then
 \[ \ell^\ast_c  > -\infty .\]
 Finally, if $d = 2$ then
  \[  \ell^\ast_c = \ell_c = 0 . \]
\end{proposition}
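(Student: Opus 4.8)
The plan is to establish the four assertions in order, since each feeds into the next. First, the bound $\ell_c \le 0$: this is essentially the content of \cite{mrv20}, where it is shown for a wide class of fields (including those satisfying Assumption \ref{a:rv}) that positivity of the covariance plus ergodicity force $\ell_c \le 0$; if $d=2$ one in fact gets $\ell_c=0$ from self-duality, which will be reused at the end. Next, $\ell^\ast_c \le \ell_c$: by definition $\ell < \ell_c$ means $\P[\arm_\ell(R)] \to 0$; a standard gluing argument (concatenating an annulus crossing of $A(R)$ with crossings of translated annuli, or simply noting that $\ann_\ell(R)$ implies a one-arm event in a ball of comparable size) shows $\P[\ann_\ell(R)] \le C\,\P[\arm_\ell(cR)]^{?} \to 0$, so $\ell \le \ell^\ast_c$ is violated only if $\ell \ge \ell^\ast_c$ — hence $\ell^\ast_c \le \ell_c$. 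Similarly $\ell^\ast_c \le \ell^{\ast\ast}_c$ follows because for $\ell > \ell^{\ast\ast}_c$ the tube $T(R;\rho)$ is crossed with probability tending to $1$ for every $\rho>0$, and a tube crossing (being in particular an annulus crossing when the tube spans the annulus $A(R)$, after rotating and rescaling) prevents $\P[\ann_\ell(R)] \to 0$; so $\ell^{\ast\ast}_c \ge \ell \ge \ell^\ast_c$ fails unless $\ell^\ast_c \le \ell^{\ast\ast}_c$.

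The substantive part is the finiteness $\ell^\ast_c > -\infty$ when $\alpha > 0$ (or $\alpha=0$, $\gamma>1$). Here I would run a \emph{sprinkled bootstrapping} / renormalisation argument in the spirit of \cite{rs13, pt15, pr15}: suppose $\P[\ann_\ell(R_0)]$ is small at some initial scale $R_0$ for $\ell$ very negative; then, using the local-global decomposition $f = f_L + g_L$ with $L$ of order $R_0$, the local field $f_{R_0}$ is finite-range dependent so its annulus-crossing events at well-separated scales are (nearly) independent, while the global field $g_{R_0}$ — whose fluctuations on scale $R$ are controlled by $K(R) \to 0$ — can be absorbed by a sprinkling $\ell \mapsto \ell + \delta_k$ with $\sum_k \delta_k < \infty$ convergent precisely because $K$ is regularly varying with $\alpha>0$ (or $\alpha=0$, $\gamma>1$), which makes $\sum_k \sqrt{K(2^k R_0)}$ summable. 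The recursion $\P[\ann_{\ell+\delta_{k+1}}(2^{k+1}R_0)] \le \big(C\,\P[\ann_{\ell+\delta_k}(2^k R_0)]\big)^{2}$ (or a similar multi-arm bound) then closes provided the base-scale probability is below a threshold, giving $\P[\ann_\ell(R)] \to 0$ — i.e. $\ell \le \ell^\ast_c$ — for all sufficiently negative $\ell$ once we know the base-scale probability can be made small. That base-scale input comes from the $\ell \to -\infty$ continuity of the field (or a crude union bound over a net in $A(R_0)$ using Gaussian concentration): $\P[\ann_\ell(R_0)] \le \P[\min_{A(R_0)} f \le \ell] \to 0$ as $\ell \to -\infty$.

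Finally, the claim $\ell^\ast_c = \ell_c = 0$ when $d=2$. We already have $\ell^\ast_c \le \ell_c = 0$, so it remains to show $\ell^\ast_c \ge 0$, i.e. that for every $\ell < 0$ one has $\P[\ann_\ell(R)] \to 0$. This is where planar duality enters: in two dimensions, $\{f \le \ell\}$ and $\{f \ge -\ell\} = \{-f \le \ell\}$ have the same law (as $-f \equlaw f$), and a left-right crossing of a quad by $\{f \le \ell\}$ is incompatible with a top-bottom crossing of the dual quad by $\{f \ge \ell\}$; combined with the RSW-type estimates and the uniqueness/ergodicity results of \cite{mrv20, rv20, mv20}, one deduces that for $\ell < 0 = \ell_c$ crossing probabilities of large annuli tend to $0$ (the subcritical phase has no crossings of large annuli on the primal side). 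I expect the \textbf{main obstacle} to be the finiteness argument: carefully setting up the sprinkled renormalisation so that the global field's contribution is genuinely summable under the stated regular-variation hypotheses (the role of the condition $\gamma>1$ in the $\alpha=0$ case is exactly to guarantee this summability), and verifying that the local-global decomposition of Definition \ref{d:locglo} supplies the finite-range independence with small enough error to drive the recursion — the planar and ordering statements are comparatively soft once this is in hand.
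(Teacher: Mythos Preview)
Your overall plan matches the paper's --- soft inequalities among the critical levels, import of the $d=2$ equalities from \cite{mrv20}, and a sprinkled bootstrap for $\ell^\ast_c > -\infty$ --- but two points need correction. The minor one: your argument for $\ell^\ast_c \le \ell_c$ runs in the wrong direction. Starting from $\ell < \ell_c$ and trying to bound $\P[\ann_\ell(R)]$ by $\P[\arm_\ell(cR)]$ would, if it worked, yield $\ell_c \le \ell^\ast_c$, the opposite inequality; and in any case $\ann_\ell(R)$ does not imply any centred arm event. The correct (and trivial) containment is the reverse: $\arm_\ell(2R) \subset \ann_\ell(R)$, since any path from $0$ to $\partial B(2R)$ crosses the annulus, so $\P[\ann_\ell(R)]\to 0$ forces $\theta(\ell)=0$, whence $\ell^\ast_c \le \ell_c$. (The paper also gets $\ell_c \le 0$ in $d \ge 3$ by restricting $f$ to a plane and invoking the $d=2$ result, not via positivity/ergodicity.)

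The substantive gap is in the $\alpha = 0$ bootstrap. With geometric scales $R_k = 2^k R_0$ and $K(x) \sim (\log x)^{-\gamma}$ one has $\sqrt{K(R_k)} \asymp k^{-\gamma/2}$, which is summable only for $\gamma > 2$, not $\gamma > 1$ as you assert. More generally, a geometric-scale bootstrap needs both $\sum_k \delta_k < \infty$ and $\delta_k^2/K(R_k) \to \infty$ (the latter so that the global-field error $\exp(-c\,\delta_k^2/K(R_k))$ decays and the recursion $a_{k+1} \le C a_k^2 + e_k$ can drive $a_k \to 0$); writing $\delta_k = k^{-\beta}$ forces $1 < \beta < \gamma/2$, hence $\gamma > 2$. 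The paper covers the missing range $1 < \gamma \le 2$ by running the bootstrap along \emph{super-exponential} scales $R_n = e^{n^{1/\eta}}$ with sprinkling $\delta_n = (\log L_n)^{-\delta} \asymp n^{-\delta/\eta}$ (see the proof of Proposition~\ref{p:sb}): summability now only requires $\delta > \eta$, and matching the now-growing combinatorial factor $(R_{n+1}/R_n)^{2d} \asymp e^{cn^{1/\eta-1}}$ against the global-field error $e^{-c(\log L_n)^{\gamma-2\delta}}$ yields the constraint $\delta < \gamma - 1$, which is precisely where $\gamma > 1$ enters. The super-exponential spacing is not cosmetic --- it is the mechanism that trades scale growth for sprinkling budget in the borderline regime.
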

\begin{remark}\label{r:ell^**}
As a consequence of Theorem \ref{t:gen} below we will later deduce (see Remark \ref{r:lastast}) that, if either $\alpha > 0$ or $\alpha = 0$ and $\gamma > 1$, then $\ell^{\ast \ast}_c \le 0$, and hence
\[  \ell_c, \ell^\ast_c, \ell^{\ast \ast}_c \in (-\infty, 0]  .  \]
\end{remark}

We believe that the critical levels $\ell_c, \ell^\ast_c, \ell^{\ast \ast}_c $ coincide for all fields considered in this paper, as they do for Bernoulli percolation \cite{gm90, gr99}, however so far this is only known if $d=2$ and $\alpha > 0$ (where they are zero). In a companion paper \cite{m22}, the equality $\ell_c = \ell^\ast_c$ is established for a wide class of strongly correlated Gaussian fields in $d \ge 2$, which partially overlaps with those considered in this paper. So far, the full equality $\ell_c = \ell^\ast_c = \ell^{\ast \ast}_c$ in $d \ge 3$ is only known for fields with positive and rapidly decaying correlations (i.e.\ the case $\alpha > d)$ \cite{s21}, and for the GFF on $\Z^d$ \cite{dgrs20}.

\smallskip
We are now ready to state our main result in full generality. For $0 \le r \le R$, let $\arm_\ell(r,R)$ denote the event that $\{f \le \ell\}$ contains a path between $\partial B(r)$ and $\partial B(R)$, recalling that $\arm_\ell(R) = \arm_\ell(0, R)$. Recall also the constant $c_\alpha$ defined in \eqref{e:calpha}.

\begin{theorem}
\label{t:gen}
Let $f$ satisfy Assumption \ref{a:rv} and suppose that $f$ has a local-global decomposition in the sense of Definition \ref{d:locglo}. Then for every $\ell < \ell^\ast_c$, as $R \to \infty$:
\begin{enumerate}
\item If  $\alpha \in (1, d)$ and $K \ge 0$,
\begin{equation}
\label{e:gen1}
  \log \P[ \arm_\ell(1,R) ]   \asymp  -R .  
  \end{equation}
\item If $\alpha = 1$ (and recall that $\gamma < 1$),
\begin{equation}
\label{e:gen2}
 -\frac{(\ell^{\ast \ast}_c-\ell)^2  (1 + o(1)) }{(4/(1-\gamma)) K(R) (\log R)}   \le  \log \P[ \arm_\ell(1,R) ]   \le  -\frac{(\ell^{\ast}_c-\ell)^2  (1 + o(1)) }{ (4/(1-\gamma)) K(R) (\log R) }     .  
  \end{equation}
\item If $\alpha \in (0,1)$,
\begin{equation}
\label{e:gen3}
 - \frac{c_\alpha  (\ell^{\ast \ast}_c - \ell)^2  (1 + o(1))}{2 K(R)}    \le   \log \P[ \arm_\ell(1,R) ]   \le -  \frac{c_\alpha (\ell^{\ast}_c - \ell)^2 (1 + o(1))}{2 K(R)}    . 
   \end{equation}
\item If $\alpha = 0$, then \eqref{e:gen3} holds with $c_\alpha = 1$ and $\ell_c$ in place of $\ell_c^{\ast \ast}$.
\end{enumerate}
If in addition $K \ge 0$, then these results hold for $\arm_\ell(R)$ in place of $\arm_\ell(1, R)$.
\end{theorem}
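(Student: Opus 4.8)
The proof splits into two essentially independent halves: a lower bound on $\P[\arm_\ell(1,R)]$ obtained by a Cameron--Martin tilt, and a matching upper bound (up to the $\ell^\ast_c$ versus $\ell^{\ast\ast}_c$ discrepancy) obtained by a multi-scale renormalisation built on the local--global decomposition of Definition~\ref{d:locglo}. Fix $\ell < \ell^\ast_c$ throughout; recall from Proposition~\ref{p:nontriv} that $\ell^\ast_c \le \ell_c \le 0$ and $\ell^\ast_c \le \ell^{\ast\ast}_c$, with $\ell^\ast_c$ finite, so that the relevant differences of critical levels are positive and finite. The case $\alpha \in (1,d)$ is a degenerate instance of the same scheme in which $\int_0^\infty K < \infty$ and hence $\capa_K([0,R]) \asymp R$, so both bounds become $\asymp R$ and no constants need be tracked; I describe the scheme for $\alpha \le 1$ and merely indicate the simplifications. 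Finally, when $K \ge 0$ the passage from $\arm_\ell(1,R)$ to $\arm_\ell(R)$ is routine: the upper bound transfers by the inclusion $\arm_\ell(R) \subseteq \arm_\ell(1,R)$, and the lower bound transfers by positive association (Pitt's inequality), since $\P[\arm_\ell(R)] \ge \P[\arm_\ell(1,R)]\,\P[0 \leftrightarrow \partial B(1) \text{ in } \{f \le \ell\}]$ and the last factor is a constant in $(0,1)$ independent of $R$.

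\textbf{Lower bound.} Fix small $\delta, \rho > 0$ and let $T$ be a thin tube joining $\partial B(1)$ to $\partial B(R)$, comparable to $T(R;\rho)$. Pick $h \in H$ attaining $\min\{\|h\|_H^2 : h \ge \ell^{\ast\ast}_c - \ell + \delta \text{ on } T\} = (\ell^{\ast\ast}_c - \ell + \delta)^2\, \capa_K(T)$, and compare $\P$ with the law $\P_{-h}$ of $f - h$, whose relative entropy with respect to $\P$ is $\tfrac12\|h\|_H^2$. On $T$ one has $\{f - h \le \ell\} = \{f \le \ell + h\} \supseteq \{f \le \ell^{\ast\ast}_c + \delta\}$, so by the definition of $\ell^{\ast\ast}_c$ the field $f - h$ realises $\arm_\ell(1,R)$ with probability tending to $1$. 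Since $\|h\|_H^2 \asymp \capa_K([0,R]) \to \infty$, the Donsker--Varadhan entropy inequality gives $-\log\P[\arm_\ell(1,R)] \le \tfrac12\|h\|_H^2(1+o(1))$. Using $\capa_K(T) \sim \capa_K([0,R])$ for a sufficiently thin tube (from the capacity estimates behind Proposition~\ref{p:capline}), letting $\delta, \rho \downarrow 0$, and inserting \eqref{e:capasym} yields the lower bounds in \eqref{e:gen2}--\eqref{e:gen3}. For $\alpha = 0$ I would instead tilt on the whole ball $B(R)$ and invoke only the definition of $\ell_c$ — when $\alpha = 0$ every macroscopic set has $\capa_K \sim 1/K(R)$, so this costs the same order — which explains why $\ell_c$ rather than $\ell^{\ast\ast}_c$ appears in that case.

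\textbf{Upper bound.} Set $c := \ell^\ast_c - \ell > 0$, fix small $\delta, \eps > 0$, and decompose $f = f_L + g_L$. Since $\ell^\ast_c - \delta < \ell^\ast_c$, the definition of $\ell^\ast_c$ gives $\P[\ann_{\ell^\ast_c - \delta}(r)] \to 0$; using the convergence of $f_L$ to $f$ and a sprinkled comparison, fix a base scale $L_0$ and then $L \gg L_0$ so that, for $f_L$, the probability of crossing an $L_0$-box at level $\ell^\ast_c - \delta$ is at most some $p < 1$, with crossings of well-separated $L_0$-boxes independent. Tile $B(R)$ by $L_0$-boxes and condition on $g_L$. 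A crossing of $\{f \le \ell\}$ from $\partial B(1)$ to $\partial B(R)$ traverses an $\ast$-connected chain of $L_0$-boxes, and for each box $Q$ in the chain either (i) $\min_Q g_L < -(c - \delta)$ (a \emph{dent box}), or (ii) $\min_Q g_L \ge -(c-\delta)$, in which case $f_L = f - g_L \le \ell + (c - \delta) = \ell^\ast_c - \delta$ on the arm inside $Q$, so $f_L$ makes a subcritical crossing of $Q$. Hence, given $g_L$ and a fixed coarse shape of the chain (at scale $\eps R$), either the arm makes $\ge m$ subcritical crossings in well-separated boxes — of probability at most $p^{m}$ (after folding the dilution constant from extracting independent boxes into $p$) — or the dent boxes cover all but $< m$ of the chain; since $m = o(R/L_0)$ in these regimes, using the $L_0$-scale regularity of $g_L$ from Definition~\ref{d:locglo} this forces $g_L \le -(c - 2\delta)$ on a set $\mathcal{D}$ that spans all but an asymptotically negligible portion of a path joining $\partial B(1)$ to $\partial B(R)$, so that $\capa_K(\mathcal{D}) \ge \capa_K([0,R])(1-o(1))$ by the capacity estimates of the paper (a radial segment asymptotically minimises $\capa_K$ among connecting sets, and deleting a negligible portion does not change this). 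The probability of such an excess for $g_L$ is at most $\exp(-\tfrac12(c-2\delta)^2\capa_K(\mathcal{D})(1-o(1)))$ — an entropic-repulsion-type estimate whose sharp form follows from the Gaussian tail of $\int g_L\, d\mu$ with $\mu$ the extremal measure for $\capa_K(\mathcal{D})$ in \eqref{e:cap2}, using that $\var(\int g_L\,d\mu) = (1+o(1))/\capa_K(\mathcal{D})$ because $g_L$ carries the kernel $K$ on scales $\gg L$. Summing over the $O_\eps(1)$ coarse shapes, choosing $m$ to balance the two contributions, and sending $\eps, \delta \downarrow 0$, gives $\P[\arm_\ell(1,R)] \le \exp(-\tfrac12(\ell^\ast_c-\ell)^2\capa_K([0,R])(1+o(1)))$, i.e.\ the upper bounds in \eqref{e:gen2}--\eqref{e:gen3} (with $c_\alpha = 1$ when $\alpha = 0$). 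When $\alpha \in (1,d)$ and $K \ge 0$ the same scheme with $\capa_K([0,R]) \asymp R$ gives $\P[\arm_\ell(1,R)] \le e^{-cR}$ directly.

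\textbf{Main obstacle.} The hard part is the upper bound, and specifically recovering the \emph{sharp} prefactor $\tfrac12(\ell^\ast_c-\ell)^2$ rather than just the correct order of $\capa_K([0,R])$. Three points require genuine care. First, transferring the a priori annulus estimate from $f$ to the local field $f_L$ while $f_L$ and $g_L$ are \emph{dependent} — this is where the sprinkled bootstrap (in the spirit of \cite{rs13,pt15,pr15}) and the precise properties of Definition~\ref{d:locglo} enter. Second, ensuring the sum over the coarse geometry of the arm costs only a $1+o(1)$ factor in the exponent, which forces one to coarsen at scale $\eps R$ and to send $\eps \downarrow 0$ only after $R \to \infty$; relatedly, one needs the entropic-repulsion estimate for $g_L$ with its sharp constant, uniformly over the (random) excess set $\mathcal{D}$. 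Third, the companion capacity input that a radial segment asymptotically minimises $\capa_K$ among connected sets joining $\partial B(1)$ and $\partial B(R)$, together with the stability of this minimum under thickening to a tube and under deleting an asymptotically negligible portion. In practice the second point especially is cleanest to handle by running the dichotomy as a multi-scale recursion — controlling $\P[\arm_\ell(1,r)]$ at scale $r$ in terms of smaller scales plus the cost for $g_L$ to build an excess across $A(r)$ — and closing the recursion using the regularly varying asymptotics of $\capa_K$, the sharp constant emerging only in the large-scale limit; maintaining the sharp constant through this iteration is the technical heart of the argument.
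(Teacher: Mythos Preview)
Your lower bound matches the paper's argument (Section~\ref{s:lb}): tilt by a Cameron--Martin function exceeding $\ell^{\ast\ast}_c - \ell + \delta$ on a thin tube $T(R;\rho)$, apply the entropic bound (Corollary~\ref{c:eb}), and invoke $\capa_K(T(R;\rho)) \sim \capa_K([0,R])$ from Proposition~\ref{p:capline}; for $\alpha = 0$ tilt on the whole ball and use $\ell_c$ instead, exactly as you say. The passage from $\arm_\ell(1,R)$ to $\arm_\ell(R)$ under $K \ge 0$ is also as in the paper (Remark~\ref{r:pa}).

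For the upper bound your dichotomy is the paper's idea, but three of your steps would not close as written. First, you condition on $g_L$ and then use independence of $f_L$-crossings of separated boxes; since $f_L$ and $g_L$ are \emph{not} assumed independent in Definition~\ref{d:locglo}, the conditional law of $f_L$ given $g_L$ need not factorise over $L$-separated regions, and the $p^m$ bound is unjustified. The paper never conditions: it writes $\arm_\ell(1,R) \subseteq E_1 \cup E_2$ pathwise and bounds each event unconditionally. Second, coarsening the chain at scale $\eps R$ to get $O_\eps(1)$ shapes does not work, because the dichotomy and the independence both live at the small scale, where the number of possible chains is $(R/L)^{c R/L}$; this union-bound entropy cannot be absorbed unless the mesoscopic scale $L = L(R)$ is chosen carefully. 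Third, the input ``$p < 1$'' is far too weak: the paper needs a \emph{quantitative} a priori bound $P = -\log \P[\ann^\infty_\ell(L)]$ that grows with $L$, obtained from a separate sprinkled bootstrap (Proposition~\ref{p:sb}, giving $P \gg L^{\psi'}$ for any $\psi' < \alpha$ when $\alpha > 0$) or, when $\alpha = 0$, from \cite{mrv20} (Theorem~\ref{t:apriori}). With these ingredients the paper runs a \emph{one-step} renormalisation (Proposition~\ref{p:boot}), not a multi-scale recursion, at a scale $L$ chosen so that $(R/L)\log(R/L) \ll \capa_K([0,R]) \ll (R/L)\min\{P, 1/K(L)\}$ --- e.g.\ $L = R K(R)(\log R)^2$ for $\alpha \in (0,1)$ and $L = (\log R)^{2-\gamma}$ for $\alpha = 1$. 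The sharp constant on $E_1$ then comes from a Borell--TIS bound for $g_L$ on a union of separated balls (Theorem~\ref{t:ld3}) combined with the projection and condensation lemmas (Corollary~\ref{c:conpro}), which you only gesture at. Finally, for $\alpha > 1$ the paper does \emph{not} run this scheme at all: exponential decay of $\P[\arm_\ell(1,R)]$ is obtained directly from the sprinkled bootstrap with $\psi' = 1$.
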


We can also deduce the analogue of Theorem \ref{t:diamspec}, but for simplicity we state this only in the case that $K(x) \sim x^{-\alpha} (\log x)^{-\gamma}$. Recall that $D_{R,\ell}$ denotes the largest diameter among the connected components of $\{f \le \ell\} \cap B(R)$. A sequence of events $(A_R)_{R > 0}$ is said to hold \textit{with high probability (w.h.p.)} if $\P[A_R] \to 1$ as $R \to \infty$.

\begin{theorem}
\label{t:diamgen}
Let $f$ satisfy Assumption \ref{a:rv} with $L(x) = (\log x)^{-\gamma}$, $\gamma \in \R$, and suppose that $f$ has a local-global decomposition in the sense of Definition \ref{d:locglo}. Then for every $\ell < \ell^\ast_c$, as $R \to \infty$:
\begin{enumerate}
\item If $\alpha \in (1, d)$ and $K \ge 0$,
\[ \frac{  D_{R, \ell} }{   \log R  } \quad  \text{is bounded above and away from zero in probability.} \]
\item If $\alpha = 1$ (and recall that $\gamma < 1$), for every $\eps > 0$,
 \[  \frac{  D_{R, \ell} }{   \log R (\log \log R)^{1-\gamma} }   \in  \Big(  \frac{4d(1-\eps)}{(1-\gamma)(\ell^{\ast \ast}_c - \ell)^2}  , \frac{4d(1+\eps)}{(1-\gamma)(\ell^{\ast}_c - \ell)^2}  \Big)   \quad \text{w.h.p.}  \]
\item If $\alpha \in (0, 1)$ (and recall that $\gamma > 0$), for every $\eps > 0$,
\[    \frac{  D_{R, \ell} }{  (  \log R )^{1/\alpha} (\log \log R)^{-\gamma/\alpha} }   \in  \Big( \Big( \frac{2d(1-\eps)}{c_\alpha (\ell^{\ast \ast}_c - \ell)^2} \Big)^{1/\alpha} ,   \Big( \frac{2d(1+\eps)}{c_\alpha (\ell^{\ast}_c - \ell)^2} \Big)^{1/\alpha} \Big) \quad \text{w.h.p.}   \]
\end{enumerate}
\end{theorem}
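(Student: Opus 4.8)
The plan is to deduce the diameter asymptotics from the connection-probability estimates in Theorem \ref{t:gen} via a union bound for the upper tail and a second-moment / quasi-independence argument for the lower tail, exactly in the spirit of the passage from \eqref{e:gff} to the analogous diameter statement for the GFF. Throughout I will write $\kappa = \kappa(\ell)$ for the relevant constant in front of $\capa_K([0,\cdot])$, i.e.\ $\kappa = (\ell^{\ast}_c-\ell)^2/2$ or $(\ell^{\ast\ast}_c-\ell)^2/2$ depending on which side of \eqref{e:gen2}--\eqref{e:gen3} we are using (and the factor $2/(1-\gamma)$ absorbed appropriately when $\alpha=1$), and I will use freely that under the hypothesis $L(x)=(\log x)^{-\gamma}$ the capacity asymptotics \eqref{e:capasym} become fully explicit: $\capa_K([0,T]) \sim T^\alpha (\log T)^\gamma / c_\alpha$ when $\alpha\in(0,1)$, $\sim T(\log T)^{\gamma-1}(1-\gamma)/2$ when $\alpha=1$, and $\sim (\log T)^\gamma$ when $\alpha=0$.

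\textbf{Upper bound on $D_{R,\ell}$.} If $D_{R,\ell}\ge T$ then $B(R)$ contains a cluster of $\{f\le\ell\}$ of diameter at least $T$, hence some point $x$ in a $T/2$-net of $B(R)$ (of cardinality $O((R/T)^d)$) from which $\{f\le\ell\}$ connects to distance $T/2$; by stationarity and a union bound,
\[
\P[D_{R,\ell}\ge T] \;\le\; C (R/T)^d \,\P[\arm_\ell(T/2)] .
\]
Plugging in the upper bound of Theorem \ref{t:gen} in the form $\P[\arm_\ell(T/2)] \le \exp(-\kappa\,\capa_K([0,T/2])(1+o(1)))$ and choosing $T = T(R,\eps)$ so that $\kappa\,\capa_K([0,T/2])$ exceeds $(d+1)\log R$ shows $\P[D_{R,\ell}\ge T]\to 0$. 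Inverting the capacity asymptotics: in the $\alpha\in(0,1)$ case one needs $T^\alpha(\log T)^\gamma \gtrsim (2d/(c_\alpha(\ell^\ast_c-\ell)^2))\log R$, which forces $T \gtrsim (\log R)^{1/\alpha}(\log\log R)^{-\gamma/\alpha}(2d/(c_\alpha(\ell^\ast_c-\ell)^2))^{1/\alpha}(1+o(1))$ after substituting $\log T \sim (1/\alpha)\log\log R$; this gives the upper endpoint in part (3). The $\alpha=1$ and $\alpha\in(1,d)$ cases are identical with the respective capacity asymptotics (in the $\alpha>1$ case $\capa_K([0,T])\asymp T$, giving $T\asymp\log R$).

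\textbf{Lower bound on $D_{R,\ell}$.} Here I fix $T = T(R,\eps)$ slightly \emph{smaller} than the target threshold, tile $B(R)$ by $\asymp(R/T)^d$ disjoint boxes of side $\asymp T$, and in each box consider the event that $\{f\le\ell\}$ crosses the box (equivalently connects the two opposite faces along the long direction); by Theorem \ref{t:gen} (applied to $\tube$- or $\cross$-type events, whose lower bound is proved by the same entropic construction — see the discussion in Section \ref{s:other}, and note the lower bounds in \eqref{e:gen2}--\eqref{e:gen3} come from an explicit $h\in H$ supported near a segment, which equally produces a crossing) each such event has probability at least $\exp(-\kappa'\,\capa_K([0,T])(1+o(1)))$ with $\kappa' = (\ell^{\ast\ast}_c-\ell)^2/2$. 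Choosing $T$ so that this probability is $\ge R^{-d+\delta}$ makes the expected number of crossed boxes tend to infinity. To conclude that \emph{some} box is crossed w.h.p.\ I need a second-moment bound, and this is where the local-global decomposition does the work: using $f = f_{cT}+g_{cT}$ with $f_{cT}$ being $cT$-range dependent, the crossing events of boxes at distance $\gg cT$ are, conditionally on the global field $g_{cT}$, independent; a standard sprinkling argument (lowering $\ell$ by an arbitrarily small amount to control the contribution of $g_{cT}$, which is smooth and small in sup-norm on the relevant scale — precisely the estimates underpinning Proposition \ref{p:nontriv}) then yields a genuine second-moment or Paley--Zygmund bound showing at least one box is crossed with probability $\to 1$. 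A crossed box of side $\asymp T$ contains a cluster of diameter $\gtrsim T$, giving $D_{R,\ell}\ge T$ w.h.p., and substituting the capacity asymptotics produces the lower endpoints in (1)--(3).

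\textbf{Main obstacle.} The routine part is the capacity inversion and the first-moment (union-bound) upper bound; the genuine work is the lower bound's second-moment step, specifically decorrelating the box-crossing events using the local-global decomposition while keeping the sprinkling loss $o(1)$ as $\eps\to 0$. The subtlety is that the global field $g_{cT}$ on a box of side $T$ is not negligible pointwise — it carries fluctuations of order $\sqrt{K(cT)}$ — but because it varies slowly it can be absorbed into an arbitrarily small sprinkle of the level (this is exactly the mechanism already encapsulated in the gap between $\ell^\ast_c$, $\ell^{\ast\ast}_c$ and the reason Theorem \ref{t:gen} has those constants on the two sides), so the two endpoints in Theorem \ref{t:diamgen} legitimately carry $\ell^\ast_c$ and $\ell^{\ast\ast}_c$ respectively and there is no claim of a sharp constant. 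One should also take care that in part (1), where only $\asymp$ is available, the same tiling/union-bound scheme delivers the weaker "bounded above and away from zero" conclusion without needing sharp constants, which is straightforward.
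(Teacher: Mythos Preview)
Your upper bound is essentially the paper's argument (union bound over a covering of $B(R)$, then plug in Theorem~\ref{t:gen}); the paper covers by unit balls and uses $\arm_\ell(1,r)$ rather than a $T/2$-net and $\arm_\ell(T/2)$, which is cleaner since your net point need not lie in the cluster, but this is easily repaired.

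The lower bound, however, has a genuine gap. You write that ``conditionally on the global field $g_{cT}$, the crossing events of well-separated boxes are independent.'' This is not justified: in Definition~\ref{d:locglo} the fields $f_L$ and $g_L$ are \emph{not} assumed independent (and in the constructions of Section~\ref{s:locglo} they are not), so conditioning on $g_L$ does not leave $f_L$ with any useful independence structure, and a second-moment or Paley--Zygmund argument built on this conditioning has no foundation. The paper avoids this entirely: it never conditions on $g_L$. Instead it works directly with the \emph{local} field $f_L$, whose restrictions to regions at mutual distance $\ge L$ are genuinely independent by $L$-range dependence. One sprinkles twice --- once to pass from an $f$-arm event to an $f_L$-arm event (via $\{\inf g_L \ge -\eps\}$), and once to pass from an $f_L$-diameter event back to an $f$-diameter event (via $\{\sup g_L \le \eps\}$) --- and then the probability that \emph{no} separated ball carries an $f_L$-arm is bounded by $(1-p)^n \le e^{-np}$ using exact independence; no second moment is needed. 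A secondary point is the choice of scale: the paper takes $L = (\log R)^{\max\{2,2/\alpha\}}$, strictly larger than the target diameter $r$, so that $\P[\sup_{B(R)}|g_L|\ge\eps]$ decays faster than any polynomial in $R$; your choice $L\asymp T$ is borderline for this.
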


\begin{remark}
\label{r:lastast}
A corollary of Theorem \ref{t:gen} is that, if either $\alpha > 0$ or $\alpha = 0$ and $\gamma > 1$, then $\ell^{\ast \ast}_c \le 0$. To see this, fix $\ell > 0$ and $\rho > 0$ and observe that  $\tube_\ell(R; \rho)^c$ is implied by the event that $\{f|_{\R^2 } \ge \ell \}$ contains a path connecting $[0, R] \times \{0\} $ and $[0, R] \times \{R^\rho\} $. Hence by the union bound and the symmetry in law of $f$ and $-f$, for any $R \ge 1$,
\[ \P[\tube_\ell(R;\rho) ] \ge  1 -   R \times \P[ f|_{\R^2 } \in \arm_{-\ell}(1,R^\rho) ] .\]
 By the upper bound of Theorem~\ref{t:gen} applied in the case $d=2$ (recall that $\ell_c^\ast = 0$ in that case), $\P[ f|_{\R^2 } \in \arm_{-\ell}(1,R^\rho) ] $ decays faster than any polynomial, which gives the claim.

\smallskip
A consequence is that, if $d=2$, the conclusions of Theorem \ref{t:gen} and \ref{t:diamgen} can be simplified (as in Theorems \ref{t:spec} and \ref{t:diamspec}) since $ \ell^\ast_c = \ell_c = 0 $ in these cases, and moreover $\ell^{\ast \ast}_c = 0$ if $\alpha > 0$ by the previous remark.
\end{remark}

\begin{remark}
In the case $\alpha \in (0, 1)$ we could relax the definition of $\ell_c^{\ast \ast}$ by only demanding that the `tubes' be of the form $[0, R] \times [0, \rho R]^{d-1}$ rather than  $[0, R] \times [0, R^\rho]^{d-1}$ (see Remark~\ref{r:tube}), although this would be insufficient in the case~$\alpha = 1$. The fact that if $\alpha = 0$ we can replace $\ell_c^{\ast \ast}$ with $\ell_c$ is a consequence of the capacities of $[0, R]$ and~$B(R)$ being asymptotically equal in this case.
 \end{remark}

\begin{remark}
In the cases $\alpha = 0$ and $\alpha = 1$ our proofs can be adapted to other classes of slowly varying $L$ that are not poly-logarithmic, for instance $L(x) \sim (\log \log x)^{-\gamma}$. We have not formally considered these cases for simplicity.
\end{remark}

\begin{remark}[On positive associations]
\label{r:pa}
Except for the lower bound in the case $\alpha >1$, Theorem \ref{t:gen} does \textit{not} require that $K \ge 0$, so positive associations (i.e.\ the FKG inequality) does not necessarily hold \cite{pit82}.

\smallskip
In the case $\alpha > 1$ we assume $K \ge 0$ only so that, for $R \ge 1$ and $\delta > 0$,
\[ \P[ \arm_\ell(R) ] \ge \P\Big[ \sup_{x \in [0, R]} f(x) \le \ell \Big]   \ge \P\Big[ \sup_{x \in [0, \delta]} f(x) \le \ell \Big]^{R/\delta+1} \ge e^{-cR} ,\]
for a $c = c(f,\ell,\delta)  > 0$, where the second step used positive associations, and the third step the fact that, by continuity, $\P[ \sup_{x \in [0, \delta]} f(x) \le \ell ] > 0$ for sufficiently small $\delta = \delta(f,\ell) > 0$.

\smallskip
Observe also that Theorem \ref{t:gen} is stated, in general, for $\arm_\ell(1, R)$ rather than $\arm_\ell(R)$. If $K \ge 0$ these events are comparable, since by positive associations
\[\P[ \arm_\ell(R)] \le \P[ \arm_\ell(1, R) ] \le \frac{  \P[\arm_\ell(R)] }{ \P[ \sup_{x \in B(1)} f(x) \le \ell  ]} = c  \,   \P[\arm_\ell(R)]  \]
for some $c = c(f,\ell) > 0$, but they may not be comparable in general. Positive associations will play no further role in the remainder of the paper.
\end{remark}
 
Let us finish this section by observing that Theorems \ref{t:spec}, \ref{t:diamspec} and \ref{t:spec0} are consequences of Proposition \ref{p:nontriv} and Theorems \ref{t:gen} and \ref{t:diamgen}:

\begin{proof}[Proof of Theorems \ref{t:spec}, \ref{t:diamspec} and \ref{t:spec0}]
The fields $F_\alpha$, $\alpha \in (0,2)$, satisfy Assumption \ref{a:rv} and have a local-global decomposition (see Remark~\ref{r:examples1}), as do the fields $F_0^\gamma$, $\gamma > 0$ (see Remark~\ref{r:examples2}). Thus the results follow from a combination of Proposition~\ref{p:nontriv}, Remark~\ref{r:ell^**} and Theorems~\ref{t:gen}--\ref{t:diamgen}.
\end{proof}

\subsection{Sketch of the proof}
The core of the paper is the proof of Theorem~\ref{t:gen} in the case that $\arm_\ell(R)$ has sub-exponential decay (i.e.\ $\alpha \le 1$). As discussed above, the basic strategy is to show that $\arm_\ell(R)$ is asymptotically carried, in a large deviation sense, by the event in which the field has `excess' mean of $\ell^{\ast \ast}_c - \ell$ on the line segment $[0, R]$. If this occurs, the field `looks supercritical' in neighbourhood of the line segment, and one can show, using the definition of $\ell^{\ast \ast}_c$, that $0$ is connected to $\partial B(R)$ with non-negligible probability. 

\smallskip
An application of the Cameron-Martin theorem (see Proposition \ref{p:eb}) shows that this excess mean has probabilistic cost at most $e^{-\|h\|_H^2  / 2 }$, where $H$ is the reproducing kernel Hilbert space associated to $f$ (see Section \ref{s:cap1}), and $h \in H$ is any function such that $h \ge \ell^{\ast \ast}_c - \ell $ on the line segment $[0, R]$. This leads immediately to the lower bound
 \begin{align*}
  - \log \P[ \arm_\ell(R) ]  & \le    \frac{1}{2}    \min \{ \|h\|^2_H :  h \ge \ell^{\ast \ast}_c - \ell  \text{ on } [0, R] \}  + O(1)  \\
  & =   \frac{1}{2} ( \ell^{\ast \ast}_c - \ell  )^2     \capa_K([0, R]) + O(1)    .
  \end{align*}
 
It remains to obtain a matching upper bound. The strategy we use is a `one-step renormalisation' based on the fact that the event $\arm_\ell(R)$ implies the existence of a path in $\{ f \le \ell\}$ that crosses many well-separated annuli on some mesoscopic scale $L$. Setting $\delta > 0$ small and recalling the local-global decomposition $f = f_L + g_L$, this implies that either:
\begin{itemize}
\item The set $\{f_L \le \ell^{\ast}_c - \delta \}$ crosses a fraction $\delta$ of these annuli; or
\item The set $\{g_L  \ge \ell^{\ast}_c - \ell - \delta\}$ intersects a fraction $1-\delta$ of these annuli.
\end{itemize}
 Given sufficient a priori control on the probability that $\{f_L \le \ell^{\ast}_c - \delta \}$ crosses an annulus, then since $f_L$ is $L$-range dependent, by independence the first event is very unlikely. On the other hand, one can show (see Proposition~\ref{p:ld2}) that the probability that  $\{g_L  \ge \ell^{\ast}_c - \ell - \delta\}$ intersects a fraction of $1-\delta$ of these annuli is approximately
 \[   \exp \Big( - \frac{(\ell^\ast_c - \ell - \delta)^2}{2} \capa_K(  \text{`union of annuli'} )  \Big) . \]
By a `projection' argument, among all possible configurations of annuli the capacity is minimised by those which aligns along a line segment connecting $0$ to $\partial B(R)$, and by a `condensation' argument this capacity is asymptotically equivalent to $\capa_K([0, (1-\delta)R])$. Putting this together, and taking first $R \to \infty$ and then $\delta \to 0$, gives the upper bound
  \[ - \log \P[ \arm_\ell(R) ] \gtrsim    \frac{1}{2} (\ell^\ast_c - \ell)^2 \capa_K([0, R] )  . \]
 Note that the mesoscopic scale $L$ must be well-chosen: it needs to be large enough that the combinatorial complexity of the set of annuli does not overwhelm the bounds, but small enough for the renormalisation and the `condensation' to be effective. 

 \smallskip
The strategy sketched above is inspired by \cite{Szn15,grs21}, which used similar methods to analyse the GFF. However, as mentioned above, we replace the GFF-specific tools used in these works by robust tools relying only on the regularly-varying setting.

\smallskip
The a priori input is drawn from two sources. In the case $\alpha \in (0, 1]$, a sprinkled bootstrapping argument yields the a priori estimate  $- \log \P[ \arm_\ell(R) ] \ge c R^{\psi}$ for any $\psi < \alpha$. For $\alpha = 0$ the sprinkled bootstrapping arguments fail in general, and instead we use a recent estimate from \cite{mrv20} on the sharpness of the phase transition for strongly correlated fields.
 
\smallskip
The proof of Theorem \ref{t:gen} for $\alpha > 1$ is simpler, and only uses the sprinkled bootstrapping argument mentioned above. Theorem \ref{t:diamgen} is a consequence of Theorem \ref{t:gen} and properties of the local-global decomposition.
 
\subsection{Outline of the remainder of the paper}
In Section \ref{s:prelim} we collect preliminary results on Gaussian fields, including basic properties of the capacity. In Section \ref{s:lgd} we introduce the local-global decomposition and give sufficient conditions for its existence. In Section \ref{s:eup} we use a sprinkled bootstrapping argument to establish Proposition \ref{p:nontriv}; as a byproduct we obtain a priori upper bounds in the case $\alpha \in (0, 1]$ and complete the proof of Theorem \ref{t:gen} for $\alpha > 1$. In Section \ref{s:seup} we give the main renormalisation procedure which completes the proof of the upper bounds in Theorem \ref{t:gen} in the case $\alpha \in [0, 1]$. In Section \ref{s:lb} we give the proof of the lower bounds in Theorem \ref{t:gen} and the remaining results (Theorem \ref{t:diamgen} and Proposition~\ref{p:cl}). Finally the Appendix collects technical results on Gaussian fields and regularly varying functions.

\smallskip
\section{Gaussian field preliminaries}
\label{s:prelim}

In this section we collect preliminaries results on continuous Gaussian fields. We will state explicitly any additional assumptions that we use, but mostly this is limited to assuming the field is isotropic with regularly varying covariance kernel.

\subsection{Gaussian fields: The RKHS, energy, capacity, and convex duality}
\label{s:cap1}
We begin by developing some aspects of the general theory of continuous Gaussian fields, including the notion of capacity and its connection to the RKHS. Our references are \cite{lan72,jan97,ams14}.

\smallskip
 Let $f$ be a continuous Gaussian field on $\R^d$, not necessarily stationary, with covariance kernel $K(x, y)$. Let $\Xi$ be the Gaussian Hilbert space associated to $f$, i.e.\ the closed linear span of $\{f(x)\}$ in $L^2$ (see \cite{jan97}). The \textit{reproducing kernel Hilbert space} (RKHS) (also known as the \textit{Cameron-Martin space}) associated to $f$ is the linear space
  \[ H =  \{h_\xi: \xi \in \Xi \} , \quad h_\xi(x) =  \E[f(x) \xi ]  , \]
 equipped with the inner product
\[ \langle  h_\xi, h_{\xi'} \rangle_H = \E[\xi \xi'] .\]
The name `RKHS' refers to the \textit{reproducing property} of $K$: for all $h_\xi \in H, x \in \R^d$,
\[ \langle h_\xi(\cdot), K(x, \cdot) \rangle_H =  \langle h_\xi(\cdot), h_{f(x)}(\cdot)  \rangle_H  = \E[\xi f(x)] = h_\xi(x) . \]
The RKHS contains all deterministic functions $h$ such that the laws of $f + h$ and $f$ are mutually absolutely continuous; indeed, by the Cameron-Martin formula (see \cite[Theorems 14.1 and 3.33]{jan97}), for every $h_\xi \in H$,
\[\frac{dP}{dQ}(\omega)  = \exp \Big(  \xi(\omega)  - \frac{1}{2} \new{\E_Q[ \xi^2 ] }  \Big) ,   \]
where $P$ and $Q$ denote the laws of $f + h_\xi$ and $f$ respectively, and $dP/dQ$ is the Radon-Nikodym derivative. In particular, since $\E_Q[  \xi ] = 0$ and $\E_Q[ \xi^2 ] = \|h_\xi\|_H^2$, the \textit{relative entropy} from $Q$ to~$P$ is
\begin{equation}
\label{e:cmt}
     \int  - \Big( \! \log  \frac{dP}{dQ} \Big) dQ  =  \frac{1}{2} \|h_\xi \|_H^2 .
\end{equation}

\smallskip
If $f$ is stationary, the RKHS has a convenient representation as the Fourier transform of a (weighted, complex) $L^2$-space. Recall the spectral measure $\nu$ of $f$, and let $L^2_{\text{sym}}(\nu)$ denote the Hilbert space of complex Hermitian (i.e.\ $g(-x) = \bar{g}(x)$) functions on $\R^d$ equipped with inner product 
\[ \langle g_1, g_2 \rangle_{L^2_{\text{sym}}(\nu)} = \int  g_1 \bar{g}_2   \, d \nu .  \] 
Then the RKHS can be represented as
\[ H  =  \{ \mathcal{F}[g d\nu] :  g   \in L^2_{\text{sym}}(\nu)  \}  \]
with inner product inherited from $L^2_{\text{sym}}(\nu)$.

\smallskip
We next introduce the capacity (see \cite{lan72} for a general reference). Let $D \subset \R^d$ be a compact subset. Recall that $\mathcal{P}(D)$ denotes the set of (Borel) probability measures on $D \subset \R^d$, and define $\mathcal{M}(D)$ to be the set of signed finite (Borel) measures on $D \subset \R^d$. For a measure $\mu \in \mathcal{M}(D)$, the \textit{energy of $\mu$ (with respect to the kernel $K$)} is
\begin{equation}
\label{e:energy}
 E(\mu)  =  \int_D \int_D K(x,y) d\mu(x) d\mu(y)  .
 \end{equation}
Since $K$ is positive definite, $E(\mu) \ge 0$, and the fact that
\begin{equation}
\label{e:convex}
E(\lambda \mu_1 + (1-\lambda) \mu_2) = \lambda E(\mu_1) + (1-\lambda) E(\mu_2) - \lambda(1-\lambda) E(\mu_1 - \mu_2) ,
\end{equation}
for every $\mu_1, \mu_2 \in \mathcal{M}(D)$ and $\lambda \in \R$, shows that $\mu \mapsto E(\mu)$ is a convex function on $\mathcal{M}(D)$. In terms of the RKHS, the energy can be expressed as
\begin{equation}
\label{e:energyh}
  E(\mu) =  \| h_\mu  \|_H^2  
  \end{equation}
where $h_\mu(\cdot) = \int  K(\cdot, x) d \mu(x)  \in H $ is the \textit{potential} of the measure $\mu$; in the case of a stationary field with spectral measure $\nu$ this is equal to
   \begin{equation}
   \label{e:energystat}
   E(\mu) = \int |\mathcal{F}[\mu]|^2 d\nu  .
   \end{equation}
  One can also view the energy as capturing the fluctuations of $f$ averaged over $\mu$
\begin{equation}
\label{e:fluct}
   E(\mu)  = \textup{Var}\Big[ \int f(x) d\mu \Big]  .
   \end{equation}

\smallskip
The \textit{measures of minimal energy (for a compact $D$)} is the collection $\mathcal{S}(D)$ of probability measures $\mu \in \mathcal{P}(D)$ which minimise $E(\mu)$; by continuity and the compactness of $\mathcal{M}(D)$, there is at least one such measure. Although in general there may be multiple measures of minimal energy, uniqueness is guaranteed if $\mu \mapsto E(\mu)$ is \textit{strictly} convex on $\mathcal{M}(D)$, and in the stationary case there is a simple sufficient condition for this:

\begin{proposition}
\label{p:unique}
If $f$ is stationary and the support of the spectral measure contains an open set, then $\mu \mapsto E(\mu)$ is strictly convex on $\mathcal{M}(D)$. In particular $|\mathcal{S}(D)| = 1$.
\end{proposition}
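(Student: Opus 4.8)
The plan is to deduce strict convexity of $\mu \mapsto E(\mu)$ from the identity \eqref{e:convex}, which reduces the claim to showing that $E(\rho) > 0$ for every non-zero signed measure $\rho \in \mathcal{M}(D)$ of total mass zero (these are exactly the differences $\mu_1 - \mu_2$ that arise, but it is cleanest to prove it for all non-zero $\rho \in \mathcal{M}(D)$, mass zero or not). Indeed, if $E(\mu_1 - \mu_2) > 0$ whenever $\mu_1 \ne \mu_2$, then \eqref{e:convex} with $\lambda \in (0,1)$ gives $E(\lambda \mu_1 + (1-\lambda)\mu_2) < \lambda E(\mu_1) + (1-\lambda) E(\mu_2)$, which is strict convexity; and $|\mathcal{S}(D)| = 1$ then follows because a strictly convex function on the convex set $\mathcal{P}(D)$ has at most one minimiser. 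So the entire content is the strict positivity of the energy quadratic form on non-zero measures.

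For this I would use the spectral representation \eqref{e:energystat}: for $\mu \in \mathcal{M}(D)$ we have $E(\mu) = \int_{\R^d} |\mathcal{F}[\mu](\lambda)|^2 \, d\nu(\lambda)$, where $\mathcal{F}[\mu](\lambda) = \int_D e^{2\pi i (\lambda, x)} d\mu(x)$. Suppose $E(\mu) = 0$. Then $\mathcal{F}[\mu] = 0$ $\nu$-almost everywhere. Now $\mathcal{F}[\mu]$ is a (complex) analytic function on $\R^d$ — since $\mu$ is a finite measure with compact support $D$, the integral $\int_D e^{2\pi i (\lambda, x)} d\mu(x)$ extends to an entire function of $\lambda \in \mathbb{C}^d$ (this is the Paley–Wiener phenomenon) and in particular is real-analytic on $\R^d$. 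By hypothesis the support of $\nu$ contains a non-empty open set $U \subset \R^d$, so $\mathcal{F}[\mu]$ vanishes on a set of positive Lebesgue measure inside $U$; but a real-analytic function on $\R^d$ that vanishes on a set with an accumulation point in a connected open set (in particular on a set of positive measure) vanishes identically on that open set, and then — being the restriction of an entire function — vanishes identically on all of $\R^d$. Hence $\mathcal{F}[\mu] \equiv 0$, which forces $\mu = 0$ by injectivity of the Fourier transform on finite measures (uniqueness theorem for characteristic functions). This shows $E(\mu) = 0 \implies \mu = 0$, i.e. $E$ is strictly positive on non-zero measures, completing the argument via the reduction above.

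I would organize the write-up as: (i) record the reduction ``strict convexity $\Leftrightarrow$ $E(\rho) > 0$ for all $0 \ne \rho \in \mathcal{M}(D)$'' from \eqref{e:convex}; (ii) invoke \eqref{e:energystat} to write $E(\rho) = \|\mathcal{F}[\rho]\|^2_{L^2(\nu)}$; (iii) run the analyticity / identity-theorem argument to conclude $\mathcal{F}[\rho] \equiv 0$; (iv) conclude $\rho = 0$; (v) deduce uniqueness of the minimiser. The main obstacle — really the only non-routine point — is step (iii): justifying that $\mathcal{F}[\rho]$, which we only know vanishes $\nu$-a.e. on an open set where $\nu$ lives, must vanish on that whole open set and then everywhere. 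The key facts to cite cleanly are that the Fourier transform of a compactly supported finite measure extends to an entire function on $\mathbb{C}^d$ (so is real-analytic on $\R^d$), and that a real-analytic function vanishing on a set of positive Lebesgue measure in a connected open set vanishes on the whole component; alternatively one can avoid complexification and argue directly that $\partial^\beta \mathcal{F}[\rho]$ are the Fourier transforms of the finite measures $x^\beta d\rho(x)$, that a positive-measure zero set forces all these derivatives to vanish at some common point, and then that $\mathcal{F}[\rho]$, being real-analytic with all derivatives vanishing at a point, is identically zero. Everything else is bookkeeping.
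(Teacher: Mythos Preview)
Your proposal is correct and follows essentially the same route as the paper: reduce strict convexity to $E(\mu)>0$ for non-zero $\mu$ via \eqref{e:convex}, use the spectral representation \eqref{e:energystat}, and invoke Paley--Wiener to see that $\mathcal{F}[\mu]$ is entire and hence cannot vanish on an open set in the support of $\nu$ unless $\mu=0$. Your treatment of the ``$\nu$-a.e.\ versus everywhere on $U$'' subtlety is slightly more elaborate than necessary---since $\mathcal{F}[\mu]$ is continuous and every open subset of $U\subset\mathrm{supp}(\nu)$ has positive $\nu$-measure, vanishing $\nu$-a.e.\ immediately forces vanishing on all of $U$---but the argument is sound.
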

\begin{proof}
Recall that $\mu \in \mathcal{M}(D)$ is compactly supported. Hence by the Paley-Weiner theorem, $\mathcal{F}[\mu]$ is an entire function, and so cannot vanish on an open set. By \eqref{e:energystat} and the assumption on the support of the spectral measure, this implies that $E(\mu) > 0$ for every $\mu \in \mathcal{M}(D)$, which gives the required strict convexity  by \eqref{e:convex}. See also \cite[Proposition 3.1]{cs18} for a similar statement in the one-dimensional case.
\end{proof}

The \textit{capacity of $D$ (with respect to the kernel $K$)} is defined as
\begin{equation}
\label{e:ca2}
 \capa_K(D)  = \Big( \min_{ \mu \in \mathcal{P}(D) } E(\mu) \Big)^{-1}  \in (0, \infty].
 \end{equation}
While in general the capacity can be infinite, it is finite if $\mu \mapsto E(\mu)$ is strictly convex, and hence under the sufficient conditions in Proposition \ref{p:unique}. By definition the capacity is increasing in the domain and decreasing under the addition of a positive definite kernel: for every $D \subset D'$, and continuous covariance kernels $K_1$ and $K_2$, 
\begin{equation}
\label{e:mono}
 \capa_{K_1+K_2}(D) \le \capa_{K_1}(D') .
\end{equation}
It is also clear that $\capa_K(D)$ depends only on the restriction of $K$ to $D$.

\smallskip
The capacity has a dual formulation in terms of the RKHS:

\begin{proposition}[Convex duality; see {\cite[Theorems 3.2, 4.1 and 4.3]{ams14}}]
\label{p:duality}
For every compact $D \subset \R^d$,
\begin{equation}
\label{e:dual}
 \capa_K(D) = \inf \{ \|h\|^2_H  :  h \in H, h|_D \ge 1 \}    .
\end{equation}
Moreover, if $\capa_K(D) < \infty $ then there is a unique minimiser in \eqref{e:dual} which is equal to $h = \capa_K(D) h_\mu$ for any $\mu \in \mathcal{S}(D)$. 
\end{proposition}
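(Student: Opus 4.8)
The plan is to prove the two inequalities between $\capa_K(D)$ and $I(D) := \inf\{\|h\|_H^2 : h \in H,\ h|_D \ge 1\}$ separately, the bridge in both directions being the identity $\langle h, h_\mu\rangle_H = \int_D h\, d\mu$, valid for every $h \in H$ and every $\mu \in \mathcal{M}(D)$ of finite energy. This identity follows from \eqref{e:energyh} and \eqref{e:fluct}: writing $\zeta = \int_D f\, d\mu \in \Xi$ (the $L^2$-integral makes sense because $\var[\int f\,d\mu] = E(\mu) < \infty$), one has $h_\mu = h_\zeta$, whence $\langle h_\xi, h_\mu\rangle_H = \E[\xi\zeta] = \int_D \E[\xi f(x)]\, d\mu(x) = \int_D h_\xi \, d\mu$.

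For $I(D) \ge \capa_K(D)$, fix $\mu \in \mathcal{S}(D)$ (nonempty by compactness of $\mathcal{M}(D)$ and continuity of $E$). For any feasible $h$, $\langle h, h_\mu\rangle_H = \int_D h\, d\mu \ge \mu(D) = 1$, so Cauchy--Schwarz gives $1 \le \|h\|_H\|h_\mu\|_H = \|h\|_H \sqrt{E(\mu)}$, i.e.\ $\|h\|_H^2 \ge 1/E(\mu) = \capa_K(D)$; then take the infimum over $h$. The same computation in the degenerate case $\capa_K(D) = \infty$ (i.e.\ $E(\mu) = 0$, so $h_\mu \equiv 0$) shows no feasible $h$ can exist, so $I(D) = \infty = \capa_K(D)$ and the proposition holds vacuously. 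For the reverse inequality, assume $\capa_K(D) < \infty$ and establish the equilibrium estimate $h_\mu(x) \ge E(\mu)$ for all $x \in D$: for $\sigma \in \mathcal{P}(D)$ the path $\mu_t = (1-t)\mu + t\sigma$ stays in $\mathcal{P}(D)$, and expanding $E(\mu_t)$ as a quadratic in $t$ (using \eqref{e:convex}) gives $\tfrac{d}{dt}\big|_{t=0^+}E(\mu_t) = 2\big(\int_D h_\mu\, d\sigma - E(\mu)\big)$, which must be $\ge 0$ by minimality of $\mu$; taking $\sigma = \delta_x$, $x \in D$, yields the claim. Then $h := \capa_K(D)\, h_\mu \in H$ satisfies $h|_D \ge \capa_K(D)\, E(\mu) = 1$ and $\|h\|_H^2 = \capa_K(D)^2 E(\mu) = \capa_K(D)$, so $I(D) \le \capa_K(D)$ and the infimum is attained at $h$.

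For uniqueness, note the feasible set $\{h \in H : h|_D \ge 1\}$ is convex, so if $h_1, h_2$ both achieve $I(D)$ then so does $(h_1+h_2)/2$, and the parallelogram law $\big\|\tfrac{h_1+h_2}{2}\big\|_H^2 = \tfrac12\|h_1\|_H^2 + \tfrac12\|h_2\|_H^2 - \tfrac14\|h_1-h_2\|_H^2$ forces $h_1 = h_2$; since $\capa_K(D)\, h_\mu$ is a minimiser for each $\mu \in \mathcal{S}(D)$, it is therefore the same function for all such $\mu$. I expect the only real obstacle to be the equilibrium inequality $h_\mu \ge E(\mu)$ on $D$ — one must differentiate $t \mapsto E(\mu_t)$ with the correct sign and justify testing against $\delta_x$ (legitimate since $h_\mu$ is continuous and $\{x\} \subseteq D$) — everything else being soft Hilbert-space geometry together with the preliminaries already assembled.
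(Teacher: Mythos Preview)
Your argument is correct and self-contained: the pairing identity $\langle h,h_\mu\rangle_H=\int_D h\,d\mu$, Cauchy--Schwarz for one inequality, and the Frostman-type first-variation $h_\mu\ge E(\mu)$ on $D$ for the other, followed by the parallelogram law for uniqueness, is exactly the classical proof. The paper does not give its own argument here---it simply cites \cite[Theorems~3.2, 4.1 and 4.3]{ams14} and remarks that the one-dimensional proofs there extend verbatim to compact $D\subset\R^d$---so you have in effect reproduced the content of that reference.
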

\begin{remark}
Although the setting in \cite{ams14} is one-dimensional, the proofs extend immediately to all compact $D \subset \R^d$. See also \cite[Theorem 2.5]{fug60}.
\end{remark}

While it is not true in general that the minimiser in \eqref{e:dual} satisfies $h = 1$ on $D$ (e.g.\ it is not true for smooth fields \cite[Section 5]{ams14}, nor if the covariance is regularly varying with index $\alpha < d-2$, although it \textit{is} true for the GFF), it is always the case that $h = 1$ on the support of energy minimising measures $\mu \in \mathcal{S}(D)$ (see \cite[Theorem 4.1(iii)]{ams14}).

\smallskip
A useful consequence of convex duality is the following bound on $\capa_K(D)$: for every $\mu \in \mathcal{P}(D)$ and $h \in H$ such that $h|_D \ge 1$,
\begin{equation}
\label{e:capbounds}
  E(\mu)^{-1}   \le \capa_K(D) \le \|h\|^2_H .
\end{equation}

\subsection{Capacity of paths, lines and boxes: Asymptotics, condensation, projection}
\label{s:cap2}

We next study the capacity of paths, lines, and boxes for isotropic fields with regularly varying covariance. In particular we (i) establish asymptotics for the capacity of large line segments and boxes, (ii) show that the capacity of a line segment is well-approximated by coarse grained approximations (`condensation'), and (iii) verify that the capacity of a collection of balls is minimised when they are positioned along a line (`projection'). 

\smallskip
In the remainder of the subsection we consider continuous isotropic Gaussian fields on~$\R^d$, $d \ge 1$, whose covariance kernel $K$ is regularly varying with index $\alpha \in [0, \infty)$. In the case $\alpha = 1$ we further assume that $K(x) \sim x^{-1} (\log x)^{-\gamma}$ for $\gamma <  1$.

\smallskip
We first study the asymptotic growth of the capacity of line segments:

\begin{proposition}[Capacity of line segments]
\label{p:capline}
As $R \to \infty$,
\begin{equation}
\label{e:capline}
\capa_K([0, R])  \sim  \begin{cases}  R /(2 \int_0^\infty K(x) dx )    & \text{if } \alpha > 1,   \int_0^\infty K(x) dx > 0 , \\  R  /( 2 \int_0^R K(x) dx )  \sim  \frac{1-\gamma}{2 K(R) (\log R) }  & \text{if } \alpha = 1,  \\   c_{\alpha} / K(R) &   \text{if } \alpha \in [0, 1)  , \end{cases} 
\end{equation}
where $c_\alpha$ is defined as in \eqref{e:calpha2} (or equivalently \eqref{e:calpha}, see Section \ref{sss:cap}). Moreover, if $\alpha \in [0, 1]$ then for any $r = r(R)  \ge 0$ such that $r = R^{o(1)}$, as $R \to \infty$,
\begin{equation}
\label{e:captube}
\capa_K([0, R] \times [0, r]^{d-1} )  \sim \capa_K([0, R]) .  
\end{equation}
\end{proposition}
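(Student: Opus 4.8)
The plan is to compute $\capa_K([0,R])$ via the dual variational formulation \eqref{e:ca2}, i.e.\ by estimating $\min_{\mu \in \mathcal{P}([0,R])} E(\mu)$, where $E(\mu) = \int\int K(x-y)\,d\mu(x)\,d\mu(y)$. By \eqref{e:capbounds} it suffices to produce, for each regime of $\alpha$, a test measure $\mu$ giving the claimed upper bound on $E$ (hence lower bound on $\capa_K$), together with a matching lower bound on $E$ for \emph{every} $\mu \in \mathcal{P}([0,R])$.

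For the case $\alpha > 1$ with $\int_0^\infty K < \infty$: the natural test measure is the normalised Lebesgue measure $\mu = \frac{1}{R}\mathrm{Leb}|_{[0,R]}$, for which $E(\mu) = R^{-2}\int_0^R\int_0^R K(x-y)\,dx\,dy = R^{-2}\big(2\int_0^R (R-u)K(u)\,du\big) \sim \frac{2}{R}\int_0^\infty K(u)\,du$ by dominated convergence (here I use $\int_0^\infty K < \infty$ and regular variation to control the tail). For the lower bound on $E(\mu)$ over all $\mu$, I would rescale: writing $\mu_R$ for the pushforward of $\mu$ under $x \mapsto x/R$, one has $E(\mu) = \int\int K(R(x-y))\,d\mu_R(x)\,d\mu_R(y)$; since $K$ is bounded and $R\,K(R\cdot) \to (\text{const})\,\delta_0$ in an appropriate weak sense on compacts (Riesz-kernel homogenisation fails here, so instead: $K$ integrable forces the double integral to be at least $\sim c/R$ by a direct argument comparing to the diagonal contribution). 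Actually the cleanest route for the lower bound in this regime is to note $E(\mu) \ge$ (something) via the Fourier representation \eqref{e:energystat}--\eqref{e:energystat}, or simply via the known sharp asymptotics of line-segment capacity for integrable kernels; I would cite or reprove that $\capa_K([0,R]) \sim R/(2\int_0^\infty K)$ by a subadditivity/superadditivity argument on the capacity of $[0,R]$ as a function of $R$ (capacity is superadditive under disjoint unions up to interaction terms, which are negligible when $\int K < \infty$).

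For $\alpha = 1$ (with $L(x) = (\log x)^{-\gamma}$, $\gamma < 1$) and $\alpha \in [0,1)$: here I follow the heuristic in Section~\ref{sss:cap}. For $\alpha \in [0,1)$, rescale by $x \mapsto x/R$; regular variation gives $K(R(x-y))/K(R) \to |x-y|^{-\alpha}$ uniformly on compact subsets of $\{x \neq y\}$, and one checks (using the Potter bounds from Appendix~\ref{a:reg}, and the fact that the $\alpha$-Riesz energy of the beta-distributed optimiser is finite) that $E(\mu)/K(R) \to \int\int|x-y|^{-\alpha}\,d\mu_R(x)\,d\mu_R(y)$, with the convergence of the minima following from a standard $\Gamma$-convergence/lower-semicontinuity argument on $\mathcal{P}([0,1])$. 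Taking the infimum gives $\min E(\mu) \sim K(R)/c_\alpha$, i.e.\ $\capa_K([0,R]) \sim c_\alpha/K(R)$. For $\alpha = 1$, the Riesz energy $\int\int|x-y|^{-1}$ diverges, signalling that the optimal measure homogenises: take $\mu$ Lebesgue again, so $E(\mu) \sim \frac{2}{R^2}\int_0^R(R-u)K(u)\,du \sim \frac{2}{R}\int_0^R K(u)\,du$ (the factor $R-u \approx R$ dominates since $\int_0^R K$ is slowly varying, being $\sim R K(R)/(1-\gamma)$ by Karamata's theorem, $\gamma<1$), giving $\capa_K([0,R]) \gtrsim R/(2\int_0^R K)$; the matching lower bound on $E(\mu)$ for all $\mu$ is the delicate point — one shows any $\mu$ concentrating more than Lebesgue pays a strictly larger energy because the logarithmic divergence of the Riesz kernel penalises concentration, so up to $(1+o(1))$ the Lebesgue measure is optimal. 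I would make this rigorous by truncating $K$ at scale $\eps R$, bounding the truncated energy below by $c\eps$-independent terms, and optimising.

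Finally, \eqref{e:captube}: monotonicity \eqref{e:mono} of capacity in the domain gives $\capa_K([0,R]) \le \capa_K([0,R]\times[0,r]^{d-1})$ immediately. For the reverse inequality, project: the map $\pi:\R^d \to \R$, $\pi(x_1,\dots,x_d)=x_1$, sends the tube onto $[0,R]$; given the optimal measure $\mu$ on the tube, note that for $x,y$ in the tube with $|\pi(x)-\pi(y)|$ large, $|x-y| = |\pi(x)-\pi(y)|(1+O((r/|\pi(x)-\pi(y)|)^2))$, so regular variation plus $r = R^{o(1)}$ shows $K(x-y) = K(\pi(x)-\pi(y))(1+o(1))$ outside a small-measure diagonal neighbourhood; the diagonal neighbourhood contributes negligibly to the energy (its measure is controlled and $K$ is bounded). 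Hence $E(\mu) \ge (1-o(1))E(\pi_*\mu) \ge (1-o(1))\min_{\nu \in \mathcal{P}([0,R])} E(\nu)$, so $\capa_K(\text{tube}) \le (1+o(1))\capa_K([0,R])$.

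The main obstacle will be the $\alpha = 1$ case: proving the universal lower bound $E(\mu) \ge (1-o(1))\cdot\frac{2}{R}\int_0^R K$ for \emph{every} $\mu \in \mathcal{P}([0,R])$ requires quantifying that logarithmic kernels force near-uniform optimisers, which is more subtle than the straightforward rescaling arguments available for $\alpha \in [0,1)$ and $\alpha > 1$. A secondary technical point is handling the slowly varying $L$ uniformly — controlling the errors from replacing $K(\lambda x)$ by $\lambda^{-\alpha}L(\lambda)|x|^{-\alpha}$ near the diagonal, where Potter-type bounds degrade — but this is routine given Appendix~\ref{a:reg}.
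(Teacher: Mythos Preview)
Your approach for $\alpha \in [0,1)$ (rescale to the Riesz kernel, use Potter bounds and weak compactness of $\mathcal{P}([0,1])$) is essentially what the paper does, via its Proposition~\ref{p:capdom}. For $\alpha \ge 1$ you correctly use the uniform measure for the \emph{lower} bound on $\capa_K$, but your proposals for the upper bound (subadditivity for $\alpha>1$, ``truncate at scale $\eps R$'' for $\alpha=1$) are vague and would need real work. The device you do not invoke, and which the paper uses to dispatch this direction in one stroke, is convex duality \eqref{e:dual}: rather than bound $E(\mu)$ below for \emph{every} $\mu$, one exhibits a single $h\in H$ with $h|_{[0,R]}\ge 1$ and small norm, namely the scaled potential $h(\cdot) \propto \int_{-sR}^{R+sR} K(\cdot - x)\,dx$ of the uniform measure on a slightly enlarged segment. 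Then $\|h\|_H^2$ is an explicit double integral of $K$, and sending $s\to 0$ recovers the sharp constant in both the $\alpha>1$ and $\alpha=1$ cases. This is precisely the point of Proposition~\ref{p:duality}: it turns your ``delicate'' universal lower bound on energies into the exhibition of one test function.

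Your projection argument for \eqref{e:captube} has a genuine gap. You want $E(\mu) \ge (1-o(1))E(\pi_*\mu)$ for the tube-optimal $\mu$, but since $|x-y| \ge |\pi(x) - \pi(y)|$ and $K$ is asymptotically decreasing, the pointwise inequality $K(x-y) \le (1+o(1))K(\pi(x)-\pi(y))$ goes the \emph{wrong} way; to recover you must show the near-diagonal set $\{|\pi(x) - \pi(y)| \lesssim r\}$ contributes $o(1/\capa_K([0,R]))$ to $E(\pi_*\mu)$. Your justification (``its measure is controlled and $K$ is bounded'') is exactly the missing step: you have no a priori control on how concentrated $\pi_*\mu$ is, and nothing obvious prevents the tube minimiser from projecting to a measure with substantial mass at scale $r$. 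The paper sidesteps this entirely by duality again: the \emph{same} $h$ above satisfies $h \ge 1 + o(1)$ on the whole tube because $r = R^{o(1)}$ (for $\alpha=1$; for $\alpha\in[0,1)$ one instead applies Proposition~\ref{p:capdom} to the tube and lets the aspect ratio $\rho\to 0$ in the limiting Riesz problem), giving $\capa_K(\text{tube}) \le (1+o(1))\capa_K([0,R])$ without ever touching the tube-optimal measure.
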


\begin{remark}
Proposition \ref{p:capline} extends results in \cite[Theorems 7.1, 7.2]{ams14}, which proved \eqref{e:capline} in the cases $\alpha \in (0,1)$ and $\alpha > 1$ under the additional assumption that $K \ge 0$. 
\end{remark}

\begin{proof}
Let us first consider \eqref{e:capline}. For $\alpha \in [0, 1)$ the result is a special case of Proposition \ref{p:capdom} below (see also Remark \ref{r:reduc}). To treat the case $\alpha \ge 1$ we observe that, for any stationary continuous kernel $K$ and any $R, s > 0$ such that $ \min_{y \ge s R} \int_0^y K(x) dx  > 0$,
\begin{equation}
\label{e:capline1}
  \frac{R^2}{\int_0^R \int_0^R K(x-y) dx dy } \le \capa_K([0, R]) \le \frac{ \int_{- s R}^{R + s R} \int_{-s R}^{R +  s R} K(x-y) dx dy}{ (2 \min_{y \ge s R} \int_0^y K(x) dx )^2 } .   
  \end{equation}
Indeed the left-hand side of \eqref{e:capline1} is deduced from \eqref{e:capbounds} with $\mu$ the uniform measure on the line-segment $[0,R]$. To establish the right-hand side we consider the function
\begin{equation}
\label{e:capline2}
 h(\cdot) = h_\mu(\cdot) = \int  K(x-\cdot) d \mu(x) = \frac{\int_{-s R}^{R + s R} K(x -  \cdot) dx }{2 \min_{y \ge s R} \int_0^y K(x) dx } ,  
 \end{equation}
where $\mu$ is the uniform measure on the line-segment $[-sR, R+sR]$ scaled by 
\[  \frac{R+2sR}{2\min_{y \ge s R} \int_0^y K(x) dx}. \]
 Then, recalling \eqref{e:energyh}, $\|h\|_H^2 =  E[\mu]$ is equal to the right-hand side of \eqref{e:capline1}, and moreover, for any $z \in [0, R]$,
\[ h(z) = \frac{\int_0^{z + s R} K(x) dx + \int_0^{R + s R - z} K(x) dx }{2 \min_{y \ge s R} \int_0^y K(x) dx } \ge 1 .  \]
Hence $h$ is a valid candidate in \eqref{e:capbounds}, which gives the right-hand side of \eqref{e:capline1}.

By stationary and since $K$ is eventually positive, for every $s > 0$ and sufficiently large $R$ we deduce that
\begin{equation}
\label{e:capline3}
   \frac{R^2}{\int_0^R \int_0^R K(x-y) dx dy } \le \capa_K([0, R]) \le \frac{ \int_{0}^{R + 2s R} \int_{0}^{R +  2s R} K(x-y) dx dy}{ (2 \int_0^{s R} K(x) dx )^2 } .  
   \end{equation}

In the case $\alpha > 1$ recall that by assumption $\int_0^\infty |K(x)| dx < \infty$ and  $\int_0^\infty K(x) dx > 0$. Then, by a change of variables and dominated convergence, the left-hand side in \eqref{e:capline3} satisfies, as $R \to \infty$,
\[    \frac{R^2}{\int_0^R \int_0^R K(x-y) dx dy }  =   \frac{R}{  \int_0^1  \int_{-yR}^{(1-y)R} K(x) dx  dy } \sim  \frac{R}{2 \int_0^\infty K(x) dx} .  \]
Moreover for any $s > 0$ the right-hand side of \eqref{e:capline3} satisfies, as $R \to \infty$
\[ \frac{ \int_{0}^{R + 2s R} \int_{0}^{R +  2s R} K(x-y) dx dy}{ (2  \int_0^{s R} K(x) dx )^2 }  \sim  \frac{   (R + 2s R)  \big( 2  \int_0^\infty K(x) \big) }{ \big( 2  \int_0^\infty K(x) dx \big) ^2 } =   \frac{R(1 + 2s)}{2 \int_0^\infty K(x) dx} . \]
Taking $s \to 0$ gives the result. 

In the case $\alpha = 1$ we use the following additional facts (see \eqref{e:regvar1}): for any $\rho > 0$, 
\[  \frac{1}{2R} \int_0^R  \int_0^R K(x-y) dx  dy \sim  \int_0^{s R}   K(x) dx   \sim   \int_0^{R} K(x) dx \sim \frac{1}{1-\gamma} R K(R) (\log R) .    \]
Hence the left-hand side of \eqref{e:capline3} satisfies,  as $R \to \infty$,
\[  \frac{R^2}{\int_0^R \int_0^R K(x-y) dx dy }   \sim  \frac{R}{2 \int_0^R K(x) dx}   \sim  \frac{1-\gamma}{2 K(R) (\log R) }   . \]
Moreover, for any $s > 0$ the right-hand side of \eqref{e:capline3} satisfies, as $R \to \infty$,
\begin{equation}
\label{e:capline4}
   \frac{  \int_{0}^{R + 2s R} \int_{0}^{R + 2 s R} K(x-y) dx dy }{  (2 \int_0^{s R} K(x) dx)^2 } \sim  \frac{R(1 + 2s)}{2 \int_0^R K(x) dx}   \sim  \frac{(1-\gamma)(1+2s)}{2 K(R) (\log R) }   . 
   \end{equation}
Taking $s \to 0$ gives the result also in this case.

\smallskip
We now turn to \eqref{e:captube}. In the case $\alpha = 1$ we use the fact that (see \eqref{e:regvar2}), since $r = R^{o(1)}$, for any $s > 0$,
\[ \min_{h \in [0, \sqrt{d-1} r] }  \int_0^{s R} K( \sqrt{x^2 + h^2} ) dx  \sim \int_0^{s R} K(x) dx   \sim \int_0^R K(x) dx  . \]
Let $h$ be the function in \eqref{e:capline2}, which satisfies, for $z = (z_1, z_2) \in [0, R] \times [0, r]^{d-1}$,
\begin{align*}
h(z) & = \frac{\int_0^{z_1 + s R} K(\sqrt{x^2 + |z_2|^2 } ) dx + \int_0^{R + s R - z_1} K(\sqrt{x^2 + |z_2|^2}) dx }{2    \min_{y \ge s R} \int_0^{s R} K(x) dx }  \\
& \ge \frac{  \min_{h \in [0, \sqrt{d-1} r] }  \int_0^{s R} K( \sqrt{x^2 + h^2} ) dx  }{     \int_0^{s R} K(x) dx }  \\
& \ge 1 + o(1) ,
\end{align*}
where we used the fact that $K$ is eventually positive. Hence applying \eqref{e:capbounds} with $h(\cdot)$ scaled by $(\min_{z \in  [0, R] \times [0, r]^{d-1}} h(z))^{-1}$ shows that 
\begin{align*}
  \capa_K([0, R] \times [0, r]^{d-1} ) &  \le (1+o(1))  \|h\|_H^2  \sim \frac{ \int_{0}^{R + 2s R} \int_{0}^{R +  2s R} K(x-y) dx dy}{ (2 \int_0^{s R} K(x) dx )^2 } \\
  & \sim \frac{R(1 + 2s)}{2 \int_0^R K(x) dx}   \sim  \capa_K([0, R])(1 + 2s) , 
  \end{align*}
where the third step used \eqref{e:capline4}. Since $ \capa_K([0, R] \times [0, r]^{d-1} )  \ge   \capa_K([0, R])$ by monotonicity of the capacity \eqref{e:mono}, taking $s \to 0$  gives the result.

\smallskip
Finally, in the case $\alpha \in [0, 1)$, Proposition \ref{p:capdom} below states that, for any $\rho > 0$,
\begin{align*}
&   \capa_K([0, R] \times [0, \rho R]^{d-1} ) K(R)  \\
&   \qquad \sim \Big(  \min_{ \mu \in \mathcal{P}([0,1] \times [0,\rho]^{d-1}) }  \int_{[0, 1] \times [0, \rho]^{d-1}}  \int_{[0, 1] \times [0, \rho]^{d-1}}   |x-y|^{-\alpha} d\mu(x)  d\mu(y) \Big)^{-1}  .
\end{align*}
By monotonicity, the latter quantity is at least $c_\alpha$, so it remains to rule out
\[ \liminf_{\rho \to 0}    \min_{ \mu \in \mathcal{P}([0,1] \times [0,\rho]^{d-1}) }  \int_{[0, 1] \times [0, \rho]^{d-1}}  \int_{[0, 1] \times [0, \rho]^{d-1}}   |x-y|^{-\alpha} d\mu(x)  d\mu(y)    <  c^{-1}_\alpha . \]
For the sake of contradiction suppose this were true, and extract a subsequence $\rho_n \to 0$ such that the minimisers $\mu_n$ converge weakly to a $\hat{\mu} \in \mathcal{P}([0,1])$, and 
\[    \lim_{n \to \infty}      \int_{[0, 1] \times [0, \rho_n]^{d-1}}  \int_{[0, 1] \times [0, \rho_n]^{d-1}}   |x-y|^{-\alpha} d\mu_n(x)  d\mu_n(y)    <  c^{-1}_\alpha .  \]
 Then by weak convergence we would have
\[   \int_{[0, 1]}  \int_{[0, 1]}   |x-y|^{-\alpha} d\hat{\mu}(x)  d\hat{\mu}(y) <  c^{-1}_\alpha  =     \min_{ \mu \in \mathcal{P}([0,1]) }  \int_{[0, 1]}  \int_{[0, 1]}   |x-y|^{-\alpha} d\mu(x)  d\mu(y)  , \]
which is a contradiction
\end{proof}

 \begin{remark}
 \label{r:tube}
The proof of \eqref{e:capline} in the case $\alpha > 1$ applies to any stationary kernel $K$ satisfying $\int_0^\infty |K(x)| dx < \infty$ and $\int_0^\infty K(x) dx > 0$. Notice also that the proof of \eqref{e:captube} in the case $\alpha \in [0, 1)$ requires only that $r = o(R)$ rather than $r = R^{o(1)}$. In principle this would allow us to weaken the definition of $\ell^{\ast \ast}_c$ in the case $\alpha \in (0, 1)$.
 \end{remark}
 
\begin{remark}
In the case $\alpha > 1$ of Proposition \ref{p:capline} we assumed that $\int_0^\infty K(x) dx > 0$, and in general this condition cannot be dispensed with. However for isotropic fields in dimension $d \ge 2$ this is without loss of generality by the following claim:
\end{remark}
 
 \begin{claim}
\label{c:int}
If $K : \R^d \to \R$, $d \ge 2$, is a non-zero isotropic kernel such that $K|_\R \in L^1(\R)$, then $\int_0^\infty K(x)  dx > 0$.
\end{claim}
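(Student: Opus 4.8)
The plan is to exploit the fact that an isotropic kernel in dimension $d \ge 2$ is, via Bochner's theorem, the Fourier transform of a rotationally invariant finite positive spectral measure $\nu$ on $\R^d$. Writing $K(x) = \mathcal F[\nu](x)$, and using that $K|_\R \in L^1(\R)$, I would first observe that the one-dimensional restriction $K|_\R$ is itself a positive-definite function on $\R$ (restrictions of positive-definite functions to subspaces remain positive-definite), so by Bochner's theorem in one dimension it is the Fourier transform of a finite positive measure $\tilde\nu$ on $\R$; concretely $\tilde\nu$ is the pushforward of $\nu$ under the projection $\lambda \mapsto \lambda_1$. Since $K|_\R \in L^1(\R)$, this measure $\tilde\nu$ is absolutely continuous with a continuous density $p(\lambda_1) = \int_\R K(t) e^{-2\pi i \lambda_1 t}\,dt$ by Fourier inversion, and crucially
\[
\int_0^\infty K(x)\,dx = \tfrac12 \int_{-\infty}^\infty K(x)\,dx = \tfrac12\, p(0) = \tfrac12\, \tilde\nu(\{\text{density at }0\}) \ge 0 ,
\]
using that $K$ is even (isotropic). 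So the content of the claim is that this value is strictly positive, i.e. that the spectral density of the projected measure does not vanish at the origin.

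The key step is to rule out $p(0)=0$. Since $p \ge 0$ (it is a spectral density), $p(0)=0$ would force $p$ to have a zero at its minimum, but more usefully: $p(0) = \int_{\R^{d-1}} g(0,\lambda')\,d\lambda'$ where (thinking heuristically) $\nu$ has a density $g$ — but $\nu$ need not be absolutely continuous on $\R^d$, so I would instead argue directly with measures. By rotational invariance of $\nu$, if the projection $\tilde\nu$ assigns zero mass (density) near $\lambda_1 = 0$, then by rotating coordinates the same must hold near any hyperplane $\{(\omega,\lambda) = 0\}$ through the origin, for every unit vector $\omega$. The intersection of a small neighbourhood of all such hyperplanes is a neighbourhood of the origin in $\R^d$, so this would force $\nu$ to have no mass near $0$. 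That alone is not a contradiction, so the real mechanism must be different: $p(0) = 0$ with $p \ge 0$ and $p$ continuous means $p$ attains a global minimum at $0$, and since $K|_\R \in L^1$, we have $p(\lambda_1) = \int_\R K(t) e^{-2\pi i\lambda_1 t} dt$, hence by Riemann–Lebesgue $p \to 0$ at infinity too. The correct route, I think, is: $\int_\R p(\lambda_1)\,d\lambda_1 = K(0) = \int_{\R^d} d\nu > 0$ (Fourier inversion applied to $p$, valid since $p \in L^1$), so $p$ is not identically zero; and then one shows a rotation-invariance rigidity forcing $p(0)>0$ whenever $p \not\equiv 0$. Specifically, for any $\lambda_1 \in \R$, the set $\{\lambda \in \R^d : \lambda_1 = c\}$ can be mapped by a rotation into a set that "shadows" a neighbourhood of $0$; more precisely, $\mathrm{supp}(\tilde\nu)$ is symmetric and, being the projection of the rotation-invariant $\mathrm{supp}(\nu)$, must be an interval $[-a,a]$ or all of $\R$ — so if $p(0)=0$ then $0 \notin \mathrm{int}(\mathrm{supp}\,\tilde\nu)$, forcing $\mathrm{supp}\,\tilde\nu \subseteq \{0\}$, i.e. $\tilde\nu = 0$, contradicting $\tilde\nu(\R) = K(0) > 0$.

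The main obstacle is making that support/rigidity argument airtight: the projection of a rotation-invariant closed set in $\R^d$ ($d\ge2$) onto a line is a closed symmetric set that is \emph{connected} (indeed an interval containing $0$, or empty), because $\mathrm{supp}(\nu)$, being rotation-invariant, is a union of spheres $\{|\lambda| = r\}$ and each such sphere projects onto $[-r,r]$ — here $d \ge 2$ is exactly what guarantees the sphere is connected and its projection is a full interval rather than a two-point set. Hence $\mathrm{supp}(\tilde\nu)$ is an interval symmetric about $0$; if $K|_\R$ is non-zero then $\tilde\nu \ne 0$ so this interval is nondegenerate, so $0$ lies in its interior, and since $\tilde\nu$ has continuous density $p$ which is \emph{real-analytic}? — no, only continuous; but a density that is continuous and strictly positive on the interior of its support, evaluated at an interior point, could still a priori be $0$. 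To close this last gap I would use that $p$ is the Fourier transform of the $L^1$ even function $K|_\R$, so $p(0) = \int_\R K(t)\,dt$ and separately note $p$ is of positive type's inverse... The cleanest finish: since $\tilde\nu$ is a nonzero finite positive measure with a continuous density supported on an interval $[-a,a] \ni 0$ in its interior (if $a>0$), and $p$ is continuous and $p\ge 0$, one still needs $p(0)>0$; I would instead invoke that $K|_\R$ being non-zero and in $L^1$ makes $p$ a nonzero continuous positive function whose \emph{support} is all of $\R$ when $\nu$ has unbounded support, or an interval otherwise — and in either case, by the structure above $0$ is interior, and I will verify $p(0)>0$ by the concrete formula $p(0) = \int_{\R^{d-1}} \text{(disintegration of }\nu\text{ over }\lambda_1)(\{0\})$ combined with $d \ge 2$ ensuring the fibre over $\lambda_1 = 0$ is a full $(d-1)$-dimensional slice carrying positive $\nu$-mass. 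That disintegration step is the technical crux and where I would spend the most care.
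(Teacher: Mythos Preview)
Your setup is exactly right and matches the paper's approach: pass to the spectral measure $\nu$ on $\R^d$, note that the one-dimensional restriction $K|_\R$ has spectral measure $\tilde\nu$ equal to the pushforward of $\nu$ under projection onto the first coordinate, use $K|_\R \in L^1(\R)$ to conclude that $\tilde\nu$ has a continuous density $p$, and reduce the claim to $p(0)>0$. Your support argument is also correct as far as it goes, and you correctly identify that it alone does not close the proof since a continuous nonnegative density can vanish at an interior point of its support.

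The gap is in your final step. Your proposed formula for $p(0)$ as ``the $\nu$-mass of the fibre $\{\lambda_1=0\}$'' is not right: $p(0)$ is a density value, not a slice mass, and for instance if $\nu$ is absolutely continuous on $\R^d$ then the hyperplane $\{\lambda_1=0\}$ has $\nu$-measure zero, yet $p(0)$ can perfectly well be positive. So the mechanism you describe does not produce strict positivity. What the paper does instead is to disintegrate $\nu$ radially: by rotational invariance, $\nu$ is a mixture of uniform measures on spheres $\{|\lambda|=r\}$. The classical fact is that the projection of the uniform probability measure on $\mathbb{S}^{d-1}$ onto a line has density proportional to $(1-t^2)^{(d-3)/2}$ on $[-1,1]$, which is \emph{strictly positive at $t=0$} for every $d\ge 2$ (this is precisely where $d\ge 2$ enters). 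Averaging over radii then gives $p(0)>0$ immediately, since every shell contributes a strictly positive amount. This replaces your disintegration-over-the-fibre idea with disintegration-over-spheres, and the explicit sphere-projection density is what supplies the missing strict positivity.
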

\begin{proof}
Let $\mu$ be the spectral measure of $K$; the spectral measure of the restriction $K|_\R$ is the projection $P_1(\mu)$ of $\mu$ onto $\R$. Recall the classical fact that if the uniform measure on the unit sphere $\mathbb{S}^{d-1}$ is orthogonally projected onto a linear subspace of dimension $k \le d-1$, the push-forward measure on the unit disk has density proportional to $(1-\|x\|_2^2)^{(d-k-2)/2}$; in particular the density is continuous and strictly positive near the origin. Since $P_1(\mu)$ can be represented as an average of such projections, $P_1(\mu)$ has a density $\rho$ satisfying $\rho(0) > 0$. Hence $ \int_0^\infty K(x) dx = \rho(0)/2$ is strictly positive.
\end{proof}

We next generalise Proposition \ref{p:capline} to rescaled domains in the `non-integrable' case (i.e.\ in which $\alpha$ is less than the dimension of the domain). For simplicity we consider only the case of balls or boxes, although the proof works for bounded domains with sufficiently regular boundary (e.g.\ piece-wise smooth and Lipschitz).

\begin{proposition}[Capacity of rescaled domains; non-integrable case]
\label{p:capdom}
Let $D \subset \R^d$ be a domain which is either a ball $B(r)$, $r > 0$, or a box $[0, a_1] \times \cdots \times [0, a_d]$, $a_i > 0$. Then if $\alpha \in [0, d)$, as $R \to \infty$, 
\begin{equation}
\label{e:capbox2}
\capa_K(RD)  \sim   c_{D;\alpha} / K(R) 
\end{equation}
where
\[   c_{D;\alpha} =  \Big(  \min_{ \mu \in \mathcal{P}(D) }  \int_{D} \int_{D}  |x-y|^{-\alpha} d\mu(x)  d\mu(y) \Big)^{-1}  > 0 . \]
Note that $c_{D;\alpha} = 1$ if $\alpha = 0$.
\end{proposition}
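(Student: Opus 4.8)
The plan is to prove \eqref{e:capbox2} by establishing matching upper and lower bounds on $\capa_K(RD) K(R)$ via the energy/measure characterisation \eqref{e:ca2}, using the spatial rescaling $x \mapsto x/R$ together with the regular variation of $K$. First I would record the natural reparametrisation: for any $\mu \in \mathcal{P}(RD)$, writing $\tilde\mu$ for its pushforward under $x \mapsto x/R$ (so $\tilde\mu \in \mathcal{P}(D)$), we have $E(\mu) = \int_D\int_D K(R(x-y)) \, d\tilde\mu(x)\, d\tilde\mu(y)$, and hence
\[
\capa_K(RD)^{-1} = \min_{\nu \in \mathcal{P}(D)} \int_D\int_D K(R(x-y)) \, d\nu(x)\, d\nu(y).
\]
So the claim reduces to showing that, as $R\to\infty$, this minimum is asymptotic to $K(R) \cdot \min_{\nu} \int_D\int_D |x-y|^{-\alpha} d\nu(x) d\nu(y)$, i.e.\ that we may replace $K(R(x-y))/K(R)$ by $|x-y|^{-\alpha}$ under the minimum. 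Since $\alpha < d = \dim D$, the Riesz-energy $\int_D\int_D |x-y|^{-\alpha} d\nu(x) d\nu(y)$ is finite for suitable $\nu$ (e.g.\ uniform measure on $D$), so $c_{D;\alpha}>0$ is well-defined, and by classical potential theory (the $\alpha$-Riesz kernel on a compact set with positive Lebesgue measure) there is an equilibrium measure achieving the minimum; the $\alpha=0$ case is trivial since the kernel is constantly $1$.

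For the \emph{upper bound} on capacity (lower bound on the energy minimum), I would use uniform convergence of $K(Rt)/K(R) \to t^{-\alpha}$ on compact subsets of $(0,\infty)$ (Appendix \ref{a:reg}), combined with a uniform control near the diagonal: since $K$ is continuous and bounded near $0$ and regularly varying with index $-\alpha$, a Potter-type bound gives $K(Rt) \le C K(R) t^{-\alpha-\eps}$ for $t \le 1$ and $R$ large, with $\alpha+\eps < d$; this lets me dominate and pass to the limit, showing $\liminf_R K(R)^{-1} \min_\nu E_R(\nu) \ge \min_\nu \int\int |x-y|^{-\alpha}$, where $E_R$ is the rescaled energy. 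Here I would take a weakly convergent subsequence of near-minimisers $\nu_R \to \hat\nu$ and use Fatou/lower semicontinuity of the limiting Riesz energy under weak convergence, exactly as in the contradiction argument at the end of the proof of Proposition \ref{p:capline}. For the \emph{lower bound} on capacity (upper bound on the minimum), I would simply plug the $\alpha$-Riesz equilibrium measure $\mu_D \in \mathcal{P}(D)$ into $E_R$: by the same Potter bound providing an integrable dominating function and the pointwise convergence $K(R(x-y))/K(R) \to |x-y|^{-\alpha}$, dominated convergence yields $K(R)^{-1} E_R(\mu_D) \to \int_D\int_D |x-y|^{-\alpha} d\mu_D(x) d\mu_D(y) = c_{D;\alpha}^{-1}$, whence $\limsup_R K(R)^{-1}\capa_K(RD)^{-1} \le c_{D;\alpha}^{-1}$.

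The main obstacle is the behaviour near the diagonal $x = y$: regular variation only controls $K(Rt)/K(R)$ for $t$ bounded away from $0$, so without care the contribution of pairs $(x,y)$ with $|x-y| = o(1)$ (equivalently $R|x-y|$ not large) is not directly governed by the asymptotic kernel, and one must rule out mass concentration of the minimisers. The resolution is the Potter bound $K(Rt) \le C K(R)\max(t^{-\alpha-\eps}, t^{-\alpha+\eps})$ valid for all $t>0$ and $R$ large (using continuity and boundedness of $K$ near $0$ to handle the genuinely small-$t$ regime, and $\alpha+\eps<d$ so the bound is still Lebesgue-integrable against $\nu$ when $\nu$ has a bounded density such as $\mu_D$), which supplies a uniform integrable majorant enabling both the dominated-convergence step for the fixed test measure and the uniform-integrability/lower-semicontinuity step for the weakly convergent minimisers. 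A secondary technical point — that the geometry of $D$ (ball or box) ensures $\mu_D$ can be chosen with bounded density, or at least with finite $(\alpha+\eps)$-Riesz energy — is where the hypothesis on the shape of $D$ enters; this is routine for balls and boxes since $\alpha + \eps < d$.
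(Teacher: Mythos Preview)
Your overall strategy matches the paper's: rescale, reduce to comparing $K(R(x-y))/K(R)$ with $|x-y|^{-\alpha}$, and handle the two directions by Fatou/weak-compactness and by a test measure plus dominated convergence respectively. However, two steps in your outline are genuinely incomplete.

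\textbf{First gap (the Fatou direction).} Fatou with weakly converging measures requires a nonnegative integrand, and $K$ is only \emph{eventually} positive under Assumption~\ref{a:rv}; the rescaled kernel $K(R(x-y))/K(R)$ can be negative on the (shrinking but not a priori $\nu_R\otimes\nu_R$-negligible) set $\{R|x-y|\in\text{bounded negative set of }K\}$. Your Potter upper bound does not help here, and the argument you cite from the end of the proof of Proposition~\ref{p:capline} has a \emph{fixed} nonnegative kernel with only the measures varying, so it is not the same situation. The paper deals with this by first reducing to $K\ge 0$ via the decomposition $K=K_1+K_2$ with $K_1\ge 0$ and $K_1\sim K$ (Proposition~\ref{p:rvdecomp}), and then using monotonicity of the capacity in the kernel; this is a nontrivial step that your proposal omits.

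\textbf{Second gap (the test-measure direction).} Your claim that the $\alpha$-Riesz equilibrium measure $\mu_D$ on a ball or box has bounded density is false in general: already on $[0,1]$ the density is proportional to $(x(1-x))^{(\alpha-1)/2}$, which blows up at the endpoints for every $\alpha\in(0,1)$, and on higher-dimensional balls and boxes one likewise has boundary/corner singularities. Your fallback---that $\mu_D$ has finite $(\alpha+\eps)$-Riesz energy for some $\eps>0$---may well be true, but it is shape- and $\alpha$-dependent and not the ``routine'' consequence of $\alpha+\eps<d$ you suggest (that inequality only handles Lebesgue-type test measures, not the singular equilibrium measure). The paper sidesteps this entirely: it takes $\mu_\alpha$, convolves with the uniform measure on $B(\theta)$ and rescales back into $D$ to obtain $\nu_\theta$ with bounded density; for fixed $\theta$ the DCT/Potter argument goes through cleanly (and the near-diagonal piece is handled by the bounded density), and then a separate dominated-convergence step shows the $\alpha$-energy of $\nu_\theta$ converges to that of $\mu_\alpha$ as $\theta\to 0$. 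This smoothing is precisely what makes the hypothesis on the shape of $D$ enter (one needs to rescale the convolved measure back into $D$), and it is the missing idea in your lower-bound argument.
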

\begin{proof}
We use a similar strategy to the proof of \cite[Theorem 7.2]{ams14}.

\smallskip
 Let us first consider the upper bound. We claim that without loss of generality we may assume that $K \ge 0$. Indeed, by Proposition \ref{p:rvdecomp} there exist continuous isotropic positive definite kernels $K_1$ and $K_2$ such that $K = K_1 + K_2$, $K_1 \ge 0$, and $K_1(R) \sim K(R)$ as $R \to \infty$; in particular $K_1$ is regularly varying with index $\alpha$. Then by monotonicity of the capacity~\eqref{e:mono}, 
  \[ \capa_K(RD) / K(R)  \le  \capa_{K_1}(RD)  / K(R) = (1+o(1))  \capa_{K_1}(RD)  / K_1(R) .\] 
So henceforth we assume that $K \ge 0$. Via \eqref{e:ca2} and a change of variables we have
\begin{equation}
\label{e:capdom0}
 \frac{1}{ \capa_K(RD) K(R) } = \min_{\mu \in \mathcal{P}(D)} \int_D \int_D \frac{K( R (x-y) )}{K(R)} d\mu(x) d\mu(y) . 
 \end{equation} 
Let $\mu_\alpha \in \mathcal{P}(D)$ denote a probability measure which minimises energy with respect to the $\alpha$-Riesz kernel on $D$, i.e.\ such that
\[   c_{D;\alpha}^{-1} =    \min_{\mu \in \mathcal{P}(D)} \int_D \int_D |x-y|^{-\alpha} d\mu(x) d\mu(y)  =  \int_{D} \int_{D}  |x-y|^{-\alpha} d\mu_\alpha(x)  d\mu_\alpha(y)  . \]
Suppose, for the sake of contradiction, that
\[   \liminf_{R \to \infty}  \min_{\mu \in \mathcal{P}(D)} \int_D \int_D \frac{K( R (x-y) )}{K(R)} d\mu(x) d\mu(y)  <  c_{D;\alpha}^{-1} , \]
Extract subsequences $R_n \to \infty$ and $\mu_n \in \mathcal{P}(D)$, $n \ge 1$, such that $\mu_n$ converges weakly to a $\mu_\infty \in \mathcal{P}(D)$, and also 
\[   \liminf_{n \to \infty} \int_D \int_D \frac{K( R_n (x-y) )}{K(R_n)} d\mu_n(x) d\mu_n(y)  <  c_{D;\alpha}^{-1} . \]
Then by Fatou's lemma (recall that we assume $K \ge 0$) and regular variation,
\[   c_{D;\alpha}^{-1}  > \liminf_{n \to \infty} \int_D \int_D \frac{K( R_n (x-y) )}{K(R_n)} d\mu_n(x) d\mu_n(y)  \ge    \int_D \int_D |x-y|^{-\alpha} d\mu_\infty(x) d\mu_\infty(y)   , \]
which contradicts the definition of $c_{D;\alpha}$.

\smallskip
We turn to proving the lower bound. Let $\theta > 0$ and define $\nu_\theta \in \mathcal{P}(D)$ to be the probability measure with bounded density obtained as follows. Let $X$ be a random variable whose law is $\mu_\alpha$ and let $U$ be an independent uniform random variable on $B(1)$. Then let $\nu_\theta$ be the law of the random variable $ c_\theta (X + \theta U)$, where $c_\theta \in (0,1)$ is chosen to be the largest value that ensures $Y$ is supported on~$D$ (i.e.\ $c_\theta =  r / (r+\theta)$ if $D= B(r)$ and $c_\theta = \min_i a_i /(a_i+\theta)$ if $D = [0,a_1] \times \cdots \times [0, a_d]$). Fixing $\varepsilon \in (0, d-\alpha)$, by Potter's bounds \cite[Theorem 1.5.6]{bgt87} we may take $t > 0$ sufficient large so that, for all $R \ge t$ and $x,y \in D$,
\[ \frac{K( R (x-y) )}{K(R)} \id_{|x-y| \ge t / R}  \le   \begin{cases} 2 |x-y|^{-\alpha - \varepsilon}, & |x-y| \le 1, \\ 2 |x-y|^{-\alpha+\eps}, & |x-y| \ge 1. \end{cases} \]
Hence by dominated convergence we have
\[  \lim_{R \to \infty}  \int_D \int_D \frac{K( R (x-y) )}{K(R)} \id_{|x-y| \ge t / R} \, d\nu_\theta(x) d\nu_\theta(y)  =  \int_D \int_D |x-y|^{-\alpha} d\nu_\theta(x) d\nu_\theta(y) . \]
On the other hand,
\[  \lim_{R \to \infty}  \int_D \int_D \frac{K( R (x-y) )}{K(R)} \id_{|x-y| \le t / R} \, d\nu_\theta(x) d\nu_\theta(y)   \le O(R^{-d} / K(R)) = o(1) \]
by the bounded density of $\mu_\theta$. This establishes that
\[   \limsup_{R \to \infty}  \min_{\mu \in \mathcal{P}(D)} \int_D \int_D \frac{K( R (x-y) )}{K(R)} d\mu(x) d\mu(y)  \le \int_D \int_D |x-y|^{-\alpha} d\nu_\theta(x) d\nu_\theta(y)  . \]
By \eqref{e:capdom0}, it remains to argue that
\[ \lim_{\theta \to 0}  \int_D \int_D |x-y|^{-\alpha} d\nu_\theta(x) d\nu_\theta(y)  =  \int_D \int_D |x-y|^{-\alpha} d\mu_\alpha(x) d\mu_\alpha(y) , \]
or equivalently that
\begin{equation}
\label{e:capdom1}
  \lim_{\theta \to 0} c_\theta \E \big[ | X_1 - X_2 + \theta ( U_1 - U_2) |^{-\alpha}  \big]  = \E \big[ |X_1-X_2|^{-\alpha} \big], 
  \end{equation}
where $X_i$ and $U_i$ are independent copies of $X$ and $U$ respectively. First, for $\theta \in [0,1]$ and $x_1, x_2 \in \R^d$ define
\[ h_\theta(x_1,x_2) = c_\theta \E[  \big[ | x_1 - x_2 + \theta ( U_1 - U_2) |^{-\alpha}  \big]   .\]
Since $c_\theta \to 1$ as $\theta \to 0$, and by dominated convergence, for every $x_1 \neq x_2$, $h_\theta(x_1,x_2) \to h_0(x_1,x_2)$ as $\theta \to 0$. It may also be checked by elementary calculation that 
\[ h_\theta(x_1,x_2) \le c_\alpha h_0(x_1,x_2) = c_\alpha |x_1-x_2|^{-\alpha} \]
for a constant $c_\alpha > 0$ that depends only on $\alpha$. Applying the dominated convergence theorem again establishes \eqref{e:capdom1}.
\end{proof}

\begin{remark}
\label{r:reduc}
Since the capacity of a set $D \subset \R^d$ depends only on the restriction of $K$ to $D$, one can apply Proposition \ref{p:capdom} to sets of dimension lower than the the ambient space. In particular, applying it to the line segment $D = [0, 1]$ recovers the $\alpha \in [0, 1)$ case of Proposition~\ref{p:capline}. 
\end{remark}

We next study `condensation' and `projection' properties of the capacity. For the next two propositions we assume that $K$ is asymptotic to a decreasing function. If $\alpha > 0$ this is automatic since in fact $K(x) \sim \sup_{t \ge x} K(t)$ (see Lemma \ref{l:evd}), but it is not true in general if $\alpha = 0$. In particular we will use the fact that
\begin{equation}
\label{e:evdec}
  \xi_r = \sup_{y \ge x \ge r}    \frac{ K(y) }{ K(x)} - 1  ,
 \end{equation}
tends to zero as $r \to \infty$, whereas in general this is only true for $\alpha > 0$. We also recall the uniform convergence property of regular varying functions \cite[Theorem 1.2.1]{bgt87} which implies that, for any $\delta > 0$ there exists $\rho \in (0,1)$ such that, as $R \to \infty$ eventually
\begin{equation}
\label{e:rv2}
\sup_{ z   \in R (1- \rho, 1 + \rho)} \Big|  \frac{K(z)}{K(R)} - 1  \Big| \le \delta  .
\end{equation}

\smallskip
For $0 \le s \le r \le R$, define 
\[    \mathcal{S}_{s, r, R} =    \cup_{1 \le i < \lfloor R/r \rfloor } \{ i r +  [0, s] \}  \]
to be a coarse grained approximation of the line segment $[0, R]$, using `grains' of length $s$ translated along $r \mathbb{Z}$ (see Figure \ref{f:cond}). In the `non-integrable' case $\alpha \in [0, 1]$, we show that, provided $r$ does not grow too quickly with $R$, and $s$ does not grow too slowly, the capacity of $\mathcal{S}_{s, r, R}$ approximates the capacity of $[0, R]$:

\begin{proposition}[Condensation]
\label{p:con}
Let $\alpha \in [0, 1]$ and let $r = r(R) \to \infty$ and $s = s(R) \to \infty$ be such that $s \le r \le R$ and, as $R \to \infty$,
\begin{equation}
\label{e:concon}
 r  = \begin{cases}   R^{o(1)} & \text{if } \alpha = 1 ,   \\  o(R)  & \text{if } \alpha \in [0, 1),  \end{cases} \qquad \text{and} \qquad  \begin{cases}    r K(s) (\log s) \ll R K(R) (\log R)  & \text{if } \alpha = 1 ,   \\  r K(s)  \ll R K(R)    & \text{if } \alpha \in [0, 1).  \end{cases}    
 \end{equation}
Then, as $R \to \infty$,
\[ \capa_K([0, R]) \sim \capa_K( \mathcal{S}_{s, r, R} )  . \]
\end{proposition}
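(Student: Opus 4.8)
The plan is to combine the trivial upper bound with a careful lower bound built from a condensed test measure. Since $\mathcal{S}_{s,r,R}\subseteq[0,R]$, monotonicity of the capacity \eqref{e:mono} gives $\capa_K(\mathcal{S}_{s,r,R})\le\capa_K([0,R])$ at once, so it suffices to prove $\liminf_{R\to\infty}\capa_K(\mathcal{S}_{s,r,R})/\capa_K([0,R])\ge1$. By the first bound in \eqref{e:capbounds} this follows if, for every $\delta>0$, we can produce a probability measure $\mu=\mu_{R,\delta}\in\mathcal{P}(\mathcal{S}_{s,r,R})$ with $E(\mu)\le\tfrac{1+\delta}{1-\delta}(1+o(1))\,\capa_K([0,R])^{-1}$ as $R\to\infty$. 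First I would reduce to the case $s\le r/2$: if $r/2<s\le r$ then $\mathcal{S}_{\lfloor r/2\rfloor,r,R}\subseteq\mathcal{S}_{s,r,R}$, the pair $(r,\lfloor r/2\rfloor)$ still satisfies \eqref{e:concon} (since $K$ is regularly varying and $\lfloor r/2\rfloor\asymp s\asymp r$, so $K(\lfloor r/2\rfloor)\asymp K(s)$ and $\log\lfloor r/2\rfloor\sim\log s$), and the case $s\le r/2$ together with \eqref{e:mono} yields the claim for $(r,s)$.

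To build $\mu$, set $m=\lfloor R/r\rfloor-1$ and, for $1\le i\le m$, let $I_i=[ir,(i+1)r]$ and $G_i=ir+[0,s]\subseteq I_i$ (the $G_i$ being pairwise disjoint once $s\le r/2$), and put $J=\bigcup_{i=1}^m I_i$, a translate of $[0,mr]$ with $mr\sim R$. Let $\tilde\mu$ be a probability measure supported on $J$ with density bounded by $C_\delta/R$ and with $E(\tilde\mu)\le(1+\delta)\,\capa_K([0,R])^{-1}$ for all large $R$; such a $\tilde\mu$ exists by Propositions \ref{p:capline} and \ref{p:capdom}, since when $\alpha\in\{0,1\}$ the uniform measure on $J$ already works (in those regimes the uniform measure on a segment is asymptotically energy‑minimising, as in the proof of Proposition \ref{p:capline}), while for $\alpha\in(0,1)$ one rescales to $J$ the mollified Riesz‑equilibrium measure $\nu_\theta$ from the proof of Proposition \ref{p:capdom}, with $\theta=\theta(\delta)$ small enough that its $\alpha$‑Riesz energy is at most $(1+\delta)c_\alpha^{-1}$, and transfers this to $K$ via Proposition \ref{p:capdom}. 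Finally set $\lambda_i=\tilde\mu(I_i)$, so that $\sum_i\lambda_i=1$, $\sum_i\lambda_i^2\le\max_i\lambda_i\le C_\delta r/R$, and let $\mu=\sum_{i=1}^m\lambda_i\,\mathrm{Unif}(G_i)\in\mathcal{P}(\mathcal{S}_{s,r,R})$.

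Then $E(\mu)=\sum_{i,j=1}^m\iint_{G_i\times G_j}K\,d\mu\,d\mu$ is estimated by splitting the sum over $(i,j)$ according to a fixed large constant $N$. \emph{Diagonal} $i=j$: $\iint_{G_i^2}K\,d\mu\,d\mu=\lambda_i^2\,E(\mathrm{Unif}([0,s]))\le\lambda_i^2\cdot\tfrac2s\int_0^sK\asymp\lambda_i^2\,K(s)(\log s)^{\id_{\alpha=1}}$ by Karamata's theorem, so this range contributes $\lesssim_\delta rK(s)(\log s)^{\id_{\alpha=1}}/R=o(\capa_K([0,R])^{-1})$ by the second condition of \eqref{e:concon} and Proposition \ref{p:capline}. \emph{Near off‑diagonal} $1\le|i-j|\le N$: since $s\le r/2$ any $x\in G_i$, $y\in G_j$ have $|x-y|\ge r/2$, so (as $K$ is asymptotic to a decreasing function, cf.\ \eqref{e:evdec}) $\iint_{G_i\times G_j}K\,d\mu\,d\mu\le\lambda_i\lambda_j(1+o(1))K(r/2)$, and this range contributes $\lesssim NK(r)\max_i\lambda_i\lesssim_{N,\delta}rK(r)/R=o(\capa_K([0,R])^{-1})$. \emph{Far off‑diagonal} $|i-j|>N$: here all the distances $|x-y|$ occurring for $x\in G_i$, $y\in G_j$ (or $x\in I_i$, $y\in I_j$) lie within a factor $1\pm1/N$ of $|i-j|r\to\infty$, so by the uniform convergence of regularly varying functions \eqref{e:rv2} one gets $\iint_{G_i\times G_j}K\,d\mu\,d\mu\le(1+\delta)\lambda_i\lambda_jK(|i-j|r)$ and $\iint_{I_i\times I_j}K\,d\tilde\mu\,d\tilde\mu\ge(1-\delta)\lambda_i\lambda_jK(|i-j|r)$, whence this range contributes at most $\tfrac{1+\delta}{1-\delta}\sum_{|i-j|>N}\iint_{I_i\times I_j}K\,d\tilde\mu\,d\tilde\mu$. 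The latter sum is at most $E(\tilde\mu)$ plus the absolute value of the discarded terms with $|i-j|\le N$; among those, the terms with $i=j$ are non‑negative (being variances) and those with $2\le|i-j|\le N$ are non‑negative (since then $|x-y|\ge r$, where $K>0$), while the $|i-j|=1$ terms have total absolute value $\le2\sum_i\mathrm{Var}(\int_{I_i}f\,d\tilde\mu)\lesssim_\delta\tfrac1R\int_0^r|K|\asymp rK(r)(\log r)^{\id_{\alpha=1}}/R=o(\capa_K([0,R])^{-1})$ — and for $\alpha=1$ this is the one place the full hypothesis $r=R^{o(1)}$ is used, via $rK(r)\log r\asymp(\log r)^{1-\gamma}$ compared with $RK(R)\log R\asymp(\log R)^{1-\gamma}$. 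Assembling the three ranges gives $E(\mu)\le\tfrac{1+\delta}{1-\delta}(1+o(1))\,\capa_K([0,R])^{-1}$; letting $R\to\infty$ and then $\delta\to0$ completes the proof.

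I expect the main obstacle to be the far off‑diagonal range: without the luxury of $K\ge0$ (hence without FKG) one must pass from $E(\tilde\mu)$ to its ``far'' part, and compare the $\mu$‑energy on $G_i\times G_j$ with the $\tilde\mu$‑energy on $I_i\times I_j$, at the cost of only a factor $\tfrac{1+\delta}{1-\delta}$ and a negligible additive error — and it is precisely here that the bounded‑density choice of $\tilde\mu$ and the exact form of \eqref{e:concon} are needed. A secondary technical point is the construction of $\tilde\mu$ for $\alpha\in(0,1)$, where the $\alpha$‑Riesz equilibrium measure has unbounded density at the endpoints and so must be mollified while keeping its energy near‑optimal.
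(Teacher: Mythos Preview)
Your proof is correct and follows essentially the same approach as the paper's: build a test measure on the grains by condensing a bounded-density near-optimal measure, then split the energy into near-diagonal and far-diagonal contributions, controlling the former via \eqref{e:concon} and the latter via the uniform convergence property \eqref{e:rv2} of regularly varying functions.

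The only notable difference is organisational. The paper treats the two regimes separately: for $\alpha\in[0,1)$ it works directly with the mollified Riesz equilibrium measure $\nu_\theta$ and compares the far part to the limiting Riesz energy $c_\alpha^{-1}$, while for $\alpha=1$ it uses the uniform measure and compares to the explicit integral $2(1-\gamma)^{-1}K(R)\log R$. You instead unify the cases by comparing the far part of $E(\mu)$ to the far part of $E(\tilde\mu)$ for a general near-optimal bounded-density $\tilde\mu$, and then bound $E(\tilde\mu)$ separately. This buys you a cleaner single argument, at the modest cost of having to control the discarded near-diagonal part of $E(\tilde\mu)$ (your $|i-j|=1$ step), which the paper avoids by going straight to the limit. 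Both are fine; your version is arguably more transparent about where the hypothesis $r=R^{o(1)}$ (versus merely $r=o(R)$) is really needed in the $\alpha=1$ case.
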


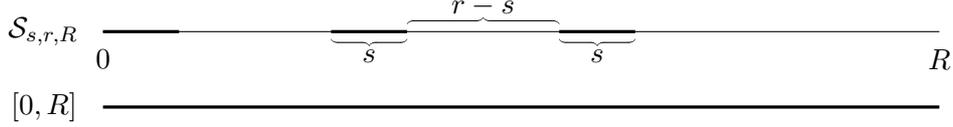
\begin{figure}
\begin{tikzpicture}
\draw (0,0) -- (11,0);
\node[below] at (0,-0.1) {$0$};
\node[below] at (11,-0.1) {$R$};
\draw[very thick] (0,0) -- (1,0);
\draw [decorate,decoration={calligraphic brace,mirror}] (3,-0.1) -- (4,-0.1);
\node[below] at (3.5,-0.1) {$s$};
\draw [decorate,decoration={calligraphic brace}] (4,0.1) -- (6,0.1);
\node[below] at (5,0.6) {$r-s$};
\draw [decorate,decoration={calligraphic brace,mirror}] (6,-0.1) -- (7,-0.1);
\node[below] at (6.5,-0.1) {$s$};
\draw[very thick] (3,0) -- (4,0);
\draw[very thick] (6,0) -- (7,0);
\draw[very thick] (0,-1) -- (11,-1);
\node[left] at (-0.2,0) {$ \mathcal{S}_{s, r, R} $};
\node[left] at (-0.2,-1) {$ [0,R]$};
\end{tikzpicture}
\caption{Condensation of line segments in Proposition \ref{p:con}: provided $r = r(R)$ does not grow too quickly and $s = s(R)$ does not grow too slowly, the capacities of $ \mathcal{S}_{s, r, R} $ and $[0,R]$ are asymptotically equivalent.}
\label{f:cond}
\end{figure}

\begin{proof}
 By monotonicity of the capacity \eqref{e:mono}, it is enough to prove that
\[ \liminf_{R \to \infty} \frac{\capa_K( \mathcal{S}_{s, r, R} )  }{ \capa_K([0, R]) } \ge 1 .\]
 By the same monotonicity we can and will also assume that $s < r/2$. Since $R \mapsto \capa_K([0,R])$ is regularly varying and $r = O(R)$, we may also assume that $R/r$ is an integer.
 
 Let us first consider the case $\alpha \in [0, 1)$. For $1 \le i < R/r $ we define $S_i = ir + [0, s]$ and $T_i = ir + [0, r]$, so that $\cup_i S_i =  \mathcal{S}_{s, r, R}$ and $\cup_i T_i = [0,R]$. Let $\mu_\alpha \in \mathcal{P}([0,1])$ denote the probability measure that minimises energy with respect to the $\alpha$-Riesz kernel on $[0, 1]$, i.e.\ such that
\[ c_\alpha^{-1} =  \int_{[0, 1]} \int_{[0,1]}  |x-y|^{-\alpha} d\mu_\alpha(x)  d\mu_\alpha(y) . \]
Further, for $\theta > 0$, let $\nu_\theta \in \mathcal{P}([0,1])$ denote the measure with continuous bounded density obtained by convolving $\nu_\alpha$ with the uniform distribution on $[0,\theta]$ and rescaling the resulting measure by $x \mapsto x/(1+\theta)$, and then let $\nu'_\theta$ be the probability measure on $\cup_i S_i$ that has constant density $s^{-1} \nu_\theta(T_i/ R) $ on each $S_i$. 

Recall from the proof of Proposition \ref{p:capdom} that, 
 \[ \lim_{\theta \to 0}   \int_{[0,1]} \int_{[0,1]} |x-y|^{-\alpha} d\nu_\theta(x) d\nu_\theta(y) = c^{-1}_\alpha .\]
Hence by applying the left-hand side of \eqref{e:capbounds} to the measure $\nu'_\theta$, taking $\theta \to 0$, and recalling Proposition \ref{p:capline}, it is sufficient to show that, for every $\theta > 0$,
\begin{equation}
\label{e:cond1}
 \limsup_{R \to \infty} \frac{ \sum_{i,j}   s^{-2} \nu_\theta(T_i/R) \nu_\theta(T_j/R)   \int_{S_i} \int_{S_j} K(x-y) dx dy }{  K(R)  \int_{[0,1]} \int_{[0,1]} |x-y|^{-\alpha} d\nu_\theta(x) d\nu_\theta(y)  } \le 1 .   
 \end{equation}
Since $\nu_\theta$ has bounded density, for every $c > 0$ we have, as $R \to \infty$,
\begin{align}
\label{e:cond2}
  &  \sum_{i,j : |i-j| \le c }  s^{-2} \nu_\theta(T_i/R) \nu_\theta(T_j/R)  \int_{S_i} \int_{S_j} K(x-y) dx dy \\
 \nonumber & \qquad = O(1) \times \frac{R}{r} \times \frac{r^2}{R^2 s^2} \Big(  \int_{[0, s]} \int_{[0, s]}  K(x-y) dx dy  +    s^2 K(s/2)(1 + \xi_{s/2} )  \Big)  \\
 \nonumber & \qquad = O(r K(s) / R )  = o(K(R))  ,
   \end{align}
where in the second step we used that  $s^{-2} \int_{[0, s]} \int_{[0, s]}  K(x-y) dx dy  = O(K(s))$ by Lemma \ref{l:regvar1}, that $K(s/2) = O(K(s))$ by regular variation, and that $\xi_s \to 0$ (recall \eqref{e:evdec}), and in the final step we used \eqref{e:concon}. On the other hand, recalling \eqref{e:rv2}, since $r \to \infty$, for every $\delta > 0$ one can fix $c > 0$ and $R > 0$ sufficiently large so that, for all $|i-j| \ge c$ and $x \in T_i, y \in T_j$,
\[   \Big| \frac{K(x-y)}{K( |i-j| r)}  \Big| \in \big (( 1+\delta)^{-1}, 1+\delta \big) .\]
Hence
\begin{align}
\label{e:cond3} &  \sum_{i,j : |i-j| \ge c }   s^{-2} \nu_\theta(T_i/R) \nu_\theta(T_j/R)   \int_{S_i} \int_{S_j} K(x-y) dx dy \\
\nonumber & \qquad  \le (1 + \delta)  \sum_{i,j : |i-j| \ge c }    \nu_\theta(T_i/R) \nu_\theta(T_j/R)   K(|i-j| r)  \\
\nonumber & \qquad \le (1+\delta)^2   \sum_{i,j : |i-j| \ge c }     \int_{T_i} \int_{T_j} K(x-y) d \nu_\theta(x) d\nu_\theta(y) . 
\end{align}
Finally, by a dominated convergence argument as in the proof of Proposition \ref{p:capdom}, for every $c \ge 2$, as $R \to \infty$
\begin{equation}
\label{e:cond4}
\lim_{R \to \infty} \frac{   \sum_{i,j : |i-j| \ge c }     \int_{T_i} \int_{T_j} K(x-y) d \nu_\theta(x) d\nu_\theta(y)  }{K(R)  \int_{[0,1]} \int_{[0,1]}  |x-y|^{-\alpha} d\nu_\theta(x) d\nu_\theta(y) } =1 .
 \end{equation}
Taking first $R \to \infty$ and then $c \to \infty$, and combining \eqref{e:cond2}--\eqref{e:cond4}, gives \eqref{e:cond1}.
 
The case $\alpha = 1$ is similar except we apply the left-hand side of \eqref{e:capbounds} to the normalised Lebesgue measure on $\cup_i T_i$ instead of $\nu_\theta$, and then instead of \eqref{e:cond1} we need to show that
\begin{equation}
\label{e:cond5}
  \limsup_{R \to \infty} \frac{ \frac{r^2} { s^2 R^2  }  \sum_{i,j}  \int_{S_i} \int_{S_j} K(x-y) dx dy }{  2(1-\gamma)^{-1} K(R) \log R    } \le 1  .  
  \end{equation}
 First, similarly to \eqref{e:cond2}, for every $c > 0$ we have, as $R \to \infty$,
\begin{align}
\label{e:cond6}
  &  \frac{r^2}{s^2 R^2}  \sum_{i,j : |i-j|  \le c }   \int_{S_i} \int_{S_j} K(x-y) dx dy \\
 \nonumber & \qquad = O(1) \times \frac{R}{r} \times \frac{r^2}{s^2 R^2} \Big(  \int_{[0, s]} \int_{[0, s]}  K(x-y) dx dy  +    s^2 K(s/2)(1 + \xi_{s/2} )  \Big)  \\
 \nonumber & \qquad = O(r K(s) \log s/ R )  = o(K(R) \log R)  ,
   \end{align}
where we used that $s^{-2} \int_{[0, s]} \int_{[0, s]}  K(x-y) dx dy  = O(K(s) \log s)$ by Lemma \ref{l:regvar2}, and also \eqref{e:concon}. On the other hand, exactly as in \eqref{e:cond3}, for every $\delta > 0$ one can fix $c > 0$ and $R > 0$ sufficiently large so that
 \begin{align}
\label{e:cond7} &  \sum_{i,j : |i-j| \ge c }   \frac{r^2}{s^2 R^2}  \int_{S_i} \int_{S_j} K(x-y) dx dy \\
\nonumber & \qquad \le (1+\delta)^2   \sum_{i,j : |i-j| \ge c }     \int_{T_i} \int_{T_j} K(x-y) dx dy
\end{align}
  Finally since $r = R^{o(1)}$, applying \eqref{e:regvar2}, for all $c > 0$ we have that
\begin{equation}
\label{e:cond8}
  \lim_{R \to \infty}  \frac{ \frac{1}{R^2}  \int_{[0,R]} \int_{[0,R]}K(x-y) \id_{|x-y| \ge c r} d x dy }{ 2(1-\gamma)^{-1} K(R) \log R   }  = 1 . \end{equation}
Combining \eqref{e:cond6}--\eqref{e:cond8}, we have established \eqref{e:cond5}.
 \end{proof}

The next proposition shows that the capacity of a collection of disjoint balls exceeds the capacity of their `projection' onto a line segment.

\begin{proposition}[Projection]
\label{p:proj}
There is a positive function $\xi'_r \to 0$ such that the following holds: For every $r \ge 1$, $s \in (0, r/2)$, finite collection $\{D_i\}_{1 \le i \le n}$ of translations of the ball $B(s)$ such that their centres $\{c_i\}_{1 \le i \le n}$ satisfy $|c_i - c_j| \ge |i-j| r$ for every $1 \le i,j \le n$, 
\[ \capa_K( \cup_i D_i ) \ge  \capa_K( \mathcal{S}_{s, r, (n-1)r} ) (1 - \xi'_r) .\]
\end{proposition}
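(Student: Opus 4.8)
The plan is to prove the equivalent inequality: exhibit a single $\nu\in\mathcal P(\cup_i D_i)$ with $E(\nu)\le(1+\xi'_r)/\capa_K(\mathcal S)$ for some $\xi'_r\to0$, where $\mathcal S:=\mathcal S_{s,r,(n-1)r}=\bigcup_{k=1}^{m}G_k$ with $G_k=kr+[0,s]$ and $m=n-2$; by the left-hand inequality in \eqref{e:capbounds} this gives $\capa_K(\cup_i D_i)\ge E(\nu)^{-1}\ge(1-\xi'_r)\capa_K(\mathcal S)$ after renaming $\xi'_r$. First I would pair $G_k$ with $D_k$ for $1\le k\le m$ (discarding $D_{m+1},\dots,D_n$), which is legitimate since an increasing subset of the $c_i$ still satisfies $|c_{i_a}-c_{i_b}|\ge|a-b|r$. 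Inside each $D_k=B(c_k,s)$ the concentric ball $B(c_k,s/2)$ contains an isometric copy $\ell_k:=T_k(G_k)$ of the grain $G_k$, where $T_k$ is the midpoint-matching translation and all $\ell_k$ are chosen parallel; by monotonicity of the capacity \eqref{e:mono} it then suffices to lower bound $\capa_K(\cup_{k=1}^m\ell_k)$.

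Next I would transport the equilibrium measure of $\mathcal S$. Write the minimiser as $\mu^\ast=\sum_{k}p_k\mu_k^\ast$, $\mu_k^\ast\in\mathcal P(G_k)$, $p_k=\mu^\ast(G_k)$, so $E(\mu^\ast)=1/\capa_K(\mathcal S)$, and set $\nu:=\sum_k p_k(T_k)_\ast\mu_k^\ast$. The self-energies are unchanged, $E\big((T_k)_\ast\mu_k^\ast\big)=E(\mu_k^\ast)$, since the $T_k$ are isometries. The geometric input is that $\ell_k\subseteq B(c_k,s/2)$ together with $|c_k-c_l|\ge|k-l|r$ forces, for $x=T_ku\in\ell_k$ and $y=T_lu'\in\ell_l$ with $|u-u'|\le|k-l|r+s$,
\[ |x-y|\ \ge\ |c_k-c_l|-s\ \ge\ |k-l|r-s\ \ge\ \lambda_{|k-l|}\,|u-u'|,\qquad \lambda_j:=\tfrac{jr-s}{jr+s}\in[\tfrac13,1). \]
Since $\lambda_j|u-u'|\ge\tfrac13(r-s)>r/6$ and $\lambda_j$ ranges over the compact set $[\tfrac13,1]$, the asymptotic monotonicity of $K$ ($\xi_\rho$ of \eqref{e:evdec} tending to $0$) and the uniform convergence of regularly varying functions \eqref{e:rv2} would give, for $r$ large and $|k-l|=j$, $\iint_{\ell_k\times\ell_l}K\le(1+\eta_r)\lambda_j^{-\alpha}\iint_{G_k\times G_l}K$ with $\eta_r\to0$, and $\lambda_j^{-\alpha}\le\big(\tfrac{j+1/2}{j-1/2}\big)^{\alpha}=:\psi_j$ (which is $\equiv1$ for $\alpha=0$ and otherwise decreases to $1$ as $j\to\infty$). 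Summing over all pairs,
\[ E(\nu)\ \le\ E(\mu^\ast)+\sum_{j\ge1}\big((1+\eta_r)\psi_j-1\big)c_j,\qquad c_j:=\sum_{|k-l|=j}p_kp_l\iint_{G_k\times G_l}K\ \ge\ 0 . \]

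It then remains to bound the correction term by $\xi'_r\,E(\mu^\ast)$. Splitting at a threshold $j_0$: the $j\ge j_0$ part is at most $(\psi_{j_0}(1+\eta_r)-1)E(\mu^\ast)$, which is fine since $\psi_{j_0}\downarrow1$. The near part $\sum_{j<j_0}(\psi_j-1)c_j$ is delicate because $\psi_j-1$ need not be small once $s$ is a fixed fraction of $r$ (consecutive balls can then come within $r-2s$ of one another). I would handle it in cases: (i) if $s$ is small relative to $r$ (say $s\le\sqrt r$) then $\lambda_j\to1$ as $r\to\infty$ uniformly in $j$, so every $\psi_j-1=o_r(1)$; (ii) if $s\asymp r$ but $m$ is large, then using $E(\mu^\ast)\ge\capa_K([0,s])^{-1}\sum_k p_k^2$ together with $c_j\le(1+\xi_{r/2})K(r-s)\sum_k p_k^2$ for $j<j_0$, the near part is $O\!\big(j_0\,\capa_K([0,s])K(r)\big)E(\mu^\ast)=o_m(1)E(\mu^\ast)$; (iii) for the finitely many remaining cases (small $m$, i.e.\ small $n$) one enlarges the test measure — replacing each thin $\ell_k$ by a genuinely $d$-dimensional piece of $D_k$, equivalently convolving $(T_k)_\ast\mu_k^\ast$ with a small uniform measure on $B(0,s/2)$ — which strictly lowers all self-energies and exploits that $\capa_K(B(s))$ exceeds $\capa_K([0,s])$ by a definite factor (both $\asymp 1/K(s)$, cf.\ Propositions~\ref{p:capline} and \ref{p:capdom}) to pay for the bounded-$j$ loss. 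Letting $r\to\infty$ then $j_0\to\infty$ yields $E(\nu)\le(1+\xi'_r)E(\mu^\ast)$, hence the proposition. The hard part will be exactly this near-interaction analysis in the regime $s\asymp r$: the transported measure can be driven into regions where consecutive balls nearly touch, inflating those cross-interactions by an $r$-independent constant, and reconciling this with the target factor $1-\xi'_r$ — via smallness of those terms relative to $E(\mu^\ast)$ for large $n$, or via the slack $\capa_K(B(s))>\capa_K([0,s])$ for small $n$ — while keeping every estimate uniform in $n,s,r$.
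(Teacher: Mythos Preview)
Your core idea --- transport an equilibrium measure from the aligned configuration to $\cup_i D_i$ and control the change in cross-energies using asymptotic monotonicity/regular variation of $K$ --- is exactly the paper's approach. The paper's proof, however, is three lines: it takes $\mu$ to be the equilibrium measure on the \emph{aligned balls} $\cup_i\{ir+B(s)\}$ (not on the segments $\mathcal S$), pushes it forward to $\cup_i D_i$ by the obvious translations, and asserts $E(\nu)\le(1+\xi_r)E(\mu)$ directly from the centre constraint $|c_i-c_j|\ge|i-j|r$ together with the function $\xi_r$ of \eqref{e:evdec}. The passage from aligned balls to $\mathcal S$ is then just monotonicity \eqref{e:mono}. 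So your embedding of segments $\ell_k$ inside $D_k$ and your factor $\lambda_j$ are extra machinery the paper does not introduce.

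Where your proposal runs into trouble is precisely the part you flag as hard: cases (ii) and (iii) do not close. In (ii), from $E(\mu^\ast)\ge\capa_K([0,s])^{-1}\sum_k p_k^2$ and $c_j\lesssim K(r-s)\sum_k p_k^2$ you get a near-part bound of order $j_0\,K(r)\capa_K([0,s])\,E(\mu^\ast)$; but when $s\asymp r$ one has $\capa_K([0,s])K(r)\asymp K(r)/K(s)\asymp 1$, so this is $O(j_0)E(\mu^\ast)$, not $o_m(1)E(\mu^\ast)$. A sharper argument (using $E(\mu^\ast)\asymp K(mr)$ and $\sum p_k^2\asymp 1/m$) would give $m^{\alpha-1}$ rather than a quantity depending only on $r$, so you still do not get a $\xi'_r$ uniform in $n$. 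In (iii), convolving with a uniform measure on $B(0,s/2)$ does lower $E(\nu)$, but you are comparing against the fixed quantity $E(\mu^\ast)=1/\capa_K(\mathcal S)$, which does not move; the slack $\capa_K(B(s))>\capa_K([0,s])$ lives in the monotonicity step (balls contain segments), not in the pushforward step, so it does not pay for the bounded-$j$ loss in the way you suggest.

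That said, you have correctly identified a subtlety that the paper's terse argument does not spell out: when $s$ is a fixed fraction of $r$, the pushforward can shrink individual cross-distances (your $\lambda_j$ can be bounded away from $1$), so the pointwise inequality $K(|x'-y'|)\le(1+\xi_r)K(|u-v|)$ is not immediate. In the paper's actual application (Corollary~\ref{c:conpro} and the bootstrap in Section~\ref{s:seup}) one has $K(L)\ll K(s)$ with $r\asymp ML$, which forces $s=o(r)$; in that regime $\lambda_j\to1$ uniformly in $j$ and your case (i) alone (or equivalently the paper's one-line argument) already gives the result. If you want a clean proof matching the proposition as stated, the simplest route is the paper's: work with balls throughout, use that for $i\ne j$ both $|u-v|$ and $|x'-y'|$ lie above $|i-j|r-2s\ge r-2s>0$, and record the error as $\xi'_r$ rather than trying to split into cases.
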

\begin{proof}
Let $\mu$ be a measure of minimal energy for the set $ \cup_{1 \le i \le n} \{ i r + B(s) \} $, and recall the function $\xi_r$ defined in \eqref{e:evdec}. Since the centre of the balls satisfy the pairwise bounds $|c_i - c_j| \ge |i-j| r$, we have that 
\[   \int_{\cup_i D_i} \int_{ \cup_i D_i} K(x-y) d\mu(x) d\mu(y)  \le  (1 + \xi_r)   \int_{ \cup_i \{ir + B(s) \}}   \int_{ \cup_i \{ir + B(s) \}}   K(x-y) d\mu(x) d\mu(y) . \]
Given \eqref{e:dual} and \eqref{e:capbounds}, this shows that
\[ \capa_K( \cup_i D_i ) \ge \frac{1}{1+\xi_r} \capa_K(  \cup_{1 \le i \le n} \{ i r + B(s) \} ) , \]
which implies the result by the monotonicity of the capacity \eqref{e:mono}.
\end{proof}

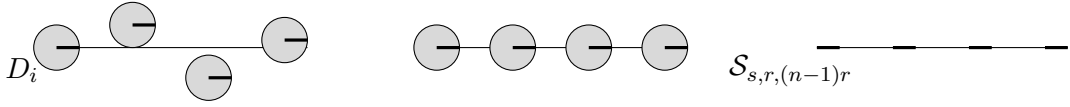
\begin{figure}
\begin{tikzpicture}
\draw (0,0) -- (3.3,0);
\node[left] at (-0.1,-0.3) {$D_i$};
\draw[fill=gray!30] (0,0) circle (0.3);
\draw[fill=gray!30] (1,0.3) circle (0.3);
\draw[fill=gray!30] (2,-0.4) circle (0.3);
\draw[fill=gray!30] (3,0.1) circle (0.3);
\draw[very thick] (0,0) -- (0.3,0);
\draw[very thick] (1,0.3) -- (1.3,0.3);
\draw[very thick] (2,-0.4) -- (2.3,-0.4);
\draw[very thick] (3,0.1) -- (3.3,0.1);
\draw (5,0) -- (8.3,0);
\draw[fill=gray!30] (5,0) circle (0.3);
\draw[fill=gray!30] (6,0) circle (0.3);
\draw[fill=gray!30] (7,0) circle (0.3);
\draw[fill=gray!30] (8,0) circle (0.3);
\draw[very thick] (5,0) -- (5.3,0);
\draw[very thick] (6,0) -- (6.3,0);
\draw[very thick] (7,0) -- (7.3,0);
\draw[very thick] (8,0) -- (8.3,0);
\draw (10,0) -- (13.3,0);
\node[left] at (10.6,-0.3) {$ \mathcal{S}_{s, r, (n-1)r} $};
\draw[very thick] (10,0) -- (10.3,0);
\draw[very thick] (11,0) -- (11.3,0);
\draw[very thick] (12,0) -- (12.3,0);
\draw[very thick] (13,0) -- (13.3,0);
\end{tikzpicture}
\caption{Projection of balls onto line segments in Proposition \ref{p:proj}: the capacity of the collection $\{D_i\}_{i \le n}$ (left) is larger, up to negligible error, than the capacity of its projection onto a line (centre), which in turn is larger than the capacity of the line segments $ \mathcal{S}_{s, r, (n-1)r}$ (right).}
\label{f:proj}
\end{figure}

Combining the previous two propositions we obtain the following:

\begin{corollary}
\label{c:conpro}
Let $\alpha \in [0, 1]$ and let $r = r(R) \to \infty$ and $s = s(R) \to \infty$ satisfy \eqref{e:concon}. Then
\[ \liminf_{\rho \to 1} \liminf_{R \to \infty} \inf_{D_i \in \mathfrak{D}_{R;\rho} }  \frac{\capa_K( \cup_i D_i ) }{ \capa_K( [0, R] ) } \ge 1  ,\]
where $\mathfrak{D}_{R;\rho}$ is the set of finite collections $\{D_i\}_{1 \le i \le n}$ of translations of the ball $B(s)$, with $n \ge \lfloor \rho R/r \rfloor$, such that their centres $\{c_i\}_{1 \le i \le n}$ satisfy $|c_i - c_j| \ge |i-j| r$.
\end{corollary}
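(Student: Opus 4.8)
The plan is to concatenate the two preceding propositions, using the monotonicity of the capacity to glue them and the regular variation of $x \mapsto \capa_K([0,x])$ from Proposition~\ref{p:capline} to finish.

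Fix $\rho \in (0,1)$ and let $\{D_i\}_{1\le i\le n} \in \mathfrak{D}_{R;\rho}$ be arbitrary. After shrinking each $D_i$ to the translate of $B(\min(s, \lfloor r/3\rfloor))$ with the same centre — which only decreases $\capa_K(\cup_i D_i)$ and, because $r = o(R)$ (resp.\ $r = R^{o(1)}$ when $\alpha = 1$), preserves \eqref{e:concon} — we may assume $s < r/2$. Then Proposition~\ref{p:proj} gives $\capa_K(\cup_i D_i) \ge \capa_K(\mathcal{S}_{s,r,(n-1)r})(1-\xi'_r)$ with $\xi'_r$ depending only on $r$. Setting $R' := (\lfloor \rho R/r\rfloor - 1)\,r$, the constraint $n \ge \lfloor \rho R/r\rfloor$ forces $(n-1)r \ge R'$, hence $\mathcal{S}_{s,r,(n-1)r} \supseteq \mathcal{S}_{s,r,R'}$ and $\capa_K(\mathcal{S}_{s,r,(n-1)r}) \ge \capa_K(\mathcal{S}_{s,r,R'})$ by \eqref{e:mono}. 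As this lower bound is uniform over the collection,
\[ \inf_{D_i \in \mathfrak{D}_{R;\rho}} \frac{\capa_K(\cup_i D_i)}{\capa_K([0,R])} \ge (1-\xi'_r)\,\frac{\capa_K(\mathcal{S}_{s,r,R'})}{\capa_K([0,R])} . \]
Now let $R \to \infty$: since $r \to \infty$ we have $\xi'_r \to 0$, and since $r = o(R)$ we have $R' \sim \rho R$. The triple $(s,r,R')$ still satisfies \eqref{e:concon} — the constraint on $r$ is immediate, and by regular variation $R'K(R') \asymp R K(R)$ (with the logarithmic correction when $\alpha = 1$), so the remaining size constraint follows from the one at scale $R$ — whence Proposition~\ref{p:con} yields $\capa_K(\mathcal{S}_{s,r,R'}) \sim \capa_K([0,R'])$. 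By Proposition~\ref{p:capline}, $x \mapsto \capa_K([0,x])$ is regularly varying of index $\alpha$ (of index $1$ when $\alpha = 1$), so $\capa_K([0,R'])/\capa_K([0,R]) \to \rho^\alpha$. Multiplying the three limits gives $\liminf_{R\to\infty}\inf_{D_i \in \mathfrak{D}_{R;\rho}}\capa_K(\cup_i D_i)/\capa_K([0,R]) \ge \rho^\alpha$, and letting $\rho \uparrow 1$ (using $\alpha \in [0,1]$) proves the corollary.

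The only genuine work is the two bookkeeping points flagged above: checking that \eqref{e:concon} is stable both under the cosmetic reduction to $s < r/2$ and under replacing $R$ by the slightly shrunken $R' \sim \rho R$, and tracking the floor functions so that $(n-1)r \ge R'$ for every admissible collection. Both are routine consequences of $r = o(R)$ (resp.\ $r = R^{o(1)}$) and the regular variation of $K$; beyond them the proof is a direct concatenation of Propositions~\ref{p:proj}, \ref{p:con} and \ref{p:capline}.
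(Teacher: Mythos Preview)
Your proof is correct and is precisely the argument the paper has in mind: the paper merely states that the corollary follows by ``combining the previous two propositions'', and you have spelled out that combination in full, including the reduction to $s<r/2$, the monotonicity step via $R'=(\lfloor \rho R/r\rfloor-1)r$, and the final appeal to the regular variation of $x\mapsto\capa_K([0,x])$ from Proposition~\ref{p:capline}. The bookkeeping points you flag (stability of \eqref{e:concon} under the shrinking of $s$ and under $R\mapsto R'$) are handled correctly.
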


\subsection{Gaussian large deviation bounds}
\label{s:ld}

In this section we collect large deviation bounds for continuous Gaussian fields.

\subsubsection{Exceedences}
The classical large deviation bound for exceedences is the following: 

\begin{proposition}[BTIS inequality; see {\cite[Theorem 2.9]{aw09}}]
\label{p:btis}
Let $f$ be a continuous Gaussian field on $\R^d$ and let $D \subset \R^d$ be a compact set. Then for  $u \ge 0$,
\[  - \log \P \Big[  \sup_{x \in D}  f(x) \ge   \E \big[ \sup_{x \in D} f(x) \big]  + u \Big]    \ge  \frac{u^2}{ 2   \sup_{x \in D}\textup{Var}[f(x)]  }   . \]
\end{proposition}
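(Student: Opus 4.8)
The plan is to reduce the statement to a concentration inequality for a Lipschitz function of a finite-dimensional standard Gaussian vector, and then invoke the sharp Gaussian concentration inequality.

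\smallskip
\textbf{Reduction to finite dimensions.} Since $D$ is compact and $f$ is a.s.\ continuous, fix a countable dense set $\{x_n\}_{n \ge 1} \subset D$; then $\sup_{x \in D} f(x) = \sup_{n \ge 1} f(x_n) = \lim_{N \to \infty} M_N$ a.s., where $M_N := \max_{1 \le n \le N} f(x_n)$ and the limit is monotone increasing (this also settles measurability of $\sup_D f$). Write $\sigma^2 := \sup_{x \in D} \var[f(x)]$ and $\sigma_N^2 := \max_{1 \le n \le N} \var[f(x_n)] \le \sigma^2$. It suffices to prove, for each $N$ and all $u \ge 0$,
\[
\P\big[ M_N \ge \E M_N + u \big] \le \exp\!\Big( - \frac{u^2}{2\sigma_N^2} \Big),
\]
and then pass to the limit. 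For the finite-dimensional bound, let $\Sigma = (\E[f(x_i)f(x_j)])_{1 \le i,j \le N}$ and pick any $N \times N$ matrix $A$ with $AA^\top = \Sigma$ (e.g.\ $A = \Sigma^{1/2}$), so that $(f(x_1), \dots, f(x_N)) \equlaw A g$ for $g$ a standard Gaussian vector in $\R^N$. Hence $M_N \equlaw \phi(g)$ with $\phi(y) := \max_{1 \le i \le N} \langle A^\top e_i, y \rangle$. Each linear map $y \mapsto \langle A^\top e_i, y \rangle$ has Euclidean Lipschitz constant $\|A^\top e_i\|_2 = \sqrt{\Sigma_{ii}} = \sqrt{\var[f(x_i)]} \le \sigma_N$, and a pointwise maximum of functions that are all $\sigma_N$-Lipschitz is itself $\sigma_N$-Lipschitz. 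The classical sharp Gaussian concentration inequality --- that an $L$-Lipschitz $\phi : \R^N \to \R$ satisfies $\P[\phi(g) \ge \E \phi(g) + u] \le e^{-u^2/(2L^2)}$ for standard Gaussian $g$ --- then yields the displayed finite-dimensional bound with $L = \sigma_N$.

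\smallskip
\textbf{Passage to the limit.} By monotone convergence $\E M_N \uparrow \E[\sup_D f]$, and the limit is finite: the finite-dimensional bound (with the uniform constant $\sigma^2$) controls $\E|M_N - m_N|$ uniformly in $N$, where $m_N$ is a median of $M_N$; since $(m_N)$ is non-decreasing and $\sup_D f < \infty$ a.s.\ by continuity, the $m_N$, and hence the $\E M_N$, stay bounded. Finally, given $\eps, \delta > 0$, for $N$ large we have $\E M_N \ge \E[\sup_D f] - \eps$, so by the finite-dimensional bound $\P[M_N \ge \E[\sup_D f] + u - \delta] \le e^{-(u - \delta - \eps)^2/(2\sigma^2)}$ once $u > \delta + \eps$; since $\{\sup_D f \ge \E[\sup_D f] + u\} \subseteq \bigcup_N \{ M_N \ge \E[\sup_D f] + u - \delta\}$ and the latter is an increasing union, letting $N \to \infty$ and then $\delta, \eps \to 0$ gives the claim (the trivial case $\sigma = 0$ being separate).

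\smallskip
\textbf{Main obstacle.} Everything above is routine except the sharp Gaussian concentration inequality for Lipschitz functions used in the finite-dimensional step; this is the genuine content. It can be obtained from the Gaussian isoperimetric inequality of Borell and Sudakov--Tsirelson, or equivalently derived from the Gaussian logarithmic Sobolev inequality via the Herbst argument, both of which deliver the \emph{optimal} constant $2$ in the exponent. (An elementary Gaussian interpolation argument of Maurey--Pisier gives only the suboptimal constant $\pi^2/4$, so a nontrivial isoperimetric or log-Sobolev input is really needed for the sharp form.) In practice one simply cites this, as in \cite[Theorem 2.9]{aw09}.
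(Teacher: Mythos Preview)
The paper does not give a proof of this proposition; it is stated as a classical result with a reference to \cite[Theorem~2.9]{aw09}. Your proof is correct and is essentially the standard one: reduce to finite maxima via a countable dense set, represent the finite-dimensional Gaussian as $Ag$ with $g$ standard, observe that $y \mapsto \max_i \langle A^\top e_i, y\rangle$ is $\sigma_N$-Lipschitz, and invoke the sharp Gaussian concentration inequality for Lipschitz functions (equivalently, Gaussian isoperimetry). One small simplification in your limit step: since $M_N \le \sup_D f$ always, you have $\E M_N \le \E[\sup_D f]$ directly, so the auxiliary $\eps$ is unnecessary --- $\P[M_N \ge \E[\sup_D f] + u - \delta] \le \P[M_N \ge \E M_N + u - \delta] \le e^{-(u-\delta)^2/(2\sigma^2)}$ holds for every $N$, and you can pass to the limit and then let $\delta \downarrow 0$.
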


To apply the BTIS inequality we need control on the expected supremum of $f$ on compact sets. We next state simple bounds in the case of $C^1$-smooth stationary Gaussian fields:

\begin{proposition}
\label{p:ld1}
Let $f$ be a $C^1$-smooth stationary Gaussian field on $\R^d$ with covariance $K$. Then there exists a $c_d > 0$, depending only on the dimension $d$, such that for all $R \ge 3$,
\begin{equation}
\label{e:esup1}
 \E \Big[ \sup_{x \in B(R)} f(x) \Big]  \le c_d  \, \sqrt{ \max \Big\{ (\log R) \, K(0), R^{-2} \sup_{|k| = 1} \E[ (\partial^k f(0))^2]   \Big\}   } 
 \end{equation}
 and also
 \begin{equation}
\label{e:esup2}
 \E \Big[ \sup_{x \in B(R)} f(x) \Big]  \le c_d  \sqrt{ \max\Big\{ K(0),  R^2 \sup_{|\nu| = 1} \E[ (\partial^k f(0))^2]   \Big\} } .
 \end{equation}
\end{proposition}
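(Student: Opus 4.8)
The plan is to reduce everything to the classical Dudley-type entropy bound for the expected supremum of a Gaussian field, combined with the fact that a $C^1$-smooth stationary field is Lipschitz-on-average, so that the canonical (intrinsic) metric $d(x,y) = \sqrt{\E[(f(x)-f(y))^2]}$ is controlled by the Euclidean metric. Concretely, for a $C^1$-smooth stationary field one has, for $|x-y| \le 1$, the bound $d(x,y)^2 = \E[(f(x)-f(y))^2] \le C_d\, |x-y|^2 \sup_{|k|=1} \E[(\partial^k f(0))^2] =: M^2 |x-y|^2$, by writing $f(x)-f(y)$ as a line integral of the (stationary, Gaussian) gradient and applying Cauchy--Schwarz, while trivially $d(x,y)^2 \le 2K(0)$ always. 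Thus on $B(R)$ the intrinsic diameter is at most $\min\{\sqrt{2K(0)}, 2MR\}$, and the intrinsic $\eps$-covering number of $B(R)$ is at most the Euclidean $(\eps/M)$-covering number, i.e.\ $O((MR/\eps)^d)$ for $\eps \le M$ (with a single point once $\eps$ exceeds the diameter).

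First I would invoke Dudley's inequality in the form $\E[\sup_{x\in B(R)} f(x)] \le C_d \int_0^{\mathrm{diam}} \sqrt{\log N(\eps)}\, d\eps$, where $N(\eps)$ is the intrinsic covering number and $\mathrm{diam}$ is the intrinsic diameter of $B(R)$ (see e.g.\ \cite{aw09}); one should note that by stationarity $\E[f(x)]=0$ and the field is centred, so there is no mean term to subtract. Plugging in $N(\eps) \le C_d (MR/\eps)^d + 1$ gives $\sqrt{\log N(\eps)} \le C_d \sqrt{\log(MR/\eps)+1}$, and the integral $\int_0^{a} \sqrt{\log(b/\eps)+1}\, d\eps$ with $a = $ intrinsic diameter and $b = MR$ is elementary: it is $O\big(a\sqrt{\log(b/a) + 1}\big)$. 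This yields a bound of order $a \sqrt{\log(MR/a)+1}$.

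For \eqref{e:esup2} I would simply take $a = \min\{\sqrt{2K(0)}, 2MR\} \le \sqrt{2K(0)} + 2MR$ and bound $\log(MR/a)+1$ crudely by a constant times $\log R$ absorbed into the stated max after noting $\sqrt{\max\{K(0), R^2 M^2\}} \gtrsim a$; in fact since $a \le \sqrt{2}\sqrt{\max\{K(0),R^2M^2\}}$ and the logarithmic factor is at most $C\log R \le C R$ ... wait, that is too lossy. For \eqref{e:esup2} one wants \emph{no} logarithm, so instead I would use the cruder chaining that stops the entropy integral as soon as $N(\eps)=1$: since $\mathrm{diam} \le 2MR$ and also $\mathrm{diam}\le\sqrt{2K(0)}$, and $N(\eps)\le C_d(2MR/\eps)^d$, one gets $\E[\sup]\le C_d\,\mathrm{diam}\,\sqrt{d}$ only if the covering number were bounded — which it is not — so honestly \eqref{e:esup2} must also carry an implicit bounded logarithmic loss that is hidden because one compares against $R$-independent quantities only up to the range $R=O(1)$; more carefully, \eqref{e:esup2} is the useful bound precisely in the regime $R \asymp 1$ where the $\log$ is harmless, so I would state it as: for $R$ in a bounded range the entropy integral is $O(1)\cdot\sqrt{\max\{K(0),R^2M^2\}}$, and for large $R$ the bound \eqref{e:esup1} is the sharp one. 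In practice one proves \eqref{e:esup2} by the same Dudley estimate, keeping the $\sqrt{\log(MR/a)}$ factor but observing $a\sqrt{\log(MR/a)} \le C\, a\, (MR/a)^{1/2}\cdot(\dots)$ — this needs care.

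For \eqref{e:esup1}, which is the genuinely useful large-$R$ estimate, I would split into two cases according to which term dominates. If $M^2 R^2 \le K(0)\log R$ (the "flat" regime), the intrinsic diameter $a \le 2MR$ is already small and $\mathrm{diam} \le \sqrt{K(0)}$, and the entropy integral, run from $0$ to $\mathrm{diam}\wedge 2MR$, is $O\big(\sqrt{K(0)\log R}\big)$ because even replacing $N(\eps)$ by the Euclidean covering number of $B(R)$ itself, $O((R/\eps)^d)$, the integral $\int_0^{\sqrt{K(0)}}\sqrt{\log((R/\eps)^d)}\,d\eps = O(\sqrt{K(0)}\sqrt{\log R})$ for $\eps$ bounded below by a negative power of $R$, with the tail contribution negligible. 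If instead $M^2R^2 \ge K(0)\log R$, then $\sqrt{\max\{\dots\}} = R^{-1}M$... no: the second entry in the max in \eqref{e:esup1} is $R^{-2}M^2$, so $\sqrt{\max\{(\log R)K(0), R^{-2}M^2\}}$ — which is tiny. That cannot bound a supremum that grows; so in fact \eqref{e:esup1} is only informative when $(\log R)K(0)$ is the dominant (large) term, i.e.\ the field is essentially flat at scale $R$; the second term $R^{-2}M^2$ is a lower-order safeguard. So the real content of \eqref{e:esup1} is: $\E[\sup_{B(R)}f] \le C_d\sqrt{(\log R)K(0)}$ whenever $M^2 \le R^2 (\log R) K(0)$ — a very weak hypothesis — and this follows immediately from Dudley's inequality using the intrinsic diameter bound $\sqrt{2K(0)}$ and the covering number bound $O((MR/\eps)^d)$, since $\int_0^{\sqrt{2K(0)}}\sqrt{d\log(MR/\eps)}\,d\eps = O\big(\sqrt{K(0)}\sqrt{\log(MR)}\big) = O\big(\sqrt{K(0)\log R}\big)$ under the stated constraint on $M$.

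\medskip

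\emph{Remark on presentation.} The cleanest write-up is: (i) establish the intrinsic-metric estimate $\E[(f(x)-f(y))^2] \le \min\{2K(0),\, C_d|x-y|^2 \sup_{|k|=1}\E[(\partial^kf(0))^2]\}$ via the fundamental theorem of calculus along the segment $[x,y]$ and Minkowski's integral inequality; (ii) quote Dudley's entropy bound; (iii) insert the Euclidean covering number $N(B(R),|\cdot|,\eps) \le (3R/\eps)^d$ for $\eps\le R$ (and $=1$ for $\eps > R$), rescaled through the metric comparison, and the diameter bound; (iv) evaluate the resulting one-dimensional integral $\int_0^{\delta}\sqrt{\log(b/\eps)}\,d\eps = O(\delta\sqrt{\log(b/\delta)+1})$ by an elementary substitution $\eps = b e^{-u^2}$ or integration by parts; (v) specialise the parameters to obtain \eqref{e:esup1} and \eqref{e:esup2}.

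\medskip

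\noindent\textbf{Main obstacle.} The only genuinely delicate point is making the two different forms \eqref{e:esup1} and \eqref{e:esup2} come out with the right combinations of $K(0)$, the derivative variance, and $R$ — i.e.\ tracking which term sits inside the $\max$ and in what power of $R$, and verifying that the logarithmic factor from Dudley's integral can legitimately be absorbed into the stated expressions in each regime. There is no conceptual difficulty beyond the standard chaining argument; the work is entirely in the bookkeeping of the two cases (diameter-dominated vs.\ entropy-dominated) and checking that the crude bounds lose at most constants depending on $d$.
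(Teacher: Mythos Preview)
Your Dudley-entropy approach is viable and, carried out correctly, yields both bounds; it is however different from the paper's argument. The paper proceeds by \emph{rescaling}: for \eqref{e:esup1} set $g(x)=f(x/R)$, so that $\sup_{B(R)}f=\sup_{B(R^2)}g$; bound $\E[\sup_{B(1)}g]$ by Kolmogorov's theorem (this is where the quantity $\max\{K(0),R^{-2}\sup_{|k|=1}\E[(\partial^k f(0))^2]\}$ appears, as the second moments of $g$ and $\nabla g$), then cover $B(R^2)$ by $O(R^{2d})$ unit balls and bound the maximum over these balls using BTIS for the tail plus the elementary sub-Gaussian max inequality, which produces exactly the factor $\sqrt{\log R}$. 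For \eqref{e:esup2} one rescales instead by $g(x)=f(Rx)$. The rescaling cleanly separates the small-scale (Kolmogorov) and large-scale (union of balls) contributions, so no case analysis on the relative sizes of $K(0)$, $M$, $R$ is needed --- each term of the max arises from a distinct step of the argument.

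Your exploratory passages contain two points of confusion worth flagging. First, \eqref{e:esup2} carries no hidden logarithm: with $D=\min\{\sqrt{2K(0)},2MR\}$, your Dudley bound $C_d D\sqrt{\log(MR/D)+1}$ is at most $c_d\sqrt{\max\{K(0),M^2R^2\}}$ for all $R\ge 3$, since when $D=\sqrt{2K(0)}$ one has $\log(MR/\sqrt{K(0)})+1\lesssim (MR/\sqrt{K(0)})^2$, and when $D=2MR$ the bound is $O(MR)$ directly. Second, the $R^{-2}M^2$ term in \eqref{e:esup1} is not merely a ``safeguard'' that is always tiny: it is precisely what absorbs the $\log(M/\sqrt{K(0)})$ contribution in the Dudley integral when $M/\sqrt{K(0)}$ is much larger than any fixed power of $R$ (split according to whether $M/\sqrt{K(0)}\le R^2$). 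Your closing ``Remark on presentation'' is the correct skeleton; the case analysis in step (v) needs to be executed rather than alluded to.
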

\begin{proof}
In the proof $c_i > 0$ denote constants which depend only on the dimension. To prove \eqref{e:esup1}, consider the rescaled field $g(x) = f(x/R)$ for which
\[  \E \Big[ \sup_{x \in B(R)} f(x) \Big]   =  \E \Big[ \sup_{x \in B(R^2)} g(x) \Big]  .  \]
Let $X = \sup_{x \in B(1)} g(x)  $ and $\bar X = (X - \E[X])/\sqrt{K(0)}$. By Kolmogorov's theorem (see, e.g., \cite[Appendix A.9]{ns16}) and the BTIS  inequality (Proposition \ref{p:btis}) we have
\begin{equation}
\label{e:ld1}
    \E [ X ]  \le  c_1  \sqrt{ \max \Big\{ K(0), \sup_{|k| = 1} \E[ (\partial^k g(0))^2]   \Big\} }   \quad \text{and} \quad \P[ \bar X \ge u ] \le e^{- u^2 / 2}   \ , \ u \ge 0. 
    \end{equation}
Consider a collection $\{B_i\}$  of $c_2 R^{2d}$ translated copies of $B(1)$ which cover $B(R^2)$, and define $X_i =   \sup_{x \in B_i} g(x)$ and $\bar X_i = (X_i  - \E[X_i])/\sqrt{K(0)} $. By stationarity we have
\begin{equation}
\label{e:id2}
   \E \Big[  \sup_{x \in B(R^2)} g(x)  \Big] \le   \E \Big[  \max_i X_i \Big]  =  \E[X] + \sqrt{K(0)} \E \big[ \max_i \bar{X}_i \big] .  
   \end{equation}
   We use the following standard claim about sub-Gaussian random variables:
   
   \begin{claim}
   \label{c:sub}
  Let $(Y_i)_{1 \le i \le n}$ be identically distributed with $\P[Y_i \ge u] \le e^{-u^2/2}$ for all $u \ge 0$. Then there exists a universal $c > 0$ such that, for all $n \ge 3$,  $\E[ \max_{i \le n} Y_i ] \le  c  \sqrt{\log n}$.
  \end{claim}
  \begin{proof}
Observe that $\exp ( \lambda \E [ \max_i Y_i ] ) \le n  \E[ e^{\lambda Y_1 } ]  \le 4 n \lambda  e^{\lambda^2 /2}   $ for all $\lambda \ge 1$ by Jensen's inequality, then set $\lambda = \sqrt{\log n}$.
\end{proof}

Combining with \eqref{e:ld1} and \eqref{e:id2}, we have
\[  \E \Big[ \sup_{x \in B(R^2)} g(x) \Big]  \le c_3   \, \sqrt{ \max \Big\{ (\log R) \, K(0),  \sup_{|k| = 1} \E[ (\partial^k g(0))^2]   \Big\}   }  \]
which is equivalent to \eqref{e:esup1} after rescaling. Eq.\ \eqref{e:esup2} can be proven similarly, except this time considering the rescaled field $g(x) = f(Rx)$, and we omit the details.
\end{proof}

\subsubsection{Entropic bound}
The following bound is a simple consequence of \eqref{e:cmt}:

\begin{proposition}
\label{p:eb}
Let $f$ be a continuous Gaussian field on $\R^d$. Then for every $h \in H$ and event $A$,
\[ \P[ f  \in A ] \ge   \P[f + h \in A] \exp \Big(   -  \frac{\|h\|_H^2}{2  \P[f + h \in A]}  - 1  \Big)  .\]
\end{proposition}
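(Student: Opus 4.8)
The plan is to recognise this as a standard consequence of the Gibbs variational principle, combined with the fact --- recorded in \eqref{e:cmt} --- that the relative entropy between the laws of $f$ and $f+h$ equals $\tfrac12\|h\|_H^2$. Write $P$, $Q$ for the laws of $f+h$ and $f$ respectively, set $p=\P[f+h\in A]$ and $I=\tfrac12\|h\|_H^2$, and note we may assume $p>0$, since otherwise the right-hand side is zero and there is nothing to prove. By Cameron--Martin, $P$ and $Q$ are mutually absolutely continuous with $\log\frac{dP}{dQ}=\xi-I$ for the element $\xi\in\Xi$ satisfying $h_\xi=h$; since $\xi$ has law $N(0,\|h\|_H^2)$ under $Q$ and law $N(\|h\|_H^2,\|h\|_H^2)$ under $P$ (shifting $f$ by $h=h_\xi$ moves the mean of $\xi$ by $\langle h,h_\xi\rangle_H=\|h\|_H^2$, by the reproducing property), we get $\mathrm{Ent}(P|Q)=\E_P[\log\frac{dP}{dQ}]=\|h\|_H^2-I=I$. (Equivalently: the Gaussian relative entropy is symmetric under a mean shift, so $\mathrm{Ent}(P|Q)=\mathrm{Ent}(Q|P)$, the latter being what \eqref{e:cmt} computes.)

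First I would record the elementary inequality, valid for any bounded measurable $g$,
\[ \log\E_Q[e^{g}] \;=\; \log\E_P\!\big[e^{\,g-\log\frac{dP}{dQ}}\big] \;\ge\; \E_P[g]-\mathrm{Ent}(P|Q) \;=\; \E_P[g]-I, \]
where the first equality is a change of measure and the inequality is Jensen for the convex function $\exp$. Applying this with $g=\lambda\mathbf 1_A$ for a parameter $\lambda>0$, and using $\E_Q[e^{\lambda\mathbf 1_A}]=1+(e^{\lambda}-1)\P[f\in A]$ together with $\E_P[\lambda\mathbf 1_A]=\lambda p$, rearranging yields
\[ \P[f\in A] \;\ge\; \frac{e^{\lambda p-I}-1}{e^{\lambda}-1}. \]

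It then remains to optimise over $\lambda$: taking $\lambda=1+I/p>0$ makes $\lambda p-I=p$, so the numerator equals $e^{p}-1\ge p$ while the denominator is $e^{\lambda}-1<e^{\lambda}=e^{1+I/p}$, and hence
\[ \P[f\in A] \;\ge\; p\,e^{-1-I/p} \;=\; \P[f+h\in A]\,\exp\!\Big(-\frac{\|h\|_H^2}{2\,\P[f+h\in A]}-1\Big), \]
as claimed. There is no real obstacle here; the only two points needing a little care are the identification of $\mathrm{Ent}(P|Q)$ with $\tfrac12\|h\|_H^2$ (rather than $\mathrm{Ent}(Q|P)$, which is what \eqref{e:cmt} records, though the two agree for a mean shift), and the explicit choice of $\lambda$, which is what produces the stated constants exactly rather than up to an absolute factor.
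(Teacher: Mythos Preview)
Your proof is correct. The route differs from the paper's: you invoke the Donsker--Varadhan (Gibbs variational) inequality $\log\E_Q[e^g]\ge\E_P[g]-\mathrm{Ent}(P\mid Q)$ with the test function $g=\lambda\mathbf 1_A$ and then optimise in $\lambda$, whereas the paper splits the relative-entropy integral $\int\log\frac{dP}{dQ}\,dQ$ over $A$ and $A^c$, applies Jensen to each piece to get $Q[A]\log\frac{P[A]}{Q[A]}+Q[A^c]\log\frac{P[A^c]}{Q[A^c]}$, and closes with the elementary fact $\sup_{x\in[0,1]}\frac{1-x}{x}\log\frac{1}{1-x}=1$. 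Both arguments are short and yield exactly the same constants; the paper's version is marginally more direct in that it never introduces an auxiliary parameter, while yours has the advantage of being an instance of a general variational principle that readers may recognise. Your remark that one needs $\mathrm{Ent}(P\mid Q)$ rather than $\mathrm{Ent}(Q\mid P)$ (the quantity recorded in \eqref{e:cmt}), and that the two coincide for a Gaussian mean shift, is well taken and correctly handled.
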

\begin{proof}
Let $P$ and $Q$ denote the laws of $f$ and $f+h$ respectively. By \eqref{e:cmt} and Jensen's inequality
\begin{align*}
- \frac{\|h\|_H^2}{2}  & = \int  \log \frac{dP}{dQ} dQ   = \int_A \log \frac{dP}{dQ} dQ  + \int_{A^c} \log \frac{dP}{dQ} dQ   \\
&\le Q[A] \Big( \log \frac{P[A]}{Q[A]}  + \frac{Q[A^c]}{Q[A]}   \log \frac{P[A^c]}{Q[A^c]}  \Big) \\
& \le  Q[A] \Big( \log \frac{P[A]}{Q[A]}  + 1\Big) ,
\end{align*}
where the last inequality is since $P[A^c] \le 1$ and $\sup_{x \in [0, 1]} \frac{1-x}{x} \log \frac{1}{1-x} = 1$, which is easily verified. Rearranging gives the result.
\end{proof}

\begin{corollary}
\label{c:eb}
Let $f$ be a continuous Gaussian field on $\R^d$. Then for every $u \ge 0$, compact set $D \subset \R^d$, and event $A$ that is increasing and measurable with respect to $f|_D$,
\[  \P[ f  \in A ] \ge     \P[f + u  \in A] \exp \Big( -  \frac{u^2 \capa_K(D) }{2  \P[f + u  \in A]}  - 1 \Big)   .\]
\end{corollary}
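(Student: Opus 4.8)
The plan is to specialise Proposition~\ref{p:eb} to the shift $h=u\,h_D$, where $h_D\in H$ is the capacitary potential of $D$, and then to pass from the probability $\P[f+uh_D\in A]$ to the smaller probability $\P[f+u\in A]$ using that $A$ is increasing and $f|_D$-measurable. The only inputs needed are the convex-duality Proposition~\ref{p:duality}, Proposition~\ref{p:eb}, and an elementary monotonicity lemma.

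First I would dispose of the degenerate cases. If $u=0$ the asserted inequality reads $\P[f\in A]\ge\P[f\in A]e^{-1}$, which is trivial. If $\capa_K(D)=+\infty$ then, by \eqref{e:dual}, there is no $h\in H$ with $h|_D\ge 1$; the right-hand side of the corollary then vanishes (the exponent is $-\infty$ when $\P[f+u\in A]>0$, while the whole expression is $0$ when $\P[f+u\in A]=0$, under the usual convention), so again there is nothing to prove. Hence I may assume $u>0$ and $\capa_K(D)<\infty$, and by Proposition~\ref{p:duality} there is then $h_D\in H$ with $h_D|_D\ge 1$ and $\|h_D\|_H^2=\capa_K(D)$.

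Next I would record the key domination. Since $u>0$ and $h_D\ge 1$ on $D$, the shifted field satisfies $(f+uh_D)|_D\ge(f+u)|_D$ pointwise on $D$. As $A$ is increasing and measurable with respect to $f|_D$, this yields the inclusion $\{f+u\in A\}\subseteq\{f+uh_D\in A\}$, hence $\P[f+uh_D\in A]\ge\P[f+u\in A]$. Applying Proposition~\ref{p:eb} to the element $uh_D\in H$, whose squared norm is $u^2\|h_D\|_H^2=u^2\capa_K(D)$, gives
\[
\P[f\in A]\ \ge\ \P[f+uh_D\in A]\,\exp\Big(-\frac{u^2\capa_K(D)}{2\,\P[f+uh_D\in A]}-1\Big).
\]

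Finally I would close with an elementary monotonicity observation: for $a\ge 0$ the map $\Phi(p)=p\,e^{-a/p-1}$ (with $\Phi(0):=0$) is non-decreasing on $[0,\infty)$, since $\tfrac{d}{dp}\big(p\,e^{-a/p}\big)=e^{-a/p}\big(1+a/p\big)\ge 0$. Taking $a=u^2\capa_K(D)/2$ and using $\P[f+uh_D\in A]\ge\P[f+u\in A]$ from the previous step, the displayed inequality implies
\[
\P[f\in A]\ \ge\ \P[f+u\in A]\,\exp\Big(-\frac{u^2\capa_K(D)}{2\,\P[f+u\in A]}-1\Big),
\]
which is the claim. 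I do not expect any genuine obstacle here: this is a short deduction, and the only points needing a line of care are the monotonicity of $\Phi$ and the two degenerate cases handled above. (If one prefers to avoid invoking attainment in \eqref{e:dual}, one may instead run the argument with any $h\in H$ satisfying $h|_D\ge 1$ and $\|h\|_H^2\le\capa_K(D)+\eps$, then let $\eps\downarrow 0$.)
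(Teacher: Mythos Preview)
Your proof is correct and follows essentially the same approach as the paper: invoke convex duality (Proposition~\ref{p:duality}) to obtain $h_D\in H$ with $h_D|_D\ge 1$ and $\|h_D\|_H^2=\capa_K(D)$, apply Proposition~\ref{p:eb} with $h=uh_D$, and use the monotonicity of $A$ together with the monotonicity of $p\mapsto p\,e^{-a/p-1}$ to replace $\P[f+uh_D\in A]$ by $\P[f+u\in A]$. The paper's version is just more terse, leaving the monotonicity step implicit.
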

\begin{proof}
Suppose $ \capa_K(D) < \infty$ since otherwise the statement is trivial. By convex duality (Proposition \ref{p:duality}) there exists $h \in H$ with $h|_D \ge 1$ and $\|h\|_H = \capa_K(D)$. Applying Proposition \ref{p:eb}, and since $A$ is increasing and measurable with respect to $f|_D$, gives the result.
\end{proof}

\smallskip
\section{Local-global decompositions}
\label{s:lgd}
In this section we introduce `local-global decompositions' of smooth Gaussian fields, show their existence under our assumptions, and deduce bounds on exceedences of the global field.

\subsection{Definition and existence}
\label{s:locglo}
 
Let us begin by defining a local-global decomposition:

\begin{definition}
\label{d:locglo}
Let $f$ be a $C^1$-smooth isotropic Gaussian field on $\R^d$ with covariance $K$ that is eventually positive. We say that $f$ \textit{has a local-global decomposition} if there exists a collection of pairs of isotropic Gaussian fields $(f_L, g_L)_{L \ge 1}$ such that, for every $L \ge 1$:
\begin{itemize}
\item $ f \stackrel{d}{=} f_L + g_L $.
\item The field $f_L$ is $L$-range dependent, i.e.\
\[  \mathbb{E}[f_L(0) f_L(x) ]  = 0  \qquad \text{for all } |x| \ge L .\]
\item The field $g_L$ is $C^1$-smooth and satisfies
\begin{equation}
\label{e:locglo1}
  \lim_{M \to \infty} \limsup_{L \to \infty} \sup_{|x| \ge ML}  \Big| \frac{ \mathbb{E}[g_L(0) g_L(x) ] }{ K(x) } -  1 \Big|  = 0  ,
\end{equation}
and, as $L \to \infty$,
\begin{equation}
\label{e:locglo2}
\mathbb{E}[ g_L(0)^2 ] = O(K(L))  \quad \text{and} \quad  \sup_{|k| = 1} \mathbb{E}[ (\partial^k g_L(0))^2 ]  =   o(1)  . 
\end{equation}
If $|1/K(L)| = L^{o(1)}$ as $L \to \infty$, we ask in addition that
\begin{equation}
\label{e:locglo3}
 \sup_{|k| = 1} \mathbb{E}[ (\partial^k g_L(0))^2 ]  =   O(  L^{-2} K(L)  ) .
 \end{equation}
 \end{itemize}
\end{definition}

We next give two sufficient conditions for the existence of such a decomposition.  The reader may wish to skip this and move to Section~\ref{s:egf} at a first reading. The first condition is more restrictive but easier to verify:

\begin{assumption}
\label{a:gen1}
Suppose Assumption \ref{a:rv} holds. In addition, there exists a non-negative function $v \in C^0(\R^+)$ such that $K(r) = \mathcal{L}[v](r^2)$, where $\mathcal{L}[\cdot]$ denotes the Laplace transform. Moreover $v(s) s^{d/2+3}$ is bounded, and there is a $c > 0$ such that, as $s \to 0$,
\begin{equation}
\label{a:v}
 v(s) \sim \begin{cases}
c s^{\alpha/2 - 1} L(1/\sqrt{s}) & \text{if } \alpha \in (0, d), \\ 
c  s^{-1}  (\log (1/s))^{-\gamma-1} & \text{if } \alpha = 0 .    \end{cases}
\end{equation}
\end{assumption}

\begin{remark}[Examples]
\label{r:examples1}
The function $r \mapsto (1 + r)^{-\alpha/2}$, $\alpha > 0$, is the Laplace transform of
\[ v(s) = \Gamma(\alpha/2)^{-1} s^{\alpha/2-1} e^{-s} , \quad s > 0 ,\]
where $\Gamma(\cdot)$ is the gamma function. Hence the smooth isotropic field on $\R^d$, $d \ge 2$, with Cauchy covariance \eqref{e:cauchy} satisfies Assumption \ref{a:gen1} if $\alpha \in (0,d)$. More generally, one can take any non-negative function $v \in C^0(\R^+)$ with $v(s) s^{d/2+3}$ bounded and $v(s)$ satisfying \eqref{a:v}. Then we show in Appendix~\ref{a:wnd} that the isotropic Gaussian field on~$\R^d$, $d \ge 2$, whose covariance is $K(r) \propto \mathcal{L}[v](r^2)$, suitably normalised, satisfies Assumption \ref{a:gen1}.
\end{remark}

The second is broader but may be harder to verify:

\begin{assumption}
\label{a:gen2}
Suppose Assumption \ref{a:rv} holds. In addition, the spectral measure has a density $\rho$, the function $q = \mathcal{F}[\sqrt{\rho}]$ is in $C^3(\R^d)$, for each multi-index $k$ with $|k| \le 3$, $\partial^k q \in L^2(\R^d)$, and there is a $c > 0$ such that, as $x \to \infty$,
\begin{equation}
\label{a:q}
 q(x) \sim \begin{cases}
c x^{-(d+\alpha)/2} \sqrt{L(x)} & \text{if } \alpha \in (0, d), \\ c  x^{-d/2} (\log x)^{-(\gamma+1)/2} & \text{if } \alpha = 0 .    \end{cases}
\end{equation}
If $\alpha = 0$ we additionally assume that, as $x \to \infty$,
\begin{equation}
\label{a:extra}
 \sup_{|k| = 1}   |\partial^k q(x)| = O \big(x^{-1} q(x) \big)  . 
 \end{equation}
\end{assumption}

\begin{remark}[Examples]
\label{r:examples2}
A first class of examples is obtained by taking a smooth isotropic unimodal function $q$ such that $\partial^k q \in L^2(\R^d)$ for all $|k| \le 3$, which satisfies \eqref{a:q}, and such that $x^{(d-1)/2} q(x)$ is eventually decreasing (and also satisfying~\eqref{a:extra} if $\alpha = 0$). Then if $d \in \{2,3\}$ and $\alpha \in ((d-3)/2, d)$, we show in Appendix \ref{a:tau} that the isotropic Gaussian field on $\R^d$, $d \ge 2$, whose covariance kernel is $K \propto q \star q$, suitably normalised, satisfies Assumption \ref{a:gen2}. The restriction $d \in \{2,3\}$ and $\alpha \in ((d-3)/2, d)$ is for technical reasons, and it is possible these examples also satisfy Assumption~\ref{a:gen2} without this restriction.

\smallskip
Second, one can obtain a class of examples via \textit{spectral blow-up}, namely by assuming that the spectral density $\rho$ exists and satisfies, as $\lambda \to 0$,
\begin{equation}
\label{e:blowup}
\rho(\lambda) \sim \begin{cases} 
 \lambda^{\alpha - d }  & \text{if } \alpha \in (0,d) , \\ 
 \lambda^{- d } (\log (1/\lambda) )^{-(\gamma +1)}  & \text{if } \alpha = 0 .
  \end{cases}
\end{equation} 
As we show in Appendix \ref{a:tau}, if $\alpha \in ((d-3)/2, d)$ and $K$ (resp.\ $q$) is continuous with $K(x) x^{(d-3)/2}$ (resp.\ $q(x) x^{(d-3)/2}$) eventually decreasing, then \eqref{a:k} (resp.\ \eqref{a:q}), suitably normalised, is implied by~\eqref{e:blowup}.
\end{remark}

\begin{remark}
Although we do not show this formally, a consequence of the classical Tauberian theorem for the Laplace transform is that Assumption \ref{a:gen1} implies the spectral density satisfies~\eqref{e:blowup} (suitably normalised). Hence if $\alpha \in ((d-3)/2, d)$ and $q$ is continuous with $q(x) x^{(d-3)/2}$ eventually decreasing, Assumption \ref{a:gen1} implies Assumption \ref{a:gen2}. It is plausible that this implication holds in general, but we lack a proof.
\end{remark}

We now verify the sufficiently of these conditions:

\begin{proposition}
\label{p:locglo}
If $f$ satisfies Assumption \ref{a:gen1} or \ref{a:gen2} it has a local-global decomposition.
\end{proposition}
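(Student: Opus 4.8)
The plan is to construct the decomposition $f = f_L + g_L$ at the level of spectral representations, by splitting the ``source'' of the field (the function $v$ in Assumption~\ref{a:gen1}, or the function $q = \mathcal{F}[\sqrt{\rho}]$ in Assumption~\ref{a:gen2}) into a compactly-supported-in-space piece and a remainder. Concretely, under Assumption~\ref{a:gen2} one writes $f = q \star W$ for $W$ white noise, fixes a smooth cutoff $\chi_L$ equal to $1$ on $B(L/2)$ and supported in $B(L)$, and sets $f_L = (\chi_L q) \star W$, $g_L = ((1-\chi_L) q) \star W$. Then $f_L$ is automatically $(2L)$-range dependent (after rescaling the cutoff radius by a constant this becomes $L$-range dependent), $g_L$ is isotropic up to the non-isotropy introduced by $\chi_L$ --- which one repairs by choosing $\chi_L$ radial --- and all the required covariance asymptotics for $g_L$ reduce to estimating $\int (1-\chi_L(y))(1-\chi_L(x-y)) q(y) q(x-y)\, dy$ and its derivatives. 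Under Assumption~\ref{a:gen1} the analogous construction is carried out through the Laplace-transform (i.e.\ subordination / scale-mixture) representation $K(r) = \int_0^\infty e^{-r^2 s} v(s)\, ds$: one writes $f$ as a mixture of Gaussian (squared-exponential) fields at scales $s$, truncates the mixing measure $v(s)\,ds$ at $s \asymp L^{-2}$, and identifies the low-$s$ part (long correlation length) with $g_L$ and the high-$s$ part (short correlation length) with $f_L$. Strictly speaking the Gaussian field is not finite-range, so one then further replaces the truncated-Gaussian part by a genuinely $L$-range field using the Gaussian tail bound; alternatively one shows directly that Assumption~\ref{a:gen1} implies Assumption~\ref{a:gen2} (as the remark preceding the proposition indicates, via the Tauberian theorem, giving $\rho$ satisfying~\eqref{e:blowup}, hence the required $q$-asymptotics), so that only one construction need be run in detail.

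The key steps, in order, are: (i) reduce Assumption~\ref{a:gen1} to Assumption~\ref{a:gen2} by the classical Tauberian theorem, obtaining a spectral density with the blow-up~\eqref{e:blowup} and a square-root $q$ with the asymptotics~\eqref{a:q} (using the eventual-monotonicity hypotheses to pass from Abelian/Tauberian control of averages to pointwise control, exactly as in Appendix~\ref{a:tau}); (ii) fix a radial $C^\infty$ cutoff $\chi$ with $\chi \equiv 1$ on $B(1/2)$, $\mathrm{supp}\,\chi \subset B(1)$, set $\chi_L(x) = \chi(x/L)$, and define $f_L = (\chi_L q)\star W$, $g_L = ((1-\chi_L) q)\star W$, noting $f \stackrel{d}{=} f_L + g_L$ and that $f_L$ is $2L$-dependent (rescale $L \mapsto L/2$ to match the definition); (iii) verify $g_L$ is $C^1$ --- indeed $C^3$ --- using $\partial^k q \in L^2$ for $|k|\le 3$; (iv) prove~\eqref{e:locglo1}: for $|x| \ge ML$ write $\mathbb{E}[g_L(0)g_L(x)] = \int (1-\chi_L(y))(1-\chi_L(y-x)) q(y)q(y-x)\,dy$ and compare with $K(x) = \int q(y)q(y-x)\,dy$; the difference is supported where $|y| \le L$ or $|y-x|\le L$, and on each such region the regular variation of $q$ (Potter bounds, \cite[Theorem 1.5.6]{bgt87}) plus $|x| \ge ML$ gives a contribution that is $o(K(x))$ uniformly, with the $M \to \infty$ limit killing the error; (v) prove~\eqref{e:locglo2}: $\mathbb{E}[g_L(0)^2] \le \int_{|y|\ge L/2} q(y)^2\,dy$, which by~\eqref{a:q} is $O(L^{-\alpha} L(L)) = O(K(L))$ when $\alpha \in (0,d)$ (and the corresponding log-computation when $\alpha=0$, where one needs $\gamma > 0$ for convergence), while $\mathbb{E}[(\partial^k g_L(0))^2] \le \int_{|y|\ge L/2} |\partial^k((1-\chi_L)q)(y)|^2 \,dy \to 0$ by dominated convergence since $\partial^k q \in L^2$ and the integrand is pointwise bounded by $2(|\partial^k q|^2 + L^{-2}|q|^2 \|\nabla\chi\|_\infty^2)$ on an escaping-to-infinity region; (vi) prove the sharpened bound~\eqref{e:locglo3} in the regime $|1/K(L)| = L^{o(1)}$ (which forces $\alpha = 0$ and $\gamma \le 0$, or $\alpha=1,\gamma=1$ borderline --- though $\gamma<1$ is assumed for $\alpha=1$, so in fact only $\alpha=0$ with $0<\gamma$ small enough that $(\log L)^{\gamma}=L^{o(1)}$, which is automatic): here one needs $\int_{|y|\ge L/2}|\partial^k q|^2 = O(L^{-2}K(L))$, which follows from the extra hypothesis~\eqref{a:extra} ($|\partial^k q(x)| = O(x^{-1}q(x))$) by substituting into the $q$-asymptotics.

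The main obstacle I expect is step~(iv), the uniform covariance asymptotic~\eqref{e:locglo1}: one must control $\int_{|y|\le L} q(y) q(y-x)\,dy$ (and the symmetric term) relative to $K(x) \sim x^{-\alpha}L(x)$ \emph{uniformly over all $|x| \ge ML$}, and then take $L\to\infty$ followed by $M \to \infty$. On the region $|y|\le L$ one has $|y-x| \in [(M-1)L, \infty)$, so $q(y-x) = O(|x|^{-(d+\alpha)/2}\sqrt{L(|x|)}\cdot(\text{Potter factor in }|y-x|/|x|))$; integrating the remaining $q(y)$ over $|y|\le L$ gives a factor $O(L^{(d-\alpha)/2}\sqrt{L(L)})$ (using $\alpha < d$ so $q \in L^1_{loc}$ with this growth), and assembling, the ratio to $K(x)$ is $O((L/|x|)^{(d-\alpha)/2}) \le O(M^{-(d-\alpha)/2})$, which vanishes as $M\to\infty$ --- but making the Potter-bound error terms genuinely uniform in $x$ over the unbounded range, and separately handling the ``crossed'' region where both $|y|\le L$ and $|y-x|\le L$ (empty once $|x| > 2L$, hence irrelevant in the $M\to\infty$ limit), requires some care. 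The $\alpha = 0$ case of~(iv)--(v) is slightly more delicate because the relevant integrals converge only marginally and one leans on $\gamma > 0$; this is where I would spend the most effort checking constants. Everything else is routine manipulation of Gaussian covariances via the white-noise representation and standard regular-variation estimates from Appendix~\ref{a:reg}.
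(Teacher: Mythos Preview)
Your treatment of Assumption~\ref{a:gen2} is essentially the paper's own argument: radial spatial cutoff of $q$, white-noise representation, and regular-variation estimates for the three conditions. The estimates you sketch in steps (iv)--(vi) are correct in substance and match the paper's computations (your $O(M^{-(d-\alpha)/2})$ in step (iv) is exactly what the paper obtains after simplifying $M^\alpha(M-1)^{-(d+\alpha)/2}$).

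The gap is in Assumption~\ref{a:gen1}. Your step~(i) proposes to reduce \ref{a:gen1} to \ref{a:gen2} via Tauberian theorems, but the remark immediately before the proposition states explicitly that this implication is only known under the extra hypotheses $\alpha \in ((d-3)/2,d)$ and eventual monotonicity of $q(x)x^{(d-3)/2}$, neither of which is part of Assumption~\ref{a:gen1}; the paper says plainly ``we lack a proof'' in general. Your alternative---truncating the mixing measure $v(s)\,ds$ at $s \asymp L^{-2}$---does not produce a finite-range local field (the squared-exponential kernel is nowhere-vanishing), and your one-line fix ``further replace by a genuinely $L$-range field using the Gaussian tail bound'' is not a construction: you would need to exhibit a coupling that preserves the distributional identity $f \stackrel{d}{=} f_L + g_L$ while simultaneously controlling the covariance of the replacement $g_L$, and it is not clear how to do this cleanly.

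What the paper does instead for \ref{a:gen1} is to use a white-noise representation on $\R^d \times \R^+$: one writes $f = q \star_1 W$ with $q(x,t) = w(t)\,e^{-|x|^2/(2t^2)}$ and $W$ white noise on the half-space (the extra coordinate $t$ indexing the scale), and then truncates \emph{spatially} via $q_L(x,t) = q(x,t)\varphi(x/L)$. This gives exact $L$-range dependence for $f_L$ for free, and the covariance analysis for $g_L$ proceeds as in the \ref{a:gen2} case but with an additional $t$-integral handled by the Laplace-transform Tauberian theorem. This is the construction you are missing.
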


Before embarking on the proof let us explain the main idea, which is to represent the field as a \textit{spatial moving average}. More precisely, under Assumption \ref{a:gen2} we use the representation $f = q \star W$, where $q$ is the defined in Assumption~\ref{a:gen2} and $W$ is the white noise on $\R^d$. On the other hand, we show in Appendix \ref{a:wnd} that, under Assumption \ref{a:gen1}, there exists a $q(x,t) \in L^2(\R^d \times \R^+)$ such that $f = q \star_1 W$, where $W$ is the white noise on $\R^d \times \R^+$, and $\star_1$ denotes the convolution over $\R^d \times \R^+$ restricted to $\R^d \times \{0\}$. In both cases we define $f_L$ and $g_L$ via a truncation of $q$ at scale $L$.

\smallskip
Similar decompositions were used in \cite{mv20, s21,m22} to study Gaussian field percolation. An important difference in our application is that relevant properties are carried by the global field, whereas in \cite{mv20,s21,m22} the global field was discarded.

\begin{proof}[Proof of Proposition \ref{p:locglo} in the case of Assumption \ref{a:gen2}]
Recall that $f = q \star W$ (see Appendix~\ref{a:wnd}). Let $\varphi : \mathbb{R}^+ \to \R$ be a smooth function with bounded derivatives of all order such that $\varphi(x) = 1$ for $x \le 1/4$ and $\varphi(x) = 0$ for $x \ge 1/2$. For $L \ge 1$, define $q_L(\cdot) = q(\cdot) \varphi(\cdot / L)$, and the isotropic Gaussian fields
\[ f_L = q_L \star W \quad \text{and} \quad g_L = ( q - q_L)  \star W  = f - f_L . \] 
Let us verify that $f_L$ and $g_L$ fulfil the requirements of a local-global decomposition. By definition it is clear that $f_L$ and $g_L$ are isotropic Gaussian fields and that $f_L$ is $L$-range dependent; hence it remains to show that $g_L$ is $C^1$-smooth and satisfies \eqref{e:locglo1}--\eqref{e:locglo3}.

As a preliminary we observe, since $q \in C^3(\R^d)$ and $\varphi$ has uniformly bounded derivatives, $q_L \in C^3(\R^d)$. Hence $g_L$ is $C^2$-smooth (Lemma \ref{l:a1}), and for each $|k| \le 2$,
\[   \partial^k f = (\partial^k q) \star W  \quad \text{and} \quad \partial^k g_L = (\partial^k (q-q_L)) \star W .\]   
 We also observe that, by assumption, as $L \to \infty$, for some $c > 0$,
 \begin{equation}
 \label{e:locglop1}
  q(L)^2 L^d \sim \begin{cases} c K(L)  & \text{if } \alpha \in (0, d) , \\ c K(L) / \log L  & \text{if }  \alpha = 0 . \end{cases} 
 \end{equation}
In particular, $q$ is regularly varying with index $(d+\alpha)/2 \in (\alpha,d)$.

 \textit{Proof of \eqref{e:locglo1}.} Since $f = f_L + g_L$ and $f_L$ is $L$-range dependent, for $x \ge 3 L$ we have
\begin{equation}
\label{e:decomp}
 \mathbb{E}[g_L(0) g_L(x) ]  =  \E[f(0) f(x)] - 2 \E[f_L(0) f(x)]  = K(x) - 2  \langle q_L(\cdot), q (\cdot - x) \rangle_{L^2}   .   
 \end{equation}
Moreover, since $x \ge 3L$ and $q$ is eventually positive, as $L \to \infty$,
\[  \langle q_L(\cdot) , q (\cdot - x) \rangle_{L^2}   \le \| q_L \|_{L^1} \sup_{y \in B(x + L)} q(y)    = O \Big(  \int_0^L  x^{d-1} q(x) \, dx    \times  q(x-L) \Big) , \]
where we used that $q(x) \sim \sup_{t \ge x} q(t)$ (Lemma \ref{l:evd}). Since $\alpha < d$ we have by Proposition \ref{p:kit} (with $\beta = (d-\alpha)/2 - 1 > -1$) and \eqref{e:locglop1} that, as $L \to \infty$,
 \[  \int_0^L x^{d-1} q(x) \, dx     =   O( L^d q(L) ) = \frac{O(K(L)) }{q(L) }  . \]
Moreover, by Lemma \ref{l:evd}, for sufficiently large $M > 1$,
\[ \sup_{|x| \ge ML} \frac{q(x-L)}{K(x)}  \sim \frac{ q((M-1)L)}{K(ML) }   . \]
Hence, by the regular variation of $K$ and $q$, for sufficiently large $M > 1$,
\[ \limsup_{L \to \infty} \sup_{|x| \ge ML}  \frac{K(L)}{K(x)}  \frac{ q(x-L)}{q(L) }  =   \limsup_{L \to \infty}   \frac{K(L)}{K(ML)}  \frac{q((M-1)L) }{q(L) }  = M^\alpha (M-1)^{-(d+\alpha)/2}     . \]
Recalling that $(d+\alpha)/2 > \alpha$, we have shown that
\[   \lim_{M \to \infty} \limsup_{L \to \infty} \sup_{|x| \ge ML}  \frac{ \langle q_L(\cdot), q(\cdot - x) \rangle_{L^2} }{K(x) }   =  0 ,   \]
which, combining with \eqref{e:decomp}, completes the proof.
 
\textit{Proof of \eqref{e:locglo2}.} We note that, since $q$ is eventually positive,
\[  \E[g_L(0)^2 ]   =   \|q -  q_L \|_2  = O\Big(  \int_{L/4}^\infty x^{d-1} q(x)^2  dx \Big)  , \]
In the case $\alpha \in (0, d)$ we have by Proposition \ref{p:kit} (with $\beta = -\alpha - 1 < -1$) and \eqref{e:locglop1}
\[  \int_{L/4}^\infty x^{d-1} q(x)^2  dx   = O(K(L)) , \]
and in the case $\alpha = 0$ instead by Proposition \ref{p:kitlog} (with $\beta = -\gamma - 1 < -1$)
\[  \int_{L/4}^\infty x^{d-1} q(x)^2  dx   \sim  \int_{L/4}^\infty x^{-1} (\log x)^{-(\gamma + 1)}   dx  =  O( (\log L)^{-\gamma}) = O(K(L)) . \]
Moreover, since $\partial^k q \in L^2$, as $L \to \infty$,
\[ \E[ ( \partial^k g_L(0))^2 ]  = \| \partial^k (q - q_L) \|_{L_2} \to 0 .  \]

\textit{Proof of \eqref{e:locglo3}.} Eq.\ \eqref{e:locglo3} only applies in the case $q(x) \sim x^{-d/2} (\log x)^{-(\gamma+1)/2}$. Recalling the assumption \eqref{a:extra}, for each $|k| = 1$,
\[ \E[ (\partial^k g_L(0))^2 ]  = \| \partial^k (q - q_L) \|_{L^2} = O \Big(  \int_{L/4}^\infty  x^{d-1} x^{-(d+2)} (\log x)^{-\gamma + 1} dx \Big) = O(L^{-2} (\log L)^{-\gamma} )  , \]
as required, where we used Proposition \ref{p:kit} (with $\beta = -3$).
 \end{proof}
 
 \begin{proof}[Proof of Proposition \ref{p:locglo} in the case of Assumption \ref{a:gen1}]
In Appendix \ref{a:wnd} we show that, under Assumption \ref{a:gen1},
\[ f = q \star_1 W \ , \quad q(x,t) = w(t) Q(x/t) ,\]
where $w(t) \propto   t^{-d-3} v(1/(4t^2))$, $Q(x) = e^{-|x|^2/2}$, and $W$ is the white noise on $\R^d \times \R^+$. Let $\varphi$ be as in the proof in the case of Assumption \ref{a:gen2}, and for $L \ge 1$, define $q_L(\cdot) = w(t) Q(\cdot/t) \varphi(\cdot/L)$ and the isotropic Gaussian fields
\[ f_L = q_L \star_1 W \quad \text{and} \quad g_L = ( q - q_L)  \star_1 W  = f - f_L . \] 
As in the previous proof, it is clear that $g_L$ and $f_L$ are isotropic Gaussian fields, $f_L$ is $L$-range dependent, and $g_L$ is $C^2$-smooth (see Lemma \ref{l:a1}) with, for each $|k| \le 2$,
\[   \partial^k f = (\partial^{k,0} q) \star_1 W  \quad \text{and} \quad \partial^k g_L = ( \partial^{k,0} (q-q_L) ) \star W , \]
where $\partial^{k,0} = \partial^k_x$. Moreover, the bounds \eqref{e:locglo2} and \eqref{e:locglo3} are a special case of \cite[Proposition 2.6]{m22}. Hence it remains to show that $g_L$ satisfies \eqref{e:locglo1}.

Similarly to \eqref{e:decomp}, for $x \ge 3 L$ we have
\begin{equation}
\label{e:decomp2}
\mathbb{E}[g_L(0) g_L(x) ]  =  \E[f(0) f(x)] - 2 \E[f_L(0) f(x)]  = K(x) - 2  \langle q_L(\cdot), q (\cdot - x) \rangle_{L^2(\R^d \times \R^+)}   .   
\end{equation}
Using that $x \mapsto e^{-x^2}$ is decreasing on $\R^+$ and $q_L$ is supported on $B(L)$, we have
\begin{align*}
 \langle q_L(\cdot), q (\cdot - x) \rangle_{L^2(\R^d \times \R^+)} &\le \int_0^\infty \| q_L(\cdot,t)  \|_{L^1} \sup_{y \in B(x + L)} q(y,t) \, dt    \\
 &  \le  c_1 L^d \int_0^\infty w(t)  e^{-|x-L|^2/(2t^2)} \, dt ,
 \end{align*}
 for a constant $c_1 > 0$. By the change of variables $t \mapsto 1/(2 \sqrt{s})$ and applying Proposition \ref{p:taulaplace}, as $x \to \infty$,
 \begin{align*}
\int_0^\infty w(t)  e^{-|x|^2/(2t^2)} \, dt & = c_2  \int_0^\infty s^{d/2}   v(s) e^{-2|x|^2 s} \, ds \\
 & = c_2 \mathcal{L}[ v(s) s^{d/2} ]( 2x^2 ) \\
   & \le c_3 (1+o(1) )
 K \big(\sqrt{2} x \big) x^{-d} 
  \end{align*}
 for constants $c_2,c_3 > 0$. Since, by Lemma \ref{l:evd}, for sufficiently large $M$,
 \[   \sup_{|x| \ge ML}  \frac{ K(\sqrt{2}(x-L)) (x-L)^{-d}  }{K(x) }   \sim  L^{-d} (\sqrt{2}(M-1))^{-\alpha} (M-1)^{-d}  M^{\alpha}   , \] we conclude that
 \[  \limsup_{L \to \infty} \sup_{|x| \ge ML}  \frac{ \langle q_L(\cdot), q(\cdot - x) \rangle_{L^2(\R^d \times \R^+)} }{K(x) }   \le   c_4 (M-1)^{-\alpha-d}  M^{\alpha} \le c_5 M^{-d} . \]
 for $c_4, c_5> 0$. Combining with \eqref{e:decomp2} and sending $M \to \infty$, we have the result.
 \end{proof}

\subsection{Exceedences of the global field}\label{s:egf}
We next control the exceedences of the global field. In the next two results $f$ is a $C^1$-smooth isotropic Gaussian field on $\R^d$ with covariance $K$ that is regularly varying with index $\alpha \in [0, \infty)$, and we assume that $f$ has a local-global decomposition $(f_L, g_L)_{L \ge 1}$. 

\smallskip
The first result controls exceedences in a ball of scale $L$:
\begin{proposition}
\label{p:ld2}
For any $u > 0$, as $L \to \infty$, 
\begin{equation}
\label{e:gsup}
\E \Big[\sup_{x \in B(L)} g_L \Big] = o(1)  \quad \text{and} \quad  - \log \P \Big[ \sup_{x \in B(L)} g_L \ge u \Big] \ge \frac{1}{O(K(L))}  .
\end{equation}
Moreover, if $\alpha > 0$ then for every $\psi \in (0, \min\{\alpha/2,1\})$ there is a $c_1 > 0$ such that, for any $L \ge 1$ and $u \ge L^{-\psi}$,
\begin{equation}
\label{e:gsup2}
 - \log \P \Big[ \sup_{x \in B(L)} g_L \ge u \Big] \ge \frac{c_1 u^2}{K(L)}  ,
\end{equation}
and if $\alpha = 0$ there is a $c_2 < 0$ such that, for any $L \ge 1$ and $u \ge c_2^{-1} \sqrt{ K(L)}$,  
\begin{equation}
\label{e:gsup3}
 - \log \P \Big[ \sup_{x \in B(L)} g_L \ge u \Big] \ge \frac{c_2 u^2}{K(L)}  .
\end{equation}
\end{proposition}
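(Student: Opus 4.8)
The plan is to reduce everything to the BTIS inequality (Proposition~\ref{p:btis}) applied to the stationary $C^1$-smooth field $g_L$ on $D = B(L)$, together with the defining properties \eqref{e:locglo2}--\eqref{e:locglo3} of the local--global decomposition. Write $a_L := \E[\sup_{x \in B(L)} g_L]$, and note that by stationarity $\sup_{x \in B(L)} \var[g_L(x)] = \E[g_L(0)^2] = O(K(L))$ by \eqref{e:locglo2}. Then Proposition~\ref{p:btis}, applied with its ``$u$'' equal to $u - a_L$, gives for every $u \ge a_L$
\[
  - \log \P\Big[ \sup_{x \in B(L)} g_L \ge u \Big] \;\ge\; \frac{(u - a_L)^2}{2\,\E[g_L(0)^2]} \;=\; \frac{(u - a_L)^2}{O(K(L))} .
\]
Hence all three assertions \eqref{e:gsup}--\eqref{e:gsup3} follow once we show that in the relevant range of $u$ one has $a_L \le u/2$, so that $(u-a_L)^2 \ge u^2/4$ and the factor $1/4$ is absorbed into the implied constants.

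The crux is therefore controlling $a_L$, which I would do by applying Proposition~\ref{p:ld1} to $g_L$ with $R = L$. If $\alpha > 0$, the decomposition only provides $\sup_{|k|=1}\E[(\partial^k g_L(0))^2] = o(1)$, so I would use \eqref{e:esup1}: the two terms under the square root are $(\log L)\,\E[g_L(0)^2] = O((\log L)K(L))$ and $L^{-2}\sup_{|k|=1}\E[(\partial^k g_L(0))^2] = o(L^{-2})$, both of which tend to $0$, giving $a_L = o(1)$ (the first claim of \eqref{e:gsup}). Writing $K(L) = L^{-\alpha}L_0(L)$ with $L_0$ slowly varying and using Potter's bounds \cite{bgt87} to absorb $\sqrt{\log L}$ and $L_0(L)^{1/2}$ into arbitrarily small powers of $L$, these two terms decay like $L^{-\alpha/2+\eps}$ and $o(L^{-1})$ respectively, so $a_L = o(L^{-\beta})$ for every $\beta < \min\{\alpha/2,1\}$. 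If instead $\alpha = 0$, then $1/K(L)$ is slowly varying, so $|1/K(L)| = L^{o(1)}$ and the refined bound \eqref{e:locglo3} applies, $\sup_{|k|=1}\E[(\partial^k g_L(0))^2] = O(L^{-2}K(L))$; I would then use \eqref{e:esup2}, whose two terms become $\E[g_L(0)^2] = O(K(L))$ and $L^2 \cdot O(L^{-2}K(L)) = O(K(L))$, giving $a_L = O(\sqrt{K(L)})$ (and in particular $o(1)$).

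With these estimates the three bounds follow by choosing the threshold appropriately. For \eqref{e:gsup}, with $u > 0$ fixed, $a_L \to 0$ so eventually $a_L \le u/2$, and the displayed inequality yields $-\log\P[\sup_{B(L)} g_L \ge u] \ge (u/2)^2 / O(K(L)) = 1/O(K(L))$. For \eqref{e:gsup2}, given $\psi < \min\{\alpha/2,1\}$ we have $a_L = o(L^{-\psi})$, so for $L$ large and any $u \ge L^{-\psi}$ we get $a_L \le u/2$ and hence $-\log\P[\sup_{B(L)} g_L \ge u] \ge (u/2)^2/O(K(L)) = c_1 u^2 / K(L)$. For \eqref{e:gsup3}, since $a_L \le C\sqrt{K(L)}$ for $L$ large, taking the constant small enough that $c_2^{-1}\sqrt{K(L)} \ge 2a_L$ forces $a_L \le u/2$ whenever $u \ge c_2^{-1}\sqrt{K(L)}$, and the conclusion follows (the constant $c_2$ being positive; the ``$c_2<0$'' in the statement should read $c_2>0$). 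The residual range of small $L$ — a bounded set on which $K(L)$ stays bounded away from $0$ and $\infty$ (otherwise the claim is trivial) and $a_L,\E[g_L(0)^2]$ are bounded — is dealt with by the same BTIS bound after shrinking $c_1$, resp.\ $c_2$, using that $\P[\sup_{B(L)} g_L < u] > 0$ for $u>0$ by non-degeneracy of $g_L$.

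The main obstacle is the sharp control of $a_L = \E[\sup_{B(L)} g_L]$: one must choose the correct estimate among \eqref{e:esup1}--\eqref{e:esup2} according to whether $|1/K(L)| = L^{o(1)}$ (i.e.\ whether $\alpha = 0$ or $\alpha > 0$), and then run the regular-variation bookkeeping carefully enough to identify the decay rates $L^{-\alpha/2}$ and $L^{-1}$ of the two error terms in \eqref{e:esup1}; it is exactly the minimum of these that produces the hypothesis $\psi < \min\{\alpha/2,1\}$ of \eqref{e:gsup2}. Everything else is a routine consequence of the BTIS inequality and \eqref{e:locglo2}.
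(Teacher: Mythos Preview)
Your proposal is correct and follows essentially the same approach as the paper: bound $a_L=\E[\sup_{B(L)}g_L]$ via Proposition~\ref{p:ld1} (using \eqref{e:esup1} for $\alpha>0$ and \eqref{e:esup2} for $\alpha=0$, the latter enabled by \eqref{e:locglo3}), obtaining $a_L=O(L^{-\psi})$ for $\psi<\min\{\alpha/2,1\}$ in the first case and $a_L=O(\sqrt{K(L)})$ in the second, and then apply the BTIS inequality with $\sup_x \var[g_L(x)]=\E[g_L(0)^2]=O(K(L))$. You also correctly flag the typo in the statement: one needs $c_2>0$.
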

 \begin{proof}
We have by \eqref{e:esup1} that, as $L \to \infty$,
 \[ \E \Big[\sup_{x \in B(L)} g_L \Big]  \le  \sqrt{  O \Big(  \max \Big\{ (\log L) \E[ (g_L(0)^2 ]   , L^{-2} \sup_{|\nu| = 1}  \E[ ( \partial^\nu g_L(0) )^2 ]   \Big) \Big\} }   \]
 and
 \[ \E \Big[\sup_{x \in B(L)} g_L \Big]  \le  \sqrt{  O \Big(  \max \Big\{  \E[ (g_L(0)^2 ]   , L^2 \sup_{|\nu| = 1}  \E[ ( \partial^\nu g_L(0) )^2 ]   \Big) \Big\} }   .\]
By the assumed properties of $g_L$ in \eqref{e:locglo2} and \eqref{e:locglo3}, in the case $\alpha > 0$, for any $\psi \in (0, \min\{\alpha/2,1\})$ the first bound is $O(L^{-\psi})$, where as in the case $\alpha = 0 $ the second bound is $O(\sqrt{K(L)})$; in particular, in both cases  $\E [\sup_{x \in B(L)} g_L ] \to 0$. The result then follows from the BTIS inequality (Proposition~\ref{p:btis}) and the fact that $\E[g_L(0)^2] = O(K(L))$ by \eqref{e:locglo2}.
 \end{proof}
 
The next result controls exceedences in a collection of disjoint balls simultaneously; it is inspired by \cite[Theorem 3.2]{Szn15}, which proved a similar result for the GFF. For $s, r > 0$, and a ball $D$ which is a translated copy of $B(r)$, let $\Gamma_s(D)$ denote the ball that is concentric to $D$ with radius $s$. 

\begin{theorem}
\label{t:ld3}
Let $u > 0$ and let $s = s(L) \to \infty$ be such that, as $L \to \infty$, $K(L) = o(K(s))$. Then there exists a function $\gamma_{L, M}$ satisfying
\[ \lim_{M \to \infty} \limsup_{L \to \infty} \gamma_{L, M} = 0  \]
such that, for every $L, M \ge 1$ and finite collection $D = (D_i)_{1 \le i \le n}$ of disjoint translations of the ball $B(L)$ such that their pairwise distance is larger than $ML$,
\[ - \log \mathbb{P}[ A]  \ge  \frac{u^2   \capa_K(  \cup_i \Gamma_s(D_i)   )( 1 - \gamma_{L,M})   }{2}    ,\]
where $A$ is the event that $\{g_L(x) \ge u\}$ has non-empty intersection with each $D_i$.
\end{theorem}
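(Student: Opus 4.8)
The plan is to obtain an \emph{upper} bound on $\mathbb{P}[A]$ (not via an entropic/change-of-measure inequality, which is the natural route for \emph{lower} bounds on probabilities as in Corollary~\ref{c:eb}), by using the smoothness of $g_L$ to reduce $A$ to a large-deviation event for a single Gaussian linear functional. Write $K_{g_L}:=\mathbb{E}[g_L(0)g_L(\cdot)]$ and $D:=\bigcup_i\Gamma_s(D_i)$; since $K$ is eventually decreasing and $K(L)=o(K(s))$ we have $s<L$ for $L$ large, so $\Gamma_s(D_i)\subseteq D_i$. Let $\mu=\sum_i\mu_i\in\mathcal{P}(D)$ be a minimal-energy probability measure for $\capa_K(D)$ (which is finite --- else the statement is vacuous --- and unique by Proposition~\ref{p:unique}), so $E(\mu)=1/\capa_K(D)$; set $p_i:=\mu(\Gamma_s(D_i))$, with $\sum_ip_i=1$. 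The functional I track is $Y:=\int_Dg_L\,d\mu$, a centred Gaussian with $\mathrm{Var}[Y]=\int_D\int_DK_{g_L}(x-y)\,d\mu(x)d\mu(y)$ by \eqref{e:fluct}.

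\emph{Reduction and variance estimate.} On $A$, pick for each $i$ a point $x_i\in D_i$ with $g_L(x_i)\ge u$; since $x_i$ and $\Gamma_s(D_i)$ both lie in $D_i$, we get $g_L\ge u-O_i$ on $\Gamma_s(D_i)$, where $O_i:=\sup_{D_i}g_L-\inf_{D_i}g_L$. Integrating against $\mu$ gives $Y\ge u-\bar O$ on $A$ with $\bar O:=\sum_ip_iO_i\ge0$, so $\mathbb{P}[A]\le\mathbb{P}[Y\ge u-\eta]+\mathbb{P}[\bar O>\eta]$ for any $\eta\in(0,u)$. For the first term I would split $\mathrm{Var}[Y]$ into its diagonal blocks ($x,y$ in the same $\Gamma_s(D_i)$) and off-diagonal blocks: diagonally, $\sum_i\int\int_{\Gamma_s(D_i)^2}K_{g_L}\,d\mu_id\mu_i\le K_{g_L}(0)\sum_ip_i^2=O(K(L))\sum_ip_i^2$, whereas the corresponding $K$-energy is $\ge\sum_ip_i^2/\capa_K(B(s))\asymp K(s)\sum_ip_i^2$, so the former is $o(1)$ times the latter; off-diagonally ($|x-y|>ML$), \eqref{e:locglo1} gives $K_{g_L}\le(1+\beta_{L,M})K$, where $\beta_{L,M}:=\sup_{|z|\ge ML}|K_{g_L}(z)/K(z)-1|$ satisfies $\lim_M\limsup_L\beta_{L,M}=0$. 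Summing, $\mathrm{Var}[Y]\le(1+\beta_{L,M})/\capa_K(D)$, hence $\mathbb{P}[Y\ge u-\eta]\le\exp\bigl(-\tfrac12(u-\eta)^2\capa_K(D)(1-\beta_{L,M})\bigr)$.

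\emph{The crux: concentration of $\bar O$ uniformly in $n$.} A union bound over the balls is useless, since $\mathbb{P}[\max_iO_i>\eta]\to1$ as $n\to\infty$; instead I would regard $\bar O$ as a Lipschitz functional of $g_L$ and apply Gaussian concentration with variance proxy $\Lambda^2$, its squared Cameron--Martin Lipschitz constant. As $O_i$ depends only on $g_L|_{D_i}$ and is $2$-Lipschitz in the sup-norm there, its Cameron--Martin gradient equals $K_{g_L}(x_i^+,\cdot)-K_{g_L}(x_i^-,\cdot)$ for the (a.s.\ unique) maximiser $x_i^+$ and minimiser $x_i^-$ in $D_i$, so $\Lambda^2\le O(K(L))\sum_ip_i^2+C\,L^2\sup_{|z|\ge(M-2)L}\|\nabla^2K_{g_L}(z)\|\sum_{i\ne j}p_ip_j$. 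Two observations bring this to $\Lambda^2\le\delta_{L,M}/\capa_K(D)$ with $\delta_{L,M}:=O\bigl(M^{-2}+K(L)/K(s)\bigr)$ and $\lim_M\limsup_L\delta_{L,M}=0$: (i) $\mu_i$ has energy $\ge p_i^2/\capa_K(B(s))$ and off-diagonal energies are non-negative, so $\sum_ip_i^2\le\capa_K(B(s))E(\mu)\asymp(K(s)\capa_K(D))^{-1}$; and (ii) $\|\nabla^2K_{g_L}(z)\|\asymp\|\nabla^2K(z)\|\asymp|z|^{-2}K(|z|)$ for large $|z|$ (by \eqref{e:locglo1} and the standard estimates on derivatives of regularly varying kernels), so each off-diagonal term carries a spare factor $(ML)^{-2}$ relative to $K(|c_i-c_j|)$, and since $K$ varies slowly on the ball scale, $\sum_{i\ne j}p_ip_jK(|c_i-c_j|)$ is the off-diagonal part of $E(\mu)$ up to $1+O(1/M)$, whence the off-diagonal contribution to $\Lambda^2$ is $\lesssim M^{-2}/\capa_K(D)$. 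Gaussian concentration then yields $\mathbb{P}[\bar O>\eta]\le\exp\bigl(-(\eta-\mathbb{E}\bar O)^2\capa_K(D)/(2\delta_{L,M})\bigr)$, and since $\mathbb{E}\bar O=2\,\mathbb{E}[\sup_{B(L)}g_L]\to0$ by Proposition~\ref{p:ld2}, choosing $\eta:=2u\sqrt{\delta_{L,M}}\to0$ gives $\mathbb{P}[\bar O>\eta]\le\exp(-\tfrac12u^2\capa_K(D))$ for $L$ large.

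\emph{Conclusion and main obstacle.} Combining the two estimates and using $(u-\eta)^2(1-\beta_{L,M})\ge u^2(1-4\sqrt{\delta_{L,M}}-\beta_{L,M})$ gives $\mathbb{P}[A]\le2\exp\bigl(-\tfrac12u^2\capa_K(D)(1-\gamma_{L,M})\bigr)$ with $\gamma_{L,M}:=4\sqrt{\delta_{L,M}}+\beta_{L,M}$; absorbing the $\log2$ into $\gamma_{L,M}$ is harmless because $\capa_K(D)\ge\capa_K(B(s))\to\infty$ as $L\to\infty$ (and the stated bound is trivial once $\gamma_{L,M}\ge1$), and $\lim_M\limsup_L\gamma_{L,M}=0$, which is the claim. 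I expect observation (ii) in the previous paragraph to be the delicate point: it is what pins the near-decorrelation of the per-ball oscillations to the energy $E(\mu)$, and thereby makes the estimate on $\bar O$ uniform in the number $n$ of balls; this is the analogue here of the Gaussian-free-field computation in \cite[Theorem~3.2]{Szn15}, carried out without a Markov property.
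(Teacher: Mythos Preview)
Your approach has a genuine gap at observation (ii), and the paper's proof circumvents it by a simpler reduction.

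The estimate $\|\nabla^2 K_{g_L}(z)\|\asymp |z|^{-2}K(|z|)$ for large $|z|$ is not available under the hypotheses. Definition~\ref{d:locglo} controls $K_{g_L}(z)/K(z)$ only \emph{pointwise} for large $|z|$ via \eqref{e:locglo1}, and bounds derivatives of $g_L$ only at the origin via \eqref{e:locglo2}--\eqref{e:locglo3}; neither yields second-derivative information on $K_{g_L}$ at scale $ML$. Nor does Assumption~\ref{a:rv} give such control for $K$ itself: regular variation of $K(r)$ says nothing about $K''(r)$. Without (ii) the off-diagonal part of $\Lambda^2$ can only be bounded crudely by $4\sup_{|z|\ge ML}|K_{g_L}(z)|\sum_{i\ne j}p_ip_j\lesssim K(ML)$, which is not $o(1)/\capa_K(D)$ uniformly over all finite collections (the theorem allows $n$ arbitrary), so your concentration bound on $\bar O$ collapses.

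The paper avoids the oscillation term $\bar O$ entirely. Instead of fixing the integral $Y=\int g_L\,d\mu$, it applies the BTIS inequality (Proposition~\ref{p:btis}) to the Gaussian field
\[
F:\;\prod_i D_i\to\R,\qquad (x_1,\dots,x_n)\longmapsto\sum_i\lambda_i\,g_L(x_i),\qquad\lambda_i:=\mu(\Gamma_s(D_i)),
\]
on the product of the large balls. On $A$ one has $\sup F\ge u$, so BTIS gives the bound once $\E[\sup F]$ and $\sup\mathrm{Var}[F]$ are controlled. The first satisfies $\E[\sup F]\le\sum_i\lambda_i\,\E[\sup_{D_i}g_L]=\E[\sup_{B(L)}g_L]\to0$ by swapping sup and sum (this is your $\E[\bar O]/2$ computation, but here it bounds the whole expected supremum). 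The second equals $\sup_{x_i\in D_i}\sum_{i,j}\lambda_i\lambda_jK_{g_L}(x_i-x_j)$, and this is bounded by $(1+\gamma_{L,M})E(\mu)$ using only \eqref{e:locglo1} and the regular variation of $K$: your diagonal/off-diagonal splitting from ``Reduction and variance estimate'' goes through verbatim, applied to $K_{g_L}$ itself rather than to its second differences. No derivative estimates are needed.

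The point is to keep the per-ball maximisation \emph{inside} the supremum and let BTIS absorb it, rather than committing to a fixed linear functional and then having to control the residual oscillation by a separate concentration argument whose Lipschitz constant involves cross-terms you cannot estimate.
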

\begin{proof}
Let $u > 0$, $L, M \ge 1$, and the finite collection $D = (D_i)$ be given. Let $\mu$ be a measure of minimum energy for the set $\cup_i \Gamma_s(D_i)$ with respect to $K$, and define the positive weights $\lambda_i = \mu(\Gamma_s(D_i))$ which satisfy $\sum_i \lambda_i = 1$. Observe that the event $A$ implies that 
\[     \sup_{x_i \in D_i}  \sum_i \lambda_i  g_L(x_i) \ge u  , \]
where the supremum is taken over the choice of one $x_i$ in each $D_i$. By stationarity, as $L \to \infty$,
\[ \E \Big[ \sup_{x_i \in D_i}  \sum_i \lambda_i  g_L(x_i)   \Big]  =    \mathbb{E} \Big[ \sum_i \lambda_i   \sup_{x_i \in D_i}  g_L(x_i)  \Big] \le  \E \Big[\sup_{x \in B(L)}  g_L(x)  \Big]   \to  0 , \]
where in the last step we used~\eqref{e:gsup}. Then by Proposition \ref{p:btis} applied to the Gaussian field $F : D_1 \times \ldots \times D_n \to \R$, $(x_1, \ldots, x_n) \mapsto \sum_i \lambda_i  g_L(x_i)$,
  \begin{align*}
       - \log \P \Big[  \sup_{x_i \in D_i}  \sum_i \lambda_i  g_L(x_i) \ge u   \Big]   &  \ge  \frac{ \Big(u -   \E \Big[ \sup_{x_i \in D_i}  \sum_i \lambda_i  g_L(x_i)   \Big]   \Big)^2  }{ 2   \sup_{x_i \in D_i}   \textup{Var} [ \sum_i \lambda_i  f(x_i)   ]   }  \\
       &  \ge  \frac{ u^2 +  o(1)   }{ 2   \sup_{x_i \in D_i}   \textup{Var} [ \sum_i \lambda_i  f(x_i)   ]   }     
       \end{align*}
as $L  \to \infty$. Hence it remains to prove that
    \[ \sup_{x_i \in D_i}  \textup{Var} \big[ \sum_i \lambda_i  g_L(x_i)   \big]  \le (\capa_K(\cup_i \Gamma_s(D_i) ))^{-1} (1 + \gamma_{L, M}) \]
    for some $ \lim_{M \to \infty} \limsup_{L \to \infty} \gamma_{L, M} = 0$, or equivalently (by \eqref{e:fluct} and \eqref{e:dual})
    \begin{align*}
 &    \sup_{x_i \in D_i}  \sum_{i,j} \mu(\Gamma_s(D_i)) \mu(\Gamma_s(D_j))  \mathbb{E}[g_L(0) g_L(x_i - x_j)]    \\
     & \qquad \qquad  \le (1 + \gamma_{L, M}) \sum_{i,j} \int_{\Gamma_s(D_i)} \int_{\Gamma_s(D_j)} K(x-y)  d\mu(x) d\mu(y)  .
     \end{align*}
    We first deal with the diagonal terms $i = j$. For brevity we abbreviate $\tilde{D}_i = \Gamma_s(D_i)$. Note that, by the definition of the global field $g_L$
    \[    \mu(\tilde{D}_i)^2 \mathbb{E}[g_L(0)^2]  \le  c_1 \mu(\tilde{D}_i)^2 K(L)   .      \]    
for some $c_1 > 0$ independent of $i$. On the other hand, by the left-hand side of \eqref{e:capbounds} applied to the measure $\mu(\cdot) / \mu(\tilde{D}_i)$ (which is a probability measure on $\tilde{D}_i$),
    \[ \int_{\tilde{D}_i} \int_{\tilde{D}_i} K(x-y)  d\mu(x) d\mu(y)  \ge \frac{  \mu(\tilde{D}_i)^2 }{\capa_K(\tilde{D}_i)}   = \frac{  \mu(\tilde{D}_i)^2 }{\capa_K(B(s))} . \]
 By Proposition \ref{p:capdom} since $K(L) = o(K(s))$, 
 \[  \capa_K(B(s)) \sim c_2 / K(s) = o(1/K(L))  ,\]
 and we conclude that
 \[   \mu(\tilde{D}_i)^2 \mathbb{E}[g_L(0)^2]  \le  \int_{\tilde{D}_i} \int_{\tilde{D}_i} K(x-y)  d\mu(x) d\mu(y)   )   \quad \text{for all } i \]
eventually as $L \to \infty$. To handle the off-diagonal terms we define
\begin{align*}
 \gamma_{L, M}  &=   \sup_{ \substack{|x'-y'|\ge ML, \\ \max\{ |x-x'|, |y-y'| \}\le 2s}}   \Big| \frac{ \mathbb{E}[g_L(0) g_L(x'-y') ] }{ K(x-y) } -  1 \Big|    \\
 & =   \sup_{ \substack{|x'-y'|\ge ML, \\ \max\{ |x-x'|, |y-y'| \}\le 2s}}   \Big| \frac{ \mathbb{E}[g_L(0) g_L(x'-y') ] }{ K(x'-y')} \times   \frac{ K(x'-y')  }{ K(x-y) } -  1 \Big|   ,
 \end{align*} 
 which satisfies $\lim_{M \to \infty} \limsup_{L \to \infty} \gamma_{L, M} = 0$ by the assumption on $g_L$ and the regular variation of $K$. Then since distinct $D_i$ and $D_j$ are separated by distance at least $ML$ we have that
\[  \sum_{i \neq j} \mu(\tilde{D}_i) \mu(\tilde{D}_j)  \sup_{x_i \in D_i}  \mathbb{E}[g_L(0) g_L(x_i - x_j)]     \le (1 + \gamma_{L, M}) \sum_{i\neq j} \int_{\tilde{D}_i} \int_{\tilde{D}_j} K(x-y)  d\mu(x) d\mu(y)  ,\]
as required.
  \end{proof}

\smallskip
\section{Sprinkled bootstrapping: A priori bounds and exponential decay}
\label{s:eup}

In this section we use a sprinkled bootstrapping argument to obtain various a priori bounds; this is similar to the approach pioneered in \cite{rs13,pt15,pr15}. First we give conditions under which $\ell^\ast_c > -\infty$. As a byproduct we also obtain that, at levels $\ell < \ell_c^\ast$, the connectivity of the excursion set $\{f \le \ell\}$ has, in the case $\alpha \in (0, 1]$, at least stretched exponential decay with arbitrary exponent $\psi < \alpha$, and exponential decay in the case $\alpha > 1$. In the latter case this completes the proof of the upper bounds in Theorem~\ref{t:gen}.

\smallskip
We work in a more general setting than in the rest of the paper, namely we consider continuous stationary random fields that satisfy a weak notion of `local-global decomposition'; in particular, we do not impose Gaussianity, isotropy, or regularly varying assumptions.

\smallskip
The main result of this section is the following: 

\begin{proposition}
\label{p:sb}
Let $f$ be a continuous stationary random field on $\R^d$ such that, for every $L \ge 1$, $f \stackrel{d}{=} f_L + g_L$, where $f_L$ is a stationary $L$-range dependent field on $\R^d$, and $g_L$ is a continuous field on $\R^d$ satisfying, for constants $0 < \delta' < \delta < 1 $ and $c > 0$,
\begin{equation}
\label{e:tail}
 \P\big[ \sup_{x \in B(L)} |g_L(x)| \ge  (\log L)^{-\delta} \big] \le   e^{- c (\log L)^{1-\delta'}} \ , \quad L \ge 2 . 
 \end{equation}
 Then $\ell^\ast_c > - \infty$.

Suppose that in addition there exists constants $\psi, \delta, c_1 > 0$ such that
\begin{equation}
\label{e:tail2}
 \P\big[ \sup_{x \in B(L)} |g_L(x)| \ge  (\log L)^{-(1+\delta)} \big] \le  e^{- c_1 L^\psi } \ , \quad L \ge 2 . 
 \end{equation}
Then for every $\ell < \ell^\ast_c$ and $\psi' \in (0, \psi) \cap (0, 1]$, there exists a $c_2> 0$ such that
\begin{equation}
\label{e:armbound}
 \P[ \ann_\ell( R) ] \le e^{- c_2 R^{\psi'}} \ , \quad R \ge 1.
 \end{equation}
\end{proposition}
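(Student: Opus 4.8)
The plan is a multi-scale ``sprinkled bootstrapping'' renormalisation. Fix a large integer $\kappa$, a growing sequence of scales $L_0 < L_1 < \cdots$ and a decreasing sequence of levels $\ell_n \downarrow \ell_\infty$ with $\sum_n (\ell_n - \ell_{n+1}) < \infty$, and track $\bar p_n := \P[\ann_{\ell_n}(L_n)]$. The aim is a recursive inequality $\bar p_{n+1} \le C_\kappa\, \bar p_n^{\,m_n} + e_n$, in which the first term is produced by the $L_n$-range dependence of $f_{L_n}$ (independence of sub-crossings in well-separated regions) and the second, the ``sprinkling error'', by the tail hypotheses on $g_{L_n}$; one then tunes the parameters so that this recursion can be solved. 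The two ingredients in its derivation are: \emph{(geometric)} if $\{f \le \ell\}$ crosses $A(L_{n+1})$ with $L_{n+1} = \kappa L_n$, then, splitting $A(L_{n+1})$ into $\Theta(\kappa)$ disjoint concentric thin sub-annuli, $\{f \le \ell\}$ crosses each of them, and from the even-indexed (hence pairwise well-separated) ones one extracts crossings of $m_n = \Theta(\kappa)$ disjoint, pairwise $L_n$-separated translates of $A(L_n)$; \emph{(probabilistic)} writing $f = f_{L_n}+g_{L_n}$, a crossing of $A(L_{n+1})$ by $\{f \le \ell_n\}$ either forces $\sup_{A(L_{n+1})}|g_{L_n}| \ge \epsilon_n$ or forces crossings by $\{f_{L_n}\le\ell_n+\epsilon_n\}$ of those $m_n$ well-separated copies of $A(L_n)$, which by $L_n$-range dependence are independent. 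Reabsorbing the $g_{L_n}$-excursion into a further $O(\epsilon_n)$-shift of the level, and union-bounding over the $O_\kappa(1)$ angular positions of the sub-annuli and over an $O((L_{n+1}/L_n)^d)$ net for the event $\{\sup|g_{L_n}|\ge\epsilon_n\}$, yields the recursion with $\ell_{n+1} = \ell_n - O(\epsilon_n)$.

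For $\ell^*_c > -\infty$ I would take $\ell_0$ so negative that the seed $\bar p_0 = \P[\ann_{\ell_0}(L_0)] \le \P[\inf_{A(L_0)} f \le \ell_0]$ lies below the contraction threshold of $x \mapsto C_\kappa x^{m}$ — possible for any fixed $L_0$ since $\inf_{A(L_0)} f > -\infty$ almost surely — take $\epsilon_n = (\log L_n)^{-\delta}$ so that \eqref{e:tail} applies, and take scales $L_n = \exp(n^\beta)$ with $\beta \in (1/\delta,\,1/\delta')$. This interval is non-empty precisely because $\delta' < \delta$, and the choice simultaneously makes $\sum_n \epsilon_n < \infty$ (so $\ell_\infty > -\infty$) and keeps $\log(L_{n+1}/L_n) \asymp n^{\beta-1}$ negligible against $(\log L_n)^{1-\delta'} \asymp n^{\beta(1-\delta')}$, so the $O((L_{n+1}/L_n)^d)$ covering factor does not swamp the $e^{-c(\log L_n)^{1-\delta'}}$ gain and hence the $e_n$ are summable. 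The recursion then forces $\bar p_n \to 0$; interpolating from the subsequence $R = L_n$ to all $R$ — a crossing of $A(R)$ with $L_n \le R \le L_{n+1}$ contains, by the same geometric lemma, a crossing of a translate of $A(L_n/C)$, with multiplicity polynomial in $R/L_n$ — gives $\lim_{R \to \infty}\P[\ann_{\ell_\infty}(R)] = 0$, hence $\ell^*_c \ge \ell_\infty > -\infty$.

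For \eqref{e:armbound}, fix $\ell < \ell^*_c$, pick $\ell' \in (\ell, \ell^*_c)$ with $\lim_R \P[\ann_{\ell'}(R)] = 0$, and now use \eqref{e:tail2} with $\epsilon_n = (\log L_n)^{-(1+\delta)}$, which is summable along a purely geometric sequence $L_n = \kappa^n L_0$. Choosing the starting index $n_0$ large makes $\sum_{n \ge n_0}\epsilon_n \le \ell' - \ell$ and the seed $\bar p_{n_0} \le \P[\ann_{\ell'}(L_{n_0})]$ as small as needed, while \eqref{e:tail2} gives $e_n \le C_\kappa e^{-c_1 L_n^{\psi}}$. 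Solving $\bar p_{n+1} \le C_\kappa \bar p_n^{\,m} + C_\kappa e^{-c_1 L_n^{\psi}}$: from a small seed the term $C_\kappa \bar p_n^{\,m}$ (with $m \ge 2$) drives $\bar p_n$ down super-exponentially until it is comparable to $e^{-c_1 L_n^{\psi}}$, after which one verifies by induction that an Ansatz $\bar p_n \le B\, e^{-a L_n^{\psi'}}$ is self-reproducing, using $\psi > \psi'$ to dominate the error term and $m > \kappa^{\psi'}$ to absorb the constant $C_\kappa$ in the main term. For $\psi' < 1$ the inequality $m > \kappa^{\psi'}$ holds once $\kappa$ is large (with $m = \Theta(\kappa)$); the boundary case $\psi'=1$, which forces $\psi>1$, instead requires running the sub-crossings at a ``decorrelation scale'' chosen strictly smaller than $L_n$, so that $m$ can be taken equal to $\kappa(1-o(1))$. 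Interpolating back to all $R$ as in the first part (the polynomial loss is absorbed since $e^{-cL_n^{\psi'}}$ beats it) then gives \eqref{e:armbound}.

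The step I expect to be the main obstacle is the joint bookkeeping inside the recursion: arranging the geometric decomposition so that the number $m$ of independent sub-crossings is large enough — essentially equal to the scale ratio $\kappa$ in the exponential regime $\psi' = 1$ — while simultaneously keeping the combinatorial prefactor $C_\kappa$, the per-step level increment $O(\epsilon_n)$ and (in the first part) the entropy loss $\log(L_{n+1}/L_n)$ small enough for the recursion to close; the precise calibration $\beta \in (1/\delta,1/\delta')$ imposed by the weaker hypothesis \eqref{e:tail} is a second delicate point.
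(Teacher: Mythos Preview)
Your proposal captures the right strategy---sprinkled bootstrapping with a local-global decomposition---and your scale choice $L_n = \exp(n^\beta)$ with $\beta \in (1/\delta, 1/\delta')$ for the first part matches the paper's $R_n = e^{n^{1/\eta}}$, $\eta \in (\delta',\delta)$, exactly. The paper differs mainly in the geometric lemma: it uses only \emph{two} well-separated collections of translates of $A^\infty(R_1)$ (one near the inner boundary of $A^\infty(R_2)$, one near the outer), so the recursion always has exponent $2$ with a polynomial prefactor $(R_2/R_1)^{2d}$. Your concentric-shell construction with $m=\Theta(\kappa)$ sub-crossings is closer to what the paper uses later (in Section~5), and it also works here, but when the scale ratio $\kappa_n$ varies (as in the first part) the union bound over angular positions contributes $\exp(O(m_n\log\kappa_n))$, not the $O_\kappa(1)$ you wrote; this extra factor is still beaten by the $\bar p_n^{m_n}$ gain, but it needs to be tracked.

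For the second statement, the real difference shows up at $\psi'=1$. The paper does \emph{not} try to push $m$ close to $\kappa$. It keeps $m=2$, takes scales $R_{n+1}=2R_n+R_n^{\psi'/\psi}$ (so the ratio is essentially $2$ and the decorrelation gap is the sub-linear $R_n^{\psi'/\psi}$), and then solves the recursion $a_{n+1}\le c_1 a_n^2 + e^{-c_2 R_n^{\psi'}}$ by the substitution $a_n'=c_1(a_n+e^{-(c_2/2)R_n^{\psi'}})$, checking $a_{n+1}'\le (a_n')^{2^{\psi'}}$; this closes uniformly for all $\psi'\le 1$. Your fix---take decorrelation scale $o(L_n)$ so that $m=\kappa(1-o(1))$---does not propagate a fixed-constant Ansatz $\bar p_n\le Be^{-aL_n}$: the deficit $\kappa-m$ produces a growing factor $e^{a(\kappa-m)L_n}$ in the main term. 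One can rescue it by allowing $a_n$ to drift and showing $\prod_n(1-\eta_n)>0$ when $\eta_n=\tilde L_n/L_n$ is summable, but this is a genuinely different (and more delicate) closure than what you sketched, and the paper's exponent-$2$ route avoids the issue entirely.
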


Let us show that Proposition \ref{p:nontriv} and the upper bound in the case $\alpha > 1$ of Theorem \ref{t:gen} follow from Proposition \ref{p:sb}:

\begin{proof}[Proof of Proposition \ref{p:nontriv}]
The fact that $\ell_c^\ast \le \min\{ \ell_c,  \ell_c^{\ast \ast}\}$ is clear by definition. Moreover, it is shown in \cite{mrv20} that, $\ell_c^\ast = \ell_c = 0$ for all fields we consider in dimension $d=2$. By restricting to a plane, this also implies that $ \ell_c  \le 0$ in all dimension $d \ge 2$. It remains to observe that if either $\alpha > 0$ or $\alpha = 0 $ and $\gamma > 1$, then by \eqref{e:gsup2}--\eqref{e:gsup3} the assumptions in the first statement of Proposition \ref{p:sb} are satisfied (in the $\alpha = 0$ case one must choose $\delta < \gamma-1$ in order for \eqref{e:tail} to hold, which leads to the restriction that $\gamma > 1$), and hence $\ell^\ast_c > -\infty$. 
\end{proof}

\begin{proof}[Proof of upper bound in the case {$\alpha > 1$ of Theorem \ref{t:gen}}]
By Proposition \ref{p:locglo} and \eqref{e:gsup2}, both assumptions in Proposition \ref{p:sb} are satisfied, and in \eqref{e:tail2} one may take any $\psi \in (0, \alpha)$. Then the conclusion follows by setting $\psi \in (\alpha,1)$ and then $\psi' = 1$ in \eqref{e:armbound}.
\end{proof} 

\begin{remark}
For later use we note that both assumptions in Proposition~\ref{p:sb} are satisfied by the fields considered in Theorem \ref{t:gen} for any $\alpha > 0$, and one may take any $\psi \in (0,\alpha)$ in \eqref{e:tail2}. In particular, if $\alpha \le 1$ then \eqref{e:armbound} holds for any $\psi' \in (0,\alpha)$.
\end{remark}

Before proving Proposition \ref{p:sb} we isolate the (deterministic) construction that underpins the proof (see Figure \ref{f:boot1}). It will be convenient to work with annuli defined with respect to the sup-norm. As such, for $R > 0$, we define $\Lambda_R = [-R, R]^d$ and $A^\infty(R) =  \Lambda_{2R} \setminus \Lambda_{R/\sqrt{d}}$. For a path in $\R^d$, we say that the path \textit{crosses} $A^\infty(R)$, or a translated copy, if it intersects both $\partial \Lambda_{R/\sqrt{d}}$ and $\partial \Lambda_{2 R}$. 

\begin{lemma}
\label{l:sb}
There exists a constant $c_d \ge 1$, depending only on the dimension, such that for every $R_1 \ge 1$ and $R_2 \ge 2R_1$ there exist collections $(A_i^1)_{i \le n}$ and $(A_j^2)_{i \le n}$, $n = c_d (R_2/R_1)^d$, of translated copies of $A^\infty(R_1)$ which satisfy:
\begin{enumerate}
\item $\text{dist}(A^1_i, A^2_j) \ge (2 - 1/\sqrt{d})(R_2-2R_1)$ for all $i,j$;
\item If $\gamma$ is a path that crosses $A^\infty(R_2)$ then there exist $i,j$ such that $\gamma$ crosses $A^1_i$ and $A^2_j$.
\end{enumerate}
\end{lemma}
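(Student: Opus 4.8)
The plan is to take both families to be ``shells'' of small annuli, one shell hugging the inner boundary $\partial\Lambda_{R_2/\sqrt d}$ of $A^\infty(R_2)$ and one hugging the outer boundary $\partial\Lambda_{2R_2}$, and to use only the elementary topological fact that a path $\gamma$ crossing $A^\infty(R_2)$ is connected and that $x\mapsto |x|_\infty$ is continuous, so that $\gamma$ contains a point $p$ with $|p|_\infty=R_2/\sqrt d$ and a point $q$ with $|q|_\infty=2R_2$.

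For the construction I would place the \emph{inner} family at centres $z$ whose coordinates are: one ``radial'' coordinate (ranging over the $2d$ choices of index and sign) equal to $\pm(R_2/\sqrt d - R_1/\sqrt d)$, and the remaining $d-1$ ``tangential'' coordinates on a grid of spacing $\asymp R_1$ clamped to the cube $[-(R_2/\sqrt d - R_1/\sqrt d),\, R_2/\sqrt d - R_1/\sqrt d]^{d-1}$. An elementary covering check shows the associated boxes $\{z+\Lambda_{R_1/\sqrt d}\}$ cover $\partial\Lambda_{R_2/\sqrt d}$ (a clamped centre near an edge still has its box reaching that edge), that there are $O((R_2/R_1)^{d-1})$ of them with constant depending only on $d$, and — crucially — that every such centre has $|z|_\infty\le R_2/\sqrt d - R_1/\sqrt d$, so every point of $A^1_i:=z+A^\infty(R_1)\subseteq z+\Lambda_{2R_1}$ has sup-norm at most $R_2/\sqrt d - R_1/\sqrt d + 2R_1$. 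Symmetrically, the \emph{outer} family would have centres with one radial coordinate equal to $\pm(2R_2 + R_1/\sqrt d)$ and the other $d-1$ coordinates on a grid of spacing $\asymp R_1$ covering $[-2R_2,2R_2]^{d-1}$: again $O((R_2/R_1)^{d-1})$ centres, their boxes cover $\partial\Lambda_{2R_2}$, each has $|z|_\infty=2R_2+R_1/\sqrt d$, and so every point of $A^2_j:=z+A^\infty(R_1)$ has sup-norm at least $2R_2+R_1/\sqrt d - 2R_1$. Choosing $c_d$ large enough and padding the smaller family with repeated copies so both have exactly $n=\lceil c_d(R_2/R_1)^d\rceil$ members (harmless, since repetition affects neither conclusion) yields two families of a common size $n\le c_d(R_2/R_1)^d$.

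Claim (1) is then immediate: for $x\in A^1_i$ and $y\in A^2_j$, $\mathrm{dist}(x,y)\ge |x-y|_\infty\ge |y|_\infty-|x|_\infty\ge (2R_2+R_1/\sqrt d-2R_1)-(R_2/\sqrt d-R_1/\sqrt d+2R_1)=(2-1/\sqrt d)(R_2-2R_1)$. For (2), given $\gamma$ crossing $A^\infty(R_2)$, pick $p,q\in\gamma$ with $|p|_\infty=R_2/\sqrt d$ and $|q|_\infty=2R_2$; by the covering property there are $i,j$ with $p\in z_i+\Lambda_{R_1/\sqrt d}$ and $q\in z_j+\Lambda_{R_1/\sqrt d}$. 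Since $R_1<R_2$ we have $2R_2>R_2/\sqrt d-R_1/\sqrt d+2R_1$, so $q\notin z_i+\Lambda_{2R_1}$, and since $R_1/\sqrt d<2R_1$, applying the intermediate value theorem to the continuous map $x\mapsto|x-z_i|_\infty$ along the connected set $\gamma$ shows $\gamma$ meets both $z_i+\partial\Lambda_{R_1/\sqrt d}$ and $z_i+\partial\Lambda_{2R_1}$, i.e.\ $\gamma$ crosses $A^1_i$. Likewise $R_2/\sqrt d<2R_2+R_1/\sqrt d-2R_1$, so $p\notin z_j+\Lambda_{2R_1}$ and the same argument gives that $\gamma$ crosses $A^2_j$.

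The one place needing care is that shifting the placement radii inward/outward by exactly $R_1/\sqrt d$ from the naive $R_2/\sqrt d$ and $2R_2$ — which is what forces the separation in (1) to be exactly $(2-1/\sqrt d)(R_2-2R_1)$ rather than something weaker — leaves the inner-family centres with \emph{zero radial slack} to cover $\partial\Lambda_{R_2/\sqrt d}$; organising the centres face-by-face and clamping only in the tangential directions resolves this. Everything else (the covering-number count, the elementary sup-norm inequalities) is routine, and I do not anticipate a genuine obstacle beyond this bookkeeping.
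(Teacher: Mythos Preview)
Your construction is correct and is essentially identical to the paper's: both place the inner family so that the inner cubes $z+\Lambda_{R_1/\sqrt d}$ sit just inside $\Lambda_{R_2/\sqrt d}$ with a face on $\partial\Lambda_{R_2/\sqrt d}$, and the outer family so that the inner cubes sit just outside $\Lambda_{2R_2}$ with a face on $\partial\Lambda_{2R_2}$. The paper states the construction in one sentence and declares the two properties ``simple to check''; you have supplied the verification (the sup-norm distance computation for (1) and the intermediate-value argument for (2)), together with the bookkeeping about clamping tangential coordinates so that $|z|_\infty$ stays exactly at $R_2/\sqrt d - R_1/\sqrt d$ for the inner family, which is precisely the detail the paper suppresses.
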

\begin{proof}
Choose the collection $(A_i^1)_{i \le n}$ so that the inner square of each $A_i^1$ (which is a translated copy of $\Lambda_{R_1/\sqrt{d}}$) is contained in $\Lambda_{R_2 / \sqrt{d}}$ (i.e.\ the inner square of $A^\infty(R_2)$) and has a boundary face that is contained in a boundary face of $\Lambda_{R_2 / \sqrt{d}}$, and so that the union of the inner squares covers $\partial \Lambda_{R_2 / \sqrt{d}}$. Similarly choose the collection $(A_j^2)_{j \le n}$ so that the inner square of each $A_j^2$ is contained in $\Lambda^c_{2 R_2}$ and has a boundary face that is contained in a boundary face of $\Lambda_{2 R_2}$, and so that the union of the inner squares covers $\partial \Lambda_{2 R_2}$.  It is simple to check that these collections satisfy the requirements of the lemma.
\end{proof}

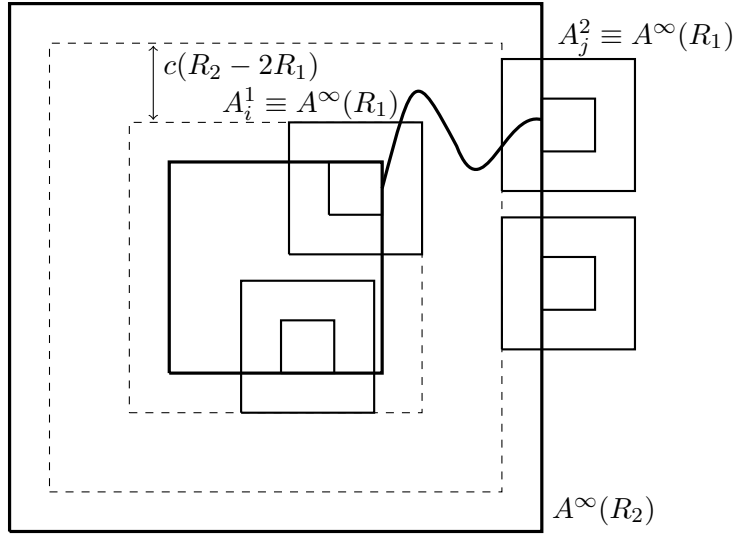
\begin{figure}
\begin{tikzpicture}[scale=0.7]
\path (-8,0);
\draw[very thick] (-2,-2) -- (2,-2) -- (2,2) -- (-2,2) -- (-2,-2);
\draw[dashed] (-2.75,-2.75) -- (2.75,-2.75) -- (2.75,2.75) -- (-2.75,2.75) -- (-2.75,-2.75);
\draw[dashed] (-4.25,-4.25) -- (4.25,-4.25) -- (4.25,4.25) -- (-4.25,4.25) -- (-4.25,-4.25);
\draw[very thick] (-5,-5) -- (5,-5) -- (5,5) -- (-5,5) -- (-5,-5);
\draw[thick] [shift={(5.5,-0.3)}] (-0.5,-0.5) -- (0.5,-0.5) -- (0.5,0.5) -- (-0.5,0.5) -- (-0.5,-0.5);
\draw[thick] [shift={(5.5,-0.3)}] (-1.25,-1.25) -- (1.25,-1.25) -- (1.25,1.25) -- (-1.25,1.25) -- (-1.25,-1.25);
\draw[thick] [shift={(5.5,2.7)}] (-0.5,-0.5) -- (0.5,-0.5) -- (0.5,0.5) -- (-0.5,0.5) -- (-0.5,-0.5);
\draw[thick] [shift={(5.5,2.7)}] (-1.25,-1.25) -- (1.25,-1.25) -- (1.25,1.25) -- (-1.25,1.25) -- (-1.25,-1.25);
\draw[thick] [shift={(1.5,1.5)}] (-0.5,-0.5) -- (0.5,-0.5) -- (0.5,0.5) -- (-0.5,0.5) -- (-0.5,-0.5);
\draw[thick] [shift={(1.5,1.5)}] (-1.25,-1.25) -- (1.25,-1.25) -- (1.25,1.25) -- (-1.25,1.25) -- (-1.25,-1.25);
\draw[thick] [shift={(0.6,-1.5)}] (-0.5,-0.5) -- (0.5,-0.5) -- (0.5,0.5) -- (-0.5,0.5) -- (-0.5,-0.5);
\draw[thick] [shift={(0.6,-1.5)}] (-1.25,-1.25) -- (1.25,-1.25) -- (1.25,1.25) -- (-1.25,1.25) -- (-1.25,-1.25);
\draw[very thick] (2,1.5) .. controls (2.6,3.8) .. (3.4,2.3) .. controls (3.9,1) and (4.3,3) .. (5,2.8);
\node[right] at (5,-4.6) {$A^\infty(R_2)$};
\node[right] at (-1.2,3.1) {$A_i^1 \equiv A^\infty(R_1)$};
\node[right] at (5.1,4.35) {$A_j^2 \equiv A^\infty(R_1)$};
\draw[<->] (-2.3,2.8) -- (-2.3,4.2);
\node[right] at (-2.3,3.8) {$c (R_2 - 2R_1)$};
\end{tikzpicture}
\caption{Illustration of Lemma \ref{l:sb}. A path that crosses the annulus $A^\infty(R_2)$ must cross at least one annulus in each of the collections $(A_i^1)_{i \le n}$ and $(A_j^2)_{i \le n}$ of translated copies of $A^\infty(R_1)$ (only partially depicted).}
\label{f:boot1}
\end{figure}

\begin{proof}[Proof of Proposition \ref{p:sb}]
In the proof $c_i > 0$ are constants which may depend on the field and may change from line to line. Let $\ann^\infty_\ell(R)$ denote the event that $\{f \le \ell \}$ contains a path that crosses $A^\infty(R)$. It is clear that
\begin{equation}
\label{e:comparearms}
  \arm_\ell(1, R) \subset  \ann^\infty_\ell(R/(2\sqrt{d})) \subset \ann_\ell(R/(2\sqrt{d})) 
  \end{equation}
for all sufficiently large $R$. Since $\{f \in  \ann^\infty_{\ell}(R)  \}$ is increasing in $\ell$, for any $L \ge 1$, $R_1,R_2> 0$, $\ell \in \R$, and $\eps > 0$ we have
\begin{equation}
\label{e:sb1}
 \big\{f_L   \in \ann^\infty_{\ell - \eps}(R_1)  \big\} \cap \big\{ \sup_{x \in \Lambda_{2R_1} }  g_L(x)    \le  \eps  \big\}   \Longrightarrow \big\{ f \in \ann^\infty_\ell(R_1)  \big\} , 
 \end{equation}
 and
 \begin{equation}
\label{e:sb2}
 \big\{ f  \in \ann^\infty_{\ell - 2\eps}(R_2)   \big\}  \cap \big\{ \inf_{x \in \Lambda_{2R_2} }  g_L(x)    \ge - \eps \big\}   \Longrightarrow \big\{ f_L \in \ann^\infty_{\ell - \eps}(R_2)  \big\}.
 \end{equation}
 Then for every $R_1 \ge 1$, $R_2 \ge 2R_1$, $\ell \in \R$, and $\eps > 0$, by applying \eqref{e:sb1}--\eqref{e:sb2} with $L = (2-1/\sqrt{d})(R_2 - 2R_1)$, and using the construction in Lemma \ref{l:sb}, the fact that $f_L$ is $L$-range dependent, and the union bound, we deduce that
\begin{align}
\label{e:sb3}
&  \P [ f  \in \ann^\infty_{\ell - 2\eps}(R_2) ]   \le  \P[   f_L \in \ann^\infty_{\ell - \eps}(R_2)  ] + \P \big[\sup_{x \in \Lambda_{2R_2} }  |g_L(x)|   \ge \eps \big]   \\
\nonumber & \qquad \le  c_d^2 (R_2/R_1)^{2d} \P[   f_L \in \ann^\infty_{\ell - \eps}(R_1) ]^2 + \P \big[\sup_{x \in \Lambda_{2R_2} }  |g_L(x)|   \ge \eps \big]   \\
  \nonumber & \qquad \le  c_d^2 (R_2/R_1)^{2d} \Big(  \P[   f  \in \ann^\infty_{\ell}(R_1) ] +  \P \big[\sup_{x \in \Lambda_{2R_1} }  |g_L(x)|   \ge \eps \big]    \Big)^2 + \P \big[\sup_{x \in \Lambda_{2R_2} }  |g_L(x)|   \ge \eps \big]   \\
\nonumber & \qquad \le c_d^2   (R_2/R_1)^{2d} \P [ f  \in \ann^\infty_{\ell}(R_1) ]^2 +  2\big(c_d^2  (R_2/R_1)^{2d} + 1 \big)  \P \big[\sup_{x \in \Lambda_{2R_2} }  |g_L(x)|   \ge \eps \big]  .
 \end{align}

We turn to the first statement. Fix $\eta$ so that $\delta'<\eta < \delta$, where $\delta', \delta$ are as in the statement of the proposition. Fix $n_0 \ge 1$ and $\ell_{n_0} \in \R$ to be chosen later (none of the subsequent constants depends on $n_0$ or $\ell_{n_0}$), and define sequences $(R_n)_{n \ge n_0}$ and $(\ell_n)_{n \ge n_0}$ as
\[ R_n = e^{n^{1/\eta}} \quad \text{and} \quad \ell_{n+1} =   \ell_n - 2  (\log L_n)^{-\delta} , \]
where $L_n = (2-1/\sqrt{d})(R_{n+1} - 2R_n)$ (we may take $n_0$ sufficiently large so that $L_n \ge 1$). Since $(\log R_{n})^{-\delta}$ is summable, so is  $(\log L_n)^{-\delta}$, and in particular we have  $\ell_\infty := \inf_{n \ge n_0} \ell_n > - \infty$.  Also by choosing $n_0$ large enough we have
 \begin{equation}
 \label{e:rn}
  R_{n+1} \le   e^{c_1 n^{1/\eta-1}} R_n \le R_n^2 , \quad n \ge n_0 .
  \end{equation}
 
 Now consider $a_n = \P[ \ann^\infty_{\ell_n}( R_n) ] $. Applying \eqref{e:sb3} (with the settings $R_1 = R_n$, $R_2 = R_{n+1}$, $\ell = \ell_n$ and $\eps = \eps_n =  (\log L_n)^{-\delta}$) we have
\begin{equation}
\label{e:boot2}
 a_{n+1} \le c_d^2 (R_{n+1}/R_n)^{2d} a_n^2 + 2\big(c_d^2 (R_{n+1}/R_n)^{2d}  + 1\big)  \P \big[\sup_{x \in \Lambda_{2R_{n+1}} }  |g_{L_n}(x)|   \ge \eps_n \big]   .
 \end{equation}
  By the union bound and \eqref{e:tail}, for sufficiently large $n_0$,
\begin{align}
\label{e:boot3}
 \P \big[\sup_{x \in \Lambda_{2R_{n+1}} }  |g_{L_n}(x)|   \ge \eps_n \big]   &  \le c_1  \P \big[\sup_{x \in B(L_n) }  |g_{L_n}(x)|   \ge \eps_n \big]    \\
 \nonumber & \le c_1 e^{- c_2  (\log L_n)^{1-\delta'}}  \le  c_3 e^{- c_4 n^{ ( 1 - \delta')/\eta }   }   .
 \end{align}
Combining \eqref{e:rn}--\eqref{e:boot3}, recalling that $\delta' < \eta$, and adjusting the constants, we deduce that
\begin{equation}
\label{e:sbrec}
 a_{n+1} \le  e^{c_1 n^{\rho} }   a_n^2  +    e^{- c_2 n^{ \rho'} }   , \quad n \ge n_0,
\end{equation}
where $\rho = 1/\eta-1 > 0$ and $\rho' =( 1 - \delta')/\eta$ satisfy $0 < \rho < \rho'$.

We next prove by induction that, if $n_0$ and $\ell_{n_0}$ are sufficiently large and small respectively, \eqref{e:sbrec} implies that 
\[ a_n \le   e^{- (3c_2/4) n^{\rho'} }   \ , \quad n \ge n_0, \]
For this, first take $n_0$ large enough so that
\begin{equation}
\label{e:boot4}
 e^{c_1 n^\rho}   e^{-  (3c_2/2) n^{ \rho'}  }   + e^{-   c_2 n^{ \rho'}  }  \le      e^{- (3c_2/4) (n+1)^{\rho'} }  , \quad n \ge n_0 .  
 \end{equation}
Then the base case $n=n_0$ holds by taking $\ell_{n_0}$ sufficiently small. For the induction step, combining \eqref{e:sbrec} and \eqref{e:boot4} we have
\begin{align*}
 a_{n+1} & \le e^{c_1 n^\rho}   a_n^2 +  e^{- c_2 n^{ \rho'} } \le   e^{c_1 n^\rho}   e^{-  (3c_2/2) n^{ \rho'}  }   + e^{-   c_2 n^{ \rho'}  }  \le       e^{- (3c_2/4) (n+1)^{\rho'} }   .  
 \end{align*}
Hence by monotonicity we conclude that 
\[ \P[ \ann^\infty_{\ell_\infty}(R_n)  ]   \le  e^{- c_3 n^{\rho'} } = e^{ - c_3 (\log R_n)^{1-\delta'} }    \]
for all $n \ge n_0$, from which we deduce (recall that $R_{n+1} \le R_n^2$) that
\[ \P[ \ann^\infty_{\ell_\infty}(R)  ]   \le   e^{ - c_4 (\log R)^{1-\delta'} }    \]
for all sufficiently large $R$ by monotonicity. Recalling \eqref{e:comparearms}, this shows that $\ell^\ast_c \ge \ell_\infty > -\infty$.  

\smallskip
We turn to the second statement of Proposition \ref{p:sb}, which is proven similarly. Let $\ell < \ell^\ast_c$ and $\psi' \in (0, \psi) \cap (0,1]$ be given. Fix a constant $R_0 \ge 1$ to be chosen later, and for each $n \ge 1$ define $R_n$ recursively as
\[ R_{n+1} = 2R_n + R_n^{\psi'/\psi}  \in [2 R_n, 3 R_n]  . \]
In particular there exists a $c_0 > 0$, depending on $R_0$, such that
\begin{equation}
\label{e:rbounds}
R_0 2^n \le R_n \le c_0 2^n . 
\end{equation}
Indeed the lower bound is immediate, and the upper bound follows from the fact that
\[  \log ( R_{n+1} /  2^{n+1} ) - \log ( R_n / 2^n ) = \log (  1 + R_n^{\psi'/\psi - 1}  / 2  )   \le R_n^{\psi'/\psi - 1} / 2    \]
is summable. We also define $\ell_0 = \ell + (\ell_c^\ast - \ell)/2$ and  $\ell_{n+1} =   \ell_n - 2 (\log L_n)^{-(1+\delta)}$, where  $L_n = (2-1/\sqrt{d})  R_n^{\psi'/\psi}$ (again choose $R_0$ large enough so that $L_n \ge 1$). Since $(\log L_n)^{-(1+\delta)}$ is summable, we may choose $R_0$ sufficiently large so that $\inf_{n \ge 1} \ell_n \ge \ell$.

Again consider $a_n = \P[ \ann^\infty_{\ell_n}( R_n) ]$, and observe that since $\ell_0 < \ell_c^\ast$ and by \eqref{e:comparearms}, one can make $a_0$ arbitrarily small by choosing $R_0$ sufficient large. Using a similar argument as in the proof of the first statement, we obtain
\[ a_{n+1} \le c_d^2 a_n^2 - (2c_d + 1)  \P \big[\sup_{x \in \Lambda_{6R_n} }  |g_{L_n}(x)|   \ge \eps_n \big] \ , \quad n \ge 0,  \]
where $\eps_n = (\log L_n)^{-(1+\delta)}$.  By \eqref{e:tail2} and the union bound, for sufficiently large $R_0$,
\[  \P \big[\sup_{x \in \Lambda_{6R_n} }  |g_{L_n}(x)|   \ge \eps_n \big]    \le c_1  \P \big[\sup_{x \in B(R_n) }  |g_{L_n}(x)|   \ge \eps_n \big]   \le c_1 e^{- c_2  L_n^\psi }  = c_1 e^{- c_2 R_n^{\psi' } }  . \]
Combining these, and adjusting the constants, we deduce that
\begin{equation}
\label{e:sbrec2}
a_{n+1} \le c_1 a_n^2 +  e^{- c_2 R_n^{\psi' } }    \ , \quad n \ge 0 .
\end{equation}
To analyse \eqref{e:sbrec2} define $a_n' = c_1(a_n + e^{-(c_2/2) R_n^{\psi'} } )$. Then we have
\begin{align*}
a_{n+1}' & = c_1(a_{n+1} + e^{-(c_2/2) R_{n+1}^{\psi'} }  )    \le  c_1^2 a_n^2 + c_1 e^{-c_2 R_n^{\psi'} } + c_1 e^{-(c_2/2) 2^{\psi'} R_n^{\psi'} }  \\
& \le c_1^2 a_n^2 + 2 c_1 e^{-(c_2/2) 2^{\psi'} R_n^{\psi'} } \le (c_1 a_n)^{2^{\psi'}} +  \big(c_1  e^{-(c_2  / 2)   R_n^{\psi'} } \big)^{2^{\psi'}} \le (a_n')^{2^{\psi'}}  ,
\end{align*}
where the first inequality used \eqref{e:sbrec2} and that $R_{n+1} \ge 2R_n$, the second that $\psi' \le 1$, and the third assumed that $c_1 \ge 1$ is fixed sufficient large so that $2 c_1 \le c_1^{2^{\psi'}}$, and then $R_0$ is taken sufficient large so that  $c_1 a_0 < 1$. We conclude that there exists a $c_3 > 0$ such that
\[ a_n \le e^{- c_3 2^{\psi' n} } \ , \quad n \ge 0. \]
Recalling \eqref{e:rbounds}, this implies that $a_n \le  e^{-c_1 R_n^{\psi'} }$, from which we deduce that
\begin{equation}
\label{e:apriori2}
  \P[ \ann^\infty_{\ell}( R) ] \le  e^{-c_2 R^{\psi'}}  \, \quad R \ge 1, 
  \end{equation}
by monotonicity. Considering \eqref{e:comparearms}, this gives the result.
\end{proof}

\smallskip
\section{Renormalisation and sharp upper bounds}
\label{s:seup}

In this section we prove the upper bounds in Theorem \ref{t:gen} in the `sub-exponential' cases. Throughout this section we assume that the conditions of Theorem \ref{t:gen} hold for $\alpha \in [0, 1]$, and in particular a local-global decomposition is available. It will sometimes be convenient to use the notation $g(x) \ll h(x)$ and $g(x) \gg h(x)$ to mean $g(x) = o(h(x))$ and $h(x) =  o(g(x))$ respectively.

\smallskip
As in the previous section we work with annuli defined with respect to the sup-norm. In particular, we recall the annulus $A^\infty(R)$ and the crossing event $\ann^\infty_\ell(R)$ from the previous section. The main ingredient in the proof is a bootstrap for the probability of $\arm_\ell(R)$ given a priori bounds on $\ann^\infty_\ell(L)$ on a mesoscopic scale $1 \ll L \ll R$:

\begin{proposition}[Bootstrap for annular crossings]
\label{p:boot}
Let $L = L(R) \to \infty$ satisfy, as $R \to \infty$,
\begin{equation}
\label{e:subcon}
 \begin{cases}
LK(L) \ll R K(R)  &  \alpha \in [0,1),  \\   \log L \ll \log R   & \alpha = 1.
\end{cases} 
\end{equation}
Then for every $\ell<\ell_\ast$ and $\delta > 0$ there exists a $c > 0$ such that, as $R \to \infty$, eventually
\begin{align*}
&   - \log  \P[\arm_\ell(R)]      \\
\nonumber & \qquad \qquad  \ge \min \Big\{   \frac{ ( (\ell^\ast_c-\ell)^2 - \delta) \capa_K([0, R]) }{2} ,    \frac{ P R}{c L}  , \frac{R}{c L K(L)}   \Big\} -   \frac{ cR}{L} \log (R/L)   ,
\end{align*}
where $P = P(R) =  - \log \P[\ann^\infty_\ell(L)]$. In particular, if 
\[   \frac{R}{L} \log(R/L) \ll \capa_K([0, R]) \ll \frac{R}{L} \min\Big\{ P , \frac{1}{ K(L) } \Big\}  \]
as $R \to \infty$, then
\[  - \log  \P[\arm_\ell(R)]   \geq \frac{(\ell^\ast_c-\ell)^2  \capa_K([0, R]) }{2} (1+o(1))  .\]
\end{proposition}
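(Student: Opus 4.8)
The plan is to make rigorous the heuristic that $\arm_\ell(R)$ forces the field to be excessively low along a line, via a single renormalisation step at the mesoscopic scale $L=L(R)$, feeding in the a priori estimate \eqref{e:armbound}, the Gaussian large-deviation bound of Theorem~\ref{t:ld3}, and the geometry of the capacity from Propositions~\ref{p:proj} and \ref{p:con}. On $\arm_\ell(R)$ there is a path $\gamma\subset\{f\le\ell\}$ from $0$ to $\partial B(R)$; by a covering construction in the spirit of Lemma~\ref{l:sb}, applied radially, one extracts from $\gamma$ a chain $(A_i)_{1\le i\le n}$, with $n=n(R)\asymp R/L$, of pairwise disjoint translates of $A^\infty(L)$ whose centres $c_i$ satisfy $|c_i-c_j|\ge|i-j|ML$ for a large fixed $M$ (a ``near-radial'' chain), such that $\gamma$ crosses each $A_i$. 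There are at most $\exp(O((R/L)\log(R/L)))$ such chains (each successive annulus occupies one of $O((R/L)^{d-1})$ angular positions at its radius), which is the source of the error term $\tfrac{cR}{L}\log(R/L)$; summing also over the $2^n=e^{O(R/L)}$ partitions below is harmless. Fix $\delta>0$, pick $\delta'=\delta'(\delta)>0$ small, and set $\ell'=\ell^\ast_c-\delta'$, $u=\ell'-\ell=\ell^\ast_c-\ell-\delta'$ (so $u$ is close to $\ell^\ast_c-\ell$). For each $i$, since $\gamma$ crosses $A_i$ inside $\{f\le\ell\}$ and $f=f_L+g_L$, either $g_L\ge-u$ on all of $\gamma\cap A_i$ --- whence $f_L=f-g_L\le\ell+u=\ell'$ there, so $\{f_L\le\ell'\}$ crosses $A_i$ --- or $\inf_{A_i}g_L<-u$. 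Thus each realisation determines a partition $\{1,\dots,n\}=I\sqcup J$ with $\{f_L\le\ell'\}$ crossing $A_i$ for $i\in I$ and $\inf_{A_i}g_L<-u$ for $i\in J$, and it suffices to bound, for each fixed chain and each fixed such partition, the probability $q_{I,J}$ of the corresponding event.

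Since $f_L$ is $L$-range dependent and the $A_i$, $i\in I$, are pairwise at distance $>ML>L$, the crossing events at level $\ell'$ are independent, so $q_{I,J}\le\P[\ann^\infty_{\ell'}(L)]^{|I|}\,\P[\,\inf_{A_i}g_L<-u\ \forall i\in J\,]$. For the first factor, \eqref{e:armbound} (valid as $\ell'<\ell^\ast_c$) gives $-\log\P[\ann^\infty_{\ell'}(L)]\ge cL^{\psi'}$, and a comparison of subcritical crossing probabilities at the nearby levels $\ell$ and $\ell'$ yields $-\log\P[\ann^\infty_{\ell'}(L)]\ge\theta P$ for some $\theta=\theta(\delta)\in(0,1)$, so $-\log$ of the first factor is $\ge|I|\,\theta P$. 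For the second factor, using $-g_L\stackrel{d}{=}g_L$ \emph{as processes} we rewrite it as $\P[\sup_{A_i}g_L\ge u\ \forall i\in J]\le\P[\{g_L\ge u\}\cap D_i\ne\emptyset\ \forall i\in J]$, where $D_i$ is the ball circumscribing $A_i$; Theorem~\ref{t:ld3} (with $s=s(L)\to\infty$ chosen so that $K(L)=o(K(s))$) bounds this by $\exp(-\tfrac{u^2}{2}\capa_K(\bigcup_{i\in J}\Gamma_s(D_i))(1-\gamma_{L,M}))$, while discarding the joint structure and invoking Proposition~\ref{p:ld2} gives the cruder $\exp(-|J|/O(K(L)))$. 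Combining,
\[
-\log q_{I,J}\ \ge\ |I|\,\theta P\ +\ \max\Big\{\frac{u^2}{2}\,\capa_K\Big(\bigcup_{i\in J}\Gamma_s(D_i)\Big)(1-\gamma_{L,M})\,,\ \frac{|J|}{O(K(L))}\Big\}.
\]

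To conclude, every realisation satisfies either $|I|\ge\delta' n$ or $|J|\ge(1-\delta')n$. In the first case the $f_L$-term alone is $\ge\delta'\theta n P\ge\tfrac{PR}{cL}$. In the second case the crude term alone is $\ge\tfrac{(1-\delta')n}{O(K(L))}\ge\tfrac{R}{cLK(L)}$, and in addition the centres $\{c_i:i\in J\}$ still form a near-radial chain with $\ge(1-\delta')n\ge\lfloor\rho R/(ML)\rfloor$ elements ($\rho\to1$ as $\delta'\to0$), so Proposition~\ref{p:proj} and Corollary~\ref{c:conpro} --- whose hypotheses reduce to \eqref{e:subcon} with $r\asymp ML$ and the above $s$, the $\alpha=1$ condition in \eqref{e:concon} being met by tuning $s$ in the admissible range $K(L)\ll K(s)\ll RK(R)/(L\log R)\cdot\log s$ --- give $\capa_K(\bigcup_{i\in J}\Gamma_s(D_i))\ge(1-o(1))\capa_K([0,R])$, whence the capacity term is $\ge\tfrac{((\ell^\ast_c-\ell)^2-\delta)}{2}\capa_K([0,R])$ once $\delta'$ is small and $\gamma_{L,M}$ and the $o(1)$ are absorbed into $\delta$. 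Taking the minimum over the three bounds and subtracting the combinatorial loss $\tfrac{cR}{L}\log(R/L)$ gives the displayed inequality. The ``In particular'' clause is then immediate: under $\tfrac{R}{L}\log(R/L)\ll\capa_K([0,R])\ll\tfrac{R}{L}\min\{P,1/K(L)\}$ the capacity term is the binding one in the minimum and strictly dominates the error, so letting $\delta\to0$ yields $-\log\P[\arm_\ell(R)]\ge\tfrac12(\ell^\ast_c-\ell)^2\capa_K([0,R])(1+o(1))$.

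I expect the main obstacle to be the quantitative capacity control in the second case: one must choose the mesoscopic scale $L$ and the auxiliary scale $s$ so that \emph{simultaneously} the combinatorial factor $\exp(O((R/L)\log(R/L)))$ is negligible against $\capa_K([0,R])$, Theorem~\ref{t:ld3} applies ($K(L)=o(K(s))$), and the projection/condensation inputs apply (the conditions \eqref{e:concon} for $s,r\asymp L$), and then to verify that the capacity of the realised, genuinely $d$-dimensional and only near-radial, collection of small balls $\Gamma_s(D_i)$ is still asymptotic to --- and in particular not asymptotically larger than --- $\capa_K([0,R])$; this is precisely where Propositions~\ref{p:proj} and \ref{p:con} do the work, and checking that all the scale constraints are mutually compatible (which they are, exactly because of \eqref{e:subcon}) is the crux. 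A secondary technical point is the comparison of crossing probabilities at the nearby subcritical levels $\ell$ and $\ell'$ used to recover $P$ (rather than merely $cL^{\psi'}$) in the $f_L$-term.
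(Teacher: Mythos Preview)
Your overall architecture is the paper's: a one-step renormalisation at scale $L$, a chain of $n\asymp R/L$ well-separated copies of $A^\infty(L)$ extracted via Lemma~\ref{l:sb2}, a local/global dichotomy along the chain, the local side handled by $L$-range independence of $f_L$, and the global side by Theorem~\ref{t:ld3} together with projection and condensation (Corollary~\ref{c:conpro}). The capacity term is obtained correctly. But your derivation of the other two terms in the minimum is broken.

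The claimed factorisation $q_{I,J}\le\P[\ann^\infty_{\ell'}(L)]^{|I|}\cdot\P[\,\cdots\,]$ is not available: $f_L$ and $g_L$ are \emph{not} independent (in Section~\ref{s:lgd} they are built from the same white noise). You only have $q_{I,J}\le\min$ of the two factors, which is enough for your case split, but you cannot add the two exponents. More seriously, the ``crude'' bound $\exp(-|J|/O(K(L)))$ you extract from Proposition~\ref{p:ld2} is false: the global field $g_L$ has no finite-range structure, so the exceedances $\{\sup_{A_i}(-g_L)>u\}$ over the disjoint $A_i$ are not independent and cannot be multiplied; from $\P[\cap_i E_i]\le\min_i\P[E_i]$ you only get $\exp(-1/O(K(L)))$, which is wiped out by the combinatorial cost. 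So your route to the $\frac{R}{cLK(L)}$ term fails.

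On the $I$-side you silently pass from $\P[f_L\in\ann^\infty_{\ell'}(L)]$ to $\P[f\in\ann^\infty_{\ell'}(L)]$ when invoking~\eqref{e:armbound} and $P$; but the a priori input is for $f$, not $f_L$. The correct step---and this is precisely what the paper does---is to sprinkle back: $\{f_L\in\ann^\infty_{\ell^\ast_c-2\delta}(L)\}\subset\{f\in\ann^\infty_{\ell^\ast_c-\delta}(L)\}\cup\{\sup_{B(2L)}g_L\ge\delta\}$, whence
\[
-\log\P[f_L\in\ann^\infty_{\ell^\ast_c-2\delta}(L)]\ \ge\ \min\big\{-\log\P[f\in\ann^\infty_{\ell^\ast_c-\delta}(L)],\ 1/O(K(L))\big\}-O(1),
\]
and raising \emph{this} to the power $|I|\asymp R/L$ (legitimately, by the $L$-range independence of $f_L$) produces \emph{both} the $\frac{PR}{cL}$ term and the $\frac{R}{cLK(L)}$ term. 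In short, the third term in the minimum comes from the local side of the dichotomy, not the global side.

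Finally, the ``secondary technical point'' you flag goes the wrong direction: since $\ell<\ell'=\ell^\ast_c-\delta'$, monotonicity gives $-\log\P[\ann^\infty_{\ell'}(L)]\le P$, not $\ge\theta P$. What the argument actually delivers is the bound at a level just below $\ell^\ast_c$, not at $\ell$; the paper's own proof shares this discrepancy with the stated $P$, and it is harmless only because in every application the a priori estimate (Proposition~\ref{p:sb} or Theorem~\ref{t:apriori}) is available at \emph{every} subcritical level.
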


Once again we isolate the construction that underpins the bootstrap (see Figure \ref{f:boot2}):

\begin{lemma}
\label{l:sb2}
There exists a $c \ge 1$, depending only on the dimension, such that for every $R, M, L \ge 6 \sqrt{d}$ there exists a set of $n = \lfloor R/(6ML) \rfloor$ collections $(A_i^k)_{i \le c  R^{d-1}/ L^{d-1}}$, $k = 1 , \ldots, n$, each consisting of translated copies of $A^\infty(L)$, which satisfy:
\begin{enumerate}
\item $\text{dist}(A^{k_1}_i, A^{k_2}_j) \ge (M-4\sqrt{d})L |k_1 - k_2|$ for all $k_1 \neq k_2$ and all $i,j$;
\item If $\gamma$ is a path that intersects $\partial B(1)$ and $\partial B(R)$ then for each $k$ there exists an $i$ such that $\gamma$ crosses $A^k_i$.
\end{enumerate}
\end{lemma}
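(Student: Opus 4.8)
The proof will follow the template of Lemma~\ref{l:sb}: I produce a nested family of $n$ Euclidean spheres surrounding the origin, surround each by a collection of translated copies of $A^\infty(L)$, and then verify separately that copies attached to different spheres are far apart and that any path from $\partial B(1)$ to $\partial B(R)$ crosses one copy at each sphere. Throughout I write $\Lambda_\rho = [-\rho,\rho]^d$, so that $A^\infty(L) = \Lambda_{2L}\setminus\Lambda_{L/\sqrt d}$ and ``crossing $A^\infty(L)$'' (or a translate) means meeting both the inner boundary $\partial\Lambda_{L/\sqrt d}$ and the outer boundary $\partial\Lambda_{2L}$; I freely use $\tfrac1{\sqrt d}\|x\|_2 \le \|x\|_\infty \le \|x\|_2$. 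For the construction, set $\rho_k = 3ML(2k-1)$ for $k = 1,\dots,n$ with $n = \lfloor R/(6ML)\rfloor$; since $M,L \ge 6\sqrt d$ one checks the elementary inequalities $\rho_1 \ge 1 + 2\sqrt d L$ and $\rho_n = 6MLn - 3ML \le R - 2\sqrt d L$. For each $k$ fix a maximal $\tfrac{L}{2\sqrt d}$-separated (in $\|\cdot\|_\infty$) subset $\{z^k_i\}_i$ of the sphere $\partial B(\rho_k)$; as $\partial B(\rho_k)$ has $(d-1)$-dimensional volume of order $\rho_k^{d-1}$ there are at most $c_d(\rho_k/L)^{d-1} \le c_d(R/L)^{d-1}$ of them, which yields the claimed cardinality. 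Put $A^k_i := z^k_i + A^\infty(L)$, with ``inner cube'' $z^k_i + \Lambda_{L/\sqrt d}$ and ``outer cube'' $z^k_i + \Lambda_{2L}$.

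For the separation statement~(1): every point of $A^k_i$ lies within $\|\cdot\|_\infty$-distance $2L$, hence $\|\cdot\|_2$-distance $2\sqrt d L$, of $z^k_i \in \partial B(\rho_k)$, so $A^k_i$ is contained in the spherical shell $\{\,\rho_k - 2\sqrt d L \le |x| \le \rho_k + 2\sqrt d L\,\}$. For $k_1 \ne k_2$ these shells are radially separated by $|\rho_{k_2}-\rho_{k_1}| - 4\sqrt d L = 6ML|k_1-k_2| - 4\sqrt d L \ge (M - 4\sqrt d)L|k_1-k_2|$, and radial separation bounds the distance from below, giving~(1) (if the distance in the statement is the sup-norm one divides by $\sqrt d$ and absorbs the factor into the constant).

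For the covering statement~(2): by maximality of the net, every point of $\partial B(\rho_k)$ is within $\|\cdot\|_\infty$-distance $<\tfrac{L}{2\sqrt d}<\tfrac{L}{\sqrt d}$ of some $z^k_i$, hence lies in the interior of $z^k_i + \Lambda_{L/\sqrt d}$. Let $\gamma$ be a path meeting $\partial B(1)$ and $\partial B(R)$; being connected, the set of Euclidean norms attained on $\gamma$ is an interval containing $[1,R]$, so for each $k$ it contains points $p$ with $|p| = \rho_k - 2\sqrt d L$ and $q$ with $|q| = \rho_k + 2\sqrt d L$. The sub-arc of $\gamma$ from $p$ to $q$ meets $\partial B(\rho_k)$ (intermediate value theorem along the path), say at $x_0$, and $x_0 \in \mathrm{int}(z^k_i + \Lambda_{L/\sqrt d})$ for some $i$. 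Following this sub-arc from $x_0$ to $q$: since $\|q - z^k_i\|_\infty \ge \tfrac1{\sqrt d}\bigl(|q| - |z^k_i|\bigr) = 2L$, the arc leaves $z^k_i + \Lambda_{2L}$, and as it also starts strictly inside $z^k_i + \Lambda_{L/\sqrt d}$ it exits both cubes, so $\gamma$ crosses $z^k_i + \partial\Lambda_{L/\sqrt d}$ and $z^k_i + \partial\Lambda_{2L}$, i.e.\ crosses $A^k_i$. Since this holds for every $k$, we obtain~(2).

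I expect the only genuine work to be the bookkeeping: pinning down the elementary inequalities among $M,L,R$ and $\sqrt d$ that make the net fine enough, that place the margins $\pm 2\sqrt d L$ inside $[1,R]$, and that make the cardinality and the separation bound come out exactly as stated — and, if one insists on treating $\gamma$ as a merely connected (rather than arcwise connected) set, replacing the sub-arc argument by the observation that the union of the inner cubes of the $k$-th collection contains a topological sphere separating $B(\rho_k - \tfrac{L}{2\sqrt d})$ from the exterior of $B(\rho_k + \tfrac{L}{2\sqrt d})$. Conceptually nothing is needed beyond the connectedness/intermediate-value argument and the comparison $\|\cdot\|_\infty \asymp \|\cdot\|_2$, exactly as in Lemma~\ref{l:sb}; in particular the statement is purely combinatorial-geometric and carries no probabilistic content.
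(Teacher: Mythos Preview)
Your proof is correct and follows essentially the same approach as the paper: place concentric Euclidean spheres around the origin and cover each by the inner cubes of translated copies of $A^\infty(L)$. The paper's proof is terse (it uses radii $kML$ rather than your $3ML(2k-1)$ and simply asserts that the inner squares cover each sphere without spelling out the net or the crossing argument), so your version is a more detailed execution of the same construction rather than a different route.
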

\begin{proof}
For $k = 1,  \ldots , \lfloor R/(6ML) \rfloor$ consider the sphere $S_k = \{ x \in \R^d : |x| = kML\}$ and define the collection $(A_i^k)_{i \le c  R^{d-1}/ L^{d-1}}$ such that each $A_i^k$ is centred on $S_k$, and the union over $i$ of the inner squares of $A_i^k$ (which are translated copies of $\Lambda_{L / \sqrt{d} })$ covers $S_k$. These collections satisfy the required properties.
\end{proof}

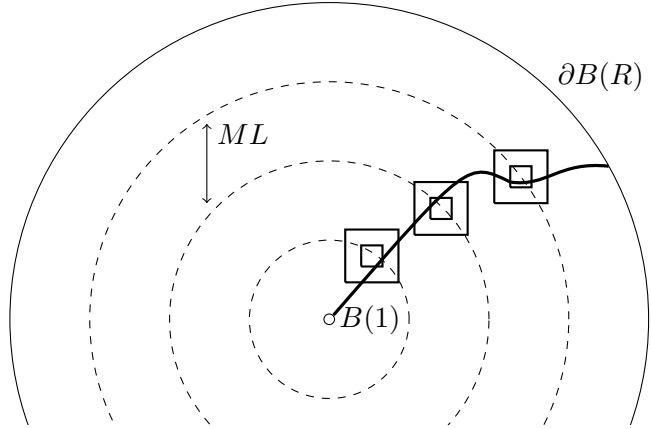
\begin{figure}
\begin{tikzpicture}[scale=0.7]
\clip (-10,-2) rectangle + (20,10);
\draw (0,0) circle (0.1);
\draw (0,0) circle (6);
\draw[dashed] (0,0) circle (1.5);
\draw[dashed] (0,0) circle (3);
\draw[dashed] (0,0) circle (4.5);
\draw[thick] [shift={(0.8,1.2)}] (-0.2,-0.2) -- (0.2,-0.2) -- (0.2,0.2) -- (-0.2,0.2) -- (-0.2,-0.2);
\draw[thick] [shift={(0.8,1.2)}] (-0.5,-0.5) -- (0.5,-0.5) -- (0.5,0.5) -- (-0.5,0.5) -- (-0.5,-0.5);
\draw[thick] [shift={(2.1,2.1)}] (-0.2,-0.2) -- (0.2,-0.2) -- (0.2,0.2) -- (-0.2,0.2) -- (-0.2,-0.2);
\draw[thick] [shift={(2.1,2.1)}] (-0.5,-0.5) -- (0.5,-0.5) -- (0.5,0.5) -- (-0.5,0.5) -- (-0.5,-0.5);
\draw[thick] [shift={(3.6,2.7)}] (-0.2,-0.2) -- (0.2,-0.2) -- (0.2,0.2) -- (-0.2,0.2) -- (-0.2,-0.2);
\draw[thick] [shift={(3.6,2.7)}] (-0.5,-0.5) -- (0.5,-0.5) -- (0.5,0.5) -- (-0.5,0.5) -- (-0.5,-0.5);
\draw[very thick] (0.08,0.08) .. controls (2.6,3) .. (3.4,2.6) .. controls (4.1,2.5) and (4.3,3) .. (5.25,2.9);
\node[right] at (0,0) {$B(1)$};
\node[right] at (4.1,4.6) {$\partial B(R)$};
\draw[<->] (-2.3,2.2) -- (-2.3,3.7);
\node[right] at (-2.3,3.5) {$ML$};
\end{tikzpicture}
\caption{Illustration of Lemma \ref{l:sb2}. A path that intersects $\partial B(1)$ and $\partial B(R)$ must cross at least one annulus in each of the collections $(A_i^k)_{i \le n}$ of translated copies of $A^\infty(L)$ centred along concentric rings (only partially depicted).}
\label{f:boot2}
\end{figure}

\begin{proof}[Proof of Proposition \ref{p:boot}]
Fix $s = s(R) \to \infty$ that satisfies, in the case $\alpha \in [0,1)$,
\begin{equation}
\label{e:condboot1}
  K(L) \ll K(s) \ll \frac{R K(R)}{L } , 
  \end{equation}
and in the case $\alpha = 1$,
\begin{equation}
\label{e:condboot2}
 K(L) \ll K(s) \ll \frac{R K(R) (\log R) }{ L (\log L)} \le \frac{R K(R) (\log R) }{ (\log s)} ,
 \end{equation}
which is possible by \eqref{e:subcon}. We may also assume that $s \ll L \ll R$, since otherwise \eqref{e:condboot1}--\eqref{e:condboot2} cannot hold.

Fix $\delta > 0$ such that $2\delta < \ell^\ast_c - \ell$, and fix also $M \ge 6 \sqrt{d}$ and $\rho \in (0, 1)$.  Consider the set of collections $(A_i^k)_{i \le c  R^{d-1}/ L^{d-1}}$, $k = 1,  \ldots , \lfloor R/(6ML) \rfloor $, of translated copies of the annulus $A^\infty(L)$ guaranteed by Lemma \ref{l:sb2}, and for $\gamma \in (0, 1)$ let $\mathcal{C}_\gamma$ denote the collection of all sets $C = (A^k_{i_k})_k$ containing at most one annulus $A_i^k$ from each collection $(A_i^k)_i$ and at least $  \lfloor \gamma R/(4ML) \rfloor $ annuli in total; this has cardinality at most $|\mathcal{C}_\gamma| \le (R/L)^{c_d R/L}$ for some constant $c_d > 0$ depending only on the dimension. Recall the local-global decomposition $f = f_L + g_L$, and observe that, by the second property of Lemma \ref{l:sb2}, for sufficiently large $R$ the event $\arm_\ell(R)$ implies that one of the following occur: 
 \begin{itemize}
 \item $E_1 = \cup_{C \in \mathcal{C}_\rho} \{ \{g_L \ge \ell^\ast_c - \ell - 2\delta\} \text{ has non-empty intersection with each annulus in } C \}$;
 \item $E_2 =  \cup_{C \in \mathcal{C}_{1-\rho}}\{ \{ f_L \le \ell^\ast_c - 2\delta \} \text{ crosses each annulus in } C  \}$. 
 \end{itemize} 
We bound the probability of these events separately. For the first event, recall that $\Gamma_s(A_i)$ denotes the ball that is concentric to $A_i$ with radius $s$. Then by the union bound, Theorem~\ref{t:ld3} (which applies since $K(L) \ll K(s)$), and Corollary \ref{c:conpro} (with the setting $r = (M-4)L$, which applies by \eqref{e:condboot1}--\eqref{e:condboot2}), for $R$ large enough (depending on $M$ and $\rho$),
 \begin{align*}
  -\log \P[E_1] & \ge \sup_{C \in \mathcal{C}_\rho}   \frac{ (\ell^\ast_c-\ell - 2\delta)^2 \capa_K( \cup_{A_i \in C} \Gamma_s(A_i)) (1- \gamma'_M) }{2} - \frac{c_d R}{L}\log(R/L)      \\
   & \ge   \frac{ (\ell^\ast_c-\ell - 2\delta)^2 \capa_K([0, R]) (1- \gamma'_M -  \gamma_{\rho,M})  }{2} - \frac{c_d R}{L}\log(R/L)       
\end{align*}
for some $\gamma'_M \to 0$ as $M \to \infty$, and $\gamma_{\rho,M}$ such that, for every $M > 0$, $\gamma_{\rho,M} \to 0$ as $\rho \to 1$. In particular we may choose $M$ sufficiently large, and then $\rho$ sufficiently close to $1$, so that
\[    -\log \P[E_1]  \ge   \frac{ (\ell^\ast_c-\ell - 3\delta)^2 \capa_K([0, R]) }{2} - \frac{c_d R}{L}\log(R/L)         \]
For the second event, by the union bound, stationarity, and the fact that $f_L$ is $L$-range dependent,  we have
\[ - \log \P[E_2] \ge    \frac{(1-\rho) R}{8ML}  \Big( - \log  \P[ f_L \in \ann^\infty_{\ell^\ast_c - 2\delta}(R)  ] \Big) -   \frac{c_d R}{L}\log(R/L)   . \]
Observe that $ \{f_L \in \ann^\infty_{\ell^\ast_c - 2\delta}(R) \} $ implies that
\[  \{ f \in \ann^\infty_{\ell^\ast_c - \delta}(R) \}  \quad \text{or} \quad  \big\{ \sup_{x \in B(2L)} g_L \ge \delta \big\}   .  \]
Hence by the tail control for $g_L$ in \eqref{e:gsup} we deduce that, as $R \to \infty$,
\[  -\log \P[E_2]  \ge    \min \Big\{   \frac{ (1-\rho) P R}{8ML}  , \frac{R}{O(L K(L))}     \Big\}    - \frac{c_d R}{L} \log (R/L)  . \]
  Combining the bounds on $\P[E_1]$ and $\P[E_2]$ gives the result.
 \end{proof}

To apply Proposition \ref{p:boot} we require an priori bound on $\P[ \ann^\infty_\ell(R) ] $. In the case $\alpha \in (0, 1]$ we use the bound established in Proposition \ref{p:sb} (more precisely we use \eqref{e:apriori2}), whereas in the case $\alpha = 0$ we use the following result:

\begin{theorem}[{\cite{mrv20}}]
\label{t:apriori}
Suppose $\alpha = 0$, so that $K(x) \sim (\log x)^{-\gamma}$, $\gamma > 0$. Then for every $\ell < \ell^\ast_c$ there exists a $c > 0$ such that, for all $R \ge 2$,
\[   \P[ \ann^\infty_\ell(R) ]  \le  e^{-c (\log R)^{\gamma'/2}}       \]
where $ \gamma' =  \min\{\gamma, 1\}$.
\end{theorem}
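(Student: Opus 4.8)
The plan is to deduce Theorem~\ref{t:apriori} from the quantitative sharpness estimate for planar Gaussian percolation proved in \cite{mrv20}, via a routine geometric reduction. I would begin by observing that the statement is vacuous unless $\ell^\ast_c > -\infty$. By Proposition~\ref{p:nontriv} the relevant cases are: $d = 2$ (where $\ell^\ast_c = \ell_c = 0$), and $d \ge 3$ with $\gamma > 1$. The case $d \ge 3$, $\gamma > 1$ is the easy one: then $\gamma' = \min\{\gamma,1\} = 1$, so it suffices to produce a bound of the shape $e^{-c(\log R)^{1/2}}$, and such a bound is already contained in the proof of the first part of Proposition~\ref{p:sb}, which --- re-run with the initial level taken close to $\ell$ and $n_0$ large --- gives $\P[\ann^\infty_\ell(R)] \le e^{-c(\log R)^{1-\delta'}}$ for arbitrarily small $\delta' > 0$. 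So the substantive case is $d = 2$.

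For $d = 2$, the first step is to reduce the annulus-crossing event to a square-crossing event at a comparable scale. Any path realising $\ann^\infty_\ell(R)$ must cross, in its hard direction, one of a bounded number of congruent rectangles of side lengths $\asymp R$ that cover the square annulus $A^\infty(R)$; hence, by the union bound together with stationarity, isotropy, and the RSW estimates of \cite{mrv20} (used once to equalise aspect ratios), $\P[\ann^\infty_\ell(R)] \le C\,\P[\cross_\ell(\lambda R)]$ for fixed constants $C,\lambda > 0$ and all large $R$. The second step is to invoke the main decay estimate of \cite{mrv20}: for a smooth planar field satisfying Assumption~\ref{a:rv} with $\alpha = 0$ and $K(x) \sim (\log x)^{-\gamma}$, and for any level $\ell$ below the crossing critical level $\ell^\ast_c$, one has $\P[\cross_\ell(R)] \le e^{-c(\log R)^{\gamma'/2}}$. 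Combining the two steps and relabelling $c$ --- the factor $C$ and the scale change $R \mapsto \lambda R$ only affect the constant, since $(\log(\lambda R))^{\gamma'/2} \sim (\log R)^{\gamma'/2}$ --- yields the claim; the only thing to verify is that the hypotheses of the cited theorem hold, which is immediate from Assumption~\ref{a:rv} and Proposition~\ref{p:nontriv}.

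The main obstacle, and the reason this estimate is imported from \cite{mrv20} rather than proved here, is the second step: obtaining \emph{any} quantitative decay of crossing probabilities for these strongly correlated, non-FKG fields. This is achieved in \cite{mrv20} through an OSSS-type randomised-algorithm inequality adapted to Gaussian fields (in the spirit of \cite{dcrt20}), which yields a differential inequality in $\ell$ for the crossing probability whose right-hand side is controlled by a sum of crossing probabilities over mesoscopic scales; the logarithmic decay of $K$ weakens the corresponding energy input, and integrating the inequality produces the sub-polynomial rate $(\log R)^{\gamma'/2}$, the truncation at $\gamma' = \min\{\gamma,1\}$ reflecting the fact that for $\gamma \ge 1$ the governing sum behaves as in the borderline integrable regime. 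I would not reproduce this argument; the content of the present proof would be confined to the geometric reduction and the verification of hypotheses described above.
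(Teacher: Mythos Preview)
Your high-level idea---import a quantitative decay estimate from \cite{mrv20} and combine it with a simple geometric reduction---matches the paper, but several details differ, and one of your claims about the \cite{mrv20} input is not quite right.

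First, the paper's proof does not split on the dimension. It works uniformly in $d \ge 2$ by reducing, via the elementary containment $\ann^\infty_\ell(R) \subset \ann_\ell(R/(2\sqrt{d}))$ (see \eqref{e:comparearms}), to the \emph{Euclidean} annulus event $\ann_\ell(R)$, which is rotationally symmetric. No RSW or square-crossing comparison is used, and the rotational symmetry is essential for invoking the relevant result from \cite{mrv20} (as the remark following the proof stresses). Your detour through $\cross_\ell$ is unnecessary, and your separate treatment of $d \ge 3$ via Proposition~\ref{p:sb}, while valid when $\gamma > 1$, is not needed.

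Second, and more substantively, the \cite{mrv20} input is not a ready-made bound of the form $\P[\cross_\ell(R)] \le e^{-c(\log R)^{\gamma'/2}}$ for $\ell < \ell^\ast_c$. What \cite[Propositions~2.14 and~2.15]{mrv20} provide is a two-sided sharp-threshold estimate
\[
\P[\ann_\ell(R)] \lessgtr c_1 \exp\Big(-c_2\,|\ell - \E[T_R]|\,\sqrt{\min\{\log R,\,1/\bar K(\sqrt{R})\}}\Big),
\]
where $T_R$ is the threshold random variable with distribution function $\ell \mapsto \P[\ann_\ell(R)]$. The paper then argues that $\min\{\log R, 1/\bar K(\sqrt{R})\} \asymp (\log R)^{\gamma'}$ in the present setting, and uses the lower half of the estimate to show $\liminf_R \E[T_R] \ge \ell'$ for any $\ell' < \ell^\ast_c$ (otherwise $\P[\ann_{\ell'}(R)]$ would not tend to zero), which then feeds into the upper half to give the claimed bound at level $\ell < \ell'$. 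This short ``locate the threshold'' step is genuinely part of the proof here, not something already packaged in \cite{mrv20}. Finally, the mechanism behind the \cite{mrv20} estimate is a Gaussian sharp-threshold/concentration argument, not an OSSS-type randomised-algorithm inequality.
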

\begin{proof}
Recalling \eqref{e:comparearms} it suffices to prove the statement for $\ann_\ell(R)$, which has the advantage of being rotationally symmetric. Applying \cite[Propositions 2.14 and 2.15]{mrv20}, there exists $c_1,c_2 > 0$ such that, for every $\ell \in \R$ and $R \ge 2$,
\begin{equation}
\label{e:sharp}
  \P[  \ann_\ell(R )  ]  \begin{cases}  \le c_1 e^{-c_2 (\E[T_R] - \ell) \sqrt{ \min\{ \log R, 1/ \bar{K}(\sqrt{R} ) \} } } & \text{if } \ell \le \E[T_R]  , \\ \ge 1 - c_1 e^{-c_2 (\E[T_R] - \ell) \sqrt{ \min\{ \log R, 1/ \bar{K}(\sqrt{R} ) \} } } & \text{if } \ell \ge \E[T_R]   ,
\end{cases} 
\end{equation}
 where $T_R$ is a random variable with c.d.f.\ $F_{T_R}(\ell) = \P[ T_R \le \ell ] = \P[ \ann_\ell(R) ]$, and $\bar K(R) = \sup_{|x| \ge R}  |K(x)|$. Note that in our setting we have
 \[  \min\{ \log R, 1/ \bar{K}(\sqrt{R} ) \}   \asymp (\log R)^{\gamma'}   \to \infty .\]
 Now fix $\ell < \ell' < \ell^\ast_c$, so that $ \P[  \ann_{\ell'}(R )  ]   \to 0$. First note that $\liminf_{R \to \infty} \E[T_{R}] \ge \ell'$ eventually, since otherwise \eqref{e:sharp} implies that $ \P[  \ann_{\ell'}(R )  ]   \to 1$. Hence applying \eqref{e:sharp} we deduce that
 \[  \P[ \ann_\ell(R) ]  \le  c_3 e^{-c_4 (\log R)^{\gamma'/2} }  ,\]
 for some $c_3, c_4 > 0$. Adjusting constants, we have the result.
 \end{proof}
\begin{remark}
Although \cite[Propositions 2.14 and 2.15]{mrv20} were only stated in $d=2$, the proof goes through unchanged in all dimensions. The non-degeneracy assumption in \cite{mrv20} is satisfied since we assume that the spectral measure of $f$ contains an open set in its support. We also emphasise that the fact that $\ann_\ell(R)$ is rotationally symmetric and $f$ is isotropic was crucial in applying \cite[Propositions 2.14 and 2.15]{mrv20}. 
\end{remark}

We can now give the proof of the sub-exponential upper bounds in Theorem \ref{t:gen}, which consists of applying the bootstrap in Proposition \ref{p:boot} starting from our a priori bounds:

\begin{proof}[Proof of upper bound in Theorem \ref{t:gen}]
By \eqref{e:comparearms}, it suffices to prove the bounds for $\ann^\infty_\ell(R)$. We treat separately the cases (i) $\alpha = 0$, (ii) $\alpha \in (0, 1)$, and (iii) $\alpha = 1$. All the asymptotic statements in the proof will be as $R \to \infty$.

\smallskip \noindent \textbf{Case $\alpha = 0$.} Recall that in this case $K(x) \sim (\log x)^{-\gamma}$ for $\gamma > 0$. Define the positive constant
\[ \mu \in \big( \gamma -   \min\{ \gamma,1 \} / 2 , \gamma \big)  ,\]
and the scale
\[ L = R (\log R)^{-\mu} .\]
By Theorem \ref{t:apriori} we have the a priori bound 
\[ P = - \log \P[ \ann^\infty_\ell(L) ] \ge c_1  (\log L)^{\min\{\gamma,1\} /2}  \ge c_2 (\log R)^{\min \{\gamma,1\}/2 }  ,  \]
 for some $c_1,c_2 > 0$. One can check that $L K(L)  \ll  R K(R)$, and also that, by our choice of $\mu$,
\begin{equation}
\label{e:capdom}
   \frac{R}{L} \log(R/L) \ll (\log  R)^\gamma \ll \frac{R}{L} \min\Big\{P , \frac{1}{K(L) } \Big\} .  
   \end{equation}
Indeed, the left-hand side of \eqref{e:capdom} is asymptotic to
\[ (\log R)^\mu (\log \log R)  \ll (\log R)^\gamma ,\]
and the right-hand side of \eqref{e:capdom} is eventually greater than 
\[ c_2 (\log R)^\mu  (\log R)^{\min \{\gamma,1\}/2 }  \gg (\log R)^\gamma  .\]
Recall that $\capa_K([0,R]) \sim 1/K(R) \sim  (\log R)^\gamma $ by Proposition \ref{p:capline}). Then applying Proposition~\ref{p:boot} we have
\[ - \log  \P[\ann^\infty_\ell(R)]  \ge  \frac{ c_\alpha(\ell^\ast_c-\ell)^2 (1 + o(1) )}{2 K(R)}   .\]

\smallskip \noindent  \textbf{Case $\alpha \in (0,1)$.} In this case we have the a prior bound (see \eqref{e:apriori2})
\begin{equation}
\label{e:apriori01}
P =  - \log  \P[\ann^\infty_\ell(L)]   \gg L^{\psi'} 
\end{equation}
 for any $\psi' < \alpha$. Define $L = R K(R) (\log R)^2$. Again one can check that $L K(L) \ll  R K(R)$, and also that
\[   \frac{R}{L} \log(R/L) \ll \frac{1}{K(R)} \ll \frac{R}{L} \min\Big\{ P , \frac{1}{ K(L) } \Big\}  . \]
Recalling that $\capa_K([0,R]) \sim  c_\alpha / K(R)$ (Proposition \ref{p:capline}), in this case Proposition~\ref{p:boot} yields
\[ - \log  \P[\ann^\infty_\ell(R)]  \ge  \frac{ c_\alpha(\ell^\ast_c-\ell)^2 (1 + o(1) )}{2 K(R)}   .\]
 
 \smallskip \noindent  \textbf{Case $\alpha =1$.} 
Recall that in this case $K(x) \sim x^{-1} (\log x)^{-\gamma}$ for $\gamma < 1$, and that we also have the a prior bound \eqref{e:apriori01} for any $\psi' < 1$ (again see \eqref{e:apriori2}). Define $L = (\log R)^{2-\gamma}$. In this case one can check that $L  = R^{o(1) }$, and also that
\[   \frac{R}{L} \log(R/L) \ll R (\log R)^{\gamma-1} \ll \frac{R}{L} \min\Big\{ P , \frac{1}{ K(L) } \Big\}  . \]
Recalling that  (Proposition \ref{p:capline})
\[ \capa_K([0, R])  \sim    \frac{1-\gamma}{2}  R  (\log R)^{\gamma-1}    \sim  \frac{1-\gamma}{2 K(R) (\log R) }   ,\]
in this case Proposition \ref{p:boot} yields 
\begin{equation*}
 - \log  \P[\ann^\infty_\ell(R)]  \ge    \frac{ (\ell^\ast_c-\ell)^2  (1 + o(1) ) }{4(1-\gamma)^{-1} K(R) (\log R) }  . \qedhere
 \end{equation*}
\end{proof}

\smallskip
\section{Proof of the remaining results}
\label{s:lb}

In this section we prove the remaining results, namely Proposition \ref{p:cl}, the lower bounds in Theorem \ref{t:gen}, and Theorem~\ref{t:diamgen}.

\begin{proof}[Proof of Proposition \ref{p:cl}]
By isotropy and the equality in law of $f$ and $-f$ we have
\[ \P[ \cross_0(R) ]   = 1/2 . \]
Also, by Proposition \ref{p:capdom} there is a $c_1 > 0$ such that $\capa_K([0, R]^2)  \le c_1 R^{-\alpha}$ for every $R \ge 1$. Hence by Corollary \ref{c:eb}, for every $R \ge 1$ and $\ell < 0$,
\[  \P[ \cross_\ell(R) ]  \ge  \frac{1}{2 e} e^{ - \frac{\ell^2 R^\alpha}{c_1} } . \]
On the other hand, by continuity one can find $\ell < 0$ sufficiently close to $0$ so that  $\P[ \cross_\ell(R) ]  \ge 1/4$ for all $R \le 1$. Recalling that the correlation length $\xi(\ell)$ is defined with $\eps \in (0, 1/(2e) ) \in (0, 1/4)$, this implies that for $\ell < 0$ sufficiently close to $0$ the correlation length $\xi(\ell)$ satisfies
  \[ \frac{1}{2 e} e^{ - \frac{\ell^2 \xi(\ell)^\alpha}{c_1} } \le \eps . \]
  Rearranging gives that
\[ \xi(\ell)   \ge  c_2 \ell^{-2/\alpha} \]
where $c_2 = c_1 \log(1/(2 e \eps)) )^{1/\alpha} > 0$, as required.
\end{proof}

\begin{proof}[Proof of lower bounds in Theorem \ref{t:gen}]
For the case $\alpha > 1$ see Remark \ref{r:pa}. Let us consider the case $\alpha \in (0, 1]$. Fix $\delta > 0$ and recall the tube $T(R; \rho) = [0, R] \times [0, R^{\rho}]^{d-1}$ and the tube crossing event $\tube_\ell(R; \rho)$. By the definition of $\ell^{\ast \ast}_c$, there exists a $\rho = \rho(R) \ge 0$ (depending on $\delta$) such that $\rho \to 0$ and $\P[\tube_{\ell^{\ast \ast}_c + \delta}(R; \rho) ] \to 1$ as $R \to \infty$. Applying Corollary \ref{c:eb} we have, as $R \to \infty$,
\begin{align*}
\P[  \tube_\ell(R; \rho) ]  & \ge    \P[\tube_{\ell^{\ast \ast}_c + \delta}(R; \rho) ]  \exp \Big( -  \frac{(\ell_c^{\ast \ast} + \delta - \ell)^2 \capa_K(T(R;\rho)) }{2  \P[\tube_{\ell^{\ast \ast}_c + \delta}(R; \rho) ] }  - 1 \Big)  \\
& =   (1 + o(1) ) \exp \Big( -  \frac{(\ell_c^{\ast \ast} + \delta - \ell)^2 \capa_K(T(R;\rho)) (1 + o(1)) }{2 }  - 1 \Big)  .
\end{align*}
Since $\capa_K(T(R;\rho)) \sim \capa_K([0, R]) \to \infty$ as $R \to \infty$ by Proposition \ref{p:capline}, by taking $R \to \infty$ and then $\delta \to 0$ we deduce that, as $R \to \infty$
\[ - \log \P[  \tube_\ell(R; \rho) ]  \le  \frac{(\ell_c^{\ast \ast} - \ell)^2 \capa_K([0, R]) (1+o(1)) }{2} . \]
Finally, by the union bound it is evident that
\[ \P[ \arm_\ell(1, R) ]   \ge  c R^{-d} \P[ f \in \tube_\ell(R; \rho) ]   \]
for some $c > 0$, and the result follows by the asymptotics of the capacity in Proposition \ref{p:capline}.

\smallskip 
The argument is similar in the case $\alpha = 0$, except it is sufficient to work with $\ell_c$ instead of $\ell^{\ast \ast}_c$ since the capacity of $B(R)$ is comparable to the capacity of $[0, R]$ (see Proposition \ref{p:capdom}). By Corollary \ref{c:eb} we have, for any $\delta > 0$,
\[  \P[  \arm_\ell(R) ] \ge    \P[\arm_{\ell_c + \delta}(R) ]  \exp \Big( -  \frac{(\ell_c + \delta - \ell)^2 \capa_K(B(R))}{2  \P[\arm_{\ell_c + \delta}(R) ] }  - 1 \Big)   .\]
By the definition of $\ell_c$ we have, as $R \to \infty$,
\[ \P[\arm_{\ell_c + \delta}(R; \rho) ] \to 1 , \]
and by Proposition \ref{p:capdom} we have $\capa_K(B(R)) \sim 1/K(R)$ as $R \to \infty$. Hence taking $R \to \infty$ and then $\delta \to 0$ we have
\begin{equation*}
 - \log \P[ \arm_\ell(R) ] \le \frac{(\ell_c  - \ell)^2 (1 + o(1)) }{2 K(R)} . \qedhere
\end{equation*}
\end{proof}

\begin{proof}[Proof of Theorem \ref{t:diamgen}]
We begin with the upper bounds, which are immediate consequences of the upper bounds in Theorem \ref{t:gen}. 

\smallskip
For $R \ge 1$, consider covering $B(R)$ with a collection $(B_i)_i = (x_i + B(1))_i$ of at most $c_d R^d$ translated copies of $B(1)$, where $x_i \in B(R)$, and $c_d > 0$ is a constant depending only on the dimension. Suppose that $\{D_{R, \ell} \ge r + 2 \}$ occurs for some $r > 1$. Then by definition there is a point $y \in B(R)$ and a path $\gamma \subset \{f \le \ell\} \cap B(R)$ joining $y$ and $y + \partial B(r+2)$. Let $1 \le i \le c_d R^d$ be such that $y \in x_i + B(1)$. Then $\gamma$ also intersects both $x_i + B(1)$ and $x_i + \partial B(r) \subseteq y + B(r+2)$. Hence by the union bound and stationarity,
\begin{equation}
\label{e:diam1}
 \P[D_{R, \ell} \ge r + 2 ] \le  c_d R^d \P[ \arm_\ell(1, r) ] .
 \end{equation}

We now split the analysis into cases. In the case $\alpha \in (1, d)$, fix $c_1 > 0$ such that $\P[\arm_\ell(1, R)] \le e^{-c_1 R}$ for $R \ge 1$, which is possible by \eqref{e:gen1}, and set $r = c_2 \log R  $ for some constant $c_2 > d / c_1$. Then by \eqref{e:diam1} we have, as $R \to \infty$,
\[ \P[D_{R, \ell} \ge r + 2 ] \le  c_d R^d \P[ \arm_\ell(1, r) ] \le c_d R^d e^{-c_1 r } = c_d R^d e^{- c_1 c_2 \log R  } \to 0 , \]
from which we deduce the result. In the case $\alpha = 1$ we instead fix $\eps > 0$ and set
\[ r = \frac{4d}{(1-\gamma) (\ell^\ast_c - \ell)^2}( 1 + \eps ) (\log R) (\log \log R)^{1-\gamma}  . \] 
Then by \eqref{e:diam1} and \eqref{e:gen2} we have, as $R \to \infty$,
\begin{align*}
    \P[D_{R, \ell} \ge r  + 2] & \le  c_d R^d \P[ \arm_\ell(1, r) ]  \le c_d R^d \exp \Big(  \frac{ - (\ell^\ast_c - \ell)^2 ( 1 + o(1) )}{ (4/(1-\gamma)) r^{-1} (\log r)^{1-\gamma} } \Big) \\
    &  = c_d R^d e^{- d(1+\eps)(1+o(1)) ( \log R ) } \to 0 .
    \end{align*}
Since $\eps > 0$ was arbitrary, we deduce the result. In the case $\alpha \in (0, 1)$ we set instead
\[ r =  \Big( \frac{2d}{c_\alpha (\ell^\ast_c - \ell)^2} \Big)^{1/\alpha} ( 1 + \eps ) (\log R)^{1/\alpha} (\log \log R)^{-\gamma/\alpha}  , \] 
and by \eqref{e:diam1} and \eqref{e:gen3} we again have that $  \P[D_{R, \ell} \ge r + 2] \to 0$ as $R \to \infty$, as required.

We turn to the lower bounds, which we deduce from the lower bounds in Theorem \ref{t:gen} using the local-global decomposition $f = f_L + g_L$. More precisely, we will use that, for any $R, r, L \ge 1$ and $\eps > 0$
\begin{equation}
\label{e:diam2}
 \big\{f_L  \in \{ D_{R, \ell-\eps} \ge r - 1 \} \big\} \cap \big\{ \sup_{x \in B(R)}  g_L(x)    \le \eps \big\}   \Longrightarrow \big\{ f \in \{ D_{R,\ell} \ge r - 1 \} \big\} , 
 \end{equation}
 and
 \begin{equation}
\label{e:diam3}
 \big\{ f  \in \arm_{\ell - 2 \eps}(1,r) \big\}  \cap \big\{ \inf_{x \in B(r)}  g_L(x)    \ge - \eps \big\}   \Longrightarrow \big\{ f_L \in \arm_{\ell-\eps}(1,r)  \big\} , 
 \end{equation}
which follow since $\{f \in \{ D_{R, \ell} \ge r-1\} \}$ and $\{f \in \arm_{\ell}(R)  \}$ are increasing in $f$ and $\ell$.

For $R \ge 8$ and $1 \le r \le L \le R/8$, consider positioning a collection $(B_i)_i = (x_i + B(r))_i$ of at least $c_d(R/L)^d$  translated copies of $B(r)$ such that $B_i \subset B(R)$ for each $i$ and $\text{dist}(B_i, B_j) \ge L$ for each $i \neq j$, where $c_d > 0$ is a constant depending only on the dimension. The relevance of this construction is that the event $\{D_{R ,\ell} \ge r - 1\}$ is implied by the existence of a path in $\{f \le \ell\}$ from $x_i + B(1)$ to $x_i + B(r)$ for at least one $1 \le i \le c_d (R/L)^d$.  Then by \eqref{e:diam2} and the stationarity and $L$-range dependence of $f_L$, we have for any $\eps > 0$ 
\begin{align}
\nonumber
   & \P[ f \in \{D_{R, \ell} \ge r - 1\}  ] \ge  1 - \P[ f_L \notin \{ D_{R, \ell - \eps} \ge r-1 \} ]    - \P \big[   \sup_{x \in B(R)}  g_L(x)    \ge \eps  \big]   \\
  \nonumber  & \quad \ge 1  -  \big(  1 -  \P[ f_L \in \arm_{\ell - \eps}(1,r) ] \big)^{c_d (R/L)^d}     - \P \big[   \sup_{x \in B(R)}  g_L(x)    \ge \eps  \big]   \\
 \label{e:diam4}     & \quad \ge 1  - \exp   \big(  - c_d (R/L)^d  \P[ f \in \arm_{\ell -  2\eps}(1,r) ] - c_d'  (R/L)^d r^d P \big)  -   c'_d R^d P 
    \end{align}
    where $P =  \P \big[   \sup_{x \in B(1)} | g_L(x) |    \ge \eps  \big]   $ and $c'_d > 0$ is a constant, where in the last step we used the bound $(1-x)^t \le e^{-xt}$ for $x, t > 0$, \eqref{e:diam3}, and the union bound. We now set $L = (\log R)^{\max\{2,2/\alpha\}}$ so that, by Proposition \ref{p:ld2} we have, for every $\eps, k > 0$, as $R \to \infty$,
\[ R^k P = R^k \P \big[   \sup_{x \in B(1)} | g_L(x) |    \ge \eps  \big]  \le R^k \exp( -  c_3 (\log R)^{c_4} )  \to 0 \]
for some constants $c_3 > 0$ and $c_4 > 1$. In view of \eqref{e:diam4}, we conclude that as $R \to \infty$ eventually 
\begin{equation}
\label{e:diam5}
\P[ f \in \{D_{R, \ell} \ge r - 1\}  ]  \ge 1 - \exp \big( -  c_d (R/L)^d  \P[ f \in \arm_{\ell -  2\eps}(1,r) ] \big) (1 - o(1))  + o(1)  .
\end{equation}
where the $o(1)$ terms are independent of the choice of $r$.

We now split the analysis into cases. In the case $\alpha \in (1, d)$, we fix $\eps > 0$ and $c_5 > 0$ such that $\P[\arm_{\ell-2\eps}(1, R)] \ge e^{-c_5 R}$ for sufficiently large~$R$, which is possible by \eqref{e:gen1}, and set $r  = c_6 \log R$ for some constant $c_6 \in (0, d/c_5)$.  Then we have, as $R \to \infty$,
\[ (R/L)^d  \P[ f \in \arm_{\ell -  2\eps}(1,r) ]  \ge R^d (\log R)^{-2d}   e^{- c_5 c_6 \log R}  \to \infty . \]
 Hence by \eqref{e:diam5} we conclude that $\P[ f \in \{D_{R, \ell} \ge r - 1\}  ]  \to 1$ as $R \to \infty$, which gives the result. In the case $\alpha = 1$ we fix instead $\eps'  \in (0, 1)$, set $\eps > 0$ to be such that
 \begin{equation}
 \label{e:diam6}
  \frac{ (\ell^{\ast \ast}_c - \ell + 2\eps)^2 }{(\ell^{\ast \ast}_c - \ell)^2} ( 1 - \eps') < 1  , 
  \end{equation}
 and set 
 \[ r = \frac{4d}{(1-\gamma) (\ell^{\ast \ast}_c - \ell)^2}( 1 - \eps' ) (\log R) (\log \log R)^{1-\gamma}   .\]
 Then by \eqref{e:gen2} we have, as $R \to \infty$,
\begin{align*}
 (R/L)^d  \P[ f \in \arm_{\ell -  2\eps}(1,r) ]  & \ge  R^d (\log R)^{-2d}   \exp \Big(  \frac{ - (\ell^{\ast \ast}_c - \ell + 2\eps)^2 ( 1 + o(1) )}{ (4/(1-\gamma)) r^{-1} (\log r)^{1-\gamma} } \Big)   \\
 &  = R^d  (\log R)^{-2d}  e^{ - c_6 (\log R)} \to \infty 
 \end{align*}
 for some $c_6 \in (0, 1)$. Again by \eqref{e:diam5} we conclude that $\P[ f \in \{D_{R, \ell} \ge r - 1\}  ]  \to 1$ as $R \to \infty$, which gives the result. Finally in the case $\alpha \in (0, 1)$ we again fix $\eps'  \in (0, 1)$ and $\eps > 0$ satisfying \eqref{e:diam6}, and instead set 
\[ r =  \Big( \frac{2d}{c_\alpha (\ell^{\ast \ast}_c - \ell)^2} \Big)^{1/\alpha} ( 1 - \eps' ) (\log R)^{1/\alpha} (\log \log R)^{-\gamma/\alpha} . \] 
Using \eqref{e:gen3} we again have $ (R/L)^d  \P[ f \in \arm_{\ell -  2\eps}(1,r) ]   \to 1$ as $R \to \infty$, and so $\P[ f \in \{D_{R, \ell} \ge r - 1\}  ]  \to 1$ by \eqref{e:diam5}.
\end{proof}

\smallskip
\appendix

\section{White noise decompositions}
\label{a:wnd}

In this section we consider the white noise decompositions introduced in Section \ref{s:locglo}. We begin by stating conditions that guarantee smoothness and regularity. The second lemma is \cite[Lemma A.1]{m22}, and the proof of the first is analogous.

\begin{lemma}
\label{l:a1}
Let $q \in L^2(\R^d) \cap  C^3(\R^d)$ and let $W$ be the white noise on $\R^d$. Then $f = q \star W$ is an a.s.\ $C^2$-smooth stationary Gaussian fields, and for every multi-index $k$ with $|k| \le 2$,
\[ \partial^k f = \partial^k q \star W .\]
If moreover $q$ is non-zero, the spectral measure of $f$ has an open set in its support.
\end{lemma}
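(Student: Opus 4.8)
The plan is to split the statement into three claims and handle each with standard Gaussian white-noise machinery. First, the field $f = q \star W$ is well-defined and stationary because $q \in L^2(\R^d)$: the stochastic integral $\int q(x-y)\, dW(y)$ converges in $L^2$, and its covariance is $\E[f(x)f(x')] = \int q(x-y) q(x'-y)\, dy = (q \star \tilde q)(x-x')$ where $\tilde q(\cdot) = q(-\cdot)$, which depends only on $x-x'$. For the smoothness and the differentiation-under-the-integral identity, the plan is a dominated-convergence / Kolmogorov-continuity argument: for $|k| \le 2$, since $\partial^k q \in C^1(\R^d)$ (as $q \in C^3$) and, crucially, $\partial^k q \in L^2(\R^d)$ — this last point needs a short justification, see below — the candidate field $g_k := \partial^k q \star W$ is a well-defined stationary Gaussian field, and one checks that the difference quotients of $f$ along a coordinate direction converge in $L^2$ (hence a.s.\ along subsequences) to $g_{e_i}$; iterating gives all derivatives up to order two. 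To upgrade from ``$L^2$-differentiable'' to ``a.s.\ $C^2$'' one invokes the Kolmogorov continuity theorem applied to the Gaussian field of second derivatives: the increments $g_k(x) - g_k(x')$ are Gaussian with variance $\|\partial^k q(\cdot - x) - \partial^k q(\cdot - x')\|_{L^2}^2$, and since $\partial^k q \in C^3 \supseteq C^1$ this modulus is $O(|x-x'|)$ locally (by the mean value theorem applied inside the $L^2$ norm, using local boundedness of $\nabla \partial^k q$), giving Hölder sample paths of the top derivatives and hence $f \in C^2$ a.s. The reference already cites \cite[Lemma A.1]{m22} for the analogous statement, so I would simply say the proof is identical.

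For the third claim — if $q$ is non-zero then the spectral measure of $f$ contains an open set in its support — the plan is: the spectral measure of $f$ has density $\rho = |\hat q|^2$ (up to the normalisation convention of $\mathcal{F}$), where $\hat q = \mathcal{F}[q]$; indeed the covariance of $f$ is $q \star \tilde q$ whose Fourier transform is $|\hat q|^2$. Since $q \in L^2$ is non-zero, $\hat q \in L^2$ is non-zero, so $\rho = |\hat q|^2$ is a non-negative $L^1$ function that is not a.e.\ zero; therefore the set $\{\rho > 0\}$ has positive Lebesgue measure. It remains to produce an \emph{open} set inside the support. Because $q \in L^2 \cap C^3 \subseteq L^1 \cap L^2$? — no, $C^3 \cap L^2$ does not give $L^1$ in general; instead note that $q \in L^2$ with $\partial^k q \in L^2$ for $|k| \le 2$ means $\hat q$ and $|\lambda|^2 \hat q$ are both in $L^2$, so $\hat q (1 + |\lambda|^2) \in L^2$, whence $\hat q \in L^1$ (by Cauchy–Schwarz against $(1+|\lambda|^2)^{-1} \in L^2$ for $d \le 3$; for general $d$ one uses that $q \in C^3$ with all derivatives up to order $3$ in $L^2$, giving $\hat q(1+|\lambda|^2)^{3/2} \in L^2$ and hence $\hat q \in L^1$ as $d < 3 \cdot 2 = 6$ always covers $d \in \{2,3\}$, the relevant range). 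Then $\hat q = \mathcal{F}[q]$ is continuous (Riemann–Lebesgue), so $\rho = |\hat q|^2$ is continuous, and $\{\rho > 0\}$ is open and non-empty; any non-empty open subset of $\{\rho > 0\}$ lies in the support.

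The main obstacle I anticipate is the bookkeeping around the claim ``$\partial^k q \in L^2$ for $|k| \le 2$,'' which is \emph{not} stated as a hypothesis — only $q \in L^2 \cap C^3$ is given. In the companion result \cite[Lemma A.1]{m22} the hypotheses presumably include integrability of the derivatives, so the cleanest fix is to strengthen the statement here to ``$q \in C^3(\R^d)$ with $\partial^k q \in L^2(\R^d)$ for all $|k| \le 3$,'' matching Assumption \ref{a:gen2} and the way the lemma is actually invoked (in the proof of Proposition \ref{p:locglo}, $q_L$ has compact support so all its derivatives are automatically in $L^2$; and $q$ itself satisfies $\partial^k q \in L^2$, $|k| \le 3$, by hypothesis of Assumption \ref{a:gen2}). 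With that reading the argument above is routine, and the continuity of $\hat q$ needed for the support statement follows for free from $\partial^k q \in L^2$, $|k| \le 3$, by the Cauchy–Schwarz argument sketched. So concretely I would: (i) state (or read off) the correct hypotheses, (ii) establish well-definedness and the covariance formula, (iii) prove $L^2$-differentiability by difference quotients and promote to $C^2$ via Kolmogorov, citing \cite{m22} for the identical argument, and (iv) prove the spectral support claim via $\rho = |\hat q|^2$ continuous and non-zero.
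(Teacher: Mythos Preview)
Your plan is correct and is exactly what the paper does: it gives no argument at all, simply stating that the proof is analogous to \cite[Lemma A.1]{m22}, which is the standard white-noise/Kolmogorov argument you outline.

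Your observation about the hypotheses is well taken: as written, $q \in L^2 \cap C^3$ does not guarantee $\partial^k q \in L^2$, which is needed both for the differentiation-under-the-integral step and for the spectral-support claim. In every application in the paper (the proof of Proposition~\ref{p:locglo} under Assumption~\ref{a:gen2}) the stronger condition $\partial^k q \in L^2$ for $|k| \le 3$ is available, so your proposed reading is the intended one. One minor point: your Cauchy--Schwarz argument showing $\hat q \in L^1$ (hence continuous) from $(1+|\lambda|^2)^{3/2}\hat q \in L^2$ requires $(1+|\lambda|^2)^{-3/2} \in L^2(\R^d)$, i.e.\ $d \le 5$; this is harmless since the concrete examples in Remark~\ref{r:examples2} live in $d \in \{2,3\}$, but if you want the lemma for all $d$ you should either assume enough derivatives in $L^2$ or, when $q$ has compact support (as does $q_L$), invoke Paley--Wiener to get $\hat q$ real-analytic and hence $\{\hat q \neq 0\}$ open and non-empty.
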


\begin{lemma}
\label{l:a2}
Let $q(x,t) \in L^2(\R^d \times \R^+)$ be such that, for every $t > 0$, $q(\cdot,t) \in C^3(\R^d)$, and for every multi-index $\nu$ with $|\nu| \le 3$, $\partial^{\nu,0} q \in L^2(\R^d \times \R^+)$ and $\partial^{\nu,0} q$ is locally bounded on $\R^d \times \R^+$. Let $W$ be the white noise on $\R^d \times \R^+$. Then $f = q \star W$ is an a.s.\ $C^2$-smooth stationary Gaussian fields, and for every multi-index $k$ with $|k| \le 2$,
\[ \partial^k f = \partial^{k,0} q \star_1 W .\]
If moreover there exists a $t \ge 0$ such at $q(\cdot,t)$ is non-zero and $q(\cdot,t') \to q(\cdot,t) \in L^1(\R^d)$ as $t' \to t$, then the spectral measure of $f$ has an open set in its support.
\end{lemma}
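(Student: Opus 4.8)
The plan is to reduce everything to two classical facts about stationary Gaussian fields built from white noise: first, that the covariance of $q \star_1 W$ is the $L^2(\R^d \times \R^+)$ autocorrelation of $q$; and second, that differentiating under the stochastic integral is legitimate once the relevant kernels are themselves $L^2$. Concretely, for $f = q \star_1 W$ one has $\E[f(x) f(y)] = \int_{\R^d \times \R^+} q(x - z, t) q(y - z, t) \, dz \, dt$, which is a continuous function of $x - y$ since $q(\cdot, t)$ translation acts continuously on $L^2$; this already gives stationarity and continuity. For the smoothness statement I would proceed as in the proof of \cite[Lemma A.1]{m22}: using the hypothesis that $\partial^{\nu, 0} q \in L^2(\R^d \times \R^+)$ for every $|\nu| \le 3$, one checks that the candidate field $\partial^{k, 0} q \star_1 W$ is well-defined for $|k| \le 2$, that its covariance is the $(k$-th, $k$-th$)$ mixed derivative of the covariance of $f$, and that the $|k| = 3$ integrability gives enough regularity of these covariances (via a Kolmogorov--Chentsov / Garsia--Rodemich--Rumsey type estimate, or the explicit moment bounds in \cite{ns16, ns16}) to conclude that $f$ admits an a.s.\ $C^2$ modification whose partials are exactly $\partial^{k,0} q \star_1 W$. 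The local boundedness hypothesis on $\partial^{\nu,0} q$ is what lets one pass derivatives inside the $t$-integral pointwise before invoking the $L^2$ control globally.

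For the support statement, I would compute the spectral measure. Since $f = q \star_1 W$, writing $\hat{q}(\lambda, t) = \mathcal{F}_x[q(\cdot, t)](\lambda)$ (Fourier transform in the spatial variable only), the spectral measure of $f$ has density $\rho(\lambda) = \int_0^\infty |\hat{q}(\lambda, t)|^2 \, dt$ with respect to Lebesgue measure on $\R^d$; this follows from Plancherel applied to the covariance formula above. Under the extra hypothesis there is a $t_0$ with $q(\cdot, t_0)$ non-zero and $q(\cdot, t') \to q(\cdot, t_0)$ in $L^1(\R^d)$ as $t' \to t_0$, the map $t \mapsto \hat{q}(\cdot, t)$ is continuous into $C_0(\R^d)$ near $t_0$ (Riemann--Lebesgue plus $L^1$ continuity), and $\hat{q}(\cdot, t_0)$ is a non-zero continuous function, hence non-vanishing on some open ball $U$. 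By continuity in $t$ we can shrink to a time interval $(t_0 - \delta, t_0 + \delta)$ on which $|\hat{q}(\lambda, t)|$ stays bounded below by a positive constant for $\lambda \in U'$, a smaller open ball; then $\rho(\lambda) \ge \int_{t_0 - \delta}^{t_0 + \delta} |\hat{q}(\lambda, t)|^2 \, dt > 0$ for all $\lambda \in U'$, so $U'$ lies in the support of the spectral measure.

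I expect the main obstacle to be the rigorous justification of differentiation under the white-noise integral in the mixed spatial/auxiliary-variable setting --- i.e.\ showing that the $|k| \le 3$ integrability hypotheses genuinely upgrade $f$ to an a.s.\ $C^2$ field with the stated partials, rather than merely producing a Gaussian field with $C^2$ covariance. This is exactly the content of \cite[Lemma A.1]{m22} (modulo the presence of the extra $\R^+$ coordinate, over which one simply integrates and which plays no role in the regularity argument beyond the local boundedness assumption), so the proof should be a matter of transcribing that argument with the obvious notational changes; everything else (covariance formula, stationarity, spectral density, the support argument) is routine Fourier analysis.
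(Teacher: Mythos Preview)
Your proposal is correct and matches the paper's approach: the paper does not give an independent proof of this lemma but simply states that it \emph{is} \cite[Lemma A.1]{m22}, and your outline (covariance formula, Kolmogorov-type regularity from the $|\nu|\le 3$ integrability, spectral density via Plancherel in the spatial variable, and the $L^1$-continuity argument for the support) is precisely the content of that reference. The only thing to add is that you have correctly anticipated the one nontrivial point, namely the passage from $C^2$ covariance to an a.s.\ $C^2$ modification, and correctly located it in the cited lemma.
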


Next we consider multi-scale decompositions of isotropic fields whose covariance is given by $K(r) = \mathcal{L}[v](r^2)$, where $v$ is a non-negative function and $\mathcal{L}[\cdot]$ is the Laplace transform. In particular we justify the examples in Remark \ref{r:examples1}. First recall the classical Tauberian theorem for the Laplace transform:

\begin{proposition}[Tauberian theorem for the Laplace transform]
\label{p:taulaplace}
Let $\alpha \ge 0$ and let $L$ be a slowly varying function. Then
\begin{equation}
\label{e:tl1}
 \mathcal{L}[v](r) \sim  r^{-\alpha/2} L(r)  \quad \text{as } r \to \infty 
 \end{equation}
 if and only if
\begin{equation}
\label{e:tl2}
 \int_0^s v(s') ds' \sim \Gamma(1 + \alpha/2)^{-1} s^{\alpha/2}  L(1/s) \quad \text{as } s \to 0 .
 \end{equation}
Moreover if (i) $\alpha > 0$, or (ii) $\alpha = 0$ and $L(r) \sim (\log r)^{-\gamma}$, $\gamma > 0$, then \eqref{e:tl1}--\eqref{e:tl2} are implied by
\[ v(s) \sim  \begin{cases}
  \Gamma(\alpha/2)^{-1}  s^{\alpha/2 - 1} L(1/s)& \alpha > 0 , \\
 \gamma s^{-1} (\log (1/s))^{-\gamma - 1}  & \alpha = 0 .
 \end{cases}  \]
\end{proposition}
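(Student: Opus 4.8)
The plan is to recognise the statement as a packaging of two classical facts about regularly varying functions — Karamata's Tauberian theorem, which yields the equivalence \eqref{e:tl1}$\Leftrightarrow$\eqref{e:tl2}, and Karamata's integration theorem, which yields the ``moreover'' clause — so that the proof reduces to citing \cite{bgt87} and checking constants. First I would set $U(s)=\int_0^s v(s')\,ds'$; this is finite for small $s$, non-decreasing and continuous since $v\ge 0$, with $U(0)=0$, and $\mathcal{L}[v](r)=\int_{[0,\infty)}e^{-rs}\,dU(s)$ (well-defined for large $r$, as implicit in the statement) is the Laplace--Stieltjes transform of $U$. Writing $\rho=\alpha/2\ge 0$, the claimed equivalence is precisely Karamata's Tauberian theorem in the form relating the behaviour of the transform at $+\infty$ to that of $U$ at $0+$ (see \cite[\S1.7, Theorem 1.7.1 and its analogue at the origin]{bgt87}): $U(s)\sim s^{\rho}L(1/s)/\Gamma(1+\rho)$ as $s\to 0+$ if and only if $\mathcal{L}[v](r)\sim r^{-\rho}L(r)$ as $r\to\infty$. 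Here the implication \eqref{e:tl1}$\Rightarrow$\eqref{e:tl2} is the genuinely Tauberian one, and is where monotonicity of $U$ (equivalently $v\ge 0$) enters; the converse \eqref{e:tl2}$\Rightarrow$\eqref{e:tl1} is Abelian and elementary (it may also be obtained by a Watson's-lemma estimate after integration by parts). This proves the first assertion.

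For the ``moreover'' clause it suffices, by the Abelian direction just recalled, to deduce \eqref{e:tl2} from the assumed asymptotics of $v$. When $\alpha>0$, the function $v$ is regularly varying at the origin with index $\alpha/2-1>-1$, so Karamata's theorem on integrals of regularly varying functions (\cite[Proposition 1.5.8]{bgt87}, in its form at $0$) gives
\[ \int_0^s v(s')\,ds'\ \sim\ \frac{\Gamma(\alpha/2)^{-1}}{\alpha/2}\,s^{\alpha/2}L(1/s)\ =\ \Gamma(1+\alpha/2)^{-1}\,s^{\alpha/2}L(1/s), \]
using $(\alpha/2)\Gamma(\alpha/2)=\Gamma(1+\alpha/2)$; this is \eqref{e:tl2}. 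When $\alpha=0$ and $L(r)\sim(\log r)^{-\gamma}$ with $\gamma>0$, the density $v$ is regularly varying at $0$ with index $-1$, the boundary case, and I would argue directly: given $\eps>0$, pick $s_0>0$ with $(\gamma-\eps)\,s^{-1}(\log(1/s))^{-\gamma-1}\le v(s)\le(\gamma+\eps)\,s^{-1}(\log(1/s))^{-\gamma-1}$ on $(0,s_0)$, and combine this with the exact identity (substitute $u=\log(1/s')$)
\[ \int_0^s s'^{-1}(\log(1/s'))^{-\gamma-1}\,ds'=\int_{\log(1/s)}^{\infty}u^{-\gamma-1}\,du=\frac{1}{\gamma}(\log(1/s))^{-\gamma}, \]
valid since $\gamma>0$, to obtain $\int_0^s v(s')\,ds'\sim(\log(1/s))^{-\gamma}\sim L(1/s)=\Gamma(1)^{-1}s^{0}L(1/s)$, again \eqref{e:tl2}. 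In both cases \eqref{e:tl1} then follows from the Abelian direction.

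The only slightly delicate point is the $\alpha=0$ regime, which sits at the edge of the regular-variation machinery: one must check by hand that an index-$(-1)$ density has primitive asymptotic to the slowly varying function $(\log(1/s))^{-\gamma}$ with leading constant exactly $1$, and that the error term in $v(s)=\gamma s^{-1}(\log(1/s))^{-\gamma-1}(1+o(1))$ does not spoil this — which is exactly what the $\eps$-sandwich together with the explicit primitive above handle. Everything else is a direct appeal to \cite{bgt87}.
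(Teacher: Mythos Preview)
Your proposal is correct and follows essentially the same route as the paper: the paper cites \cite[Theorem 3, p.\ 445]{fel71} for the equivalence \eqref{e:tl1}$\Leftrightarrow$\eqref{e:tl2} (where you cite the equivalent formulation in \cite{bgt87}), and for the ``moreover'' clause it invokes Propositions~\ref{p:kit} and~\ref{p:kitlog}, which are exactly the Karamata integral theorem and the explicit logarithmic primitive that you use. Your treatment of the $\alpha=0$ boundary case via the exact substitution $u=\log(1/s')$ is in fact the content of Proposition~\ref{p:kitlog} after a change of variables, so there is no substantive difference.
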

\begin{proof}
The first statement is \cite[Theorem 3, p.\ 445]{fel71}, and the second follows by combining with Propositions \ref{p:kit} and \ref{p:kitlog}.
\end{proof}

\begin{proposition}
\label{p:smrep}
Let $f$ be a continuous isotropic Gaussian field on $\R^d$ with covariance $K$, and suppose there exists a non-negative function $v \in C^0(\R^+)$, with $v(s) s^{d/2+3}$ bounded, such that $K(r) = \mathcal{L}[v](r^2)$. Define $ q(x,t) =  \sqrt{w(t)} Q(x/t)$, where
\begin{equation}
\label{e:qlaplace}
 w(t) = c_{d,\alpha} t^{-d-3}  v(1/(4t^2) ) \ , \quad Q(x) = e^{-|x|^2/2}  \ ,  \quad \text{and} \quad  c_{d,\alpha}=   2^{-1} \pi^{-d/2} ,
 \end{equation}
 and let $W$ be the white noise on $\R^d \times \R^+$. Then $q$ satisfies the conditions in Lemma \ref{l:a2}, and
\[ f \stackrel{d}{=}  q \star_1 W . \]
 Moreover if $v$ satisfies \eqref{a:v} then $K$, suitably renormalised, satisfies \eqref{a:k}.
\end{proposition}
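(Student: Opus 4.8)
The representation is proved by matching covariances. Once $q$ is shown to satisfy the hypotheses of Lemma~\ref{l:a2}, the field $q\star_1 W$ is an a.s.\ $C^2$-smooth centred stationary Gaussian field whose spectral measure contains an open set in its support; since $f$ is also centred Gaussian, it then suffices to verify that $q\star_1 W$ and $f$ have the same covariance kernel, and the equality in law $f\stackrel{d}{=}q\star_1 W$ follows immediately. So the two substantive tasks are (i) the computation of $\E[(q\star_1 W)(0)(q\star_1 W)(x)]$, and (ii) the bookkeeping needed to invoke Lemma~\ref{l:a2}. The ``moreover'' clause will be a direct application of the Tauberian theorem (Proposition~\ref{p:taulaplace}).

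\textbf{The covariance identity.} I would first carry out the inner Gaussian convolution over $\R^d$: with $q(x,t)=\sqrt{w(t)}\,Q(x/t)$ and $Q(x)=e^{-|x|^2/2}$, completing the square gives, for each fixed $t>0$,
\[
 \int_{\R^d} Q\big((x-y)/t\big)\,Q\big(-y/t\big)\,dy \;=\; \pi^{d/2}\,t^d\, e^{-|x|^2/(4t^2)} ,
\]
whence
\[
 \E\big[(q\star_1 W)(0)\,(q\star_1 W)(x)\big] \;=\; \int_0^\infty w(t)\,\pi^{d/2}\,t^d\, e^{-|x|^2/(4t^2)}\,dt .
\]
Applying the change of variables $s=1/(4t^2)$ (so $t=\tfrac12 s^{-1/2}$, $dt=-\tfrac14 s^{-3/2}\,ds$) together with the explicit weight $w$ from \eqref{e:qlaplace} and the choice $c_{d,\alpha}=\tfrac12\pi^{-d/2}$, the prefactor $w(t)\,\pi^{d/2}\,t^d\,dt$ collapses exactly to $v(s)\,ds$ while $e^{-|x|^2/(4t^2)}$ becomes $e^{-|x|^2 s}$, leaving $\int_0^\infty e^{-|x|^2 s}\,v(s)\,ds=\mathcal{L}[v](|x|^2)=K(|x|)$, as required. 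The same substitution applied with $x=0$ shows $\|q\|_{L^2(\R^d\times\R^+)}^2=\int_0^\infty v(s)\,ds=K(0)<\infty$ (the finiteness following since $K$ is a covariance and, by monotone convergence, $\mathcal{L}[v](0)=\lim_{r\downarrow 0}K(r)=K(0)$), so $q\star_1 W$ is well defined.

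\textbf{Verifying the hypotheses of Lemma~\ref{l:a2}.} Smoothness of $q(\cdot,t)$ in $x$ is immediate, and $\partial^{\nu,0}q(x,t)=\sqrt{w(t)}\,t^{-|\nu|}(\partial^\nu Q)(x/t)$; integrating $|\partial^{\nu,0}q(\cdot,t)|^2$ over $\R^d$ gives $\|\partial^\nu Q\|_{L^2}^2\,w(t)\,t^{d-2|\nu|}$, and under the same substitution $s=1/(4t^2)$ the $t$-integral of this is a constant multiple of $\int_0^\infty s^{|\nu|}\,v(s)\,ds$. This is finite for every $|\nu|\le 3$: near $s=0$ it is dominated by $\int_0 v(s)\,ds\le K(0)$, and at infinity it is controlled by the assumed bound $v(s)=O(s^{-d/2-3})$ (this is precisely where the exponent $d/2+3$ enters). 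Local boundedness of $\partial^{\nu,0}q$ on $\R^d\times\R^+$ holds because $Q$ and all its derivatives are bounded and $w$ is continuous on $(0,\infty)$. For the non-degeneracy clause, $v\ge 0$ and $K=\mathcal{L}[v]\not\equiv 0$ force $v(s_0)>0$ for some $s_0$, hence $w(t_0)>0$ with $t_0=(2\sqrt{s_0})^{-1}$; then $q(\cdot,t_0)=\sqrt{w(t_0)}\,Q(\cdot/t_0)\in L^1(\R^d)$ is non-zero, and $q(\cdot,t')\to q(\cdot,t_0)$ in $L^1$ as $t'\to t_0$ by continuity of $w$ and dominated convergence. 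This finishes the proof that $f\stackrel{d}{=}q\star_1 W$.

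\textbf{The asymptotics of $K$, and the main obstacle.} For the last sentence of the proposition, suppose $v$ satisfies \eqref{a:v}. In the case $\alpha\in(0,d)$, set $\tilde L(\cdot):=L(\sqrt{\cdot})$, which is again slowly varying; then $v(s)\sim c\,s^{\alpha/2-1}\tilde L(1/s)$ is exactly of the form to which Proposition~\ref{p:taulaplace} applies, yielding $\mathcal{L}[v](r)\sim c\,\Gamma(\alpha/2)\,r^{-\alpha/2}\tilde L(r)$ and hence $K(r)=\mathcal{L}[v](r^2)\sim c\,\Gamma(\alpha/2)\,r^{-\alpha}L(r)$, so that after dividing by the constant $c\,\Gamma(\alpha/2)$ one obtains \eqref{a:k}. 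The case $\alpha=0$ is identical using the logarithmic branch of Proposition~\ref{p:taulaplace}, after renormalising by a constant and absorbing the harmless factor $(\log r^2)^{-\gamma}=2^{-\gamma}(\log r)^{-\gamma}(1+o(1))$. I expect the point requiring the most care to be not the covariance identity (a routine Gaussian computation once the substitution $s=1/(4t^2)$ is in place) but the verification of the $L^2$ and local-boundedness conditions of Lemma~\ref{l:a2}: one must control the weight $w(t)$, which blows up as $t\to\infty$ when $v$ is singular at the origin, and confirm that the order-$3$ spatial derivatives remain square-integrable over $\R^d\times\R^+$, which is where the hypothesis on $v$ is used in its full strength.
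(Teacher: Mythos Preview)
Your proposal is correct and follows essentially the same approach as the paper: the covariance is computed via the Gaussian convolution identity $(e^{-|\cdot|^2/(2t^2)}\star e^{-|\cdot|^2/(2t^2)})(x)=\pi^{d/2}t^d e^{-|x|^2/(4t^2)}$ followed by the substitution $s=1/(4t^2)$, and the asymptotics of $K$ are deduced from Proposition~\ref{p:taulaplace}. In fact you are more thorough than the paper, which does not spell out the verification of the hypotheses of Lemma~\ref{l:a2}; your reduction of $\|\partial^{\nu,0}q\|_{L^2}^2$ to a multiple of $\int_0^\infty s^{|\nu|}v(s)\,ds$ is exactly the right bookkeeping (note only that for $d=2$, $|\nu|=3$ the decay $v(s)=O(s^{-d/2-3})$ gives the borderline $s^{-1}$, so in that case one implicitly relies on slightly faster decay, as holds e.g.\ for the Cauchy examples).
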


\begin{proof}
For the first statement it suffices to check that 
\begin{align*}
(q \star q)(r) & = c_{d,\alpha} \int_0^\infty  t^{-d-3}  v(1/(4t^2))   (e^{-|\cdot|^2/(2t^2)} \star e^{-|\cdot|^2/(2t^2)} )(r) \, dt \\
& =  \int_0^\infty \frac{1}{2 t^3}  v(1/(4t^2)) e^{-r^2/(4t^2)} \,dt \\
& = \int_0^\infty e^{-sr^2} v(s) \, ds   = \mathcal{L}[v](r^2) = K(r) ,
\end{align*}
where the second equality used that 
\[ (e^{-|\cdot|^2/(2t^2)} \star e^{-|\cdot|^2/(2t^2)} )(x) =  \pi^{d/2} t^d  e^{-x^2/(4t^2)} , \]
and the third equality was the change of variable $t \mapsto 1/(2\sqrt{s})$. The second statement is a consequence of Proposition \ref{p:taulaplace}.
\end{proof}

\smallskip
\section{Tauberian theorems for the isotropic Fourier transform}
\label{a:tau}

Consider a continuous isotropic Gaussian field $f$ on $\R^d$ with covariance~$K$ and whose spectral measure has a density $\rho$. Recall the function $q = \mathcal{F}[\sqrt{\rho}]$ and assume that it is continuous. The aim of this section is to prove the following:

\begin{proposition}
\label{p:tau}
Let $d\ge2$, $\alpha \in [0, d)$, $\gamma > 0$, and $L$ be a slowly varying function. Consider the following statements:
\begin{enumerate}
\item As $x \to \infty$,
  \begin{equation}
  \label{e:tauk}
  K(x) \sim  \begin{cases}  x^{-\alpha}L(x)  & \text{if } \alpha \in (0, d) , \\  (\log x)^{-\gamma} &  \text{if } \alpha = 0 . \end{cases}   
  \end{equation}
\item As $x \to \infty$,
\begin{equation}
\label{e:tauq}
  q(x) \sim  \begin{cases} c_{d,\alpha}'  x^{-(d+\alpha)/2} \sqrt{L(x)}  & \text{if } \alpha \in (0, d) , \\ c_{d,\alpha}' x^{-d/2}(\log x)^{-(\gamma+1)/2} &  \text{if } \alpha = 0 ,  \end{cases} 
  \end{equation}
  where
    \begin{equation}
  \label{e:c'}
   c_{d,\alpha}' =  \begin{cases}  \frac{ \pi^{d/4}  2^{-\alpha} \sqrt{ \Gamma( (d-\alpha)/2 )} \Gamma((d-\alpha)/4)     }{ \sqrt{\Gamma(\alpha/2)} \Gamma( (d+\alpha)/4) }   & \text{if } \alpha \in (0,d) , \\ 
\frac{ \sqrt{\gamma}}{\sqrt{2\pi}}    & \text{if }  \alpha = 0 ,
  \end{cases}  
  \end{equation}
    and $\Gamma$ denotes the Gamma function.
\item As $\lambda \to 0$, 
\begin{equation}
\label{e:taurho}
    \rho(\lambda) \sim \begin{cases} 
 c_{d,\alpha}''  \lambda^{\alpha - d } L(1/\lambda)  & \text{if } \alpha \in (0,d) , \\ 
   c_{d,\alpha}'' \lambda^{- d } (\log (1/\lambda) )^{-(\gamma+1)}  & \text{if } \alpha = 0 ,
  \end{cases}  
  \end{equation}
  where
   \begin{equation}
\label{e:c''}
 c_{d,\alpha}'' =  \begin{cases} 
  \frac{ \pi^{d/2} 2^{d-\alpha} \Gamma((d - \alpha)/2)}{\Gamma(\alpha/2)} & \text{if } \alpha \in (0,d) , \\ 
 2 \pi \gamma    & \text{if }  \alpha = 0 .
  \end{cases} 
  \end{equation}
   
\end{enumerate}
Then:
\begin{itemize}
\item If $\alpha > (d-3)/2$ and if $K(x) x^{(d-3)/2}$ and $q(x) x^{(d-3)/2}$ are eventually decreasing, then $(3)$ implies $(1)$ and $(2)$. 
\item If $\alpha > (d-3)/2$, and if $K(x) x^{(d-3)/2}$ and $q(x) x^{(d-1)/2}$ are eventually decreasing, then $(2)$ and $(3)$ are equivalent, and both imply $(1)$.
\item If $\alpha > (d-1)/2$, and if $K(x) x^{(d-1)/2}$ and $q(x) x^{(d-1)/2}$ are eventually decreasing, then $(1)$--$(3)$ are equivalent.
\end{itemize}
\end{proposition}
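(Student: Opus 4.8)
The plan is to read each of $(1)$--$(3)$ as a regular-variation statement --- for $K$ and $q$ at infinity, and for $\rho$ at the origin (and, when $\alpha=0$, a slow-variation statement) --- and to pass between them by means of a single Abelian--Tauberian theorem for the isotropic Fourier transform, i.e.\ for the Hankel transform of order $\nu=(d-2)/2$. The named constants will then be the Fourier-transform normalisations of the homogeneous ``models'': $\rho$ and $\sqrt\rho$ are modelled near $0$ on $|\cdot|^{-(d-\alpha)}$ and $|\cdot|^{-(d-\alpha)/2}$, whose isotropic Fourier transforms are multiples of $|\cdot|^{-\alpha}$ and $|\cdot|^{-(d+\alpha)/2}$, the models for $K$ and $q$ at infinity.

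\textbf{Step 1: reduction to one dimension.} Since $K=\mathcal{F}[\rho]$, $q=\mathcal{F}[\sqrt\rho]$, and (by $\mathcal{F}^2=\mathrm{id}$ on even functions) also $\rho=\mathcal{F}[K]$, $\sqrt\rho=\mathcal{F}[q]$, every map relating the six radial profiles is a Hankel transform of order $\nu$. Inserting Poisson's subordination formula
\[
J_{\nu}(z)=\frac{(z/2)^{\nu}}{\sqrt{\pi}\,\Gamma(\nu+\tfrac{1}{2})}\int_{-1}^{1}(1-t^{2})^{\nu-\tfrac{1}{2}}\cos(zt)\,dt\qquad(d\ge2),
\]
this Hankel transform becomes a $(1-t^{2})^{(d-3)/2}$-weighted superposition of one-dimensional cosine transforms; by the large-argument asymptotics $J_{\nu}(z)\sim(2/\pi z)^{1/2}\cos(z-\tfrac{\nu\pi}{2}-\tfrac{\pi}{4})$ the object effectively controlled is the cosine transform of the power-corrected profile $r\mapsto g_{0}(r)r^{(d-1)/2}$. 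For the polynomial decay rates at hand this integral is only conditionally convergent, and this is exactly where the monotonicity hypotheses are used: eventual monotonicity of $g_{0}(r)r^{(d-1)/2}$ makes it an improper integral of a monotone function against an oscillation (second mean value theorem), whereas in the implications that prescribe the singular model on the spectral side and merely need an $o$-control of the remainder after subtracting it, an integration by parts trades one power and monotonicity of $g_{0}(r)r^{(d-3)/2}$ suffices. The thresholds $\alpha>(d-3)/2$ and $\alpha>(d-1)/2$ are precisely what force $K(x)x^{(d-3)/2}=x^{(d-3)/2-\alpha}L(x)$, resp.\ $K(x)x^{(d-1)/2}$, to tend monotonically to $0$ (and similarly for $q$).

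\textbf{Step 2: one-dimensional input and the constants.} To the resulting one-dimensional cosine transforms one applies the classical Karamata Tauberian theorems for Fourier transforms of eventually monotone regularly varying functions, together with their de Haan versions for slowly varying functions when $\alpha=0$ (see \cite{bgt87}); alternatively one may quote Abelian--Tauberian theorems for the Hankel transform directly. For $\alpha\in(0,d)$ the model input is the homogeneous function $|\cdot|^{-(d-\alpha)}$ (for $\rho$) or $|\cdot|^{-(d-\alpha)/2}$ (for $\sqrt\rho$), whose isotropic Fourier transform is again a multiple of a Riesz kernel; assembling the $\Gamma$-factors from the Poisson weight of Step 1, from the Karamata theorem, and from the $2\pi$ in the Fourier convention produces $c''_{d,\alpha}$, and then $c'_{d,\alpha}$ by the half-power bookkeeping ($\sqrt{c''_{d,\alpha}}$ times the Riesz normalisation at exponent $(d-\alpha)/2$) --- a computation of $\Gamma$-values that we do not reproduce here. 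In the case $\alpha=0$ the model $|\cdot|^{-d}$ sits at the non-integrable borderline, so its Fourier transform is not homogeneous: the extra logarithm in $(1)$--$(3)$ and the factor $\gamma$ in $c'_{d,0},c''_{d,0}$ arise from Karamata's integral theorem for slowly varying functions applied to the accumulation $\int_{1/x}^{1}\lambda^{-1}(\log(1/\lambda))^{-(\gamma+1)}\,d\lambda\sim\gamma^{-1}(\log x)^{-\gamma}$.

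\textbf{Step 3: assembling the tiers, and the main obstacle.} The implications $(3)\Rightarrow(1)$ and $(3)\Rightarrow(2)$ only require the weaker monotonicity of $K(x)x^{(d-3)/2}$ and $q(x)x^{(d-3)/2}$ (the $o$-type estimate on the remainder), which gives the first bullet. Strengthening the $q$-hypothesis to $q(x)x^{(d-1)/2}$ eventually decreasing unlocks the reverse implication $(2)\Rightarrow(3)$ with the sharp constant, whence $(2)\Leftrightarrow(3)$ (and in particular $(2)\Rightarrow(1)$), the second bullet. Adding that $K(x)x^{(d-1)/2}$ is eventually decreasing yields $(1)\Rightarrow(3)$, hence $(1)\Leftrightarrow(3)$ and therefore $(1)\Leftrightarrow(2)\Leftrightarrow(3)$, the third bullet --- consistent with $K=q\star q$, which also yields $(2)\Rightarrow(1)$ directly via a convolution-of-regularly-varying-functions estimate. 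I expect the main obstacle to be Step 1: making the Poisson reduction rigorous when the profiles decay only polynomially --- justifying the conditional convergence of the oscillatory $r$-integral, the interchange with the $t$-integration, and the control of the tail after subtracting the homogeneous model --- which is exactly where the monotonicity assumptions and the thresholds $(d-3)/2$, $(d-1)/2$ genuinely enter; the remaining care is the $\alpha=0$ boundary case, handled through the de Haan class and the logarithmic accumulation above.
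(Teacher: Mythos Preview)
Your overall strategy is right --- the implications are all instances of a single Abelian--Tauberian theorem for the Hankel transform, and the monotonicity hypotheses are precisely what make the conditionally convergent integrals well-defined --- but the execution differs from the paper's in two respects.

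First, the paper does not reduce to the one-dimensional cosine transform via Poisson's formula. It quotes directly a Tauberian theorem for the Hankel transform of order $\nu$ (the result of Bingham--Inoue), which gives the equivalence $f(r)\sim r^{-\beta}L(r)\Leftrightarrow F(s)\sim \tilde c(\beta,\nu) s^{\beta-2}L(1/s)$ under the hypothesis that $r^{1/2}f(r)$ is eventually decreasing and $\beta\in(1/2,2+\nu)$. Your Poisson-subordination route is morally equivalent but adds a layer of analysis (the $t$-integral, interchange of limits) that the direct Hankel statement absorbs. You even mention this direct route as an alternative; it is what the paper actually does.

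Second, and more importantly, your account of why $(3)\Rightarrow(1),(2)$ needs only the weaker monotonicity of $K(x)x^{(d-3)/2}$ and $q(x)x^{(d-3)/2}$ is not the paper's mechanism. You invoke an integration by parts that trades one power; the paper instead replaces the density $\rho$ by the \emph{cumulative} spectral function $G(\lambda)=\int_{|s|<\lambda}\rho$, which has a Hankel representation of order $d/2$ (rather than $(d-2)/2$) in terms of $K$ and $q$. The hypothesis of the Hankel Tauberian theorem then becomes $K(x)x^{(d-4)/2}\cdot x^{1/2}=K(x)x^{(d-3)/2}$ eventually decreasing, and the $\beta$-range widens to $(1/2,2+d/2)$, which is exactly what accommodates $\alpha>(d-3)/2$. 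One reads off the asymptotics of $G_K$ and $G_q$ from $(3)$ via Karamata, then runs the Hankel Tauberian theorem backwards to obtain $(1)$ and $(2)$. Your integration-by-parts intuition points at a related phenomenon, but as written it is not a proof of the first bullet; the $G$-versus-$\rho$ device is the concrete step you are missing.

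The constants fall out of the Hankel Tauberian constant $\tilde c(\beta,\nu)=2^{1-\beta}\Gamma(1-\beta/2+\nu/2)/\Gamma(\beta/2+\nu/2)$ together with the identity $\tilde c(\beta+1,\nu+1)(\beta+\nu)=\tilde c(\beta,\nu)$; the paper checks them explicitly, and the $\alpha=0$ case (which by the threshold forces $d=2$) is handled within the same framework with Proposition~\ref{p:kitlog} supplying the logarithmic Karamata step.
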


As a corollary we justify the validity of the examples in Remark \ref{r:examples2}:
\begin{corollary}
Suppose $d \in \{2,3\}$ and $q: \R^d \to \R$ is a continuous isotropic unimodal function satisfying \eqref{e:tauq} for $\alpha \in [0, d) \cap ((d-3)/2, d)$, $\gamma > 0$, and a slowly varying function $L$. Suppose in addition that $q(x) x^{(d-1)/2}$ is eventually decreasing. Then the continuous isotropic Gaussian field with covariance $K(x) = (q \star q)(x)$ satisfies each of the statements $(1)$--$(3)$ in Proposition \ref{p:tau}.  
\end{corollary}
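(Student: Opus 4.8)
The plan is to realise the field explicitly and then read off each of $(1)$--$(3)$ from the Tauberian machinery already in hand. First I would represent $f \stackrel{d}{=} q \star W$, with $W$ white noise on $\R^d$; this is legitimate because $q \in L^2(\R^d)$ (from the tail \eqref{e:tauq}, using $\gamma > 0$ in the borderline case $\alpha = 0$), and then $K = q \star q$ is a genuine continuous covariance with spectral density $\rho = (\mathcal F q)^2 \ge 0$ and $\int \rho = K(0) = \|q\|_{L^2}^2 < \infty$. The one point to keep in mind throughout is that statement $(2)$ of Proposition~\ref{p:tau} refers to $\mathcal F[\sqrt\rho] = \mathcal F[|\mathcal F q|]$ rather than to $q$ itself, so some care is needed to relate the two; the key will be that $\mathcal F q$ is strictly positive near the origin (so that $\sqrt\rho$ and $\mathcal F q$ agree there), which in fact drops out of the computation in the next step.

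Next I would establish $(3)$, i.e.\ \eqref{e:taurho}. Since $\rho = (\mathcal F q)^2$, it suffices to show $\mathcal F q(\lambda) \sim \sqrt{c_{d,\alpha}''}\,|\lambda|^{-(d-\alpha)/2}\sqrt{L(1/|\lambda|)}$ as $\lambda \to 0$. Writing the isotropic Fourier transform as the one-dimensional integral $\mathcal F q(\lambda) = c_d \int_0^\infty q(r)\,r^{d-1}\,\Omega_d(|\lambda| r)\,dr$, with $\Omega_d$ the normalised Bessel kernel ($\Omega_d(0)=1$), this is exactly the computation carried out in the proof of Proposition~\ref{p:tau}: the hypothesis that $r \mapsto q(r)\,r^{(d-1)/2}$ is eventually decreasing is what controls the oscillatory tail of $\Omega_d$ and makes the one-dimensional Tauberian theorems applicable (Proposition~\ref{p:kit} when $\alpha \in (0,d)$, Proposition~\ref{p:kitlog} when $\alpha = 0$). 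The resulting constant is positive, which gives the claim above (and the positivity of $\mathcal F q$ near $0$ promised in the first paragraph), and squaring yields \eqref{e:taurho}; here the restriction $\alpha > (d-3)/2$ and $d \in \{2,3\}$ enters in keeping the reduced integrals in the range covered by the Tauberian lemmas.

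For $(1)$ I would work directly with the convolution $K = q \star q$, bypassing the bullets of Proposition~\ref{p:tau} altogether. When $\alpha \in (0,d)$ the model case is the Riesz composition $|\cdot|^{-\beta} \star |\cdot|^{-\beta} = C(d,\beta)\,|\cdot|^{-(2\beta - d)}$ with $\beta = (d+\alpha)/2 \in (d/2,d)$, so that the integral converges and $2\beta - d = \alpha$; the genuine $q$ is handled by Potter's bounds and dominated convergence, pulling the slowly varying factor through, and the explicit constants \eqref{e:c'}--\eqref{e:c''} are calibrated exactly so that the resulting constant is $1$, as required by \eqref{e:tauk}, giving $K(x) \sim x^{-\alpha}L(x)$. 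When $\alpha = 0$ one has $2\beta = d$ and the convolution integral is only logarithmically divergent, but the $(\log)^{-(\gamma+1)/2}$ factors render it finite; reducing to one-dimensional integrals and invoking Proposition~\ref{p:kitlog} gives $K(x) \sim (\log x)^{-\gamma}$. Finally, $(2)$ follows by combining $(3)$ with the first paragraph: $\sqrt\rho = |\mathcal F q|$ coincides with $\mathcal F q$ in a neighbourhood of the origin, where it has a regularly varying singularity of order $|\lambda|^{-(d-\alpha)/2}$, and the asymptotics of $\mathcal F[\sqrt\rho]$ at infinity are governed by this singularity, hence match those of $\mathcal F[\mathcal F q] = q$, i.e.\ \eqref{e:tauq}. (Alternatively one may verify the monotonicity side-conditions and quote Proposition~\ref{p:tau}, noting that $q \star q$ is radially non-increasing as a convolution of radially non-increasing functions.)

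The hard part will be the bookkeeping in the proof of $(2)$: one knows $\mathcal F q > 0$ near the origin, but away from it $\mathcal F q$ may oscillate, and one must argue that the negative part $(\mathcal F q)^-$ --- supported away from $0$ --- has a Fourier transform decaying strictly faster than $x^{-(d+\alpha)/2}$, equivalently that the only singularity of $|\mathcal F q|$ relevant for the large-$x$ asymptotics sits at the origin; establishing enough regularity of $\mathcal F q$ on $\R^d \setminus \{0\}$ (from the eventual monotonicity of $q(r)\,r^{(d-1)/2}$) to make this rigorous is the main technical obstacle, together with keeping track of the Gamma-function constants. Everything else is a routine matter of tracking slowly varying functions and appealing to the one-dimensional Tauberian lemmas of the appendix.
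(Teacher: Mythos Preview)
Your approach is correct but takes a substantially longer route than the paper's. The paper dispatches the corollary in three lines by invoking Proposition~\ref{p:tau} directly: since $q$ is isotropic and unimodal, so is $K = q \star q$ (a convolution of isotropic unimodal functions is isotropic unimodal), hence $K$ is radially non-increasing; because $d \in \{2,3\}$ the exponent $(d-3)/2 \le 0$, so $K(x)\,x^{(d-3)/2}$ is automatically eventually decreasing as well. Together with the standing hypothesis that $q(x)\,x^{(d-1)/2}$ is eventually decreasing, this places us in the second bullet of Proposition~\ref{p:tau}, and since statement~$(2)$ is the hypothesis \eqref{e:tauq}, statements $(1)$ and $(3)$ follow. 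You do mention this route parenthetically at the end, but it is the entire content of the paper's argument, not an aside --- in particular your direct convolution computation for $(1)$ and the careful unwinding of $\mathcal F[|\mathcal F q|]$ for $(2)$ are both avoided.

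That said, your worry about identifying the given $q$ with $\mathcal F[\sqrt\rho]$ is well-taken: the paper silently treats them as the same object, whereas a priori $\sqrt\rho = |\mathcal F q|$ need not equal $\mathcal F q$. For statements $(1)$ and $(3)$ this is harmless (the Hankel computation in the proof of Proposition~\ref{p:tau} really yields $\mathcal F q$, and squaring gives $\rho$ regardless of sign), but for statement $(2)$ interpreted literally as a statement about $\mathcal F[\sqrt\rho]$ there is a gap that the paper does not address. Your proposed fix --- showing that the negative part of $\mathcal F q$, being supported away from the origin, contributes a faster-decaying Fourier tail --- is the natural remedy; the paper simply does not engage with this point.
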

\begin{proof}
The field $f$ has spectral density $\rho = (\mathcal{F}[q])^2$ (recall that $q$ is in $L^2(\R^d)$). By assumption the statement $(2)$ in Proposition \ref{p:tau} is satisfied. Moreover, since $K = q \star q$ is isotropic and unimodal (as a convolution of two isotropic unimodal functions), $K$ is eventually decreasing, and so Proposition \ref{p:tau} implies that the statements $(1)$ and $(3)$ are also satisfied.
\end{proof}

\begin{remark}
\label{r:agen}
We use the condition $d \in \{2,3\}$ only to guarantee that if $K$ is eventually decreasing then so is $K(x) x^{(d-3)/2}$. We believe that conclusion of this corollary is true without the restrictions $d \in \{2,3\}$ and $\alpha > (d-3)/2$, possibly under mild extra conditions on $q$, however we do not pursue this here.
\end{remark}

To prove Proposition \ref{p:tau} we rely on (i) a classical Tauberian theorem relating the decay of a function at infinity with the behaviour of its Hankel transform at the origin, and (ii) Hankel-type formulae for the isotropic Fourier transform.

\begin{theorem}[Tauberian theorem for the Hankel transform; see Theorem 1.2 in \cite{bi97}]
\label{t:han}
Let $\nu > -1/2$, and suppose $f : \R^+ \to \R$ is a function such that $r^{1+\nu} f(r) \in L^1_{\text{loc}}[0, \infty)$ and $r^{1/2}f(r)$ is eventually decreasing. Then the order-$\nu$ Hankel transform
\[ F(s) = \int_0^{\infty_-} f(r) J_\nu(sr) \, r \, dr , \ s > 0 \]
exists and is continuous, where $\int_0^{\infty-} \cdots$ denotes the improper integral $\lim_{M \to \infty} \int_0^M \cdots$. Moreover, for a slowly varying function $L$, and for every $1/2 < \beta < 2 + \nu$, the following are equivalent:
\begin{enumerate}
\item $f(r) \sim r^{-\beta} L(r)$ as $r \to \infty$;
\item $F(s)\sim \tilde{c}(\beta, \nu) s^{\beta-2} L(1/s)$ as $s \to 0$, where
\[ \tilde{c}(\beta, \nu) = 2^{1-\beta} \frac{\Gamma(1-\beta/2 + \nu/2)}{\Gamma(\beta/2 + \nu/2)} > 0 .\]
\end{enumerate}
\end{theorem}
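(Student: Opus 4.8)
The plan is to prove the two asymptotic equivalences separately: the Abelian implication $(1)\Rightarrow(2)$, which is a direct estimation, and the Tauberian implication $(2)\Rightarrow(1)$, which rests on the monotonicity hypothesis together with a Mellin/Wiener Tauberian argument; the existence and continuity of $F$ will fall out of the estimates used for the Abelian half.

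\textbf{Abelian direction and existence of $F$.} After the substitution $u=sr$ one writes $F(s)=s^{-2}\int_0^{\infty-}f(u/s)\,J_\nu(u)\,u\,du$ and splits the range into $(0,\delta]$, $[\delta,M]$, and $[M,\infty)$. On $(0,\delta]$ the bound $|J_\nu(u)|\le C u^{\nu}$ near the origin together with the hypothesis $r^{1+\nu}f(r)\in L^1_{\mathrm{loc}}[0,\infty)$ controls the contribution; on $[\delta,M]$ the uniform convergence theorem for slowly varying functions \cite[Theorem 1.2.1]{bgt87} gives $f(u/s)/(s^{-\beta}L(1/s))\to u^{-\beta}$ uniformly for $u\in[\delta,M]$ as $s\to0$, so one passes to the limit by dominated convergence; on $[M,\infty)$ one uses the classical expansion $J_\nu(u)=\sqrt{2/(\pi u)}\cos(u-\nu\pi/2-\pi/4)+O(u^{-3/2})$ and the hypothesis that $r\mapsto r^{1/2}f(r)$ is eventually decreasing, so that the second mean value theorem bounds the oscillatory tail uniformly in $s$ by a quantity that tends to $0$ as $M\to\infty$. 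These same estimates show that the improper integral defining $F(s)$ converges and that $F$ is continuous. Letting $s\to0$ and then $M\to\infty$, $\delta\to0$, one obtains $F(s)/(s^{\beta-2}L(1/s))\to\int_0^{\infty-}u^{1-\beta}J_\nu(u)\,du$, and the Weber--Schafheitlin evaluation $\int_0^{\infty-}u^{1-\beta}J_\nu(u)\,du=2^{1-\beta}\Gamma(1-\beta/2+\nu/2)/\Gamma(\beta/2+\nu/2)$, valid exactly in the range $1/2<\beta<2+\nu$, identifies the limit as $\tilde c(\beta,\nu)$.

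\textbf{Tauberian direction.} Passing to multiplicative variables $r=e^{t}$ and $s=e^{-\sigma}$ turns the Hankel transform, after division by $L(e^{\sigma})$, into a convolution on $\R$ whose Mellin symbol is $\mathcal{M}[J_\nu](w)=2^{w-1}\Gamma((\nu+w)/2)/\Gamma((\nu+2-w)/2)$. Hypothesis $(2)$ says this convolution is slowly varying at $+\infty$, while the eventual monotonicity of $r\mapsto r^{1/2}f(r)$ furnishes the required Tauberian side condition. One then applies Wiener's Tauberian theorem (equivalently, a Tauberian theorem for the Mellin transform, as in \cite{bi97}), which is legitimate because $\mathcal{M}[J_\nu]$ is holomorphic and non-vanishing on the critical line $\Re w=2-\beta$ for every $1/2<\beta<2+\nu$: its zeros lie at the real points $\nu+2,\nu+4,\dots$, all of which exceed $3/2>2-\beta$, and it has no pole there since $2-\beta>-\nu$. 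This produces $f(r)\sim r^{-\beta}L(r)$, with the constant forced to agree with the Abelian one by consistency. The main obstacle is making this last step fully rigorous: the Hankel transform converges only conditionally and does not sit inside the plain $L^{1}$ Wiener framework, so one must combine an averaging/smoothing device with the monotonicity hypothesis to reduce to the applicable setting. This is precisely the content of \cite[Theorem~1.2]{bi97}, which we therefore quote rather than reprove.
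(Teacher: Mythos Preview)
The paper does not prove this theorem at all: it is quoted directly from \cite[Theorem~1.2]{bi97} without argument. Your proposal therefore goes beyond what the paper does, by sketching the Abelian direction (which is essentially correct as written: split the integral, use the uniform convergence theorem on the middle piece, and control the oscillatory tail via the Bessel asymptotic and the monotonicity of $r^{1/2}f(r)$, arriving at the Weber--Schafheitlin integral) and outlining the Tauberian direction via a Wiener/Mellin argument before, in the end, also citing \cite{bi97} for the technically delicate step. Since both you and the paper ultimately rely on the same external reference for the Tauberian half, there is no substantive divergence to compare; your additional Abelian sketch is a bonus rather than a different route.
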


\begin{proposition}[Hankel-type formulae for the isotropic Fourier transform]
\label{p:form}
Let $g$ be a continuous isotropic positive definite function on $\R^d$ and let $\mu$ be its spectral measure. Define the bounded non-decreasing function
\[ G(\lambda) = \int_{|s| < \lambda} d\mu(s) , \quad \lambda > 0 . \]
Then if $x^{(d-3)/2} g(x)$ is eventually decreasing, $G$ is continuous and has the representation 
\[   G(\lambda) = \frac{(2 \pi)^d }{\Gamma(d/2) 2^{(d-2)/2}}  \lambda^{d/2} \int_0^{\infty_-} (g(x) x^{(d-4)/2}) J_{d/2}(\lambda x) \, x \, dx . \]
Moreover, if $d \ge 2$ and $x^{(d-1)/2} g(x)$ is eventually decreasing, the spectral measure has a density $\rho$ which is continuous on $\R^d \setminus \{0\}$ and has the representation
\[    \rho(\lambda) =  (2 \pi)^{d/2}  \lambda^{(2-d)/2} \int_0^{\infty_-} (g(x) x^{(d-2)/2}) J_{(d-2)/2}(\lambda x) \, x \, dx .  \]
\end{proposition}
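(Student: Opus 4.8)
The plan is to read both identities as the radial Fourier representation of the positive-definite function $g$, made rigorous — despite $g$ being non-integrable — by replacing the (divergent) Fourier integral with the conditionally convergent improper Hankel integrals whose existence and continuity are furnished by Theorem~\ref{t:han}. Fix notation: by Bochner's theorem $g=\mathcal{F}[\mu]$ for a finite radial spectral measure $\mu$, and I write $\omega_{d-1}$ for the surface area of the unit sphere in $\R^d$. I will use only the ``exists and is continuous'' conclusion of Theorem~\ref{t:han} (not the asymptotic equivalence), together with the classical radial Fourier formula: for a radial Schwartz function $\psi(\lambda)=\psi_0(|\lambda|)$,
\[ \widehat{\psi}(x) = (2\pi)^{d/2}|x|^{-(d-2)/2}\int_0^\infty \psi_0(s)\,J_{(d-2)/2}(|x|s)\,s^{d/2}\,ds , \]
the multiplication formula $\int\psi\,d\mu=\int_{\R^d}g\,\widehat{\psi}$ (valid since $\widehat{\psi}\in L^1$ and $g$ is bounded), and the Bessel identities $\tfrac{d}{dz}[z^{\nu}J_\nu(z)]=z^{\nu}J_{\nu-1}(z)$ and $\int_0^{\lambda}s^{\nu}J_{\nu-1}(rs)\,ds=r^{-1}\lambda^{\nu}J_{\nu}(r\lambda)$ for $\nu>0$. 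The two Bessel identities are what pass between the order-$(d-2)/2$ kernel relevant to $\rho$ and the order-$d/2$ kernel (the Fourier transform of a ball indicator) relevant to $G$, and they also pin down all the explicit constants, which I treat as routine bookkeeping.

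For the density formula I would test $\mu$ against radial Schwartz functions $\psi$ supported in a spherical shell $\{0<a\le|\lambda|\le b\}$. Inserting the radial expression for $\widehat{\psi}$ into $\int\psi\,d\mu=\int g\,\widehat{\psi}$, passing to polar coordinates, and interchanging the order of integration gives
\[ \int\psi\,d\mu = (2\pi)^{d/2}\omega_{d-1}\int_0^\infty \psi_0(s)\,s^{d/2}\Big(\int_0^{\infty_-}(g(x)\,x^{(d-2)/2})J_{(d-2)/2}(sx)\,x\,dx\Big)ds, \]
where the inner improper integral exists and is continuous in $s>0$ by Theorem~\ref{t:han} with $\nu=(d-2)/2$; this is precisely where the hypotheses $d\ge2$ (so $\nu>-1/2$) and ``$x^{(d-1)/2}g(x)$ eventually decreasing'' (so that $r^{1/2}\cdot(g(r)r^{(d-2)/2})=g(r)r^{(d-1)/2}$ is eventually decreasing) are used, while $r^{d-1}g(r)\in L^1_{\mathrm{loc}}$ holds by continuity of $g$. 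Since radial Schwartz functions supported in shells determine a finite measure on $\R^d\setminus\{0\}$, this identifies $\mu\restriction_{\R^d\setminus\{0\}}$ with the absolutely continuous measure whose radial density is the asserted $\rho$, and continuity of $\rho$ on $\R^d\setminus\{0\}$ is inherited from Theorem~\ref{t:han}.

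For the $G$-formula I would run the same scheme with test functions $\psi$ squeezing $\id_{B(\lambda)}$ from inside and outside: now $\widehat{\psi}$ approximates the Fourier transform of a ball indicator, whose radial profile is the order-$d/2$ Bessel kernel $x\mapsto c_d\,\lambda^{d/2}x^{-d/2}J_{d/2}(\lambda x)$. Repeating the polar-coordinate computation and the interchange, and letting $\psi\to\id_{B(\lambda)}$ (so the left side tends to $\mu(B(\lambda))=G(\lambda)$ by monotone convergence) yields the stated identity, this time invoking Theorem~\ref{t:han} with $\nu=d/2>-1/2$, for which the monotonicity requirement drops to $r^{1/2}\cdot(g(r)r^{(d-4)/2})=g(r)r^{(d-3)/2}$ eventually decreasing — exactly the weaker hypothesis of the first bullet, and the reason no dimension restriction is needed there. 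Continuity of $G$ on $(0,\infty)$, i.e.\ the absence of spectral mass on spheres, then follows since the right-hand side is continuous in $\lambda$. When in addition $x^{(d-1)/2}g(x)$ is eventually decreasing, one may alternatively integrate the $\rho$-formula over $B(\lambda)$ and apply $\int_0^\lambda s^{d/2}J_{(d-2)/2}(rs)\,ds=r^{-1}\lambda^{d/2}J_{d/2}(r\lambda)$; this recovers the $G$-formula and doubles as a check on the constant $(2\pi)^d/(\Gamma(d/2)2^{(d-2)/2})$.

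The main obstacle is justifying the interchange of the two integrations when $g\notin L^1$, so that the $x$-integral is only conditionally convergent and the Bessel kernels $\widehat{\id_{B(\lambda)}}$, $\widehat{\psi}$ need not lie in $L^1$. I would handle this by truncating the $x$-integral at radius $M$ — where Fubini is immediate because everything is bounded on $[0,M]$ and $\psi_0$ is Schwartz — and then letting $M\to\infty$, using that the truncated Hankel integrals converge to the improper ones (the content of Theorem~\ref{t:han}) together with a tail bound that is uniform for $s$ in compact subsets of $(0,\infty)$, obtained from the Bessel asymptotics $J_\nu(z)=\sqrt{2/(\pi z)}\cos(z-\nu\pi/2-\pi/4)+O(z^{-3/2})$ and the eventual monotonicity of $g$ via a Dirichlet-type (second mean value) estimate. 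This is exactly the analytic machinery behind Bi's theorem, so in the write-up the interchange can be justified by quoting Theorem~\ref{t:han} plus a short Abel-summation lemma, after which the two formulae and the continuity statements follow by comparing radial profiles.
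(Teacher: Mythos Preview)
Your proposal is correct and follows the same route as the paper: the Hankel formulae are the standard radial Fourier representations (which the paper simply cites as known from \cite{lo13}), and the existence and continuity of the improper integrals --- and hence of $G$ and $\rho$ --- come from Theorem~\ref{t:han} applied exactly as you do, with $\nu=d/2$ for $G$ and $\nu=(d-2)/2$ for $\rho$. Your write-up supplies the derivation and the interchange-of-integration argument that the paper leaves to the reference, but the logical structure is identical.
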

\begin{proof}
The Hankel transform formulae for $G$ and $\rho$ are standard (see \cite{lo13}, and note the convention used to define the Fourier transform differs from ours), and the fact that they are well-defined and continuous follows from Theorem \ref{t:han} (applied in the first case to $x \mapsto g(x) x^{-(d-4)/2}$ and $\nu = d/2$, and in the second case to $x \mapsto g(x) x^{(d-2)/2}$ and $\nu = (d-2)/2$).
\end{proof}

\begin{proof}[Proof of Proposition \ref{p:tau}]
We prove the three conclusions of the proposition in order:

\textit{First conclusion.} Suppose that $\alpha > (d-3)/2$, $K(x) x^{(d-3)/2}$ and $q(x) x^{(d-3)/2}$ are eventually decreasing, and assume that \eqref{e:taurho} holds; we will show that this implies \eqref{e:tauk} and \eqref{e:tauq}. Define the functions
\[ G_K(\lambda) = \int_{|s|<\lambda} \rho(|s|) ds    \quad \text{and} \quad G_q(\lambda) = \int_{|s|<\lambda} \sqrt{\rho}(|s|) ds   .\]
Then in light of \eqref{e:taurho} (and recalling that the volume of the $d$-sphere is $2 \pi^{d/2} / \Gamma(d/2)$) we have, as $\lambda \to 0$, 
\[    G_K(\lambda)  \sim \begin{cases}    \frac{2 \pi^{d/2} c'' }{\Gamma(d/2)}   \int_0^\lambda s^{\alpha - 1} L(1/s) ds  \sim    \frac{2 \pi^{d/2} c'' }{\Gamma(d/2) \alpha}  \lambda^\alpha   L(1/\lambda) & \alpha \in (0, d) , \\
  2 \pi c''   \int_0^\lambda s^{-1} (\log (1/s))^{-(\gamma+1)} ds \sim    \frac{2 \pi c'' }{ \gamma} (\log (1/\lambda))^{-\gamma}  &   d=2, \alpha = 0 ,
\end{cases} \]
where we used, after a change of variables, Propositions \ref{p:kit} (with $\beta = - \alpha - 1 < -1$) and \ref{p:kitlog} (with $\beta = -\gamma - 1 < - 1$). Similarly, as $\lambda \to 0$, 
\[   G_q(\lambda)  \sim \begin{cases}     \frac{2 \pi^{d/2} \sqrt{c''} }{\Gamma(d/2)}  \int_0^\lambda s^{(\alpha+d-2)/2} \sqrt{L(1/s)} ds  \sim  \frac{2 \pi^{d/2} \sqrt{c''} }{\Gamma(d/2) ((\alpha+d)/2)} \lambda^{(\alpha+d)/2}  \sqrt{L(1/\lambda)} & \alpha \in (0, d) , \\
 2 \pi \sqrt{c''}  \int_0^\lambda  (\log (1/s))^{-(\gamma+1)/2} ds \sim    2 \pi \sqrt{c''} \lambda (\log (1/\lambda))^{-(\gamma+1)/2}  &   d=2, \alpha = 0 ,
\end{cases} \]
where we used again Proposition \ref{p:kit} (with $\beta = -2-((\alpha+d-2)/2) < -1$).

Since $x^{(d-3)/2} K(x)$ and $x^{(d-3)/2} q(x)$ are continuous and eventually decreasing by assumption, by Proposition \ref{p:form} we have the representations
\[    G_K(\lambda) = \frac{(2 \pi)^d }{\Gamma(d/2) 2^{(d-2)/2}}  \lambda^{d/2} \int_0^{\infty_-} (K(x) x^{(d-4)/2}) J_{d/2}(\lambda x) \, x \, dx \] 
and
\[   G_q(\lambda) = \frac{(2 \pi)^d }{\Gamma(d/2) 2^{(d-2)/2}}  \lambda^{d/2} \int_0^{\infty_-} (q(x) x^{(d-4)/2}) J_{d/2}(\lambda x) \, x \, dx . \]
Then by Theorem \ref{t:han} (applied to the function $x \mapsto K(x) x^{(d-4)/2}$, $\nu = d/2$, and $\beta = \alpha - (d-4)/2$, and notice that $\beta \in (1/2, 2+\nu)$ since $\alpha \in ((d-3)/2, d)$) we have, as $x \to \infty$, 
\[    K(x)  \sim \begin{cases}     \frac{c''}{(2 \pi)^{d/2} \alpha \tilde{c}(\alpha - (d-4)/2,d/2)} x^{-\alpha} L(x)  & \alpha \in (0, d) , \\
  \frac{c''}{2 \pi \gamma  }  (\log x)^{-\gamma} & d=2, \alpha = 0 ,
\end{cases} \]
 and also by Theorem \ref{t:han} (applied to the function $x \mapsto q(x) x^{(d-4)/2}$, $\nu = d/2$, and $\beta = 2 + \alpha / 2$, and notice that $\beta \in (1/2, 2+\nu)$ since $\alpha \in (-3, d)$), as $x \to \infty$,
\[    q(x)  \sim \begin{cases}     \frac{ \sqrt{ c''} }{(2 \pi)^{d/2} ((\alpha+d)/2) \tilde{c}(2+\alpha/2,d/2) } x^{-(\alpha+d)}/2  L(x)  & \alpha \in (0, d) , \\
 \frac{ \sqrt{ c'' } }{2 \pi }  x^{-1} (\log x)^{-(\gamma+1)/2} &  d=2,\alpha = 0 ,
\end{cases} \]
where in the case $d=2$ and $\alpha = 0$ we used that $\tilde{c}(1,1) = \tilde{c}(2,1) = 1$ by standard properties of the Gamma function. Recalling the identity $z \Gamma(z) = \Gamma(z+1)$ one can check that
\[    \tilde{c}(\beta+1,\nu+1) (\beta + \nu) = \tilde{c}(\beta,\nu) \  , \quad \beta,\nu > 1/2 .   \]
Since we have defined $c'$ and $c''$ to satisfy 
\begin{equation}
\label{e:c''2}
c'' =  \begin{cases} 
  (2 \pi)^{d/2} \alpha \tilde{c}(\alpha - (d-4)/2,d/2)  =   \frac{ \pi^{d/2} 2^{d-\alpha} \Gamma((d - \alpha)/2)}{\Gamma(\alpha/2)} & \text{if } \alpha \in (0,d) , \\ 
 2 \pi \gamma    & \text{if } d=2, \alpha = 0 ,
  \end{cases} 
  \end{equation}
and
  \begin{equation}
  \label{e:c'2}
   c' =  \begin{cases}  \frac{ \sqrt{ c''}}{(2 \pi)^{d/2} ((\alpha+d)/2) \tilde{c}(2+\alpha/2,d/2) }  =  \frac{ \sqrt{ c''}}{(2 \pi)^{d/2} \tilde{c}(1+\alpha/2,d/2-1) }     & \text{if } \alpha \in (0,d) , \\ 
\frac{ \sqrt{ c''} }{2 \pi}    & \text{if }  d=2,\alpha = 0 ,
  \end{cases}  
  \end{equation}
 the first conclusion is established.
 
\textit{Second conclusion.} Suppose that $d \ge 2$, $\alpha > (d-3)/2$, $K(x) x^{(d-3)/2}$ and $q(x) x^{(d-1)/2}$ are eventually decreasing, and \eqref{e:tauq} holds; by the first conclusion it is then sufficient deduce \eqref{e:taurho}. By Proposition \ref{p:form}, $\sqrt{\rho}$ has the representation 
\[   \sqrt{\rho}(\lambda) =  (2 \pi)^{d/2}  \lambda^{(2-d)/2} \int_0^{\infty_-} (q(x) x^{(d-2)/2}) J_{(d-2)/2}(\lambda x) \, x \, dx .  \]
 Then by Theorem \ref{t:han} (applied to the function $x \mapsto q(x) x^{(d-2)/2}$, $\nu = (d-2)/2$, and $\beta = (\alpha +2)/2$, and notice that $\beta \in (1/2, 2+\nu)$ since $\alpha \in (-2, d)$) we have, as $\lambda \to 0$, 
 \[      \sqrt{\rho}(\lambda) \sim  \begin{cases} c'  (2 \pi)^{d/2} \tilde{c}(1+\alpha/2, d/2-1) \lambda^{(\alpha - d)/2 } \sqrt{L(1/\lambda)}  & \text{if } \alpha \in (0,d) , \\  c'  2 \pi \lambda^{- 1 } (\log (1/\lambda))^{-(\gamma+1)/2} & \text{if } d = 2, \alpha = 0 .
  \end{cases}   \]
By \eqref{e:c'2}, after squaring this is equivalent to \eqref{e:taurho}.

\textit{Third conclusion.}  Suppose that $d \ge 2$, $\alpha > (d-1)/2 > 0$, $K(x) x^{(d-1)/2}$ and $q(x) x^{(d-1)/2}$ are eventually decreasing, and \eqref{e:tauk} holds; by the second conclusion it is then sufficient deduce \eqref{e:taurho}. By Proposition \ref{p:form}, $\rho$ has the representation 
\[    \rho(\lambda) =  (2 \pi)^{d/2}  \lambda^{(2-d)/2} \int_0^{\infty_-} (K(x) x^{(d-2)/2}) J_{(d-2)/2}(\lambda x) \, x \, dx .  \]
 Then by Theorem \ref{t:han} (applied to the function $x \mapsto K(x) x^{(d-2)/2}$, $\nu = (d-2)/2$, and $\beta = \alpha - (d-2)/2$, and notice that $\beta \in (1/2, 2+\nu)$ since $\alpha \in ((d-1)/2, d)$) we have, as $\lambda \to 0$, 
 \[      \rho(\lambda) \sim     (2 \pi)^{d/2} \tilde{c}(\alpha  - (d-2)/2, d/2 -1 ) \lambda^{\alpha - d } L(1/\lambda) \sim   \frac{2^d \pi^{d/2} 2^{-\alpha} \Gamma((d - \alpha)/2)}{\Gamma(\alpha/2)}   \lambda^{\alpha - d } L(1/\lambda)    ,  \]
 which gives \eqref{e:taurho} by \eqref{e:c''2}.
 \end{proof}

\smallskip
\section{Basic properties of regularly varying functions}
\label{a:reg}

We collect basic properties of regularly varying functions; see \cite{bgt87} for a detailed treatment. Recall that a slowly varying function is a continuous strictly positive function on $\R^+$ satisfying \eqref{e:sv}. Our first set of results concern regularly varying functions on $\R^+$, i.e.\ continuous functions that satisfy $h(x) \sim x^{-\alpha} L(x)$ as $x \to \infty$ for an index $\alpha \in \R$ and slowly varying~$L$.

\smallskip We begin with the fact that regularly varying functions are asymptotically monotone:

\begin{lemma}
\label{l:evd}
Let $h$ be a regularly varying function on $\R^+$ with index $\alpha > 0$. Then, as $R \to \infty$,
\[ \sup_{x \ge R} h_1(x)  \sim h_1(R)  .  \]
Moreover, if $h_1$ and $h_2$ are regularly varying functions with respective indices $\alpha_1 > \alpha_2 \ge 0$, then for every $c \in \R$ there exists $m_0 > -c$ such that, for every $m \ge m_0$,
\[ \sup_{x \ge mR} \frac{h_1(x + cR)}{h_2(x)} \sim \frac{h_1((m+c)R)}{h_2(mR)}  .\]
\end{lemma}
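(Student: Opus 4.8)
The plan is to prove the two assertions in turn, using Potter's bounds \cite[Theorem 1.5.6]{bgt87} and the uniform convergence theorem for slowly varying functions \cite[Theorem 1.2.1]{bgt87}.

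For the first assertion, write the function as $h_1(x) = x^{-\alpha}L(x)$ with $L$ slowly varying, and set $\bar h_1(R) = \sup_{x\ge R} h_1(x)$. Since $\bar h_1(R) \ge h_1(R)$ trivially, it suffices to show $\limsup_{R\to\infty}\bar h_1(R)/h_1(R) \le 1$. Fixing $A > 1$ and $\delta\in(0,\alpha)$, Potter's bounds produce $X=X(A,\delta)$ with $h_1(y)/h_1(x) \le A\max\{(y/x)^{-\alpha+\delta},(y/x)^{-\alpha-\delta}\}$ for all $y\ge x\ge X$; as $y/x\ge 1$ and $\delta<\alpha$, both exponents are negative and the bound is $\le A$. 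Hence $\bar h_1(x)\le A\,h_1(x)$ for $x\ge X$, and letting $A\downarrow 1$ finishes the argument.

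For the second assertion I would substitute $x = tR$. Writing $h_i(x)=x^{-\alpha_i}L_i(x)$ with $L_i(x):=x^{\alpha_i}h_i(x)$ slowly varying, one has the identity
\[ \sup_{x\ge mR}\frac{h_1(x+cR)}{h_2(x)} = R^{\alpha_2-\alpha_1}\sup_{t\ge m}\Big(\phi(t)\,\frac{L_1((t+c)R)}{L_2(tR)}\Big), \qquad \phi(t):=t^{\alpha_2}(t+c)^{-\alpha_1}, \]
and the right-hand side of the lemma equals $R^{\alpha_2-\alpha_1}\phi(m)L_1((m+c)R)/L_2(mR)$. Since $(\log\phi)'(t)=\alpha_2/t-\alpha_1/(t+c)$ is negative once $t$ exceeds $\max\{0,-c,\alpha_2 c/(\alpha_1-\alpha_2)\}$, one may fix $m_0>-c$ so that $\phi$ is strictly decreasing on $[m_0,\infty)$; fix $m\ge m_0$ and split the supremum over $[m,\infty)$ into $[m,M]$ and $[M,\infty)$ for a large $M$ to be chosen. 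On $[m,M]$ the uniform convergence theorem gives $L_1((t+c)R)/L_1((m+c)R)\to 1$ and $L_2(mR)/L_2(tR)\to 1$ uniformly in $t\in[m,M]$, while $\sup_{t\in[m,M]}\phi=\phi(m)$ by monotonicity, so the supremum over $[m,M]$ is $(1+o(1))\phi(m)L_1((m+c)R)/L_2(mR)$. On $[M,\infty)$, substituting $y=tR$, using $\phi(t)\le 2^{\alpha_1}t^{\alpha_2-\alpha_1}$ for $t\ge M\ge 2|c|$, and using $L_1(y+cR)=(1+o(1))L_1(y)$ uniformly for $y\ge MR$ (uniform convergence, since $1+cR/y$ stays in a fixed compact subset of $(0,\infty)$), one bounds that supremum by $(1+o(1))2^{\alpha_1}R^{\alpha_1-\alpha_2}\sup_{y\ge MR}p(y)$ with $p(y):=y^{-(\alpha_1-\alpha_2)}L_1(y)/L_2(y)$ regularly varying of positive index $\alpha_1-\alpha_2$; the first assertion then yields $\sup_{y\ge MR}p(y)\sim p(MR)$, and comparing with $\phi(m)L_1((m+c)R)/L_2(mR)$ shows the $[M,\infty)$ contribution is at most $C(m,c,\alpha_1,\alpha_2)M^{-(\alpha_1-\alpha_2)}(1+o(1))$ times the target. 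Taking $M$ large makes this negligible, and combining the two pieces with the trivial lower bound (from $t=m$) gives $\sup_{t\ge m}\phi(t)L_1((t+c)R)/L_2(tR)\sim\phi(m)L_1((m+c)R)/L_2(mR)$, which is the assertion after multiplying by $R^{\alpha_2-\alpha_1}$.

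The main obstacle is the $[M,\infty)$ tail: compact-set uniform convergence does not control a supremum over an unbounded range, so one must exploit that $\phi(t)$ decays like $t^{\alpha_2-\alpha_1}$ — where $\alpha_1>\alpha_2$ is essential — together with the rescaling $y=tR$, which turns $\sup_{t\ge M}$ into $\sup_{y\ge MR}$ and lets one feed the outcome back into the first part of the lemma. Keeping every $o(1)$ uniform in $t$ (resp.\ $y$) while $M$ is held fixed and $R\to\infty$ is the delicate bookkeeping.
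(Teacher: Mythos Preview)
Your proof is correct and follows essentially the same route as the paper: both reduce the second assertion to the observation that $\phi(t)=t^{\alpha_2}(t+c)^{-\alpha_1}$ is strictly decreasing for $t\ge m_0$, so the supremum is attained at the left endpoint. The paper simply cites \cite[Theorem~1.5.3]{bgt87} for the first part and asserts that ``an identical proof'' gives the analogue of your limiting statement for the second, whereas you give a self-contained argument via Potter's bounds and a compact/tail split --- more explicit, but the same idea.
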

\begin{proof}
The first statement is \cite[Theorem 1.5.3]{bgt87}. The second is proven similarly; more precisely, an identical proof shows that, for any $m > -c$,
\begin{equation}
\label{e:evd1}
 \sup_{ x \ge m R} \frac{h_1(x +cR)}{ h_1((m +c)R)}  \times \frac{h_2(m R)}{h_2(x)}  \to   \sup_{ \rho \in [m,\infty) } \frac{ (\rho+c)^{-\alpha_1} }{ (m+c)^{-\alpha_2}} \times \frac{ m^{-\alpha_2} } { \rho^{-\alpha_2} } . 
 \end{equation}
Since $\alpha_1 > \alpha_2 \ge 0$, one can fix $m_0 > -c$ so that $ (\rho+c)^{-\alpha_1}  \rho^{\alpha_2} $ is strictly decreasing on $\rho \ge m_0$. Then for any $m \ge m_0$, the right-hand side of \eqref{e:evd1} evaluates to $1$, as required.
\end{proof}

We next recall that slowly varying functions can be treated as constants when integrating:

\begin{proposition}[Karamata integral theorem, see {\cite[Theorem 1.5.11]{bgt87}}]
\label{p:kit}
Let $L$ be a continuous slowly varying function on $\R^+$. If $\beta > -1$, then as $R \to \infty$,
\[ \int_1^R x^{\beta} L(x) dx \sim \frac{1}{\beta+1} R^{\beta+1} L(R)  , \]
and if $\beta < -1$, then as $R \to \infty$,
\[ \int_R^\infty x^{\beta} L(x) dx \sim \frac{-1}{\beta+1} R^{\beta+1} L(R)  .\]
\end{proposition}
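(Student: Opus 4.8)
The plan is to prove both asymptotics by the rescaling $x = Ru$, which turns each integral into one over a fixed domain against the slowly varying ratio $L(Ru)/L(R)$, and then pass to the limit using the uniform convergence theorem for slowly varying functions \cite[Theorem 1.2.1]{bgt87} together with Potter's bounds \cite[Theorem 1.5.6]{bgt87} to dominate the non-compact part.

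First I would treat the case $\beta > -1$. Substituting $x = Ru$ gives
\[
  \int_1^R x^\beta L(x)\,dx = R^{\beta+1} L(R) \int_{1/R}^1 u^\beta\, \frac{L(Ru)}{L(R)}\,du ,
\]
so it suffices to show the last integral converges to $\int_0^1 u^\beta\,du = (\beta+1)^{-1}$. I would fix $\delta \in (0, \beta+1)$ and let $t = t(\delta)$ be the threshold furnished by Potter's bounds, so that $L(Ru)/L(R) \le C u^{-\delta}$ whenever $R \ge t$ and $t/R \le u \le 1$. Splitting the integral at $t/R$: the piece over $[t/R, 1]$ is handled by dominated convergence --- the integrands are dominated by the fixed $L^1((0,1])$ function $u \mapsto C u^{\beta - \delta}$ and converge pointwise to $u^\beta$ by slow variation, so this piece tends to $\int_0^1 u^\beta\,du$ --- while the piece over $[1/R, t/R]$ is negligible: since $L$ is continuous, hence bounded on $[1,t]$, it is $O\big(R^{-(\beta+1)}/L(R)\big) = o(1)$, because $\beta+1 > 0$ forces $R^{\beta+1} L(R) \to \infty$.

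The case $\beta < -1$ is entirely analogous, the substitution now giving
\[
  \int_R^\infty x^\beta L(x)\,dx = R^{\beta+1} L(R) \int_1^\infty u^\beta\, \frac{L(Ru)}{L(R)}\,du ,
\]
with target value $\int_1^\infty u^\beta\,du = -(\beta+1)^{-1} > 0$. Here I would first note that the improper integral on the left converges: choosing $\delta$ with $\beta + \delta < -1$, Potter's bounds give $x^\beta L(x) = O(x^{\beta+\delta})$ at infinity. The same bound gives $u^\beta L(Ru)/L(R) \le C u^{\beta+\delta}$ on $[1,\infty)$, an $L^1([1,\infty))$ dominating function, so uniform convergence on compacts together with dominated convergence yields $\int_1^\infty u^\beta L(Ru)/L(R)\,du \to \int_1^\infty u^\beta\,du$.

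The main obstacle --- essentially the only place where any work is needed --- is controlling the tails of the rescaled integrals, near $u = 0$ when $\beta > -1$ and near $u = \infty$ when $\beta < -1$, where $L(Ru)/L(R)$ fails to converge uniformly; Potter's bounds supply the domination needed for dominated convergence, with the crude estimate near $u \approx 1/R$ disposing of the remaining boundary piece in the first case. An alternative, fully self-contained route would avoid Potter's bounds by invoking Karamata's representation $L(x) = c(x)\exp\!\big(\int_a^x \eps(t)\,t^{-1}\,dt\big)$ with $c(x) \to c > 0$ and $\eps(t) \to 0$, and then comparing $\int_1^R x^\beta L(x)\,dx$ with $R^{\beta+1} L(R)$ by a monotone-density / l'H\^opital argument.
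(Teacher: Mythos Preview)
Your proposal is correct. The paper does not prove this proposition at all: it is stated as the classical Karamata integral theorem with a citation to \cite[Theorem 1.5.11]{bgt87}, and no proof is given. Your rescaling argument with Potter's bounds is a standard and valid route to the result.
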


In the borderline case $\beta = -1$ one obtains similar behaviour by restricting to the subclass of slowly varying functions that are powers of the logarithm:

\begin{proposition}
\label{p:kitlog}
If $\beta > -1$, then as $R \to \infty$,
\[ \int_1^R x^{-1} (\log x)^\beta  dx \sim  \frac{1}{\beta + 1} (\log R)^{\beta + 1}  , \]
and if $\beta < -1$, then as $R \to \infty$, 
\[ \int_R^\infty x^{-1} (\log x)^\beta  dx \sim   \frac{-1}{\beta + 1} (\log R)^{\beta + 1}   . \]
\end{proposition}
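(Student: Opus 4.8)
The plan is to reduce both integrals to elementary power integrals via the change of variables $u = \log x$ (equivalently $x = e^u$, so that $dx/x = du$), after which the asserted asymptotic equivalences will in fact hold as exact identities. This is precisely the $\beta = -1$ borderline analogue of the Karamata integral theorem (Proposition \ref{p:kit}), and for these poly-logarithmic weights no Tauberian machinery is needed.

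For the first statement, assume $\beta > -1$. The substitution $u = \log x$ turns the integral into
\[ \int_1^R x^{-1} (\log x)^\beta \, dx = \int_0^{\log R} u^\beta \, du . \]
Since $\beta + 1 > 0$, the integrand $u^\beta$ is integrable on a neighbourhood of the origin (it is continuous at $0$ when $\beta \ge 0$, and has an integrable singularity there when $-1 < \beta < 0$), so the right-hand side equals $(\log R)^{\beta+1}/(\beta+1)$ for every $R > 1$. This is exactly the claimed equivalence.

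For the second statement, assume $\beta < -1$. The same substitution gives
\[ \int_R^\infty x^{-1} (\log x)^\beta \, dx = \int_{\log R}^\infty u^\beta \, du , \]
and since $\beta + 1 < 0$ the antiderivative $u^{\beta+1}/(\beta+1)$ tends to $0$ as $u \to \infty$; hence, for $R$ large enough that $\log R > 0$, the right-hand side equals $-(\log R)^{\beta+1}/(\beta+1)$, matching the claim. There is no real obstacle here: the only points deserving a word of comment are the integrability of $u^\beta$ at the origin when $-1 < \beta < 0$ and the vanishing of $u^{\beta+1}$ at infinity when $\beta < -1$, both immediate from the sign of $\beta + 1$.
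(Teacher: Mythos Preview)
Your proof is correct. The paper does not supply a proof of this proposition, treating it as standard; your substitution $u=\log x$ reduces both integrals to exact power-law identities and is precisely the natural argument.
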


We also note some simple variants of the integrals in the previous two propositions:
 
 \begin{lemma}
 \label{l:regvar1}
 Let $h$ be a regularly varying function on $\R^+$ with index $\alpha \in [0, 1)$. Then as $R \to \infty$,
 \[    \frac{1}{R^2} \int_0^R \int_0^R h(|x-y|) dx dy \sim \frac{2 h(R)}{(2-\alpha)(1-\alpha)} .   \]
 \end{lemma}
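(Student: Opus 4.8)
The plan is to reduce the double integral to a one-dimensional integral by the elementary change of variables $u=|x-y|$, and then apply Karamata's integral theorem (Proposition \ref{p:kit}) twice. First I would observe that, by symmetry of the integrand together with Fubini, for any locally integrable $\phi$ on $[0,\infty)$ one has
\[ \int_0^R \int_0^R \phi(|x-y|)\,dx\,dy = 2\int_0^R (R-u)\,\phi(u)\,du , \]
the factor $(R-u)$ counting the measure of the slice $\{(x,y): x-y=u,\ 0\le y\le x\le R\}$. Applying this with $\phi=h$ and writing $\int_0^R (R-u)h(u)\,du = R\int_0^R h(u)\,du - \int_0^R u\,h(u)\,du$, the lemma reduces to the two asymptotics
\[ \int_0^R h(u)\,du \sim \frac{R\,h(R)}{1-\alpha} \qquad\text{and}\qquad \int_0^R u\,h(u)\,du \sim \frac{R^2\,h(R)}{2-\alpha}. \]

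Both are immediate from Proposition \ref{p:kit}. Writing $h(u)\sim u^{-\alpha}L(u)$ with $L$ slowly varying, the first integral equals $\int_1^R u^{-\alpha}L(u)\,du + O(1)$, and since the exponent $-\alpha$ exceeds $-1$ (here is where $\alpha<1$ enters), Karamata gives $\int_1^R u^{-\alpha}L(u)\,du \sim \frac{1}{1-\alpha}R^{1-\alpha}L(R)\sim \frac{R\,h(R)}{1-\alpha}$; the $O(1)$ boundary term is negligible because $R^{1-\alpha}L(R)\to\infty$, the index $1-\alpha$ being positive. The same argument, now with exponent $1-\alpha>-1$, yields $\int_0^R u\,h(u)\,du \sim \frac{1}{2-\alpha}R^{2-\alpha}L(R)\sim \frac{R^2\,h(R)}{2-\alpha}$. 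Substituting,
\[ \int_0^R (R-u)\,h(u)\,du \sim R^2 h(R)\Big(\frac{1}{1-\alpha}-\frac{1}{2-\alpha}\Big) = \frac{R^2 h(R)}{(1-\alpha)(2-\alpha)}, \]
and dividing by $R^2$ and multiplying by $2$ gives the claimed equivalence.

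The only delicate point — the ``main obstacle'', such as it is — is that $R\int_0^R h$ and $\int_0^R u\,h$ are each of the same order $R^{2-\alpha}L(R)\to\infty$, so one is subtracting two divergent quantities; the conclusion rests on the fact that their leading coefficients $\tfrac1{1-\alpha}$ and $\tfrac1{2-\alpha}$ are distinct, so the difference is genuinely of order $R^{2-\alpha}L(R)$ with the nonzero coefficient $\tfrac1{(1-\alpha)(2-\alpha)}$, and the $o(R^{2-\alpha}L(R))$ error terms do not interfere. One should also note in passing that the behaviour of $h$ near the origin is irrelevant: $h$ being continuous (hence bounded on compact sets), the contributions of $\int_0^1$ are $O(1)$ and are absorbed into the error terms above.
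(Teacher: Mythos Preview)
Your proof is correct and takes a genuinely different route from the paper. The paper rescales $(x,y)\mapsto (x/R,y/R)$ and shows $\int_0^1\!\int_0^1 h(R|x-y|)/h(R)\,dx\,dy \to \int_0^1\!\int_0^1 |x-y|^{-\alpha}\,dx\,dy$ by Fatou's lemma for the lower bound and Potter's bounds with dominated convergence for the upper bound, then evaluates the limiting Riesz integral directly. Your argument instead collapses the double integral via $u=|x-y|$ to $2\int_0^R(R-u)h(u)\,du$ and applies Karamata (Proposition~\ref{p:kit}) twice. Your route is more elementary here, using only a result already stated in the appendix and avoiding Potter's bounds, and you correctly flag the one subtlety (cancellation of two same-order divergent terms with distinct leading coefficients). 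The paper's scaling approach, on the other hand, is the template used elsewhere in the paper (e.g.\ Proposition~\ref{p:capdom}) and extends more transparently to higher-dimensional domains where no one-variable reduction is available.
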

 \begin{proof}
 Let $x_0 > 0$ be such that $h(x) \ge 0$ for $x \ge x_0$. Then by Fatou's lemma and direct calculation one has
  \begin{align*}
      \liminf_{R \to \infty}  \int_0^1 \int_0^1 \frac{h(R|x-y|) }{h(R)} dx dy  & \ge  \liminf_{R \to \infty}  \int_0^1 \int_0^1 \id_{|x-y| \ge x_0/R} \frac{h(R|x-y|) }{h(R)} dx dy  -  \frac{x_0  \|h\|_{\infty}   }{R h(R) } \\
      &  \ge \int_0^1 \int_0^1 |x-y|^{-\alpha} dx dy   = \frac{2}{(2-\alpha)(1-\alpha)} . 
      \end{align*}
A matching upper bound can be obtained from Potter's bounds \cite[Theorem 1.5.6]{bgt87} and a dominated convergence argument (see the proof of Proposition \ref{p:capdom} for a similar argument), which completes the proof by a change of variables. \end{proof}
 
 \begin{lemma}
 \label{l:regvar2}
 Let $h$ satisfy $h(x) \sim x^{-1} (\log x)^{-\gamma}$ for $\gamma < 1$.  Then
 \begin{equation}
 \label{e:regvar1}
  \frac{1}{2R} \int_0^R \int_0^R h(|x-y|) dx dy  \sim   \int_0^{R} h(x) dx  \sim \frac{1}{1-\gamma}   R h(R) (\log R).   
 \end{equation}
 Moreover, if $r = r(R) \ge 0$ satisfies $r = R^{o(1)}$ as $R \to \infty$, then
\begin{equation}
\label{e:regvar2}
  \frac{1}{2R} \int_0^R \int_0^R h(|x-y|) \id_{|x-y| \ge r} dx dy  \sim  \min_{z \in [0, r] } \int_0^R h( \sqrt{x^2 + z^2} ) dx  \sim   \int_0^R h(x) dx   . 
 \end{equation}
\end{lemma}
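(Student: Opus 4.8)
The plan is to reduce all the quantities in \eqref{e:regvar1} and \eqref{e:regvar2} to the single divergent integral $I(R):=\int_0^R h(x)\,dx$ and to pin down $I(R)$ by Karamata's theorem. Fix once and for all $x_0>1$ large enough that $h>0$ on $[x_0,\infty)$; since $h$ is continuous this makes $\int_0^{x_0}|h|$ and $\sup_{[0,x_0]}|h|$ finite, so any contribution coming from arguments in $[0,x_0]$ will be $O(1)$. Using $h(x)\sim x^{-1}(\log x)^{-\gamma}$ and Proposition \ref{p:kitlog} (with $\beta=-\gamma>-1$) one gets $I(R)\sim\frac{1}{1-\gamma}(\log R)^{1-\gamma}\sim\frac{1}{1-\gamma}Rh(R)(\log R)$, which is the right-hand asymptotic in \eqref{e:regvar1} and diverges precisely because $\gamma<1$.

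For the left-hand equivalence in \eqref{e:regvar1} I would apply Fubini, $\int_0^R\int_0^R h(|x-y|)\,dx\,dy=2\int_0^R h(t)(R-t)\,dt$, so that $\frac{1}{2R}\int_0^R\int_0^R h(|x-y|)\,dx\,dy=I(R)-\frac1R\int_0^R t\,h(t)\,dt$. Since $t\,h(t)\sim(\log t)^{-\gamma}$ is slowly varying, Karamata's theorem (Proposition \ref{p:kit}, $\beta=0$) gives $\int_0^R t\,h(t)\,dt\sim R(\log R)^{-\gamma}$, hence the subtracted term is $O((\log R)^{-\gamma})=o(I(R))$ and \eqref{e:regvar1} follows.

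For \eqref{e:regvar2} I treat the two equivalences in turn. The first is a consequence of \eqref{e:regvar1} together with the observation that the truncated and full double integrals differ by $\frac1R\int_0^r h(t)(R-t)\,dt$, which is bounded in absolute value by $\int_0^r|h(t)|\,dt=O(1)+O((\log r)^{1-\gamma})=o(I(R))$, since $r=R^{o(1)}$ forces $\log r=o(\log R)$. For the second equivalence, $\min_{z\in[0,r]}\int_0^R h(\sqrt{x^2+z^2})\,dx\sim I(R)$, the upper bound is immediate by taking $z=0$. For the lower bound I would choose a threshold $T=T(R)\to\infty$ with $r=o(T)$ and $T=R^{o(1)}$ (say $T=(r+1)\log R$). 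On $[T,R]$ one has $1\le\sqrt{x^2+z^2}/x\le(1+r^2/T^2)^{1/2}\to1$ uniformly in $z\in[0,r]$, so the uniform convergence theorem for regularly varying functions \cite[Theorem 1.2.1]{bgt87} yields $h(\sqrt{x^2+z^2})=(1+o(1))h(x)$ uniformly there; hence $\int_T^R h(\sqrt{x^2+z^2})\,dx=(1+o(1))\int_T^R h(x)\,dx=(1+o(1))I(R)$, the last step using $\int_0^T h=O((\log T)^{1-\gamma})=o(I(R))$. On $[0,T]$ the integrand is nonnegative unless $\sqrt{x^2+z^2}<x_0$, which forces $x<x_0$, so $\int_0^T h(\sqrt{x^2+z^2})\,dx\ge-x_0\sup_{[0,x_0]}|h|=O(1)$ uniformly in $z$. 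Adding the two pieces gives $\int_0^R h(\sqrt{x^2+z^2})\,dx\ge(1-o(1))I(R)$ uniformly in $z\in[0,r]$, completing \eqref{e:regvar2}.

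The one delicate point is this uniform-in-$z$ lower bound on the minimum. Because $I(R)\asymp(\log R)^{1-\gamma}$ grows strictly more slowly than $\log R$, one cannot afford to discard the near-diagonal range $x\in[0,T]$ with a crude bound of size $\asymp T$ unless $\log T=o(\log R)$; but the uniform comparison $\sqrt{x^2+z^2}\sim x$ on $[T,R]$ simultaneously requires $T/r\to\infty$. So the whole argument hinges on choosing $T$ to satisfy both constraints, and with that choice in hand the rest is routine manipulation of Karamata's theorem and the uniform convergence of regularly varying functions.
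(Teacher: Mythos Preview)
Your proof is correct and follows the same overall strategy as the paper: reduce all quantities to $I(R)=\int_0^R h(x)\,dx$, evaluate $I(R)$ via Proposition~\ref{p:kitlog}, and handle the minimum over $z$ by splitting the $x$-integral at a threshold that is large compared to $r$ but still $R^{o(1)}$.

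Your execution is slightly cleaner than the paper's in two places. For the double integral you use the identity $\int_0^R\int_0^R h(|x-y|)\,dx\,dy = 2\int_0^R (R-t)h(t)\,dt$, which immediately isolates the main term $I(R)$ and a correction $\tfrac1R\int_0^R t\,h(t)\,dt$ that a single application of Karamata shows is $o(I(R))$; the paper instead computes the truncated double integral directly by iterated integration and two applications of Propositions~\ref{p:kit}--\ref{p:kitlog}. For the minimum over $z$, you split at $T=(r+1)\log R$ and invoke the uniform convergence theorem for regularly varying functions on $[T,R]$; the paper splits at $r^2$ and relies on the asymptotic monotonicity from Lemma~\ref{l:evd} to compare $h(x\sqrt{1+z^2/x^2})$ with $h(x)$. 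Both thresholds satisfy exactly the two constraints you correctly identify as essential, $r=o(T)$ and $\log T=o(\log R)$, and both arguments reach the same conclusion. What your route buys is that the uniform convergence theorem packages the comparison in one step without appealing to eventual monotonicity; what the paper's route buys is that it stays within the tools already assembled in the appendix.
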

\begin{proof}
By Proposition \ref{p:kitlog} we have that, as $R \to \infty$,
\[   \int_0^R h(x) dx \sim \frac{1}{1-\gamma} (\log R)^{-\gamma + 1} \sim  \frac{1}{1-\gamma} R h(R) (\log R)  . \]
We next establish that 
\[   \frac{1}{2R} \int_0^R \int_0^R h(|x-y|) \id_{|x-y| \ge r} dx dy  \sim \frac{1}{1-\gamma} (\log R)^{-\gamma + 1}  .\] 
Since $h$ is bounded we may assume that $r \to \infty$. Then by Propositions \ref{p:kitlog} and \ref{p:kit}
\begin{align*}
& \frac{1}{2R}  \int_0^R  \int_0^R   h(|x-y|) \id_{|x-y| \ge r} dx  dy \\
& \sim  \frac{1}{R}  \int_0^{R-r}  \int_r^{R-y}  \frac{(\log x)^{-\gamma}}{x}  dx  dy \\
& \sim  \frac{1}{R (1-\gamma) }  \int_1^{R}  (\log y )^{-\gamma+1}   dy   -   O((\log r)^{-\gamma+1})   \sim \frac{1}{1-\gamma} (\log R)^{-\gamma + 1} ,
\end{align*}
where in the last step we used that $r = R^{o(1)}$. Finally, since $h(x)$ is eventually positive and $h(x) \sim \sup_{t \ge x} h(x)$ (Lemma \ref{l:evd}), as $R \to \infty$,
\begin{align*}
    \int_0^R h(x) dx   (1+o(1))  & \ge  \min_{z \in [0, r] } \int_0^R h( \sqrt{x^2 + z^2} ) dx   \\
    &  =  \min_{z \in [0, r] }    \int_{r^2}^R h( x \sqrt{ 1 + z^2 / x^2 } ) dx  + \int_0^{r^2} h( x \sqrt{ 1 + z^2 / x^2 } ) dx  \\
    & \ge \int_{2r^2}^{R/2} h(x) dx (1 + o(1) ) - O(1)  \\
    & \sim \Big( \frac{1}{1-\gamma} (\log (R/2))^{-\gamma + 1} - \frac{1}{1-\gamma} (\log (2 r^2))^{-\gamma + 1} \Big) \sim \frac{1}{1-\gamma} (\log R)^{-\gamma + 1},
    \end{align*}
    as required, where we used $r = R^{o(1)}$ in the final step.
\end{proof}

Finally we establish a decomposition of isotropic regularly varying kernels on $\R^d$ that was used in the proof of Proposition \ref{p:capdom}:

\begin{proposition}
\label{p:rvdecomp}
Let $h$ be a function on $\R^d$ that is isotropic, positive definite, and regularly varying with index $\alpha \in [0, d)$. Then there exist continuous, isotropic, and positive definite functions $h_1$ and $h_2$ such that $h = h_1 + h_2$,  $h_1 \ge 0$, and $h_1 (x) \sim h(x)$ as $x \to \infty$.
\end{proposition}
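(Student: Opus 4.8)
The plan is to produce the decomposition by mollifying $h$ with a single, suitably \emph{large}, isotropic Gaussian. Write $h = \mathcal{F}[\mu]$ for $\mu$ the (finite, nonnegative) spectral measure of $h$, so $\mu(\R^d) = h(0) < \infty$ and $\sup_x|h(x)| = h(0)$ by positive definiteness; since $h$ is regularly varying it is eventually positive, say $h(x) > 0$ for $|x| \ge R_0$. The crucial input will be the non-integrability forced by $\alpha < d$: by Karamata's theorem (Proposition~\ref{p:kit}), $\int_{T \le |x| \le 2T} h(x)\,dx \asymp T^{d-\alpha}L(T)$, which by Potter's bounds \cite[Theorem~1.5.6]{bgt87} dwarfs $T^{-d}$ as $T \to \infty$. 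For $T > 0$ I would take $\phi_T(x) = (\pi T^2)^{-d/2}e^{-|x|^2/T^2}$, so that $\phi_T \ge 0$ is isotropic with $\int\phi_T = 1$ and $\widehat{\phi_T}(\lambda) = e^{-\pi^2 T^2|\lambda|^2} \in (0,1]$, and set $h_1 = h \star \phi_T$, $h_2 = h - h_1$. These are continuous (indeed smooth) and isotropic; on the spectral side $h_1 = \mathcal{F}[\widehat{\phi_T}\mu]$ and $h_2 = \mathcal{F}[(1 - \widehat{\phi_T})\mu]$, and since $0 \le \widehat{\phi_T} \le 1$ both $\widehat{\phi_T}\mu$ and $(1 - \widehat{\phi_T})\mu$ are finite nonnegative measures, so $h_1$ and $h_2$ are positive definite. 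Thus the structural requirements of the statement hold for every $T$, and it remains to choose $T$ so that in addition $h_1 \sim h$ and $h_1 \ge 0$.

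For the asymptotic equivalence (valid for every fixed $T$) I would write $h_1(x) = \int h(x-y)\phi_T(y)\,dy$ and split the integral into $|y| \le \rho$, $\rho < |y| \le |x|/2$, and $|y| > |x|/2$. On the first region $h(x-y)/h(x) \to 1$ uniformly in $|y| \le \rho$ by the uniform convergence theorem for regularly varying functions \cite[Theorem~1.2.1]{bgt87}, contributing $h(x)\big(\int_{|y|\le\rho}\phi_T + o(1)\big)$; on the second $|x-y| \ge |x|/2$, so $|h(x-y)| \lesssim h(|x|/2) \asymp h(x)$ by Lemma~\ref{l:evd}, contributing $\lesssim h(x)\int_{|y|>\rho}\phi_T$; on the third $|h| \le h(0)$ and $\int_{|y|>|x|/2}\phi_T$ is a Gaussian tail, hence $o(h(x))$ since $h$ decays only polynomially up to slowly varying factors. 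Letting $|x| \to \infty$ and then $\rho \to \infty$ gives $h_1 \sim h$.

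The heart of the argument is showing $h_1 = h\star\phi_T > 0$ everywhere once $T$ is large. I would write $h = h^+ - h^-$ with $h^- = \max(-h,0)$ supported in $\overline{B(R_0)}$ and $h^- \le h(0)$; since $|x-y| \ge (|x|-R_0)_+$ for $y \in B(R_0)$, this gives $h^-\star\phi_T(x) \le h(0)|B(R_0)|(\pi T^2)^{-d/2}e^{-(|x|-R_0)_+^2/T^2} =: C_1 T^{-d}e^{-(|x|-R_0)_+^2/T^2}$. For the positive part, given $x$ set $m = \max(|x|,T)$ and (possible since $|x| \le 2m$) pick $y_0$ with $|y_0| = m$ and $|x-y_0| = T$; then on $B(y_0, T/4)$ one has $|y| \in [\tfrac34 m, \tfrac54 m]$ (so $|y| > R_0$ and $h(y) > 0$ for $T$ large), $|x-y| \le \tfrac54 T$ (so $\phi_T(x-y) \gtrsim T^{-d}$), and $h(y) \gtrsim h(m)$ by regular variation, whence $h^+\star\phi_T(x) \ge \int_{B(y_0,T/4)}h(y)\phi_T(x-y)\,dy \gtrsim h(m)\,T^{-d}\,|B(T/4)| \asymp c_\ast\, h(\max(|x|,T))$ for a constant $c_\ast = c_\ast(h,d) > 0$. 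Combining, $h_1(x) \ge c_\ast h(\max(|x|,T)) - C_1 T^{-d}e^{-(|x|-R_0)_+^2/T^2}$. For $|x| \le 2T$ the first term is $\gtrsim h(T) \sim T^{-\alpha}L(T)$, which by Potter exceeds $T^{-d}$ by a factor $\ge T^{d-\alpha-\varepsilon} \to \infty$; for $|x| > 2T$ the Gaussian factor $e^{-(|x|-R_0)^2/T^2} \le e^{-|x|^2/(4T^2)}$ decays super-polynomially in $|x|/T$ while $h(|x|) \gtrsim |x|^{-\alpha-\varepsilon}$ decays only polynomially (and $T^{\alpha+\varepsilon-d} \le 1$), so the positive term wins once $|x|/T$ is large, and for $|x|/T$ bounded it wins because $h(|x|) \asymp h(T) \gg T^{-d}$. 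Hence there is $T_0 < \infty$ with $h_1 > 0$ on all of $\R^d$ whenever $T \ge T_0$; fixing any such $T$ then yields continuous isotropic positive definite $h_1 > 0$ and $h_2$ with $h_1 + h_2 = h$ and $h_1 \sim h$, as required.

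I expect the main obstacle to be precisely this last step: the \emph{uniform in $x$} positivity of $h\star\phi_T$ for large $T$. The structural and spectral bookkeeping and the asymptotic equivalence are routine, but making $h\star\phi_T$ nonnegative everywhere requires carefully balancing the non-integrable, only-polynomially-decaying positive part of $h$ against the Gaussian-tail-suppressed negative part across the three regimes $|x| \ll T$, $|x| \asymp T$, and $|x| \gg T$ — and it is exactly the hypothesis $\alpha < d$ (i.e.\ non-integrability of $h$) that tips this balance in our favour.
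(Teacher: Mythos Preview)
Your proof is correct and follows the same strategy as the paper: mollify $h$ at a large scale by an isotropic probability kernel whose Fourier transform lies in $[0,1]$, so that $h_1 = h \star(\text{kernel})$ and $h_2 = h - h_1$ are automatically positive definite, and then exploit the non-integrability forced by $\alpha < d$ to make $h_1 \ge 0$. The only difference is the choice of mollifier. The paper uses the compactly supported $\varphi_r = \id_{B(r)}/|B(r)|$ and sets $h_1 = h \star \varphi_{r_0} \star \varphi_{r_0}$ (the double convolution is needed because $\mathcal{F}[\varphi_r]$ oscillates, whereas its square lies in $[0,1]$); your single Gaussian achieves $\widehat{\phi_T} \in (0,1]$ in one step. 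The compact support buys the paper a shorter positivity argument: once $|x| > 1 + r_0$ the support of $\varphi_{r_0}(\cdot - x)$ lies entirely in $\{h \ge 0\}$, so $h \star \varphi_{r_0}(x) \ge 0$ is automatic there and only the bounded region $|x| \le 1 + r_0$ needs a Karamata/Fatou estimate. Your Gaussian tails force the three-regime analysis you carry out; you handle it correctly, but the paper's route sidesteps it.
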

\begin{proof}
For $r > 0$, define the functions $\varphi_r(x) = \id_{B(r)}(x) / \textrm{Vol}(B(r))$ and $\bar{h}_r = h \star \varphi_r$. We first show that there exists an $r_0 > 0$ such that $\bar{h}_{r_0}$ is non-negative. 

\smallskip
Since $h$ is eventually positive, by linear rescaling we may assume that $h(x) \ge 0$ for $|x| \ge 1$. Observe that, for sufficiently large $r$ and all $x = (t, t, \ldots , t) \in \R^d$, $t \in [0, (1 + r)/\sqrt{d}]$, we have
\[ x + B(r) \supset  [0, r / c_d]^d \setminus B(c_d) , \]
for a constant $c_d > 0$ depending only on the dimension. We deduce that, for sufficiently large~$r$,
\[  \inf_{x \in B(1 + r)}   \bar{h}_r(x)  \ge \frac{  \int_{y \in [0, r/c_d]^d \setminus B(c_d) }   h(y) \, dy  - \int_{B(1)} h(x) dx} {\textrm{Vol}(B(r) )}  . \]
On the other hand, by a change of variables and Fatou's lemma,
\begin{align*}
 \liminf_{r \to \infty}  (r^d  h(r))^{-1}   \int_{ [0, r/c_d]^d \setminus B(c_d) }   h(y) / h(r)  \, dy & =   \liminf_{r \to \infty}   \int_{ [0, 1/c_d]^d \setminus B(c_d/r) }   h(yr) / h(r)  \, dy  \\
 & =  \int_{y \in [0, 1/c_d]^d } |y|^{-\alpha} \, dy     > 0 .
 \end{align*}
Since $r^d  h(r) \to \infty$ as $r \to \infty$ (recall $\alpha < d$), this shows that $\int_{[0, r/c_d]^d \setminus B(c_d) }   h(y) \, dy \to \infty$, and we conclude that 
   \[ \inf_{x \in B(1 + r)}   \bar{h}_r(x) > 0 \]
for sufficiently large $r$. Since also $\bar{h}_r(x) \ge 0$ for every $r > 0$ and $x \in B(1 + r)^c$, we have established the existence of an $r_0$ such that  $\bar{h}_{r_0} \ge 0$.

\smallskip
Next define the continuous isotropic functions $h_1 =  h \star \varphi_{r_0} \star \varphi_{r_0} = \bar{h}_{r_0} \star \varphi_{r_0}$ and $h_2 = h - h_1$. By construction we have $h_1 \ge 0$. Moreover, since $\| \varphi_{r_0} \star \varphi_{r_0}\|_{L^1(R^d)} = \| \varphi_{r_0} \|_{L^1(R^d)}  =1$,
\begin{equation}
\label{e:rvdecomp2}
\inf_{ y \in x + B(2r_0) } h(y) / h(x)    \le  h_1(x) / h(x)  \le  \sup_{ y \in x + B(2r_0) } h(y) /h(x) ,
\end{equation}
and by the uniform convergence property \eqref{e:rv2}, both sides of \eqref{e:rvdecomp2} tend to $1$ as $x \to \infty$. 

\smallskip
It remains to verify that $h_1$ and $h_2$ are positive definite, which we do by exhibiting them as Fourier transforms of positive functions. Define the function $v = \mathcal{F}[ \varphi_{r_0} \star \varphi_{r_0}]  = \mathcal{F}[ \varphi_{r_0} ]^2$ and the finite measure $\mu = \mathcal{F}[h]$. Since $v$ is a non-negative, positive definite (as the Fourier transform of a positive function), and $v(0) = \| \varphi_{r_0} \star \varphi_{r_0}\|_{L^1(R^d)} = 1$, $v$ takes values in $[0,1]$. In particular $v \mu$ and $(1-v)\mu$ are finite measures. Since $h_1 = \mathcal{F}[v \mu]$ and $h_2 = \mathcal{F}[\mu] - \mathcal{F}[v \mu] = \mathcal{F}[ (1-v) \mu]$, the proof is complete.
 \end{proof}

\bigskip

\bibliographystyle{halpha-abbrv}
\bibliography{subcrit}

\end{document}